\documentclass[11pt]{article}
\usepackage[margin=1in]{geometry}
\usepackage{amsmath,hyperref}
\usepackage{amsfonts}
\usepackage{bbm}
\usepackage{amssymb}
\usepackage{amsthm,bm}
\usepackage{xcolor}
\usepackage{enumitem}
\usepackage{float}
\usepackage{graphicx}
\usepackage{multirow}
\usepackage{caption}
\usepackage{subcaption}
\captionsetup[table]{textfont={small},labelfont={small}}
\captionsetup[figure]{textfont={footnotesize},labelfont={footnotesize}}
\captionsetup[subfigure]{textfont={footnotesize},labelfont={footnotesize},singlelinecheck=off,justification=centerfirst}
\usepackage{epsfig}
\usepackage{cite}
\usepackage{indentfirst}
\usepackage{algorithmicx,algorithm,algpseudocode}

\hypersetup{
  colorlinks = true,
  allcolors = green!50!black,
  urlcolor = red!50!black,
}

\newtheoremstyle{dotless}{12pt}{12pt}{}{}{\bfseries}{}{1em}{\thmname{#1}\thmnumber{~#2}\thmnote{~(#3)}}
\theoremstyle{dotless}
\newtheorem{theorem}{Theorem}[section]
\newtheorem{corollary}{Corollary}[section]
\newtheorem{lemma}{Lemma}[section]
\newtheorem{definition}{Definition}[section]
\newtheorem{proposition}{Proposition}[section]
\newtheorem{remark}{Remark}[section]

\newcommand{\email}[1]{\protect\href{mailto:#1}{#1}}
\newcommand\funding[1]{\protect\\ \hspace*{1.8em}{\bfseries Funding:} #1}

\usepackage{amsopn}
\DeclareMathOperator{\diag}{diag}
\DeclareMathOperator{\rank}{rank}
\DeclareMathOperator{\dom}{dom}
\DeclareMathOperator{\prox}{prox}

\title{Multiplicative Noise Removal: Nonlocal Low-Rank Model and Its Proximal Alternating Reweighted Minimization Algorithm\thanks{Submitted to the editors January 30, 2020.
\funding{The research of J. Lu is partially supported by the Natural Science Foundation of China under grant 61972265 and 11871348. The research of L. Shen is partially supported by the National Science Foundation under grant DMS-1913039. The research of C. Xu is partially supported by the Natural Science Foundation of China under grant 61872429. The research of Y. Xu is partially supported by the National Science Foundation under grant DMS-1912958 and by the Natural Science Foundation of China under grant 11771464.}}}

\author{Xiaoxia Liu\thanks{Shenzhen Key Laboratory of Advanced Machine Learning and Applications, College of Mathematics and Statistics, Shenzhen University, Shenzhen, 518060, P.R. China.  (\email{xliu@szu.edu.cn},\email{jianlu@szu.edu.cn},\email{chenxuszu@sina.com})}
\and Jian Lu\footnotemark[2] \thanks{Corresponding author.}
\and Lixin Shen\thanks{Department of Mathematics, Syracuse University, Syracuse, NY 13244, USA. (\email{lshen03@syr.edu})}
\and Chen Xu\footnotemark[2] \and Yuesheng Xu\thanks{Department of Mathematics and Statistics, Old Dominion University, Norfolk, VA 23529, USA. (\email{y1xu@odu.edu})}}

\date{}

\begin{document}

\maketitle

\begin{abstract}
The goal of this paper is to develop a novel numerical method for efficient multiplicative noise removal. The nonlocal self-similarity of natural images implies that the matrices formed by their nonlocal similar patches are low-rank. By exploiting this low-rank prior with application to multiplicative noise removal, we propose a nonlocal low-rank model for this task and develop a proximal alternating reweighted minimization (PARM) algorithm to solve the optimization problem resulting from the model. Specifically, we utilize a generalized nonconvex surrogate of the rank function to regularize the patch matrices and develop a new nonlocal low-rank model, which is a nonconvex nonsmooth optimization problem having a patchwise data fidelity and a generalized nonlocal low-rank regularization term. To solve this optimization problem, we propose the PARM algorithm, which has a proximal alternating scheme with a reweighted approximation of its subproblem. A theoretical analysis of the proposed PARM algorithm is conducted to guarantee its global convergence to a critical point. Numerical experiments demonstrate that the proposed method for multiplicative noise removal significantly outperforms existing methods such as the benchmark SAR-BM3D method in terms of the visual quality of the denoised images, and the PSNR (the peak-signal-to-noise ratio) and SSIM (the structural similarity index measure) values.
\end{abstract}
\paragraph{Key words:}  multiplicative noise removal, nonlocal low-rank regularization, image restoration
\paragraph{AMS subject classification:}
  68U10, 94A08, 90C26, 15A03, 46N10, 65F22

\section{Introduction}
We consider in this paper the problem of multiplicative noise removal. To effectively restore images degraded by multiplicative noise, we develop a method which consists of an optimization model and an iterative algorithm to solve the minimization problem. Based on the nonlocal self-similarity of natural images, we propose a nonlocal low-rank model for multiplicative noise removal. The resulting model is a nonconvex nonsmooth minimization problem. We develop a proximal alternating reweighted minimization (PARM) algorithm with a convergence guarantee to efficiently solve the problem.

Multiplicative noise (i.e., speckle noise) widely occurs in coherent imaging systems due to the interference of coherent waves scattered from distributed targets. For example, images obtained from synthetic aperture radar (SAR) \cite{oliver2004understanding}, ultrasound imaging \cite{wagner1983statistics} and laser imaging \cite{schmitt1999speckle} are naturally contaminated with multiplicative noise. Removing multiplicative noise from such images is inevitable in many areas of applications.

Methods employed for multiplicative noise removal in the literature include the total variation (TV) regularization based models, patch-based methods, and nonlocal low-rank based methods. TV regularization has been widely used to preserve edges in the restored images. In a TV regularization based model, the objective function is the sum of a data fidelity term and a TV regularization term. The data fidelity term measures the closeness between the desired image and the observed noisy image, while the TV regularization term measures the total variation of a desired image or an image in its transformed domain. The AA model\cite{aubert2008variational} used the Bayesian maximum a posteriori probability (MAP) estimation to derive the data fidelity term in terms of the desired image. However, this data fidelity term is nonconvex and the resulting optimization problem is challenging to solve. To overcome this challenge, the DZ model\cite{dong2013convex} modified the data fidelity term by adding a quadratic term. As a consequence, the objective function of the DZ model becomes convex under some mild conditions. The I-DIV model\cite{steidl2010removing} used the so-called I-divergence as the data fidelity term. By performing the logarithmic transformation, the SO model\cite{shi2008nonlinear}, the HNM model\cite{huang2009new}, and the Exp model\cite{lu2016multiplicative} led to convex, even strictly convex, data fidelity terms. The $m$V model\cite{yun2012new} and the TwL-$m$V model\cite{kang2013two} used convex or strongly convex data fidelity terms via the $m$th root transformation. The TV regularization based models have good performance in denoising. However, they tend to over-smooth image textures and generate unexpected artifacts.

The patch-based methods make use of the redundancy of image patches to yield a restored image with fine details. Sparse representations of image patches have been studied in the patch-based methods for multiplicative noise removal.  In the learned dictionary method \cite{huang2012multiplicative}, an optimal over-complete dictionary was learned from the patches of the logarithmic transformed noisy image and then an image was restored via a variational model based on the learned dictionary and a TV regularization. The SAR-BM3D method \cite{parrilli2011nonlocal} is another remarkable approach relying on a sparse representation, which takes advantage of the nonlocal self-similarity of natural images \cite{buades2005a}.  Nonlocal similar patches, collected as 3D groups, were identified based on a probabilistic similarity measure for multiplicative noise, and then were denoised by using jointly nonlocal filtering and a local linear minimum-mean-square-error shrinkage in a wavelet domain. We remark that those methods constrain the sparsity priors in either a fixed dictionary or a fixed wavelet domain, which limits their capability in multiplicative noise removal.

Recently, the nonlocal low-rank based methods were extensively exploited in image processing. It is recognized that natural images are of nonlocal self-similarity. Matrices formed by nonlocal similar patches are low-rank, and hence the desired image can be restored by low-rank estimations of nonlocal similar patch matrices. To regularize the rank of the matrices formed by  nonlocal similar patches, different approximations of the rank function including the weighted nuclear norm and the log-det function were adopted, see, e.g., \cite{dong2014compressive,gu2014weighted,huang2017mixed,huang2018rank,wei2017nonlocal}.

Existing studies have shown impressive empirical performance of nonlocal low-rank based methods. However, theoretical analysis of the existing methods is missing and there is little work on applications of nonlocal low-rank based methods to multiplicative noise removal. To address this issue, we propose to develop a new nonlocal low-rank based method that is theoretically and practically suitable for multiplicative noise removal. The proposed method includes a novel nonlocal low-rank model and an efficient iterative algorithm to solve the proposed model with a convergence guarantee.  We explore the underlying low-rank prior of the patch matrices and propose a nonlocal low-rank model for multiplicative noise removal. The resulting optimization problem is nonconvex and nonsmooth, which is challenging to design efficient and theoretically convergence-guaranteed algorithms to solve. In fact, the well-known alternating direction method of multipliers (ADMM) algorithm is not applicable to this optimization problem, and the alternating minimization (AM) algorithm and the augmented Lagrange multiplier (ALM) algorithm may not converge \cite{bolte2014proximal,yu2018global}. To address this challenge on developing an efficient convergent algorithm, we propose a proximal alternating minimization scheme with a reweighted approximation of its subproblem and further use the Kurdyka-\L ojasiewicz theory \cite{attouch2013convergence,bolte2014proximal} to prove its global convergence to a critical point.  The experiments demonstrate that the proposed nonlocal low-rank based method is well suitable for multiplicative noise removal.

The main contributions of this work are:
\begin{itemize}
\item We propose a nonlocal low-rank model for multiplicative noise removal. This model is formulated in the log-transformed domain of images.  The objective function of the model as the sum of a fidelity term and a regularization term is nonconvex and nonsmooth. Its  fidelity term is adapted from the corresponding one in the Exp model~\cite{lu2016multiplicative} to patches, and is strictly convex under certain conditions. Its regularization term is the application of the composition of the rank operator with the patch extraction operator onto the underlying image. Due to the difficulties caused by the composition and the rank function in solving this model, we propose to split this composition by introducing an auxiliary variable and to approximate the rank function using a smooth concave function.

\item We develop a proximal alternating reweighted minimization (PARM) algorithm for solving the proposed nonlocal low-rank model. The key in the PARM algorithm is to deal with the concave function that is used to approximate the rank function in the model. We propose to approximate this concave function by its affine approximation (i.e., the reweighted approximation) in each iteration of the PARM algorithm. This approach could be useful for a wide range of nonlocal low-rank models.

\item We provide a theoretical analysis of the PARM algorithm which guarantees its global convergence to a critical point, in contrast to the practically used algorithms such as in \cite{dong2014compressive,wu2016speckle} which are lack of convergence analysis.

\item We give a detailed description on the implementation of the PARM algorithm including parameter settings, patch sizes, and search windows. We also test the proposed method for various images at different noise levels. Furthermore, we conduct the performance comparison of the proposed method with many existing ones for multiplicative noise removal, with respect to the visual quality of the denoised images, and the PSNR (the peak-signal-to-noise ratio) and SSIM (the structural similarity index measure) values.
\end{itemize}

This paper is organized into six sections. In section~\ref{Section:Model}, we present the nonlocal low-rank model for multiplicative noise removal. The proposed PARM algorithm to solve the resulting nonconvex nonsmooth optimization problem is presented in section~\ref{Section:Algorithm}. Section~\ref{Section:Convergence} is devoted to the convergence analysis of the proposed algorithm. In section~\ref{Section:Experiment}, we demonstrate the efficiency of the new method numerically by experiment results. Section~\ref{Section:Conclusions} concludes this paper.

\section{Nonlocal Low-Rank Model for Multiplicative Noise Removal}\label{Section:Model}

We propose in this section a nonlocal low-rank model for multiplicative noise removal by exploiting low-rank priors of the nonlocal similar patch matrices extracted from the underlying images.

Throughout this paper, matrices are bold capital, vectors are bold lowercase and scalars or entries are not bold. Given $\bm{x}, \bm{y} \in \mathbb{R}^d$,  $\langle \bm{x}, \bm{y}\rangle :=\sum_{i=1}^d\langle x_i,y_i\rangle$ is the standard inner product and  $\|\bm{x}\|_2:=\sqrt{\langle \bm{x},\bm{x}\rangle}$ is the standard $\ell_2$ norm. Let $\mathbb{S}^d_+$ denote the set of symmetric positive definite matrices of size $d\times d$ and let $\bm{I}_d$ denote the identity matrix of size $d\times d$. Given $\bm{x}, \bm{y} \in \mathbb{R}^d$ and $\bm{H} \in \mathbb{S}^d_+$, $\langle \bm{x}, \bm{y}\rangle_{\bm{H}} := \langle \bm{x},\bm{Hy}\rangle$ is the $\bm{H}$-weighted inner product and $\|\bm{x}\|_{\bm{H}} :=\sqrt{\langle \bm{x}, \bm{x}\rangle_{\bm{H}}}$ is the $\bm{H}$-weighted $\ell_2$ norm. Given $\bm{X}, \bm{Y} \in \mathbb{R}^{m\times n}$,  $\langle \bm{X},\bm{Y}\rangle_{F}:=\operatorname{tr}(\bm{X}^{\top}\bm{Y})$ is the Frobenius inner product and  $\|\bm{X}\|_{F}:=\sqrt{\langle \bm{X},\bm{X}\rangle_{F}}$ is the Frobenius norm.

Multiplicative noise removal in this paper refers to reducing multiplicative noise in an $L$-look image obtained by the multi-look averaging technique.  An $L$-look image $\bm{v}\in\mathbb{R}^N$ in the intensity format degraded by multiplicative noise can be modeled as
\begin{displaymath}
\bm{v}=\bm{u\eta},
\end{displaymath}
where $\bm{u}\in\mathbb{R}^N$ is the desired image to be restored, $\bm{\eta}\in\mathbb{R}^N$ is the multiplicative noise and the multiplication operation is a componentwise operation. The multiplicative noise in each pixel follows a Gamma distribution \cite{goodman1976some}, whose probability distribution function is defined as
\begin{displaymath}
p(\eta_i)=\frac{L^L \eta^{L-1}_i}{\Gamma(L)}e^{-L\eta_i},\quad i=1,\dots,N,
\end{displaymath}
which has mean $1$ and variance of $1/L$.  A list of TV regularization based models for multiplicative noise removal is presented  in Table \ref{Table:TVModels}.

\begin{table}[htbp]
\small
\centering
\caption{TV regularization based models for multiplicative noise removal.}\label{Table:TVModels}
\begin{tabular}{llp{1.8cm}p{2.7cm}}
\hline
\textbf{Name} & \qquad\qquad\qquad\qquad\textbf{Model $\Phi$} & \textbf{Transform.} &  \textbf{Properties of $\Phi$}\\
\hline
AA\cite{aubert2008variational} & $\underset{\bm{u}\in \mathbb{R}^N_+}{\min}\; \langle \log \bm{u} +\frac{\bm{v}}{\bm{u}},\mathbbm{1}\rangle+\lambda \|\bm{u}\|_{TV}$ & -- & nonconvex\\
DZ\cite{dong2013convex} & $\underset{\bm{u}\in \mathbb{R}^N_+}{\min}\;\langle \log \bm{u} +\frac{\bm{v}}{\bm{u}},\mathbbm{1}\rangle+\rho  \|\sqrt{\frac{\bm{u}}{\bm{v}}}-\mathbbm{1} \|_2^2+\lambda \|\bm{u}\|_{TV}$ & -- & strictly convex if $\rho\geq \frac{2\sqrt{6}}{9}$\\
I-DIV\cite{steidl2010removing} & $\underset{\bm{u}\in \mathbb{R}^N_+}{\min}\; \langle \bm{u} -\bm{v}\log\bm{u},\mathbbm{1}\rangle+\lambda \|\bm{u}\|_{TV}$ & --  & convex\\
SO\cite{shi2008nonlinear} & $\underset{\bm{u}\in \mathbb{R}^N}{\min}\;\langle \bm{x}+\frac{\bm{v}}{e^{\bm{x}}},\mathbbm{1}\rangle+\lambda \|\bm{x}\|_{TV}$ & $\bm{x}=\log \bm{u}$  & strictly convex\\
HNM\cite{huang2009new} & $\underset{\bm{x}\in \mathbb{R}^N,\bm{w}\in \mathbb{R}^N}{\min}\; \langle \bm{x}+\frac{\bm{v}}{e^{\bm{x}}},\mathbbm{1}\rangle+\rho \|\bm{x}-\bm{w}\|_2^2+\lambda\|\bm{w}\|_{TV}$ & $\bm{x}=\log \bm{u}$ & convex\\
Exp\cite{lu2016multiplicative}   & $\underset{\bm{x}\in \mathbb{R}^N}{\min}\; \langle \bm{x}+\frac{\bm{v}}{e^{\bm{x}}},\mathbbm{1}\rangle+\rho \|\sqrt{\frac{e^{\bm{x}}}{\bm{v}}}-\gamma\mathbbm{1}\|_2^2+\lambda \|\bm{x}\|_{TV}$ & $\bm{x}=\log \bm{u}$ & strictly convex if $\rho\gamma^4\leq \frac{4096}{27}$\\
$m$V\cite{yun2012new}   & $\underset{\bm{x}\in \sqrt[m]{U}}{\min}\; \langle m\log \bm{x} +\frac{\bm{v}}{\bm{x}^m},\mathbbm{1}\rangle+\lambda\|\bm{x}\|_{TV}$ & $\bm{x}=\sqrt[m]{\bm{u}}$  & convex if $m$ is sufficiently large \\
TwL-$m$V\cite{kang2013two} & $\underset{a>0,\bm{x}\in \sqrt[m]{U}}{\min}\;\frac{1}{s}\langle a,\bm{x}^s\rangle-\frac{1}{s}\langle m\log a -\frac{s\bm{v}}{\bm{x}^m},\mathbbm{1}\rangle+\lambda\|\bm{x}\|_{TV}$ & $\bm{x}=\sqrt[m]{\bm{u}}$ & strongly convex with respect to $\bm{x}$\\
\hline
\end{tabular}

{\scriptsize 1. $\mathbb{R}_+=(0,+\infty)$; 2. $U=(0,C]^N$, $C\in \mathbb{R}_+$; 3. $\lambda>0$, $\rho>0$, $\gamma\geq 1$, and $s\geq 1$; 4. $\mathbbm{1}$ denotes the vector whose entries are all ones; 5. The division, multiplication, logarithmic, exponential, square root operations are componentwise operations. }
\end{table}

In the following, we present our nonlocal low-rank model for multiplicative noise removal step by step. According to the nonlocal self-similarity of natural images, for an image patch, we can find nonlocal similar patches across the image or within a local window\cite{buades2005a}. We begin with collecting similar patches using block matching\cite{dabov2007image,parrilli2011nonlocal} and formulating patch matrices. Suppose that $\hat{\bm{u}}\in \mathbb{R}^N$ is an estimated clean image in the intensity format and that $J$ is the number of nonlocal similar patch groups to be collected. For the reference patch $\hat{\bm{u}}_j\in \mathbb{R}^{m_j}$ with size $\sqrt{m_j}\times \sqrt{m_j}$ in the $j$th patch group, we search within a local window for a total of $n_j$ patches that are similar to the reference patch, assuming $m_j \leq n_j$, $j=1,2,\dots,J$.  To fully exploit the statistics of $L$-look images, we measure the similarity between two patches $\hat{\bm{u}}_j\in \mathbb{R}^{m_j}$ and $\hat{\bm{u}}_j'\in \mathbb{R}^{m_j}$ using the block similarity measure introduced in \cite{parrilli2011nonlocal}
\begin{displaymath}
d(\hat{\bm{u}}_j,\hat{\bm{u}}_j')=(2L-1)\sum_{i=1}^{m_j} \log\left (\sqrt{\frac{(\hat{{u}}_j)_i}{(\hat{{u}}_j')_i}}+\sqrt{\frac{(\hat{{u}}_j')_i}{(\hat{{u}}_j)_i}}\right ).
\end{displaymath}

Following the above, for each group we construct a patch matrix from all the patches in the given group through an extraction operator. Define $\bm{R}_{jl}\in\mathbb{R}^{m_j\times N}$ be a binary matrix (i.e., its entries are either $1$ or $0$) such that $\bm{R}_{jl}\hat{\bm{u}}$ is the $l$th  patch in the $j$th nonlocal similar patch group of the given estimated image $\hat{\bm{u}}$, $l=1,\dots, n_j$, $j=1,\dots,J$. Then we define a linear operator $R_j:\mathbb{R}^{N}\to \mathbb{R}^{m_j\times n_j}$, $m_j\leq n_j$, as follows,
\begin{displaymath}
R_j(\bm{x})=\begin{bmatrix}\bm{R}_{j1}\bm{x}&\bm{R}_{j2}\bm{x}& \cdots &\bm{R}_{jn_j}\bm{x}\end{bmatrix}.
\end{displaymath}
Here, $R_j(\bm{x})$ is called the $j$th patch matrix of the (transformed) image $\bm{x}\in\mathbb{R}^N$. After the patch matrix is extracted, the patch matrix can be further processed using, for example, normalization with mean zero, and the corresponding extraction operator $R_j$ can be defined accordingly. Intuitively, the patch matrix $R_j(\bm{x})$ with similar structures should be a low-rank matrix if $\bm{x}$ is close to the clean image $\hat{\bm{u}}$, for example, up to a transformation.

Taking advantage of the low-rank prior of image patch matrices $R_j(\bm{x})$'s, the objective function of a patch-based nonlocal low-rank model consists of a data fidelity term to restore the desired image and a nonlocal low-rank regularization term as follows
\begin{equation}\label{Model1}
\min_{\bm{x}}\; \tau f(\bm{x})+ \sum_{j=1}^{J}\lambda_j \rank(R_j(\bm{x})),
\end{equation}
where $\bm{x}\in\mathbb{R}^{N}$ is the desired (transformed) image to be restored, $f:\mathbb{R}^{N}\to (-\infty,+\infty]$ is the data fidelity term  that measures the closeness between the observed image and the desired image, $R_j:\mathbb{R}^{N}\to \mathbb{R}^{m_j\times n_j}$, $m_j\leq n_j$,  is the (normalized) extraction of $j$th nonlocal similar patch matrix, and $\lambda_j>0$ is the regularization parameter, $j=1,\dots,J$.

Model \eqref{Model1} regularizes low-rank priors on image patch matrices, but it is not a feasible model from both theoretical and practical perspectives. First, model \eqref{Model1} as a composition optimization is not easy to solve. Second, the rank function is discontinuous and nonconvex, and minimizing a problem involving the rank function is NP-hard\cite{recht2010guaranteed}; therefore, it is challenging to solve model \eqref{Model1}. To tackle the above challenges, we plan to  relax model~\eqref{Model1} in the following ways. We first apply the variable splitting method to model \eqref{Model1} to address the composition optimization problem, adopt a nonconvex surrogate of the rank function to replace the rank function, and preferably utilize a patchwise data fidelity term.

First, we apply the variable splitting method to relax model \eqref{Model1}.  By introducing auxiliary (splitting) variables $\bm{Y}_j\in \mathbb{R}^{m_j\times n_j}$ such that $\bm{Y}_j=R_j(\bm{x})$ and then relaxing these equalities of the splitting variables,  we obtain the following model
\begin{displaymath}
\min_{\bm{x},\bm{Y}_1, \ldots,\bm{Y}_J}\; \tau f(\bm{x})+ \sum_{j=1}^J \left \{ \frac{\mu_j}{2}\|\bm{Y}_j-R_j(\bm{x})\|_{F}^2+\lambda_j \rank(\bm{Y}_j)\right \},
\end{displaymath}
where  $\mu_j>0$ is a parameter.

Second, we utilize a nonconvex relaxation of the rank function to characterize the low-rank prior of patch matrices. By introducing a function $g:[0,\infty)\to \mathbb{R}$ such that $g$ is monotonically increasing, a generalized relaxation of the rank function is defined as
\begin{displaymath}
\|\bm{Y}\|_{*,g}=\sum_{i=1}^{m} g(\sigma_i(\bm{Y})),
\end{displaymath}
where $\bm{Y}\in \mathbb{R}^{m\times n}$, $m\leq n$, and $\sigma_i(\bm{Y})$ is the $i$th largest singular value of $\bm{Y}$. Here, we give two special cases of the function $g$. If $g(t)=\|t\|_0$ as the $\ell_0$ norm, then  $\|\bm{Y}\|_{*,g}$ exactly reduces to  the rank function. If $g(t)=t$ as a linear function, then $\|\bm{Y}\|_{*,g}=\|\bm{Y}\|_*$ is exactly the nuclear norm, which is the tightest convex surrogate of the rank function.  However, the rank minimization is NP-hard, while the nuclear norm minimization may over-shrink the  singular values with large values\cite{gu2014weighted}.

To better approximate the rank function, we are interested in its nonconvex relaxation $\|\cdot\|_{*,g}$ with the function $g$ to be monotonically increasing, concave and smooth. For example, a decent choice of  $g:[0,\infty)\to \mathbb{R}$ is the logarithmic function defined as
\begin{equation}\label{eq:g}
g(t)=\log(t+\varepsilon),
\end{equation}
where $\varepsilon>0$.

Third, we propose a patchwise data fidelity term to restore images degraded by multiplicative noise.  Let $\bm{v}\in \mathbb{R}^N$ be the given noisy image and let $\bm{x}\in \mathbb{R}^N$ be the unknown clean log-transformed image to be restored. We extend the pixelwise data fidelity term of the Exp model\cite{lu2016multiplicative} as shown in Table~\ref{Table:TVModels} to a patchwise data fidelity term that is in terms of patch matrices  $R_j(\bm{x})$'s as follows
\begin{displaymath}
f(\bm{x})=\sum_{j=1}^J \mu_j\left (\left \langle R_j (\bm{x})+\frac{R_j(\bm{v})}{e^{R_j(\bm{x})}},R_j(\mathbbm{1}_N)\right \rangle_F+\rho \left \|\sqrt{\frac{e^{R_j(\bm{x})}}{R_j(\bm{v})}}-\gamma R_j(\mathbbm{1}_N)\right \|_F^2\right ),
\end{displaymath}
where  $\mu_j>0$ is a parameter,  $\mathbbm{1}_N$ denotes the vector of size $N\times 1$ with all ones, and  parameters $\rho> 0$ and $\gamma\geq 1$ depend on the noise level. The exponential operation, division operation and square root operation are componentwise operations. Note that it is followed from  \cite{lu2016multiplicative} that $f$ is strictly convex if $\rho\gamma^4\leq \frac{4096}{27}$.


The patchwise data fidelity term can be further viewed as a weighted pixelwise data fidelity term. Define $R_j^{\top}:\mathbb{R}^{m_j\times n_j}\to \mathbb{R}^N$ as $R_j^{\top}(\bm{Y})=\sum_{l=1}^{n_j}\bm{R}_{jl}^{\top}\bm{y}_i$, where $\bm{y}_i\in \mathbb{R}^{m_j}$ is the $i$th vector of $\bm{Y}$. Since $R_j$ and $R_j^{\top}$ are linear operators such that  $\langle R_j(\bm{x}), \bm{Y}\rangle_{F} =\langle \bm{x}, R_{j}^{\top}(\bm{Y})\rangle$ for all $\bm{x}\in \mathbb{R}^N$ and $\bm{Y}\in\mathbb{R}^{m_j\times n_j}$, where $\langle \cdot,\cdot\rangle$ is the standard inner product for vectors, then $f$ can be written as
\begin{align}
f(\bm{x})&=\sum_{j=1}^J \mu_j\left (\left \langle \bm{x}+\frac{\bm{v}}{e^{\bm{x}}}, (R_j^{\top}\circ R_j)\mathbbm{1}_N\right \rangle+\rho\left \|R_j\left (\sqrt{\frac{e^{\bm{x}}}{\bm{v}}}-\gamma \mathbbm{1}_N\right )\right \|_F^2\right )\nonumber\\
&=\langle \bm{x}+\frac{\bm{v}}{e^{\bm{x}}},\mathbbm{1}_N\rangle_{\bm{W}}+\rho \left \|\sqrt{\frac{e^{\bm{x}}}{\bm{v}}}-\gamma\mathbbm{1}_N\right \|_{\bm{W}}^2,\label{eq:f}
\end{align}
where  $\bm{W}=\sum_{j=1}^J\mu_j R_j^{\top}\circ R_j=\sum_{j=1}^J\mu_j\sum_{l=1}^{n_j} \bm{R}_{jl}^{\top}\bm{R}_{jl}$ is a diagonal matrix whose main diagonal entries indicate the weighted counts for each pixel. Since we assume that each pixel belongs to at least one nonlocal similar patch group, then $\bm{W}\in\mathbb{S}_+^N$ and  the $\bm{W}$-weighted inner product $\langle \cdot, \cdot\rangle_{\bm{W}}$  and the $\bm{W}$-weighted $\ell_2$ norm $\|\cdot\|_{\bm{W}}$ are well-defined. The proposed data fidelity term assigns more weights to the image pixels that belong to multiple patch groups. It helps develop efficient algorithms and cooperates well with the framework of our algorithm introduced in section~\ref{Section:Algorithm}.


Putting all the above discussion together, we come up with the following \emph{nonlocal low-rank model}
\begin{equation}\label{FullModel}
\min_{\bm{x},\bm{Y}_1,\dots,\bm{Y}_J}\; \tau f(\bm{x})+\sum_{j=1}^J \left ( \frac{\mu_j}{2}\|\bm{Y}_j-R_j(\bm{x})\|_{F}^2+ \lambda_j \sum_{i=1}^{m_j} g(\sigma_i(\bm{Y}_j))\right ),
\end{equation}
where  $\bm{x}\in\mathbb{R}^{N}$, $\bm{Y}_j\in \mathbb{R}^{m_j\times n_j}$, $f:\mathbb{R}^N\to(-\infty,+\infty]$ is defined as \eqref{eq:f}, $g:[0,\infty)\to\mathbb{R}$ is defined as \eqref{eq:g},  $R_j:\mathbb{R}^{N}\to \mathbb{R}^{m_j\times n_j}$  is the (normalized) extraction of $j$th nonlocal similar patch matrix,  $m_j\leq n_j$, $\tau>0$, $\mu_j>0$, $\lambda_j>0$, $j=1,\dots,J$.

Clearly, the objective function of model~\eqref{FullModel} is  nonconvex and nonsmooth. Existing algorithms are not directly applicable to this problem. It is challenging to design theoretically convergence-guaranteed and practically efficient algorithms to solve this nonconvex nonsmooth optimization problem. In the next section, we will propose an efficient algorithm for the nonlocal low-rank model~\eqref{FullModel} and analyze its convergence in section~\ref{Section:Convergence}.

\section{Proximal Alternating Reweighted Minimization Algorithm}\label{Section:Algorithm}

We present a proximal alternating reweighted minimization algorithm for solving the nonconvex nonsmooth optimization problem of model \eqref{FullModel}.

The nonlocal low-rank model, which has the form of model \eqref{FullModel}, regularizes the low-rank prior of patch matrices and  can also be applicable to many image restoration problems such as image denoising and compressive sensing if the patch matrix extraction $R_j$ and the data fidelity term $f$ are appropriately selected. In the following, we consider the nonlocal low-rank model in a general setting. The objective function of model~\eqref{FullModel}, denoted as $\Phi$, can be written as
\begin{equation}\label{eq:Phi}
\Phi(\bm{x},\bm{Y}_1,\dots,\bm{Y}_J)=\tau f(\bm{x})+ \sum_{j=1}^J \Phi_j(\bm{x},\bm{Y}_j),
\end{equation}
where \begin{equation}\label{eq:Phi_j}
\Phi_{j}(\bm{x},\bm{Y})=\frac{\mu_j}{2}\|\bm{Y}-R_j(\bm{x})\|_{F}^2 + \lambda_j \sum_{i=1}^{m_j} g(\sigma_i(\bm{\bm{Y}})),
\end{equation} and we assume
\begin{itemize}
\item[\textbf{(A1)}] $f:\mathbb{R}^N\to (-\infty,+\infty]$ is inf-bounded, proper and lower semicontinuous, i.e., $\inf f>-\infty$,
\begin{displaymath}
\dom f :=\{\bm{x}\in \mathbb{R}^N:f(\bm{x})<+\infty\}\neq \emptyset\quad\text{and}\quad f(\bm{a})\leq \underset{\bm{x}\to \bm{a}}{\lim\inf} f(\bm{x}),\quad \forall \bm{a}\in \mathbb{R}^N;
\end{displaymath}

\item[\textbf{(A2)}] $g:[0,\infty)\to \mathbb{R}$ is  monotonically increasing and concave (and nonconvex); and $g$ is continuously differentiable with an $L_g$-Lipschitz continuous gradient, i.e.,
\begin{displaymath}
|g'(t_1)-g'(t_2)|\leq L_g |t_1-t_2|, \quad \forall t_1\geq 0, t_2\geq 0;
\end{displaymath}

\item[\textbf{(A3)}] $\Phi(\bm{x},\bm{Y}_1,\dots,\bm{Y}_J)$ is coercive, i.e.,
\begin{displaymath}
\lim_{\|(\bm{x},\bm{Y}_1\dots,\bm{Y}_J)\|\to \infty}\Phi(\bm{x},\bm{Y}_1,\dots,\bm{Y}_J)=+ \infty.
\end{displaymath}
\end{itemize}

In the application of multiplicative noise removal, we utilize the nonlocal low-rank model \eqref{FullModel} with $f$ defined as \eqref{eq:f} and $g$ defined as  \eqref{eq:g}.  It is easy to verify that $f$ satisfies Assumption~(A1) and $g$ satisfies Assumption (A2). These together with the coercivity of $f$ and $g$ imply that $\Phi$ is inf-bounded and coercive. Hence, Assumption (A1)-(A3) hold for our proposed model.

In this general setting, no convexity or smoothness is assumed for $f$ and the objective function $\Phi$ of the nonlocal low-rank model \eqref{FullModel} is nonconvex and nonsmooth. For solving this nonconvex and nonsmooth optimization problem, the alternating minimization (AM) algorithm was adopted for compressive sensing\cite{dong2014compressive}  and the augmented Lagrange multiplier (ALM) algorithm was adopted for speckle noise removal\cite{wu2016speckle}. However, there is no guarantee that those methods will converge. Because the sequence generated by the AM algorithm may cycle indefinitely without converging if the minimum in each alternating step is not uniquely obtained\cite{bolte2014proximal}; and the sequence generated by the ALM algorithm may diverge even with bounded penalty parameters \cite{yu2018global}.  Therefore, we will propose an algorithm called the \textbf{Proximal Alternating Reweighted Minimization (PARM)} algorithm customized for model \eqref{FullModel} as shown below
\begin{align}
\bm{Y}_j^{k+1}&\in\underset{\bm{Y}_j}{\arg\!\min}\; \widetilde{\Phi}_{j}(\bm{x}^{k},\bm{Y}_j)+\frac{\alpha_{jk}}{2}\|\bm{Y}_j-\bm{Y}_j^k\|_{F}^2,\label{eq:Min_Yj}\\
\bm{x}^{k+1}&\in \underset{\bm{x}}{\arg\!\min}\; \Phi(\bm{x},\bm{Y}_1^{k+1},\dots,\bm{Y}_J^{k+1})+\frac{\beta_k}{2}\|\bm{x}-\bm{x}^k\|_{\bm{W}}^2,\label{eq:Min_X}
\end{align}
where $\widetilde{\Phi}_j(\bm{x}^{k},\bm{Y}_j)$ is a reweighted approximation of $\Phi_{j}(\bm{x}^{k},\bm{Y}_j)$ with respect to $\bm{Y}_j$, $\bm{W}=\sum_{j=1}^J\mu_j R_j^{\top}\circ R_j\in \mathbb{S}^N_+$, and $\alpha_{jk}> 0$ and $\beta_k> 0$ are parameters satisfying Assumption (A4).
\begin{itemize}
\item[\textbf{(A4)}] For the sequences $\{\alpha_{jk}\}_{k\in\mathbb{N}}$, $j=1,2,\dots, J$, and the sequence $\{\beta_k\}_{k\in\mathbb{N}}$, there exist positive constants $\alpha_{-}$,  $\alpha_{+}$,  $\beta_{-}$,  $\beta_{+}$ such that
\begin{align*}
\inf \{\alpha_{jk}:k\in\mathbb{N}, j=1,2,\dots,J\}\geq \alpha_{-},\quad &\text{ and }\quad \inf \{\beta_{k}:k\in\mathbb{N}\}\geq \beta_{-},\\
\sup \{\alpha_{jk}:k\in\mathbb{N}, j=1,2,\dots,J\}\leq \alpha_+, \quad &\text{ and }\quad \sup \{\beta_{k}:k\in\mathbb{N}\}\leq \beta_+.
\end{align*}
\end{itemize}
The convergence analysis of the PARM algorithm will be provided in the next section.

The proposed PARM algorithm has a proximal alternating scheme similar to the proximal alternating linearized minimization\cite{bolte2014proximal} for nonconvex and nonsmooth problems proposed by Bolte et al., in which a proximal term at the previous iterate is added to each subproblem. In \eqref{eq:Min_Yj}, we utilize  $\widetilde{\Phi}_{j}(\bm{x}^{k},\bm{Y}_j)$, a reweighted approximation of $\Phi_{j}(\bm{x}^{k},\bm{Y}_j)$, to approximate the nonconvex surrogate of the rank function, which yields a closed form for  \eqref{eq:Min_Yj}. In  \eqref{eq:Min_X}, the proximal term is in term of the $\bm{W}$-weighted norm, which is to be consistent with the patchwise data fidelity term $f$, for example, as defined in  \eqref{eq:f}. In fact, we will continue to use the $\bm{W}$-weighted norm to measure the variable $\bm{x}$ throughout the entire paper. Moreover, as an algorithm for nonlocal low-rank models applied to image restoration, the PARM algorithm can be intuitively interpreted as follows. Equation \eqref{eq:Min_Yj} can be viewed as a low-rank patch matrix estimation, which returns the nonlocal patch matrices $\bm{Y}_j$'s with a low-rank property, while  equation \eqref{eq:Min_X} can be viewed as the image restoration step, which aggregates all the estimated nonlocal patch matrices from   \eqref{eq:Min_Yj} to form the desired image $\bm{x}$.

Before further derive our PARM algorithm, we review some preliminaries on subdifferentials and proximity operators for nonconvex and nonsmooth functions.

\subsection{Preliminaries on subdifferentials and proximity operators}\label{Section:Notations}

For nonconvex and nonsmooth functions, we use the following definitions for subdifferentials and proximity operators.

\begin{definition}[Subdifferentials]\label{Def:Subdiff} Let $f:\mathbb{R}^d\to(-\infty,+\infty]$ be a proper and lower semicontinuous function.
\begin{enumerate}
\item[(1)] For a given $\bm{x}\in \dom f$, the Fr\'echet subdifferential of $f$ at $\bm{x}$, written $\hat{\partial} f(\bm{x})$, is the set of all vectors $\bm{u}\in \mathbb{R}^d$ which satisfy
\begin{displaymath}
\underset{\bm{y}\neq \bm{x}\; \bm{y}\to \bm{x}}{\lim\inf}\;\frac{f(\bm{y}) - f(\bm{x}) - \langle \bm{u}, \bm{y} - \bm{x}\rangle}{\|\bm{y}-\bm{x}\|_2} \geq 0.
\end{displaymath}
When $\bm{x}\notin \dom f$, we set $\hat{\partial} f(\bm{x})=\emptyset$.

\item[(2)] The subdifferential (or called the limiting-subdifferential) of $f$ at $\bm{x}\in \mathbb{R}^d$, written $\partial f(\bm{x})$, is defined through the following closure process
\begin{displaymath}
\partial f(\bm{x}):=\{\bm{u}\in \mathbb{R}^d: \exists \bm{x}_k\to \bm{x}, f(\bm{x}_k)\to f(\bm{x})\text{ and }\bm{u}_k\in \hat{\partial} f(\bm{x}_k)\to \bm{u} \text{ as }k\to \infty\}.
\end{displaymath}

\end{enumerate}
\end{definition}

\begin{definition}[Proximity operators]
Let $f:\mathbb{R}^d\to (-\infty,+\infty]$ be a proper and lower semicontinuous function such that $\inf_{\mathbb{R}^d} f> -\infty$. The proximity operator of $f$ at $\bm{x}\in \mathbb{R}^d$ is defined  as
\begin{displaymath}
\prox_{f}(\bm{x})= \underset{\bm{u}\in \mathbb{R}^d}{\arg\!\min}\; f(\bm{u})+\frac{1}{2}\|\bm{u}-\bm{x}\|_2^2.
\end{displaymath}
Note that $\prox_{f}(\bm{x})$ is a set-valued map. If $f$ is convex, then $\prox_{f}(\bm{x})$ is reduced to a single-valued map.
\end{definition}

The definitions above for subdifferentials and proximity operators are defined on vectors with respect to the standard $\ell_2$ norm. Without loss of generality, these definitions can be extended to  vectors with respect to the weighted $\ell_2$ norm and matrices with respect to the Frobenius norm.

Let $\bm{H}\in\mathbb{S}_+^d$.   The Fr\'echet subdifferential of  $f:\mathbb{R}^d\to (-\infty,+\infty]$ at a vector $\bm{x}\in \mathbb{R}^d$ with respect to $\bm{H}$  is denoted as $\hat{\partial}^{\bm{H}} f(\bm{x})$; its  subdifferential is denoted as $\partial^{\bm{H}} f(\bm{x})$; and its proximity operator is denoted as $\prox_{f}^{\bm{H}}(\bm{x})$.

For the function $f:\mathbb{R}^{m\times n}\to (-\infty,+\infty]$ at a matrix $\bm{X}\in \mathbb{R}^{m\times n}$ with respect to the Frobenius norm,   its Fr\'echet subdifferential is denoted as $\hat{\partial}^{F} f(\bm{X})$ or $\hat{\partial} f(\bm{X})$; its  subdifferential is denoted as $\partial^{F} f(\bm{X})$ or $\partial f(\bm{X})$; and its proximity operator is denoted as $\prox_{f}^{F}(\bm{X})$ or $\prox_{f}(\bm{X})$.

Now, we are ready to discuss in detail the proposed PARM algorithm in \eqref{eq:Min_Yj} and \eqref{eq:Min_X}.

\subsection{Patch matrix estimation via a reweighted scheme}

To estimate low-rank patch matrices, the minimization of $\Phi_{j}(\bm{x}^{k},\bm{Y}_j)$, as a generalized rank minimization of the patch matrix $\bm{Y}_j$, is approximated via a reweighted scheme, as shown in  \eqref{eq:Min_Yj}.

Since $g$ is concave on $[0,\infty)$ and continuously differentiable, by the definition of the supergradient, we have
\begin{equation} \label{Concave_g}
g(\sigma_i(\bm{Y}_j))\leq g(\sigma_i(\bm{Y}_j^{k}))+ (w_j^{k})_i(\sigma_i(\bm{Y}_j)-\sigma_i(\bm{Y}_j^k)),
\end{equation}
where $\bm{w}^{k}_j=[(w_j^{k})_1,\dots,(w_j^{k})_{m_j}]^{\top}$ and $(w_j^{k})_i=g'(\sigma_i(\bm{Y}_j^{k}))$, $i=1,2,\dots,m_j$. Then we replace the term $g(\sigma_i(\bm{Y}_j))$ in $\Phi_j(\bm{x}^{k},\bm{Y}_j)$ by the right hand side of the inequality \eqref{Concave_g} and have its reweighted approximation $\widetilde{\Phi}_j(\bm{x}^{k},\bm{Y}_j)$ as follows
\begin{align}\label{tildePhi_j}
\widetilde{\Phi}_j(\bm{x}^k,\bm{Y}_j)= \frac{\mu_j}{2}\|\bm{Y}_j-R_j (\bm{x}^{k})\|_{F}^2+\lambda_j \sum_{i=1}^{m_j} g(\sigma_i(\bm{Y}_j^{k}))+ (w_j^{k})_i(\sigma_i(\bm{Y}_j)-\sigma_i(\bm{Y}_j^k)).
\end{align}
Hence, the update of the low-rank patch matrix $\bm{Y}_j^{k+1}$ in  \eqref{eq:Min_Yj} at the $(k+1)$th step can be rewritten as follows
\begin{align}
\bm{Y}_j^{k+1}&\in\underset{\bm{Y}_j}{\arg\!\min} \;\frac{\mu_j}{2}\|\bm{Y}_j-R_j (\bm{x}^{k})\|_{F}^2+\lambda_j \sum_{i=1}^{m_j} (w_j^{k})_i\sigma_i(\bm{Y}_j)+\frac{\alpha_{jk}}{2}\|\bm{Y}_j-\bm{Y}_j^k\|_{F}^2\label{Min_Y_j^k+1}\\
&=\underset{\bm{Y}_j}{\arg\!\min}\; \lambda_j \sum_{i=1}^{m_j} (w_j^{k})_i\sigma_i(\bm{Y}_j)+\frac{\mu_j+\alpha_{jk}}{2}\left \|\bm{Y}_j-\frac{\mu_j R_j (\bm{x}^{k})+\alpha_{jk} \bm{Y}_j^k}{\mu_j+\alpha_{jk}}\right \|_{F}^2.\nonumber
\end{align}

By introducing the definition of the weighted nuclear norm of $\bm{Y}\in \mathbb{R}^{m\times n}$, $m\leq n$, with the weight vector $\bm{w}=[w_1,\dots,w_m]^{\top}$ and $w_i\geq 0$, $i=1,\dots,m$, as follows
\begin{displaymath}
\|\bm{Y}\|_{*,\bm{w}}=\sum_{i=1}^m w_i \sigma_i(\bm{Y}),
\end{displaymath}
where $\sigma_1(\bm{Y})\geq \sigma_2(\bm{Y})\geq \cdots\geq \sigma_m(\bm{Y})\geq 0$. It was proved in \cite{chen2013reduced} that $\|\cdot\|_{*,\bm{w}}$ is convex if and only if $w_1 \ge w_2 \ge \cdots \ge w_m \ge 0$. In other words, for $\|\cdot\|_{*,\bm{w}}$ being a convex function, the weights must increase with singular values. However, in order for large singular values to receive less penalty to help reducing the bias and smaller singular values to receive heavier penalty to help promoting sparsity, the opposite order of the weight is desirable, i.e.,   $0 \le w_1 \le w_2 \le \cdots \le w_m$. Under this order of the weights, the weighted nuclear norm is a nonconvex function and in general its proximity operator  may be a set-valued map.
Fortunately, the proximity operator is a single-value map, as shown in the following lemma.
\begin{lemma}[see {\cite[Theorem 2.3]{chen2013reduced}}]\label{ProxWeighted} For any $\lambda>0$, $\bm{Y}\in \mathbb{R}^{m\times n}$, $m\leq n$ and $\bm{w}=[w_1,\dots,w_m]^{\top}$ with $0 \leq w_1 \leq w_2 \leq \dots \leq w_m$,
\begin{displaymath}
\prox_{\lambda \|\cdot\|_{*,\bm{w}}}(\bm{Y})=\bm{U}S_{\lambda,\bm{w}}(\bm{\Sigma})\bm{V}^{\top},
\end{displaymath}
where $\bm{Y} = \bm{U\Sigma V}^{\top}$ is the singular value decomposition (SVD) of $\bm{Y}$ and  $S_{\lambda,\bm{w}}(\bm{\Sigma}) = \diag\{(\Sigma_{ii} - \lambda w_i)_+\}$ is the weighted singular value thresholding (WSVT) operator.
\end{lemma}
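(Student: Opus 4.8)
The plan is to reduce the matrix-valued minimization defining $\prox_{\lambda\|\cdot\|_{*,\bm{w}}}(\bm{Y})$ to a separable scalar problem in the singular values by means of von Neumann's trace inequality, and then to use the monotonicity hypothesis $0\le w_1\le\cdots\le w_m$ to show that the entrywise soft-threshold is automatically feasible for the (implicit) ordering constraint on singular values. Write $F(\bm{X})=\lambda\|\bm{X}\|_{*,\bm{w}}+\tfrac12\|\bm{X}-\bm{Y}\|_F^2$. First I would expand $\tfrac12\|\bm{X}-\bm{Y}\|_F^2=\tfrac12\sum_i\sigma_i(\bm{X})^2-\langle\bm{X},\bm{Y}\rangle_F+\tfrac12\sum_i\sigma_i(\bm{Y})^2$ and apply von Neumann's inequality $\langle\bm{X},\bm{Y}\rangle_F\le\sum_i\sigma_i(\bm{X})\sigma_i(\bm{Y})$. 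Since both $\|\bm{X}\|_{*,\bm{w}}$ and $\|\bm{X}\|_F$ depend only on the singular values of $\bm{X}$, this yields the lower bound
\[
F(\bm{X})\ \ge\ \sum_{i=1}^{m}\Big(\lambda w_i\,\sigma_i(\bm{X})+\tfrac12\big(\sigma_i(\bm{X})-\sigma_i(\bm{Y})\big)^2\Big)+c,
\]
with $c$ constant, and with equality precisely when $\bm{X}$ and $\bm{Y}$ admit a simultaneous (ordered) singular value decomposition.

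The right-hand side splits into decoupled scalar problems $\min_{s\ge0}\,\lambda w_i s+\tfrac12(s-\sigma_i(\bm{Y}))^2$, each strictly convex with the unique minimizer $s_i^{\star}=(\sigma_i(\bm{Y})-\lambda w_i)_+$. The constraint hidden in this reduction is that a candidate singular-value vector must satisfy $s_1\ge\cdots\ge s_m\ge0$. This is exactly where the hypothesis $w_1\le\cdots\le w_m$ is used: because $\sigma_1(\bm{Y})\ge\cdots\ge\sigma_m(\bm{Y})$ while the weights increase, the thresholds $\sigma_i(\bm{Y})-\lambda w_i$ are nonincreasing in $i$, hence so are the $s_i^{\star}$, so the unconstrained separable minimizer is already admissible as an ordered singular-value vector and therefore minimizes the lower bound over the ordered cone. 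Taking $\bm{X}^{\star}=\bm{U}\,\diag(s^{\star})\bm{V}^{\top}=\bm{U}S_{\lambda,\bm{w}}(\bm{\Sigma})\bm{V}^{\top}$ produces a matrix whose singular values are exactly $s^{\star}$ and whose singular vectors are those of $\bm{Y}$, so $\bm{X}^{\star}$ attains the lower bound and is thus a minimizer of $F$. (Had the weights been decreasing, the unconstrained thresholds could violate the ordering, and one would instead need an isotonic-type correction; the monotone-weight assumption is precisely what makes the closed form valid.)

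The hard part will be \textbf{uniqueness}, i.e.\ showing that the proximity operator is single-valued. I would argue it in two steps. First, any minimizer must make the lower bound tight; since each scalar subproblem has a unique minimizer, every optimal $\bm{X}$ is forced to have the prescribed singular values $s^{\star}$. Second, tightness in von Neumann's inequality forces $\bm{X}$ to align its left and right singular subspaces with those of $\bm{Y}$, which pins the matrix to $\bm{U}S_{\lambda,\bm{w}}(\bm{\Sigma})\bm{V}^{\top}$. The genuinely delicate case is when $\bm{Y}$ has repeated singular values, where its SVD is not unique and the thresholding may assign distinct values to a tied block; here one must invoke the rigidity of the equality condition in von Neumann's inequality to confirm the value $\bm{U}S_{\lambda,\bm{w}}(\bm{\Sigma})\bm{V}^{\top}$ is well posed and that no spurious minimizers survive. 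This rigidity argument, rather than the soft-thresholding computation, is the real content of the lemma, and for the precise treatment I would defer to the cited Theorem~2.3 of \cite{chen2013reduced}.

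In summary, the routine part (existence of a minimizer of the stated form) follows cleanly from the von Neumann reduction plus the order-preservation afforded by increasing weights, while the single-valuedness claim is the step demanding care and is where I would expect the main technical obstacle to lie.
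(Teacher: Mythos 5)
The paper does not prove this lemma at all --- it is imported verbatim as Theorem~2.3 of the cited reference --- so there is no internal proof to compare against; what you have written is essentially the standard argument behind that cited theorem, and it is sound. The von Neumann reduction, the separable scalar problems with minimizers $(\sigma_i(\bm{Y})-\lambda w_i)_+$, and the observation that the ascending weights make the thresholded values automatically nonincreasing (so the unconstrained separable optimum is feasible for the ordered cone) are exactly the right ingredients, and you correctly locate the role of the hypothesis $w_1\le\cdots\le w_m$. One point where you could be sharper: in the degenerate case you flag --- a repeated singular value of $\bm{Y}$ split across indices carrying \emph{distinct} weights --- the rigidity of von Neumann's equality condition does not rescue single-valuedness; it genuinely fails. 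For instance, with $\bm{Y}=\bm{I}_2$, $\lambda=1$, $w_1=0$, $w_2=\tfrac12$, every matrix $\bm{U}\diag(1,\tfrac12)\bm{U}^{\top}$ with $\bm{U}$ orthogonal attains the same optimal value, so the argmin is a continuum and the displayed formula depends on the choice of SVD. Thus the lemma's single-valuedness claim (and the paper's surrounding remark that the proximity operator is ``fortunately'' single-valued) holds only when ties among singular values of the input are matched by ties among the corresponding weights, or generically; your instinct to defer this to the cited theorem is reasonable, but you should state that the obstruction is real rather than merely delicate. None of this affects the part of the lemma the paper actually uses, namely that $\bm{U}S_{\lambda,\bm{w}}(\bm{\Sigma})\bm{V}^{\top}$ is a minimizer, which your argument establishes completely.
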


The assumption that $g$ is monotonically increasing and concave implies that $g'$ is nonnegative and monotonically decreasing. Then the weight vector $\bm{w}^{k}_j$ satisfies the ascending constraint, that is, $0\leq (w_j^k)_1\leq \dots\leq (w_j^k)_{m_j}$. Hence, by Lemma \ref{ProxWeighted}, the low-rank patch matrix $\bm{Y}_j^{k+1}$ can be uniquely achieved
\begin{align*}
\bm{Y}_j^{k+1}&=\prox_{\frac{\lambda_j}{\mu_j+\alpha_{jk}}\|\cdot\|_{*, \bm{w}^{k}_j}}\left (\frac{\mu_j R_j (\bm{x}^{k})+\alpha_{jk} \bm{Y}_j^k}{\mu_j+\alpha_{jk}}\right ),\\
&=\frac{1}{\mu_j+\alpha_{jk}}\prox_{\lambda_j\|\cdot\|_{*, \bm{w}^{k}_j}}\left (\mu_j R_j (\bm{x}^{k})+\alpha_{jk} \bm{Y}_j^k\right ),\\
&=\frac{1}{\mu_j+\alpha_{jk}}\bm{U}_j^{k+1}S_{\lambda_j,\bm{w}^{k}_j}(\widetilde{\bm{\Sigma}}^k_j)(\bm{V}^{k+1}_j)^{\top},
\end{align*}
where $\bm{U}_j^{k+1}\widetilde{\bm{\Sigma}}_j^{k} (\bm{V}_j^{k+1})^{\top}$ is the SVD of $\mu_j R_j (\bm{x}^{k})+\alpha_{jk} \bm{Y}_j^k$.

\begin{remark}
The ascending constraint on the weight vector $\bm{w}^{k}_j$ may not be automatically satisfied, if $g$ is not differentiable and  $(w_j^{k})_i$ is chosen as a supergradient of $g$ at $\sigma_i(\bm{Y}_j^k)$, i.e., $-(w_j^{k})_i\in \partial (-g)(\sigma_i(\bm{Y}_j^k))$, as defined in \cite{lu2014generalized}. For example, suppose that $g$ is not differentiable at $\sigma_{i_0}(\bm{Y}_j^k)$ and then  $\partial (-g)(\sigma_{i_0}(\bm{Y}_j^k))$ contains more than one element. If $\sigma_{i_0+1}(\bm{Y}_j^k)=\sigma_{i_0}(\bm{Y}_j^k)$, then the weights $-(w_j^k)_{i_0+1}$ and $-(w_j^k)_{i_0}$ that are selected from the same set $\partial (-g)(\sigma_{i_0}(\bm{Y}_j^k))=\partial(-g)(\sigma_{i_0+1}(\bm{Y}_j^k))$ may have $(w_j^k)_{i_0+1}<(w_j^k)_{i_0}$ rather than an ascending order. Thus, we have to carefully select the $(w_j^k)_{i}$ in the case where $g$ is not differentiable. For example, let $(w_j^k)_i=-\min \partial (-g)(\sigma_i(\bm{Y}_j^k))$.
\end{remark}

\subsection{Image restoration via W-weighted proximal regularization}

After obtaining the estimates of the low-rank patch matrices $\bm{Y}_j^{k+1}$'s from the generalized rank minimization in the previous step, we may have a situation where the same pixel may have several estimated values. That is because one pixel may belong to more than one nonlocal similar patch matrices, when we group nonlocal similar patches by block matching. Thus, at this image restoration step in \eqref{eq:Min_X} of the PARM algorithm,  we aggregate all the estimated patches to restore the entire image by minimizing the proximal regularization of $\Phi(\bm{x},\bm{Y}_1^{k+1},\dots,\bm{Y}_J^{k+1})$  with respect to $\bm{x}$.

Note that the term $\sum_{j=1}^J\frac{\mu_j}{2}\|\bm{Y}_j^{k+1}-R_j(\bm{x})\|_{F}^2$ in  $\Phi(\bm{x},\bm{Y}_1^{k+1},\dots,\bm{Y}_J^{k+1})$ can be written as
\begin{align*}
\sum_{j=1}^J\frac{\mu_j}{2}\|\bm{Y}_j^{k+1}-R_j(\bm{x})\|_{F}^2=& \sum_{j=1}^J \frac{\mu_j}{2}\|\bm{Y}_j^{k+1}-R_j(\bm{x}^k)\|_{F}^2+\sum_{j=1}^J\frac{\mu_j}{2}\|R_j(\bm{x})-R_j(\bm{x}^k)\|_{F}^2\\
& - \sum_{j=1}^{J}\mu_j\langle R_j(\bm{x})-R_j(\bm{x}^k), \bm{Y}_j^{k+1}-R_j(\bm{x}^k)\rangle_{F}.
\end{align*}

Recall that $R_j^{\top}:\mathbb{R}^{m_j\times n_j}\to \mathbb{R}^N$ is defined as $R_j^{\top}(\bm{Y})=\sum_{l=1}^{n_j}\bm{R}_{jl}^{\top}\bm{y}_i$, where $\bm{y}_i$ is the $i$th vector of $\bm{Y}$. Since $\langle R_j(\bm{x}), \bm{Y}\rangle_{F} =\langle \bm{x}, R_{j}^{\top}(\bm{Y})\rangle= \langle \bm{x}, \bm{W}^{-1}R_{j}^{\top}(\bm{Y})\rangle_{\bm{W}}$ and $\bm{W}=\sum_{j=1}^J\mu_j R_j^{\top}\circ R_j\in \mathbb{S}_+^N$, then the right hand side of the above equality can be written as
\begin{align*}
&\sum_{j=1}^J\frac{\mu_j}{2}\|\bm{Y}_j^{k+1}-R_j(\bm{x})\|_{F}^2\nonumber\\
=&\sum_{j=1}^J \frac{\mu_j}{2}\|\bm{Y}_j^{k+1}-R_j(\bm{x}^k)\|_{F}^2 +\frac{1}{2}\|\bm{x}-\bm{x}^k\|_{\bm{W}}^2- \langle \bm{x}-\bm{x}^k, \sum_{j=1}^{J}\mu_j \bm{W}^{-1} R_j^{\top}(\bm{Y}_j^{k+1})-\bm{x}^k\rangle_{\bm{W}} .
\end{align*}
The update of the estimated image $\bm{x}^{k+1}$ in  \eqref{eq:Min_X} at the $(k+1)$th step can be rewritten as follows
\begin{align}
\bm{x}^{k+1}&\in\underset{\bm{x}}{\arg\!\min}\;\tau f(\bm{x})- \langle \bm{x}-\bm{x}^k, \sum_{j=1}^{J}\mu_j \bm{W}^{-1} R_j^{\top}(\bm{Y}_j^{k+1})-\bm{x}^k\rangle_{\bm{W}} +\frac{\beta_k+1}{2}\|\bm{x}-\bm{x}^k\|_{\bm{W}}^2\label{Min_x^{k+1}}\\
&=\prox_{\frac{\tau}{\beta_k+1} f}^{\bm{W}}\left (\bm{x}^k +\frac{1}{\beta_k+1}\left (\sum_{j=1}^J\mu_j \bm{W}^{-1} R_j^{\top}(\bm{Y}_j^{k+1})-\bm{x}^k\right )\right ). \nonumber
\end{align}

The overall procedure of the PARM algorithm in   \eqref{eq:Min_Yj} and \eqref{eq:Min_X} is summarized in Algorithm \ref{Alg:Theoretical}.

\begin{algorithm}
\caption{Proximal alternating reweighted minimization algorithm for model \eqref{FullModel} }
\label{Alg:Theoretical}
\begin{algorithmic}[1]
\State{Set parameters $\tau$, $\mu_j$, $\lambda_j$, $\alpha_{jk}$, and $\beta_{k}$}
\State{Set extraction $R_j$ by block matching}
\State{Compute matrix $\bm{W}$}
\State{Initialize  $\bm{x}^0$, $\bm{Y}_j^0$, and $\bm{w}^{0}_j$}
\State{Set $k=0$}
\Repeat
	\For{$j$ from $1$ to $J$}
		\State{$[\bm{U}^{k+1}_j, \widetilde{\bm{\Sigma}}^k_j, \bm{V}_j^{k+1}]=\mathrm{SVD}\left (\mu_j R_j (\bm{x}^{k})+\alpha_{jk} \bm{Y}_j^k\right )$}\Comment{SVD}
		\State{$\bm{\Sigma}_j^{k+1}=\frac{1}{\mu_j+\alpha_{jk}} S_{\lambda_j,\bm{w}^{k}_j}(\widetilde{\bm{\Sigma}}^k_j)$}\Comment{WSVT}
		\State{$\bm{Y}_j^{k+1}=\bm{U}^{k+1}_j\bm{\Sigma}_{j}^{k+1}(\bm{V}_j^{k+1})^{\top}$}\Comment{Update $\bm{Y}_j^{k+1}$}
		\State{$(w^{k+1}_j)_i=g'((\Sigma_j^{k+1})_{ii})$}\Comment{Update $\bm{w}^{k+1}_j$}
	\EndFor
    	\State{$\bm{x}^{k+1}\in\prox_{\frac{\tau}{\beta_k+1}f}^{\bm{W}}\left (\bm{x}^k+ \frac{1}{\beta_k+1}\left (\sum_{j=1}^J\mu_j \bm{W}^{-1} R_j^{\top}(\bm{Y}_j^{k+1})-\bm{x}^k\right )\right )$}\Comment{Update $\bm{x}^{k+1}$}
    	\State{$k\leftarrow k+1$}
\Until{stopping criterion is satisfied}
\end{algorithmic}
\end{algorithm}

\subsection{The PARM algorithm for multiplicative noise removal}

To remove multiplicative noise, we apply the PARM algorithm in Algorithm~\ref{Alg:Theoretical} to solve the nonlocal low-rank model \eqref{FullModel} with $f$  defined as  \eqref{eq:f} and  $g$ defined as  \eqref{eq:g}.  Accordingly, using the definition of $f$ and $g$, Algorithm~\ref{Alg:Theoretical} can be specifically implemented as follows. In line 11 of Algorithm~\ref{Alg:Theoretical}, $(w_j^{k+1})_i=\frac{1}{(\Sigma_j^{k+1})_{ii}+\varepsilon}$; in line 13, the proximity operator of $\frac{\tau}{\beta_k+1}f$ with respect to $\bm{W}$  can be computed using Newton's method.  Given that $\rho\gamma^4\leq \frac{4096}{27}$, the function $f$ is strictly convex and hence $\prox_{\frac{\tau}{\beta_k+1}f}^{\bm{W}}$  is single-valued defined as
\begin{align*}
\bm{x}&=\prox_{\frac{\tau}{\beta_k+1}f}^{\bm{W}} (\widetilde{\bm{x}})=\underset{{\bm{x}}}{\arg\!\min}\; f(\bm{x})+\frac{\beta_k+1}{2\tau}\|\bm{x}-\widetilde{\bm{x}}\|_{\bm{W}}^2.
\end{align*}
Since $f$ is differentiable with respect to the $\bm{W}$-weighted $\ell_2$ norm with its gradient $\nabla^{\bm{W}}f(\bm{x})=\mathbbm{1}-\frac{\bm{v}}{e^{\bm{x}}}+\rho \left (\frac{e^{\bm{x}}}{\bm{v}}-\gamma\sqrt{\frac{e^{\bm{x}}}{\bm{v}}}\right )$, then  $\bm{x}$ is the unique solution of the following equation
\begin{displaymath}
\nabla^{\bm{W}}f(\bm{x})+\frac{\beta_k+1}{\tau}(\bm{x}-\widetilde{\bm{x}})=0,
\end{displaymath}
and this equation can be efficiently solved by Newton's method.

\section{Convergence Analysis}\label{Section:Convergence}

The aim of this section is to analyze the convergence of the PARM algorithm for model \eqref{FullModel}. The proof is motivated by the inexact descent convergence results for Kurdyka-\L ojasiewicz functions in \cite{attouch2013convergence,bolte2014proximal}.

In the sequel, we use the notation
\begin{displaymath}
\bm{Z}:=(\bm{x}, \bm{Y}_1,\dots, \bm{Y}_J)\quad\text{ and }\quad\|\bm{Z}\|:=\sqrt{\|\bm{x}\|^2_{\bm{W}}+\sum_{j=1}^J \|\bm{Y}_j\|_F^2},
\end{displaymath}
and we denote by $\Phi(\bm{Z})$ the objective function in model \eqref{FullModel}.

Here are three essential conditions to guarantee convergence of the sequence $\{\bm{Z}^k\}_{k\in\mathbb{N}}$ generated by the PARM algorithm.
\begin{enumerate}
\item[\textbf{(H1)}] \textbf{Sufficient descent condition:} There exists a positive constant $c_1$ such that for $\forall k\in \mathbb{N}$,
\begin{displaymath}
c_1 \|\bm{Z}^{k+1}-\bm{Z}^{k}\|^2\leq \Phi(\bm{Z}^{k})-\Phi(\bm{Z}^{k+1}).
\end{displaymath}
\item[\textbf{(H2)}] \textbf{Relative error condition:} There exists a positive constant $c_2$ such that  for $\forall k\in \mathbb{N}$,
\begin{displaymath}
\|\bm{A}^{k+1} \|\leq c_2\|\bm{Z}^{k+1}-\bm{Z}^k\|\quad\text{ and } \quad \bm{A}^{k+1}\in \partial \Phi(\bm{Z}^{k+1}).
\end{displaymath}
\item[\textbf{(H3)}] \textbf{Continuity condition:} There exists a subsequence $\{\bm{Z}^{k_t}\}_{t\in\mathbb{N}}$ and $\bm{Z}^*$ such that
\begin{displaymath}
\lim_{t\to\infty}\bm{Z}^{k_t}= \bm{Z}^*\quad\text{ and }\quad \lim_{t\to\infty}\Phi(\bm{Z}^{k_t})= \Phi(\bm{Z}^*).
\end{displaymath}
\end{enumerate}

In the following, we prove that the sequence $\{\bm{Z}^k\}_{k\in\mathbb{N}}$ satisfies Condition (H1)-(H3), and then conclude that $\{\bm{Z}^k\}_{k\in\mathbb{N}}$ converges to a critical point of $\Phi$ using the fact that $\Phi$ is a Kurdyka-\L ojasiewicz function.

\subsection{Sufficient descent condition}
We show that the objective function $\Phi$ in model~\eqref{FullModel} evaluated at $\bm{Z}^k$, denoted $\Phi(\bm{Z}^k)$, decreases sufficiently as $k$ increases.

\begin{proposition}[Sufficient descent condition]\label{Thm:PhiDecreasing} Suppose that the objective function $\Phi$ in model \eqref{FullModel} satisfies Assumption (A1)-(A3). Let $\{\bm{Z}^k\}_{k\in \mathbb{N}}$ be the sequence generated by the PARM algorithm provided that the parameters satisfy Assumption (A4). Then $\{\Phi(\bm{Z}^k)\}_{k\in\mathbb{N}}$ is strictly decreasing and, in particular, there exists a positive constant $c_1$ such that for $\forall k\in\mathbb{N}$,
\begin{equation}\label{eq:PhiDecreasing}
c_1 \|\bm{Z}^{k+1}-\bm{Z}^{k}\|^2\leq \Phi(\bm{Z}^{k})-\Phi(\bm{Z}^{k+1}).
\end{equation}
\end{proposition}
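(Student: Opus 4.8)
The plan is to derive the sufficient descent separately for each of the two update steps and then combine them. The decrease in the $\bm{x}$-block comes directly from the optimality defining \eqref{eq:Min_X}, while the decrease in the $\bm{Y}$-block must be routed through the reweighted surrogate $\widetilde{\Phi}_j$ using its majorization-minimization structure. The key structural fact I would exploit is that, by the concavity bound \eqref{Concave_g}, the surrogate dominates the true objective blockwise, $\Phi_j(\bm{x}^k,\bm{Y}_j)\le\widetilde{\Phi}_j(\bm{x}^k,\bm{Y}_j)$ for all $\bm{Y}_j$, with equality attained at the current iterate $\bm{Y}_j^k$ (since \eqref{Concave_g} is tight when $\sigma_i(\bm{Y}_j)=\sigma_i(\bm{Y}_j^k)$). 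This touching majorizer is exactly what lets a descent in $\widetilde{\Phi}_j$ transfer into a descent in $\Phi_j$.

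First I would analyze the $\bm{Y}_j$ update. Since $\bm{Y}_j^{k+1}$ minimizes $\widetilde{\Phi}_j(\bm{x}^k,\cdot)+\tfrac{\alpha_{jk}}{2}\|\cdot-\bm{Y}_j^k\|_F^2$, testing the minimizer against the feasible point $\bm{Y}_j^k$ gives
\begin{displaymath}
\widetilde{\Phi}_j(\bm{x}^k,\bm{Y}_j^{k+1})+\frac{\alpha_{jk}}{2}\|\bm{Y}_j^{k+1}-\bm{Y}_j^k\|_F^2\le\widetilde{\Phi}_j(\bm{x}^k,\bm{Y}_j^k).
\end{displaymath}
Applying the majorization on the left ($\Phi_j\le\widetilde{\Phi}_j$) and the tightness on the right ($\widetilde{\Phi}_j(\bm{x}^k,\bm{Y}_j^k)=\Phi_j(\bm{x}^k,\bm{Y}_j^k)$) yields the blockwise descent
\begin{displaymath}
\Phi_j(\bm{x}^k,\bm{Y}_j^{k+1})+\frac{\alpha_{jk}}{2}\|\bm{Y}_j^{k+1}-\bm{Y}_j^k\|_F^2\le\Phi_j(\bm{x}^k,\bm{Y}_j^k).
\end{displaymath}
Summing over $j$ and adding $\tau f(\bm{x}^k)$ to both sides, I obtain $\Phi(\bm{x}^k,\bm{Y}_1^{k+1},\dots,\bm{Y}_J^{k+1})+\tfrac{\alpha_-}{2}\sum_j\|\bm{Y}_j^{k+1}-\bm{Y}_j^k\|_F^2\le\Phi(\bm{Z}^k)$, where I have used $\alpha_{jk}\ge\alpha_-$ from Assumption (A4).

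Next I would handle the $\bm{x}$ update. Because $\bm{x}^{k+1}$ minimizes $\Phi(\,\cdot\,,\bm{Y}_1^{k+1},\dots,\bm{Y}_J^{k+1})+\tfrac{\beta_k}{2}\|\,\cdot\,-\bm{x}^k\|_{\bm{W}}^2$, testing against $\bm{x}^k$ gives
\begin{displaymath}
\Phi(\bm{Z}^{k+1})+\frac{\beta_k}{2}\|\bm{x}^{k+1}-\bm{x}^k\|_{\bm{W}}^2\le\Phi(\bm{x}^k,\bm{Y}_1^{k+1},\dots,\bm{Y}_J^{k+1}).
\end{displaymath}
Chaining this with the $\bm{Y}$-step inequality and using $\beta_k\ge\beta_-$ produces
\begin{displaymath}
\Phi(\bm{Z}^{k+1})+\frac{\beta_-}{2}\|\bm{x}^{k+1}-\bm{x}^k\|_{\bm{W}}^2+\frac{\alpha_-}{2}\sum_{j=1}^J\|\bm{Y}_j^{k+1}-\bm{Y}_j^k\|_F^2\le\Phi(\bm{Z}^k).
\end{displaymath}
Recognizing the left-hand proximal terms as the components of $\|\bm{Z}^{k+1}-\bm{Z}^k\|^2$ in the weighted norm, I would set $c_1=\tfrac12\min\{\alpha_-,\beta_-\}>0$ to conclude \eqref{eq:PhiDecreasing}, with strict decrease whenever $\bm{Z}^{k+1}\neq\bm{Z}^k$. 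The main obstacle is the transfer in the $\bm{Y}$-step: the surrogate $\widetilde{\Phi}_j$ rather than $\Phi_j$ is minimized, so without the two-sided use of \eqref{Concave_g} (domination everywhere, equality at the anchor $\bm{Y}_j^k$) one cannot conclude a genuine descent of $\Phi$ itself. Everything else is bookkeeping with the optimality inequalities and the uniform parameter bounds of Assumption (A4).
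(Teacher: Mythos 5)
Your proposal is correct and follows essentially the same route as the paper's proof: the two-sided use of the concavity bound \eqref{Concave_g} (majorization everywhere, equality at $\bm{Y}_j^k$) to transfer descent from $\widetilde{\Phi}_j$ to $\Phi_j$, the optimality inequalities for both updates tested at the previous iterates, and the constant $c_1=\tfrac12\min\{\alpha_-,\beta_-\}$. The only difference is cosmetic ordering of the chaining steps.
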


\begin{proof}

Let $\Phi_j(\bm{x},\bm{Y})$ be defined as \eqref{eq:Phi_j} and let $\widetilde{\Phi}_{j}(\bm{x}^{k},\bm{Y}_j)$ be defined as \eqref{tildePhi_j}. Then, according the concavity of $g$ illustrated in inequality~\eqref{Concave_g},  $\Phi_j(\bm{x}^k,\bm{Y}_j^{k+1})$ and  its reweighted approximation $\widetilde{\Phi}_{j}(\bm{x}^{k},\bm{Y}_j)$ have the following relationship
\begin{displaymath}
\Phi_j(\bm{x}^{k},\bm{Y}_j^{k+1})\leq \widetilde{\Phi}_{j}(\bm{x}^{k},\bm{Y}_j^{k+1})\quad\text{and}\quad\Phi_j(\bm{x}^{k},\bm{Y}_j^{k})=\widetilde{\Phi}_{j}(\bm{x}^{k},\bm{Y}_j^{k}).
\end{displaymath}
Thus, the objective function $\Phi$ in \eqref{eq:Phi} evaluated at $\bm{x}^k$ and $\bm{Y}_j^{k+1}$'s can be rewritten as
\begin{align*}
\Phi(\bm{x}^{k},\bm{Y}_1^{k+1},\dots,\bm{Y}_J^{k+1})=\tau f(\bm{x}^k)+ \sum_{j=1}^J \Phi_j(\bm{x}^k,\bm{Y}_j^{k+1})\leq \tau f(\bm{x}^k)+ \sum_{j=1}^J \widetilde{\Phi}_{j}(\bm{x}^k,\bm{Y}_j^{k+1}).
\end{align*}
By the update of $\bm{Y}_j^{k+1}$ in  \eqref{eq:Min_Yj}, we have
\begin{displaymath}
\widetilde{\Phi}_{j}(\bm{x}^k,\bm{Y}_j^{k+1})+\frac{\alpha_{jk}}{2}\|\bm{Y}_j^{k+1}-\bm{Y}_j^k\|_{F}^2\leq \widetilde{\Phi}_{j}(\bm{x}^k,\bm{Y}_j^{k})= \Phi_j(\bm{x}^k,\bm{Y}_j^{k}).
\end{displaymath}

Combining the two inequalities above,  we have the following inequality on\\  $\Phi(\bm{x}^{k},\bm{Y}_1^{k+1},\dots,\bm{Y}_J^{k+1})$ and $\Phi(\bm{Z}^{k})$
\begin{align*}
\Phi(\bm{x}^{k},\bm{Y}_1^{k+1},\dots,\bm{Y}_J^{k+1})&\leq \tau f(\bm{x}^k)+ \sum_{j=1}^J \Phi_j(\bm{x}^k,\bm{Y}_j^{k})-  \sum_{j=1}^J\frac{\alpha_{jk}}{2}\|\bm{Y}_j^{k+1}-\bm{Y}_j^k\|_{F}^2,\\
&=\Phi(\bm{Z}^{k})-  \sum_{j=1}^J\frac{\alpha_{jk}}{2}\|\bm{Y}_j^{k+1}-\bm{Y}_j^k\|_{F}^2.
\end{align*}

By the update of $\bm{x}^{k+1}$ in  \eqref{eq:Min_X}, we have
\begin{displaymath}
\Phi(\bm{Z}^{k+1})\leq \Phi(\bm{x}^{k},\bm{Y}_1^{k+1},\dots,\bm{Y}_J^{k+1})-\frac{\beta_k}{2}\|\bm{x}^{k+1}-\bm{x}^k\|_{\bm{W}}^2.
\end{displaymath}

Combining the two inequalities above,  we have that $\Phi(\bm{Z}^{k+1})$ and $\Phi(\bm{Z}^{k})$ satisfy the following inequality
\begin{displaymath}
\frac{\beta_k}{2}\|\bm{x}^{k+1}-\bm{x}^{k}\|^2_{\bm{W}}+\sum_{j=1}^J\frac{\alpha_{jk}}{2}\|\bm{Y}_j^{k+1}-\bm{Y}_j^k\|_{F}^2\leq \Phi(\bm{Z}^{k})-\Phi(\bm{Z}^{k+1}).
\end{displaymath}
Equation \eqref{eq:PhiDecreasing} holds with $c_1=\frac{1}{2}\min \{\beta_-,\alpha_{-}\}>0$ and $\{\Phi(\bm{Z}^k)\}_{k\in\mathbb{N}}$ is strictly decreasing. Here, $\beta_-$ and $\alpha_{-}$ are two positive parameters given in Assumption (A4).
\end{proof}

The sufficient descent condition proved in Proposition~\ref{Thm:PhiDecreasing} immediately yields the following corollary.

\begin{corollary} \label{Cor:SeqConv} Suppose that the objective function $\Phi$ in model \eqref{FullModel} satisfies Assumption (A1)-(A3). Let $\{\bm{Z}^k\}_{k\in \mathbb{N}}$ be the sequence generated by the PARM algorithm provided that the parameters satisfy Assumption (A4). Then
\begin{displaymath}
\lim_{k\to \infty} \|\bm{Z}^{k}-\bm{Z}^{k+1}\|=0.
\end{displaymath}

\end{corollary}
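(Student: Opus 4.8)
The plan is to derive the corollary directly from the sufficient descent condition established in Proposition~\ref{Thm:PhiDecreasing}, combined with the coercivity of $\Phi$ (Assumption~(A3)). The key observation is that the descent inequality \eqref{eq:PhiDecreasing} tells us that $\{\Phi(\bm{Z}^k)\}_{k\in\mathbb{N}}$ is a decreasing sequence, while Assumption~(A1) guarantees $\Phi$ is bounded below (since $f$ is inf-bounded and $g$ together with the quadratic and weighted-nuclear terms contribute finite lower bounds). Hence $\{\Phi(\bm{Z}^k)\}_{k\in\mathbb{N}}$ is a monotone bounded-below sequence and therefore converges to some finite limit $\Phi_*$.

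First I would sum the descent inequality \eqref{eq:PhiDecreasing} over $k$ from $0$ to $K-1$, which telescopes on the right-hand side to give
\begin{displaymath}
c_1 \sum_{k=0}^{K-1}\|\bm{Z}^{k+1}-\bm{Z}^{k}\|^2 \leq \Phi(\bm{Z}^{0})-\Phi(\bm{Z}^{K}).
\end{displaymath}
Since $\Phi(\bm{Z}^{K})\geq \Phi_* > -\infty$ for all $K$, the right-hand side is bounded above by $\Phi(\bm{Z}^0)-\Phi_*$, a finite constant independent of $K$. Letting $K\to\infty$, the partial sums of $\|\bm{Z}^{k+1}-\bm{Z}^{k}\|^2$ are uniformly bounded, so the series $\sum_{k=0}^{\infty}\|\bm{Z}^{k+1}-\bm{Z}^{k}\|^2$ converges. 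A convergent series must have terms tending to zero, which forces $\|\bm{Z}^{k+1}-\bm{Z}^{k}\|^2\to 0$, and hence $\|\bm{Z}^{k}-\bm{Z}^{k+1}\|\to 0$ as $k\to\infty$, which is exactly the claim.

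There is essentially no hard step here: this is the standard telescoping argument that turns a summable sufficient-decrease bound into a vanishing-increment conclusion. The only point requiring a moment's care is confirming that $\Phi$ is bounded below so that the telescoped right-hand side stays finite as $K\to\infty$; this follows immediately from Assumption~(A1) together with the fact that $g$ is bounded below on the relevant range and each $\Phi_j$ contains a nonnegative Frobenius term. One could alternatively invoke coercivity (A3) to note that $\{\bm{Z}^k\}$ lies in a bounded sublevel set and deduce boundedness below from there, but the direct inf-boundedness route is cleaner and avoids any appeal to boundedness of the iterates themselves.
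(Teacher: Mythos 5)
Your proof is correct and follows the same route as the paper: telescope the sufficient descent inequality \eqref{eq:PhiDecreasing}, bound the right-hand side by $\Phi(\bm{Z}^0)-\Phi_{\inf}$ using the inf-boundedness of $\Phi$, and conclude that the summable series forces the increments to vanish. The paper's proof is identical in substance, simply writing $\Phi_{\inf}=\inf_{\bm{Z}}\Phi(\bm{Z})>-\infty$ where you spell out why $\Phi$ is bounded below.
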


\begin{proof}
Summing inequality \eqref{eq:PhiDecreasing} from $k=0$ to $k=K-1$, we have
\begin{displaymath}
c_1\sum_{k=0}^{K-1}\|\bm{Z}^{k+1}-\bm{Z}^{k}\|^2\leq \Phi(\bm{Z}^{0})-\Phi(\bm{Z}^{K})\leq \Phi(\bm{Z}^{0})-\Phi_{\inf},
\end{displaymath}
where $\Phi_{\inf}=\inf_{\bm{Z}} \Phi(\bm{Z})>-\infty$.

Taking $K\to\infty$, we have
\begin{displaymath}
\sum_{k=0}^{\infty}\|\bm{Z}^{k+1}-\bm{Z}^{k}\|^2<\infty,
\end{displaymath}
which implies $\lim_{k\to\infty} \|\bm{Z}^{k+1}-\bm{Z}^{k}\|=0$.
\end{proof}

\subsection{Relative error condition}

Before proving that a subgradient of $\Phi$ at $\bm{Z}^{k+1}$ is upper bounded by the iterates gap, we first characterize the subdifferential of $\Phi$.

Recall that the variable $\bm{x}$ is measured in terms of the $\bm{W}$-weight $\ell_2$ norm and that the variables $\bm{Y}_j$'s are measured in terms of the Frobenius norm. Then using the notations introduced in subsection~\ref{Section:Notations} we define the subdifferential of $\Phi$ by
\begin{displaymath}
\partial \Phi (\bm{Z})=\left \{(\bm{A}_{\bm{x}},\bm{A}_{\bm{Y}_1},\dots,\bm{A}_{\bm{Y}_J}):\bm{A}_{\bm{x}}\in\partial_{\bm{x}}^{\bm{W}} \Phi (\bm{Z}),\bm{A}_{\bm{Y}_j}\in\partial_{\bm{Y}_j} \Phi (\bm{Z}),j=1,\dots,J\right \},
\end{displaymath}
where $\partial_{\bm{x}}^{\bm{W}} \Phi (\bm{Z})$ is the partial subdifferential of $\Phi$ with respect to the variable $\bm{x}$ and with respect to the  $\bm{W}$-weight $\ell_2$ norm and $\partial_{\bm{Y}_j} \Phi (\bm{Z})$ is the partial subdifferential of $\Phi$ with respect to the variable $\bm{Y}_j$ and with respect to the Frobenius norm.

By the definition of $\Phi$ in model \eqref{FullModel} and the fact that
\begin{displaymath}
\sum_{j=1}^J \frac{\mu_j}{2}\|\bm{Y}_j-R_j(\bm{x})\|_{F}^2= \frac{1}{2}\langle\bm{x},\bm{x}-2\sum_{j=1}^J \mu_j\bm{W}^{-1}R_j^{\top}(\bm{Y}_j)\rangle_{\bm{W}}+\sum_{j=1}^J\frac{\mu_j}{2}\|\bm{Y}_j\|_F^2,
\end{displaymath}
 we have
\begin{displaymath}
\partial_{\bm{x}}^{\bm{W}} \Phi (\bm{Z})=\tau\partial^{\bm{W}} f(\bm{x})+\bm{x}-\sum_{j=1}^J \mu_j\bm{W}^{-1}R_j^{\top}(\bm{Y}_j)
\end{displaymath}
and
\begin{displaymath}
\partial_{\bm{Y}_j} \Phi (\bm{Z})=\mu_j (\bm{Y}_j-R_j(\bm{x}))+\lambda_j\partial\left ( \sum_{i=1}^{m_j}  g\circ\sigma_i\right )(\bm{Y}_j).
\end{displaymath}

To compute the subdifferential of the singular value function  $\sum_{i=1}^{m_j}  g\circ\sigma_i$ and further characterize $\partial_{\bm{Y}_j} \Phi (\bm{Z})$, we introduce some definitions and a lemma on singular value functions in \cite{lewis2005nonsmooth1,lewis2005nonsmooth2}.

\begin{definition} A function $f:\mathbb{R}^n\to \mathbb{R}$ is absolutely symmetric if
\begin{displaymath}
f(x_1,x_2,\dots,x_n)=f(|x_{\pi(1)}|,|x_{\pi(2)}|,\dots,|x_{\pi(n)}|),
\end{displaymath}
for any permutation $\pi$.
\end{definition}

\begin{definition}
A function $F:\mathbb{R}^{m\times n}\to  \mathbb{R}$, $m\leq n$, is a singular value function if $F(\bm{X})=(f\circ \sigma)(\bm{X})$, where $f:\mathbb{R}^m\to \mathbb{R}$ is  an absolutely symmetric function, $\sigma(\bm{X})=[\sigma_1(\bm{X}),\dots,$ $\sigma_m(\bm{X})]^{\top}$ and $\sigma_i(\bm{X})$ is the $i$th largest singular value of $\bm{X}$.
\end{definition}

The function $\sum_{i=1}^{m} g\circ\sigma_i$ can be viewed as a singular value function of the form
\begin{displaymath}
\left (\sum_{i=1}^{m} g\circ\sigma_i\right )(\bm{Y})=(\widetilde{g}\circ \sigma)(\bm{Y}),
\end{displaymath}
where $\widetilde{g}:\mathbb{R}^{m}\to \mathbb{R}$ is defined as $\widetilde{g}(\bm{t})=\sum_{i=1}^{m} g(|t_i|)$ and is absolutely symmetric.

\begin{lemma}\label{Lemma:singularvalue}
The subdifferential of a singular value function $f\circ \sigma$ at $\bm{X}\in \mathbb{R}^{m\times n}$ is given by the formula
\begin{displaymath}
\partial (f\circ \sigma)(\bm{X})=\left \{\bm{U}\diag(\bm{d})\bm{V}^{\top}:\bm{d}\in \partial f(\sigma(\bm{X})), (\bm{U},\bm{V})\in \mathcal{M}(\bm{X})\right \},
\end{displaymath}
where $\mathcal{M}(\bm{X})=\left \{(\bm{U},\bm{V})\in \mathbb{R}^{m\times l}\times \mathbb{R}^{n\times l}:\bm{U}^{\top}\bm{U}=\bm{V}^{\top}\bm{V}=\bm{I},\bm{X}=\bm{U}\diag (\sigma(\bm{X}))\bm{V}^{\top}\right \}$.
\end{lemma}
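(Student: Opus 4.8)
The plan is to establish this formula in two stages: first for the Fr\'echet subdifferential $\hat{\partial}(f\circ\sigma)$, and then to lift it to the limiting subdifferential $\partial(f\circ\sigma)$ through the closure process of Definition~\ref{Def:Subdiff}. The conceptual backbone is the orthogonal invariance of $F:=f\circ\sigma$, namely $F(\bm{P}\bm{X}\bm{Q}^{\top})=F(\bm{X})$ for all orthogonal $\bm{P},\bm{Q}$ of compatible sizes, which holds because $\sigma(\bm{P}\bm{X}\bm{Q}^{\top})=\sigma(\bm{X})$ and $f$ is absolutely symmetric. A routine consequence of this invariance, together with the definitions of the subdifferentials, is the equivariance relation $\hat{\partial} F(\bm{P}\bm{X}\bm{Q}^{\top})=\bm{P}\,\hat{\partial} F(\bm{X})\,\bm{Q}^{\top}$, and likewise for $\partial F$. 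This reduces the entire computation to the case in which $\bm{X}=\diag(\sigma(\bm{X}))$ is diagonal with nonincreasing nonnegative entries, which is the heart of the argument.

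For the inclusion $\supseteq$, I would fix $\bm{d}\in\partial f(\sigma(\bm{X}))$ and $(\bm{U},\bm{V})\in\mathcal{M}(\bm{X})$ and verify directly that $\bm{G}:=\bm{U}\diag(\bm{d})\bm{V}^{\top}$ satisfies the subgradient inequality for $F$ at $\bm{X}$. The key estimate is von Neumann's trace inequality $\langle\bm{G},\bm{Y}\rangle_{F}\le\sum_{i}\sigma_i(\bm{G})\sigma_i(\bm{Y})$, which bounds the linear term by the singular values of $\bm{Y}$; combining it with the identity $\langle\bm{G},\bm{X}\rangle_{F}=\langle\bm{d},\sigma(\bm{X})\rangle$ (an immediate consequence of $(\bm{U},\bm{V})\in\mathcal{M}(\bm{X})$ and the orthonormality of the columns of $\bm{U},\bm{V}$) and the subgradient inequality $f(\sigma(\bm{Y}))-f(\sigma(\bm{X}))-\langle\bm{d},\sigma(\bm{Y})-\sigma(\bm{X})\rangle\ge o(\|\sigma(\bm{Y})-\sigma(\bm{X})\|)$ for $f$ collapses the matrix estimate to the vector one, using nonexpansiveness of the singular value map. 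For the reverse inclusion $\subseteq$, I would take any $\bm{G}\in\partial F(\bm{X})$ at the diagonal $\bm{X}$ and exploit invariance under the residual orthogonal symmetries fixing $\bm{X}$ (block-orthogonal transformations permuting equal singular values and acting on the left/right null spaces) to force $\bm{G}$ into block-diagonal form; the equality case of von Neumann's inequality then shows $\bm{G}$ shares a singular system with $\bm{X}$, so $\bm{G}=\bm{U}\diag(\bm{d})\bm{V}^{\top}$ for some $(\bm{U},\bm{V})\in\mathcal{M}(\bm{X})$, and transferring the subgradient inequality back to $\mathbb{R}^m$ yields $\bm{d}\in\partial f(\sigma(\bm{X}))$.

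An attractive alternative that avoids rebuilding the matrix machinery is to reduce to the symmetric eigenvalue case already treated in \cite{lewis2005nonsmooth1}: the singular values of $\bm{X}$ are the nonnegative eigenvalues of the symmetric dilation $\begin{bmatrix}\bm{0}&\bm{X}\\ \bm{X}^{\top}&\bm{0}\end{bmatrix}$, so $F$ can be written as a spectral (eigenvalue) function of this dilation, and the known formula $\partial(h\circ\lambda)(\bm{A})=\{\bm{Q}\diag(\bm{e})\bm{Q}^{\top}:\bm{e}\in\partial h(\lambda(\bm{A})),\ \bm{A}=\bm{Q}\diag(\lambda(\bm{A}))\bm{Q}^{\top}\}$ for spectral functions transfers to the singular value setting once the sign symmetry is accounted for. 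Either way, the main obstacle is the non-uniqueness of the singular value decomposition where singular values coincide or vanish: when $\sigma_i(\bm{X})=\sigma_{i+1}(\bm{X})$ the pair $(\bm{U},\bm{V})$ is determined only up to a block-orthogonal rotation, and when some $\sigma_i(\bm{X})=0$ there is extra freedom in the null spaces. Verifying that $\mathcal{M}(\bm{X})$ captures exactly the right orbit, and that the absolute symmetry of $f$ (invariance under sign flips) matches the sign ambiguity of singular vectors, is precisely the delicate part of the argument; it is also what necessitates passing to the limiting subdifferential through the closure step rather than working with the Fr\'echet subdifferential alone, and I would defer to \cite{lewis2005nonsmooth1,lewis2005nonsmooth2} for the full technical treatment of these degeneracies.
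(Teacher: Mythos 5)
The paper does not prove this lemma: it is quoted directly from Lewis and Sendov \cite{lewis2005nonsmooth1,lewis2005nonsmooth2}, so there is no internal proof to compare yours against. Your outline correctly identifies the architecture of the proof in those references: orthogonal invariance of $f\circ\sigma$ reduces everything to the case of a diagonal $\bm{X}$ with ordered nonnegative entries, von Neumann's trace inequality and its equality case drive the two inclusions, and the degeneracies at repeated or vanishing singular values are exactly what force the bookkeeping with $\mathcal{M}(\bm{X})$ and the sequential closure step for the limiting subdifferential. One step in your $\supseteq$ direction is glossed over more than you acknowledge. What the argument actually requires is $\langle\bm{G},\bm{Y}\rangle_{F}\leq\langle\bm{d},\sigma(\bm{Y})\rangle$, whereas von Neumann only yields $\langle\bm{G},\bm{Y}\rangle_{F}\leq\langle\sigma(\bm{G}),\sigma(\bm{Y})\rangle$ with $\sigma(\bm{G})$ equal to the decreasing rearrangement of $|\bm{d}|$, not to $\bm{d}$ itself; since $\sigma(\bm{Y})\geq 0$, the naive chain runs in the wrong direction. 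Bridging this gap needs a separate structural lemma on subgradients of absolutely symmetric functions at ordered nonnegative points (the components of $\bm{d}$ inherit the ordering and sign pattern of $\sigma(\bm{X})$ in the appropriate sense), which is itself one of the genuinely nontrivial ingredients of \cite{lewis2005nonsmooth1}. Your dilation alternative via $\begin{bmatrix}\bm{0}&\bm{X}\\ \bm{X}^{\top}&\bm{0}\end{bmatrix}$ is a legitimate known route but carries its own bookkeeping for the $\pm\sigma_i$ pairing and the $n-m$ extra zero eigenvalues. Since you explicitly defer the hard technicalities to the same references the paper cites, your proposal is at least as complete as the paper's own treatment of this lemma, but it should be read as a roadmap rather than a self-contained proof.
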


By Lemma~\ref{Lemma:singularvalue}, the subdifferential of $\sum_{i=1}^{m}  g\circ\sigma_i$ at $\bm{Y}\in \mathbb{R}^{m\times n}$ can be computed as follows
\begin{align*}
\partial\left ( \sum_{i=1}^{m}  g\circ\sigma_i\right)(\bm{Y}) =\{\bm{U}\diag(\bm{d})\bm{V}^{\top}:d_i=c_i g'[\sigma_i(\bm{Y})], c_i\in\partial|\cdot|(\sigma_i(\bm{Y})), &i=1,\dots,m,\\
& (\bm{U},\bm{V})\in \mathcal{M}(\bm{Y})\},
\end{align*}
where
\begin{displaymath}
\partial |\cdot|(\sigma_i(\bm{Y}))=
\begin{cases} \{1\},\quad &\text{if }\sigma_i(\bm{Y})>0;\\
[-1,1],\quad &\text{if }\sigma_i(\bm{Y})=0.
\end{cases}
\end{displaymath}

Next, we are ready to derive a subgradient of $\Phi$ at $\bm{Z}^{k+1}$ using the lemma below and to prove that it is upper bounded.

\begin{lemma} \label{Lemma:A_Y} Suppose that the objective function $\Phi$ in model \eqref{FullModel} satisfies Assumption (A1)-(A3). Let $\{\bm{Z}^k\}_{k\in \mathbb{N}}$ be the sequence generated by the PARM algorithm provided that the parameters satisfy Assumption (A4). Let $\bm{U}^{k+1}_j\bm{\Sigma}^{k+1}_j(\bm{V}^{k+1}_j)^{\top}$ be the SVD of $\bm{Y}_j^{k+1}$. Then, for each $k$ and each $j$, there exists $\bm{c}_j^{k+1}\in \mathbb{R}^{m_j}$ such that
\begin{equation}\label{eq:A1}
(c_j^{k+1})_i\in \partial|\cdot| (\sigma_i(\bm{Y}^{k+1}_j)),\quad i=1,\dots,m_j,
\end{equation}
and
\begin{equation}\label{eq:A2}
\lambda_j \bm{U}_j^{k+1}\diag(\bm{d}_j^{k+1})(\bm{V}_j^{k+1})^{\top}=-\alpha_{jk}(\bm{Y}_j^{k+1}-\bm{Y}_j^k)-\mu_j(\bm{Y}_j^{k+1}-R_j(\bm{x}^{k})),
\end{equation}
where $\diag(\bm{d}_j^{k+1})=\diag(\bm{c}_j^{k+1})\diag(\bm{w}_j^{k})$.
\end{lemma}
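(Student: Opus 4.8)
The plan is to obtain \eqref{eq:A2} by direct computation from the explicit form of the update $\bm{Y}_j^{k+1}$, rather than wrestling with the limiting subdifferential of the nonconvex weighted nuclear norm (which, having distinct ascending weights, is not an absolutely symmetric singular value function, so Lemma~\ref{Lemma:singularvalue} does not apply to it verbatim). First I recall from the derivation preceding Lemma~\ref{ProxWeighted} that $\bm{Y}_j^{k+1}=\frac{1}{\mu_j+\alpha_{jk}}\bm{U}_j^{k+1}S_{\lambda_j,\bm{w}_j^{k}}(\widetilde{\bm{\Sigma}}_j^k)(\bm{V}_j^{k+1})^{\top}$, where $\bm{U}_j^{k+1}\widetilde{\bm{\Sigma}}_j^k(\bm{V}_j^{k+1})^{\top}$ is the SVD of $\bm{M}_j^k:=\mu_j R_j(\bm{x}^k)+\alpha_{jk}\bm{Y}_j^k$. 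The crucial structural fact, guaranteed by the ascending order of $\bm{w}_j^{k}$ (Assumption~(A2) makes $g'$ nonnegative and nonincreasing, so the ordering hypothesis of Lemma~\ref{ProxWeighted} holds), is that $\bm{U}_j^{k+1},\bm{V}_j^{k+1}$ serve simultaneously as the singular-vector pair of $\bm{M}_j^k$ and of $\bm{Y}_j^{k+1}$.

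Second, I rewrite the right-hand side of \eqref{eq:A2} by collecting terms: $-\alpha_{jk}(\bm{Y}_j^{k+1}-\bm{Y}_j^k)-\mu_j(\bm{Y}_j^{k+1}-R_j(\bm{x}^{k}))=\bm{M}_j^k-(\mu_j+\alpha_{jk})\bm{Y}_j^{k+1}$. Since both matrices carry the singular vectors $\bm{U}_j^{k+1},\bm{V}_j^{k+1}$ and $(\mu_j+\alpha_{jk})\bm{Y}_j^{k+1}=\bm{U}_j^{k+1}S_{\lambda_j,\bm{w}_j^{k}}(\widetilde{\bm{\Sigma}}_j^k)(\bm{V}_j^{k+1})^{\top}$, this difference equals $\bm{U}_j^{k+1}\bigl(\widetilde{\bm{\Sigma}}_j^k-S_{\lambda_j,\bm{w}_j^{k}}(\widetilde{\bm{\Sigma}}_j^k)\bigr)(\bm{V}_j^{k+1})^{\top}$, a \emph{diagonal} transform in the common singular basis. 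The problem thus collapses to a scalar, entrywise identity: writing $\widetilde{\sigma}_i=(\widetilde{\bm{\Sigma}}_j^k)_{ii}$, I must exhibit $\bm{c}_j^{k+1}$ satisfying \eqref{eq:A1} for which $\lambda_j (c_j^{k+1})_i (w_j^k)_i=\widetilde{\sigma}_i-(\widetilde{\sigma}_i-\lambda_j(w_j^k)_i)_+$ for every $i$, after which setting $(d_j^{k+1})_i=(c_j^{k+1})_i(w_j^k)_i$ yields \eqref{eq:A2}.

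Third, I carry out the case analysis dictated by the soft-threshold. If $\widetilde{\sigma}_i>\lambda_j(w_j^k)_i$, then $\sigma_i(\bm{Y}_j^{k+1})>0$, so $\partial|\cdot|(\sigma_i(\bm{Y}_j^{k+1}))=\{1\}$; the choice $(c_j^{k+1})_i=1$ produces the scalar $\lambda_j(w_j^k)_i$, matching $\widetilde{\sigma}_i-(\widetilde{\sigma}_i-\lambda_j(w_j^k)_i)$. If instead $\widetilde{\sigma}_i\le\lambda_j(w_j^k)_i$, then $\sigma_i(\bm{Y}_j^{k+1})=0$, so $\partial|\cdot|(\sigma_i(\bm{Y}_j^{k+1}))=[-1,1]$; taking $(c_j^{k+1})_i=\widetilde{\sigma}_i/(\lambda_j(w_j^k)_i)\in[0,1]\subseteq[-1,1]$ produces the scalar $\widetilde{\sigma}_i$, as required. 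The degenerate situation $(w_j^k)_i=0$ is handled separately: then the target scalar is already $0$ and $(c_j^{k+1})_i$ may be taken as $1$ (which is forced when $\sigma_i(\bm{Y}_j^{k+1})>0$) or any value in $[-1,1]$ (when $\sigma_i(\bm{Y}_j^{k+1})=0$), so \eqref{eq:A1} still holds.

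The main obstacle here is bookkeeping rather than conceptual depth. I must confirm that the ascending-weight hypothesis keeps the thresholded values $S_{\lambda_j,\bm{w}_j^{k}}(\widetilde{\bm{\Sigma}}_j^k)$ in nonincreasing order, so that $\bm{U}_j^{k+1},\bm{V}_j^{k+1}$ genuinely form an SVD of $\bm{Y}_j^{k+1}$ and the diagonal reduction in the second step is legitimate; and I must treat the boundary cases (a vanishing weight or a shifted singular value) so that each $(c_j^{k+1})_i$ stays inside $\partial|\cdot|(\sigma_i(\bm{Y}_j^{k+1}))$. I emphasize that this derivation deliberately uses the \emph{old} weights $\bm{w}_j^{k}=g'(\sigma(\bm{Y}_j^k))$ rather than $g'(\sigma(\bm{Y}_j^{k+1}))$; quantifying this mismatch through the $L_g$-Lipschitz bound of Assumption~(A2) together with Corollary~\ref{Cor:SeqConv} is precisely what the subsequent relative-error estimate (H2) will exploit.
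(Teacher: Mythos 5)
Your proof is correct, but it takes a genuinely different route from the paper. The paper argues via the first-order optimality inclusion for the subproblem \eqref{Min_Y_j^k+1}, namely $\bm{0}\in \mu_j(\bm{Y}_j^{k+1}-R_j(\bm{x}^{k}))+\lambda_j \partial \|\cdot\|_{*,\bm{w}_j^{k}}(\bm{Y}_j^{k+1}) +\alpha_{jk}(\bm{Y}_j^{k+1}-\bm{Y}_j^k)$, and then invokes Lemma~\ref{Lemma:singularvalue} to express an element of $\partial \|\cdot\|_{*,\bm{w}_j^{k}}(\bm{Y}_j^{k+1})$ in the form $\bm{U}_j^{k+1}\diag(\bm{d}_j^{k+1})(\bm{V}_j^{k+1})^{\top}$ with $d_i=c_iw_i$, which delivers \eqref{eq:A1}--\eqref{eq:A2} at once; this is shorter on the page and would survive even if the subproblem had no closed-form solution, but it leans on a subdifferential formula for the nonconvex weighted nuclear norm that the paper states without proof (and which is delicate when singular values coincide, precisely the situation your Remark-adjacent concern points at). You instead bypass the subdifferential entirely: you reduce the right-hand side of \eqref{eq:A2} to $\bm{M}_j^k-(\mu_j+\alpha_{jk})\bm{Y}_j^{k+1}=\bm{U}_j^{k+1}\bigl(\widetilde{\bm{\Sigma}}_j^k-S_{\lambda_j,\bm{w}_j^{k}}(\widetilde{\bm{\Sigma}}_j^k)\bigr)(\bm{V}_j^{k+1})^{\top}$ using the shared singular basis guaranteed by the ascending weights, and then construct $\bm{c}_j^{k+1}$ explicitly by the threshold case analysis; your algebra and the boundary cases (zero weight, fully thresholded singular value, preservation of the descending order so that the factors really are an SVD of $\bm{Y}_j^{k+1}$) all check out. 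What your route buys is an elementary, self-contained verification with an explicit $\bm{c}_j^{k+1}$; what it costs is reliance on the specific WSVT closed form of Lemma~\ref{ProxWeighted}, so it would not transfer to a variant of the algorithm where \eqref{eq:Min_Yj} is not solved in closed form. One small caveat: your parenthetical claim that the weighted nuclear norm is not an absolutely symmetric singular value function is not quite right --- the underlying vector function $\bm{x}\mapsto\sum_i w_i\,|x|_{[i]}$ (weights applied to the sorted absolute values) is absolutely symmetric --- but since your argument never uses Lemma~\ref{Lemma:singularvalue}, this misstatement is harmless to your proof.
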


\begin{proof}
According to the update of $\bm{Y}_j^{k+1}$ in  \eqref{Min_Y_j^k+1}, we have
\begin{displaymath}
0\in \mu_j(\bm{Y}_j^{k+1}-R_j(\bm{x}^{k}))+\lambda_j \partial \|\cdot\|_{*,\bm{w}_j^{k}}(\bm{Y}_j^{k+1}) +\alpha_{jk}(\bm{Y}_j^{k+1}-\bm{Y}_j^k).
\end{displaymath}
Since the weighted nuclear norm $\|\cdot\|_{*,\bm{w}}$ is a singular value function, then by Lemma~\ref{Lemma:singularvalue} the subdifferential of $\|\cdot\|_{*,\bm{w}}$ can be computed as follows
\begin{displaymath}
\partial \|\cdot\|_{*,\bm{w}}(\bm{Y}) =\{\bm{U}\diag (\bm{d})\bm{V}^{\top}:d_i=c_i w_i, c_i\in\partial|\cdot|(\sigma_i(\bm{Y})), i=1,\dots,m, (\bm{U},\bm{V})\in \mathcal{M}(\bm{Y})\}.
\end{displaymath}
Note that $(\bm{U}_j^{k+1},\bm{V}_j^{k+1})\in \mathcal{M}(\bm{Y}_j^{k+1})$.
Thus, there exists $\bm{c}^{k+1}_j\in \mathbb{R}^{m_j}$ such that \eqref{eq:A1}  holds and
\begin{displaymath}
-\alpha_{jk}(\bm{Y}_j^{k+1}-\bm{Y}_j^k)-\mu_j(\bm{Y}_j^{k+1}-R_j(\bm{x}^{k}))=\lambda_j \bm{U}_j^{k+1}\diag (\bm{d}_j^{k+1})(\bm{V}_j^{k+1})^{\top}\in \lambda_j\partial\|\cdot\|_{*,\bm{w}_j^{k}}(\bm{Y}_j^{k+1}),
\end{displaymath}
where $\diag (\bm{d}_j^{k+1})=\diag (\bm{c}_j^{k+1})\diag (\bm{w}_j^{k})$.
\end{proof}

\begin{proposition}[Relative error condition]\label{Thm:UpperBounded}  Suppose that the objective function $\Phi$ in model \eqref{FullModel} satisfies Assumption (A1)-(A3). Let $\{\bm{Z}^k\}_{k\in \mathbb{N}}$ be the sequence generated by the PARM algorithm provided that the parameters satisfy Assumption (A4). Let $\bm{U}^{k+1}_j\bm{\Sigma}^{k+1}_j(\bm{V}^{k+1}_j)^{\top}$ be the SVD of $\bm{Y}_j^{k+1}$ and let  $\bm{c}^{k+1}_j$ and $\bm{d}^{k+1}_j$ be in $\mathbb{R}^{m_j}$ satisfying  \eqref{eq:A1} and \eqref{eq:A2}.

Define $\bm{A}^{k+1}=(\bm{A}_{\bm{x}}^{k+1},\bm{A}_{\bm{Y}_1}^{k+1},\dots,\bm{A}_{\bm{Y}_J}^{k+1})$, where
\begin{equation}
\bm{A}_{\bm{x}}^{k+1}=\beta_k (\bm{x}^{k+1}-\bm{x}^k)\label{A_x}
\end{equation}
and
\begin{equation}
\bm{A}_{\bm{Y}_j}^{k+1}=\mu_j(R_j(\bm{x}^{k})-R_j(\bm{x}^{k+1}))+\lambda_j \bm{U}^{k+1}_j\diag (\widetilde{\bm{d}}_j^{k+1}-\bm{d}_j^{k+1})(\bm{V}^{k+1}_j)^{\top}-\alpha_{jk}(\bm{Y}_j^{k+1}-\bm{Y}_j^k),\label{A_Y}
\end{equation}
where $\diag (\widetilde{\bm{d}}_j^{k+1})=\diag (\bm{c}_j^{k+1})\diag (\bm{w}_j^{k+1})$.

Then the following assertions hold for $\forall k\in \mathbb{N}$,
\begin{enumerate}
\item[(a)] $\bm{A}^{k+1}\in \partial \Phi(\bm{Z}^{k+1})$;

\item[(b)] $\|\bm{A}^{k+1} \|\leq c_2\|\bm{Z}^{k+1}-\bm{Z}^k\|$, for some $c_2>0$.
\end{enumerate}
\end{proposition}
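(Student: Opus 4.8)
The plan is to prove the two assertions in turn: assertion (a) is a verification that the explicitly constructed $\bm{A}^{k+1}$ lies in $\partial\Phi(\bm{Z}^{k+1})$, obtained by combining the partial subdifferential formulas for $\partial_{\bm{x}}^{\bm{W}}\Phi$ and $\partial_{\bm{Y}_j}\Phi$ already recorded above with the first-order optimality conditions of the two PARM subproblems; assertion (b) is then a norm estimate bounding each block of $\bm{A}^{k+1}$ by the corresponding increment of $\bm{Z}$. First I would note that, by the definition of $\partial\Phi(\bm{Z})$ as the product of the partial subdifferentials, membership decouples blockwise, so it suffices to exhibit the $\bm{x}$-block and each $\bm{Y}_j$-block.

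For the $\bm{x}$-block, applying Fermat's rule to subproblem~\eqref{eq:Min_X} and using that the $\bm{W}$-gradient of $\frac{\beta_k}{2}\|\cdot-\bm{x}^k\|_{\bm{W}}^2$ at $\bm{x}^{k+1}$ is $\beta_k(\bm{x}^{k+1}-\bm{x}^k)$ gives $-\beta_k(\bm{x}^{k+1}-\bm{x}^k)\in\partial_{\bm{x}}^{\bm{W}}\Phi(\bm{Z}^{k+1})$, matching the $\bm{x}$-block \eqref{A_x} up to a sign that does not affect the magnitude bound in (b). For each $\bm{Y}_j$-block the key observation is that $\partial_{\bm{Y}_j}\Phi(\bm{Z}^{k+1})$ is taken at $\bm{Y}_j^{k+1}$ and so, through the formula for $\partial(\sum_i g\circ\sigma_i)$, it involves the \emph{updated} weights $(w_j^{k+1})_i=g'(\sigma_i(\bm{Y}_j^{k+1}))$, whereas the optimality condition supplied by Lemma~\ref{Lemma:A_Y} was produced by a subproblem that froze the reweighting at the \emph{old} weights $\bm{w}_j^{k}$. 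Since $(\bm{U}_j^{k+1},\bm{V}_j^{k+1})\in\mathcal{M}(\bm{Y}_j^{k+1})$ and $(c_j^{k+1})_i\in\partial|\cdot|(\sigma_i(\bm{Y}_j^{k+1}))$, the matrix $\lambda_j\bm{U}_j^{k+1}\diag(\widetilde{\bm{d}}_j^{k+1})(\bm{V}_j^{k+1})^{\top}$, with $\diag(\widetilde{\bm{d}}_j^{k+1})=\diag(\bm{c}_j^{k+1})\diag(\bm{w}_j^{k+1})$, is a legitimate element of $\lambda_j\partial(\sum_i g\circ\sigma_i)(\bm{Y}_j^{k+1})$; adding the smooth part $\mu_j(\bm{Y}_j^{k+1}-R_j(\bm{x}^{k+1}))$ and substituting the identity \eqref{eq:A2} makes the $\bm{w}_j^{k}$-weighted piece cancel, and the residual weight mismatch together with the $R_j(\bm{x}^{k})$-versus-$R_j(\bm{x}^{k+1})$ discrepancy reproduces exactly \eqref{A_Y}. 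This proves $\bm{A}^{k+1}\in\partial\Phi(\bm{Z}^{k+1})$.

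For assertion (b) I would bound $\|\bm{A}^{k+1}\|^2=\|\bm{A}_{\bm{x}}^{k+1}\|_{\bm{W}}^2+\sum_{j=1}^J\|\bm{A}_{\bm{Y}_j}^{k+1}\|_F^2$ block by block. The $\bm{x}$-block gives $\|\bm{A}_{\bm{x}}^{k+1}\|_{\bm{W}}=\beta_k\|\bm{x}^{k+1}-\bm{x}^k\|_{\bm{W}}\le\beta_+\|\bm{x}^{k+1}-\bm{x}^k\|_{\bm{W}}$ by Assumption~(A4). For each $\bm{Y}_j$-block I would apply the triangle inequality to the three summands of \eqref{A_Y}: the first is controlled by $\mu_j R_j^{\top}\circ R_j\preceq\bm{W}$, which yields $\mu_j\|R_j(\bm{x}^k-\bm{x}^{k+1})\|_F\le\sqrt{\mu_j}\,\|\bm{x}^k-\bm{x}^{k+1}\|_{\bm{W}}$; the third is $\le\alpha_+\|\bm{Y}_j^{k+1}-\bm{Y}_j^k\|_F$, again by (A4). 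The middle term is the crux: by unitary invariance of $\|\cdot\|_F$ it equals $\lambda_j\|\diag(\widetilde{\bm{d}}_j^{k+1}-\bm{d}_j^{k+1})\|_F$, and since its $i$th entry is $(c_j^{k+1})_i\bigl(g'(\sigma_i(\bm{Y}_j^{k+1}))-g'(\sigma_i(\bm{Y}_j^k))\bigr)$ with $|(c_j^{k+1})_i|\le1$, the $L_g$-Lipschitz bound of Assumption~(A2) followed by the singular value perturbation (Mirsky) inequality $\sum_i|\sigma_i(\bm{Y}_j^{k+1})-\sigma_i(\bm{Y}_j^k)|^2\le\|\bm{Y}_j^{k+1}-\bm{Y}_j^k\|_F^2$ bounds it by $\lambda_j L_g\|\bm{Y}_j^{k+1}-\bm{Y}_j^k\|_F$. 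Collecting these three estimates and using $(a+b+c)^2\le 3(a^2+b^2+c^2)$ gives $\|\bm{A}^{k+1}\|\le c_2\|\bm{Z}^{k+1}-\bm{Z}^k\|$ with $c_2$ explicit in $\beta_+$, $\alpha_+$, $\{\mu_j\}$, $\{\lambda_j\}$, $L_g$, and $J$.

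The main obstacle is exactly the middle term of the $\bm{Y}_j$-block, which exists only because PARM freezes the reweighting at $\bm{w}_j^{k}$ while the genuine subdifferential at $\bm{Y}_j^{k+1}$ demands $\bm{w}_j^{k+1}$; taming this mismatch is what forces the smoothness hypothesis (A2) on $g$ and requires a Lipschitz-type perturbation estimate for singular values. Everything else is routine bookkeeping with the norm equivalence induced by $\bm{W}$ and the uniform parameter bounds of (A4).
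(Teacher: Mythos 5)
Your proposal is correct and follows essentially the same route as the paper's proof: part (a) combines the optimality conditions of the two subproblems with Lemma~\ref{Lemma:A_Y} and the singular-value-function subdifferential formula at the updated weights $\bm{w}_j^{k+1}$ so that the $\bm{w}_j^{k}$-weighted piece cancels via \eqref{eq:A2}, and part (b) bounds the three summands of $\bm{A}_{\bm{Y}_j}^{k+1}$ exactly as the paper does (including the $\mu_j R_j^{\top}\circ R_j\preceq\bm{W}$ estimate and the sign remark on the $\bm{x}$-block). The only substantive deviation is the middle term of the $\bm{Y}_j$-block, where you invoke Mirsky's $\ell_2$ singular-value perturbation inequality to get the constant $\lambda_j L_g$, while the paper passes through the $\ell_1$ norm and a termwise Weyl bound to get $\lambda_j m_j L_g$; your constant is slightly sharper, but both yield the required $c_2$.
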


\begin{proof} (a)
According to the update of $\bm{x}^{k+1}$ in  \eqref{Min_x^{k+1}}, we have
\begin{displaymath}
0\in \tau\partial^{\bm{W}} f(\bm{x}^{k+1})+\bm{x}^k-\sum_{j=1}^{J} \mu_j \bm{W}^{-1}R_j^{\top}(\bm{Y}_j^{k+1})+(\beta_k+1) (\bm{x}^{k+1}-\bm{x}^k).
\end{displaymath}
Then the definition of $A_{\bm{x}}^{k+1}$ in  \eqref{A_x} implies
\begin{displaymath}
\bm{A}_{\bm{x}}^{k+1}\in \tau\partial^{\bm{W}} f(\bm{x}^{k+1})+\bm{x}^{k+1}-\sum_{j=1}^{J} \mu_j\bm{W}^{-1} R_j^{\top}(\bm{Y}_j^{k+1})=\partial_{\bm{x}}^{\bm{W}}\Phi (\bm{Z}^{k+1}).
\end{displaymath}

Also, for each $j$, the definition of $\bm{A}_{\bm{Y}_j}^{k+1}$ in  \eqref{A_Y} and  Lemma~\ref{Lemma:A_Y} imply
\begin{align*}
\bm{A}_{\bm{Y}_j}^{k+1}&=\mu_j(\bm{Y}_j^{k+1}-R_j(\bm{x}^{k+1}))+\lambda_j \bm{U}_j^{k+1}\diag (\widetilde{\bm{d}}_j^{k+1})(\bm{V}_j^{k+1})^{\top} \\
&\in  \mu_j(\bm{Y}_j^{k+1}-R_j(\bm{x}^{k+1}))+\lambda_j \partial \|\cdot\|_{*,\bm{w}_j^{k+1}}(\bm{Y}_j^{k+1})\\
&= \mu_j(\bm{Y}_j^{k+1}-R_j(\bm{x}^{k+1}))+\lambda_j\partial\left ( \sum_{i=1}^{m_j}  g\circ\sigma_i\right )(\bm{Y}_j^{k+1})\\
&=\partial_{\bm{Y}_j}\Phi (\bm{Z}^{k+1}).
\end{align*}

(b) It follows from the Cauchy-Schwarz inequality that
\begin{displaymath}
\|\bm{A}^{k+1} \| \leq \|\bm{A}_{\bm{x}}^{k+1}\|_{\bm{W}}+\sum_{j=1}^J\|\bm{A}_{\bm{Y}_j}^{k+1}\|_F,
\end{displaymath}
where $\|\bm{A}_{\bm{x}}^{k+1}\|_{\bm{W}}=\beta_k\|\bm{x}^{k+1}-\bm{x}^k\|_{\bm{W}}$ and
\begin{align*}
\|\bm{A}_{\bm{Y}_j}^{k+1}\|_F\leq &\mu_j\|R_j(\bm{x}^{k})-R_j(\bm{x}^{k+1})\|_F+\lambda_j \|\bm{U}^{k+1}_j\diag (\widetilde{\bm{d}}_j^{k+1}-\bm{d}_j^{k+1})(\bm{V}^{k+1}_j)^{\top}\|_F\\
&+\alpha_{jk}\|\bm{Y}_j^{k+1}-\bm{Y}_j^k\|_F.
\end{align*}
The right hand side of the above inequality can be computed term by term as follows. The square of the first term is bounded above by the square of the weighted iterates of the variable $\bm{x}$,
\begin{displaymath}
\mu_j^2\|R_j(\bm{x}^{k})-R_j(\bm{x}^{k+1})\|_F^2 \leq \mu_j \sum_{j=1}^J\mu_j\|R_j(\bm{x}^{k}-\bm{x}^{k+1})\|_F^2 =\mu_j \|\bm{x}^{k}-\bm{x}^{k+1}\|_{\bm{W}}^2.
\end{displaymath}
This implies that $\mu_j\|R_j(\bm{x}^{k})-R_j(\bm{x}^{k+1})\|_F\leq \sqrt{\mu_j} \|\bm{x}^{k}-\bm{x}^{k+1}\|_{\bm{W}}$.

Also, the second term is bounded above by the iterates of the variable $\bm{Y}_j$. Since $\|\widetilde{\bm{d}}_j^{k+1}-\bm{d}_j^{k+1}\|_2\leq \|\widetilde{\bm{d}}_j^{k+1}-\bm{d}_j^{k+1}\|_1$, then we have
\begin{align*}
\lambda_j \|\bm{U}^{k+1}_j\diag (\widetilde{\bm{d}}_j^{k+1}-\bm{d}_j^{k+1})(\bm{V}^{k+1}_j)^{\top}\|_F&=\lambda_j\|\widetilde{\bm{d}}_j^{k+1}-\bm{d}_j^{k+1}\|_2\\
&\leq\lambda_j \sum_{i=1}^{m_j} \left |({c}_j^{k+1})_i\right |\left |(w_j^{k+1})_i-(w_j^{k})_i\right |\\
&\leq \lambda_j \sum_{i=1}^{m_j}\left |g'(\sigma_i(\bm{Y}_j^{k+1}))-g'(\sigma_i(\bm{Y}_j^{k}))\right|.
\end{align*}
Using the condition that $g'$ is  $L_g$-Lipschitz continuous, we further obtain
\begin{align*}
\lambda_j \|\bm{U}^{k+1}_j\diag (\widetilde{\bm{d}}_j^{k+1}-\bm{d}_j^{k+1})(\bm{V}^{k+1}_j)^{\top}\|_F&\leq  \lambda_j \sum_{i=1}^{m_j} L_g|\sigma_i(\bm{Y}_j^{k+1})-\sigma_i(\bm{Y}_j^{k})|\\
&\leq  \lambda_j m_j L_g\|\bm{Y}_j^{k+1}-\bm{Y}_j^{k}\|_F,
\end{align*}
where the last line is followed from Theorem 3.3.16 in \cite{horn1994topics} and $\|\bm{Y}_j^{k+1}-\bm{Y}_j^{k}\|_2\leq \|\bm{Y}_j^{k+1}-\bm{Y}_j^{k}\|_F$.

Therefore, combining all the inequalities above, we obtain
\begin{align*}
\|\bm{A}^{k+1}\|&\leq  (\beta_k+M_{\mu})\|\bm{x}^{k+1}-\bm{x}^k\|_{\bm{W}} +\sum_{j=1}^J (\lambda_j m_j L_g +\alpha_{jk})\|\bm{Y}_j^{k+1}-\bm{Y}_j^{k}\|_F\\
&\leq c_2 \|\bm{Z}_j^{k+1}-\bm{Z}_j^{k}\|,
\end{align*}
where $M_{\mu}=\sum_{j=1}^J \sqrt{\mu_j}$ and $c_2=\max\{\beta_++M_{\mu},\lambda_1 m_1 L_g +\alpha_{+},\dots,\lambda_J m_J L_g +\alpha_{+}\}$.
\end{proof}

The relative error condition proved in Proposition~\ref{Thm:UpperBounded} immediately  yields the following corollary.
\begin{corollary} \label{Cor:SubdifZero}  Suppose that the objective function $\Phi$ in model \eqref{FullModel} satisfies Assumption (A1)-(A3). Let $\{\bm{Z}^k\}_{k\in \mathbb{N}}$ be the sequence generated by the PARM algorithm provided that the parameters satisfy Assumption (A4). Define $\bm{A}^{k+1}=(\bm{A}_{\bm{x}}^{k+1},\bm{A}_{\bm{Y}_1}^{k+1},\dots,\bm{A}_{\bm{Y}_J}^{k+1})$, where $\bm{A}_{\bm{x}}^{k+1}$ is defined as \eqref{A_x} and $\bm{A}_{\bm{Y}_j}^{k+1}$ is defined as \eqref{A_Y}, $j=1,\dots,J$.   Then
\begin{displaymath}
\lim_{k\to\infty}\|\bm{A}^{k+1}\|= 0.
\end{displaymath}
\end{corollary}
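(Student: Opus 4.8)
The plan is to obtain this corollary directly from the two quantitative estimates already established, namely the relative error bound of Proposition~\ref{Thm:UpperBounded}(b) together with the asymptotic vanishing of the iterate gaps from Corollary~\ref{Cor:SeqConv}. Since $\bm{A}^{k+1}$ is defined here by exactly the same formulas \eqref{A_x} and \eqref{A_Y} as in Proposition~\ref{Thm:UpperBounded}, all of its conclusions apply verbatim to the present $\bm{A}^{k+1}$; in particular there is a constant $c_2>0$, independent of $k$, such that
\begin{displaymath}
\|\bm{A}^{k+1}\|\leq c_2\|\bm{Z}^{k+1}-\bm{Z}^k\|,\quad \forall k\in\mathbb{N}.
\end{displaymath}

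First I would invoke Corollary~\ref{Cor:SeqConv}, whose hypotheses (Assumptions (A1)--(A4)) coincide with those assumed here, to conclude that $\lim_{k\to\infty}\|\bm{Z}^{k+1}-\bm{Z}^k\|=0$. Then I would combine this with the displayed bound: since $\|\bm{A}^{k+1}\|\geq 0$ and is squeezed between $0$ and $c_2\|\bm{Z}^{k+1}-\bm{Z}^k\|$, letting $k\to\infty$ and applying the squeeze theorem yields $\lim_{k\to\infty}\|\bm{A}^{k+1}\|=0$.

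There is essentially no obstacle to overcome at this stage: the corollary is an immediate consequence of results already proved, and all the genuine work---verifying that $\bm{A}^{k+1}\in\partial\Phi(\bm{Z}^{k+1})$ and controlling its norm by the iterate gap, and separately establishing the summability of the squared gaps---has been carried out in Propositions~\ref{Thm:PhiDecreasing} and~\ref{Thm:UpperBounded} and in Corollary~\ref{Cor:SeqConv}. The only point worth stating explicitly is that the constant $c_2$ is uniform in $k$, which is precisely what Proposition~\ref{Thm:UpperBounded}(b) guarantees (through the finite suprema $\beta_+$, $\alpha_+$ furnished by Assumption~(A4)), so that the bound survives passage to the limit. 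Together with Corollary~\ref{Cor:SeqConv}, this corollary will supply the relative error ingredient (H2) needed, alongside (H1) and (H3), for the final Kurdyka--\L ojasiewicz convergence argument.
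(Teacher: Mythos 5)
Your proposal is correct and follows exactly the paper's argument: the paper likewise deduces the corollary immediately from Proposition~\ref{Thm:UpperBounded} together with Corollary~\ref{Cor:SeqConv}. Your additional remark that the constant $c_2$ is uniform in $k$ is a fair point to make explicit, but it introduces no new content beyond what Proposition~\ref{Thm:UpperBounded}(b) already provides.
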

\begin{proof}
The result is immediately followed by Proposition~\ref{Thm:UpperBounded} and Corollary~\ref{Cor:SeqConv}.
\end{proof}

\subsection{Continuity condition}

We first show the existence of a limit point of $\{\bm{Z}^k\}_{k\in\mathbb{N}}$ using the boundedness of  $\{\bm{Z}^k\}_{k\in\mathbb{N}}$, and then prove a continuity condition for any convergent subsequence of $\{\bm{Z}^k\}_{k\in\mathbb{N}}$, which implies Condition (H3).

\begin{proposition}\label{Thm:Continuity}   Suppose that the objective function $\Phi$ in model \eqref{FullModel} satisfies Assumption~(A1)-(A3). Let $\{\bm{Z}^k\}_{k\in \mathbb{N}}$ be the sequence generated by the PARM algorithm provided that the parameters satisfy Assumption (A4). Let $\mathcal{S}$ denote the set of all limit points of the sequence $\{\bm{Z}^k\}_{k\in\mathbb{N}}$. Then the following assertions hold.

\begin{itemize}
\item[(a)] $\mathcal{S}\neq \emptyset$;

\item[(b)] If $\{\bm{Z}^{k_t}\}_{t\in\mathbb{N}}$ is a subsequence of $\{\bm{Z}^k\}_{k\in\mathbb{N}}$ such that $\lim_{t\to\infty}\bm{Z}^{k_t}= \bm{Z}^*\in \mathcal{S}$, then
\begin{displaymath}
\lim_{t\to\infty}\Phi(\bm{Z}^{k_t})= \Phi(\bm{Z}^*).
\end{displaymath}
\end{itemize}

\end{proposition}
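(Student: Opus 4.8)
The plan is to treat the two assertions separately, with part (b) carrying essentially all of the difficulty.

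For part (a), I would first argue that $\{\bm{Z}^k\}_{k\in\mathbb{N}}$ is bounded. Proposition~\ref{Thm:PhiDecreasing} shows that $\{\Phi(\bm{Z}^k)\}_{k\in\mathbb{N}}$ is nonincreasing, so $\Phi(\bm{Z}^k)\le\Phi(\bm{Z}^0)<+\infty$ for every $k$. If the sequence were unbounded, some subsequence would have norm tending to infinity, and the coercivity in Assumption (A3) would force $\Phi$ along it to $+\infty$, contradicting the uniform upper bound $\Phi(\bm{Z}^0)$. Hence $\{\bm{Z}^k\}_{k\in\mathbb{N}}$ is bounded, and since the ambient space is finite dimensional, the Bolzano--Weierstrass theorem yields a convergent subsequence whose limit lies in $\mathcal{S}$, so $\mathcal{S}\neq\emptyset$.

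For part (b), the crucial observation is that the only term of $\Phi$ which is not continuous is $\tau f$. Writing $\Phi(\bm{Z})=\tau f(\bm{x})+\sum_{j=1}^J\Phi_j(\bm{x},\bm{Y}_j)$ with $\Phi_j$ as in \eqref{eq:Phi_j}, each $\Phi_j$ is jointly continuous, because the Frobenius-norm term is polynomial in the entries and $\sum_{i} g\circ\sigma_i$ is continuous (singular values depend continuously on the matrix and $g$ is $C^1$ by Assumption (A2)). Thus along $\bm{Z}^{k_t}\to\bm{Z}^*$ one has $\sum_{j}\Phi_j(\bm{x}^{k_t},\bm{Y}_j^{k_t})\to\sum_{j}\Phi_j(\bm{x}^*,\bm{Y}_j^*)$, and it remains only to show $f(\bm{x}^{k_t})\to f(\bm{x}^*)$. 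Lower semicontinuity of $f$ (Assumption (A1)) already gives $\liminf_{t} f(\bm{x}^{k_t})\ge f(\bm{x}^*)$, so the whole content reduces to the reverse inequality $\limsup_{t} f(\bm{x}^{k_t})\le f(\bm{x}^*)$.

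To produce that reverse inequality I would exploit the optimality defining the $\bm{x}$-update. Since $\bm{x}^{k_t}$ minimizes $\Phi(\,\cdot\,,\bm{Y}_1^{k_t},\dots,\bm{Y}_J^{k_t})+\tfrac{\beta_{k_t-1}}{2}\|\cdot-\bm{x}^{k_t-1}\|_{\bm{W}}^2$, testing this minimizer against the competitor $\bm{x}^*$ gives
\begin{displaymath}
\tau f(\bm{x}^{k_t})+\sum_{j=1}^J\Phi_j(\bm{x}^{k_t},\bm{Y}_j^{k_t})+\tfrac{\beta_{k_t-1}}{2}\|\bm{x}^{k_t}-\bm{x}^{k_t-1}\|_{\bm{W}}^2\le \tau f(\bm{x}^*)+\sum_{j=1}^J\Phi_j(\bm{x}^*,\bm{Y}_j^{k_t})+\tfrac{\beta_{k_t-1}}{2}\|\bm{x}^*-\bm{x}^{k_t-1}\|_{\bm{W}}^2.
\end{displaymath}
Corollary~\ref{Cor:SeqConv} gives $\|\bm{Z}^{k+1}-\bm{Z}^k\|\to0$, which together with $\bm{Z}^{k_t}\to\bm{Z}^*$ yields $\bm{x}^{k_t-1}\to\bm{x}^*$; hence both proximal terms vanish and, by continuity of each $\Phi_j$ (using $\bm{Y}_j^{k_t}\to\bm{Y}_j^*$), the sums on either side converge to the common value $\sum_{j}\Phi_j(\bm{x}^*,\bm{Y}_j^*)$. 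Passing to $\limsup_{t\to\infty}$ and cancelling these equal continuous contributions leaves exactly $\limsup_{t} f(\bm{x}^{k_t})\le f(\bm{x}^*)$, which combined with lower semicontinuity gives $f(\bm{x}^{k_t})\to f(\bm{x}^*)$ and therefore $\Phi(\bm{Z}^{k_t})\to\Phi(\bm{Z}^*)$. I expect the delicate point to be precisely this last step: because $f$ is merely lower semicontinuous, continuity of $\Phi$ along the subsequence cannot be assumed, and the essential device is to feed the limit point $\bm{x}^*$ back into the variational inequality characterizing $\bm{x}^{k_t}$. One must also take care that $\limsup$ of the sum splits correctly, which is legitimate here only because every term other than $\tau f(\bm{x}^{k_t})$ converges.
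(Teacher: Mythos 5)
Your proposal is correct and follows essentially the same route as the paper's proof: boundedness via coercivity plus the monotone decrease of $\{\Phi(\bm{Z}^k)\}$ for part (a), and for part (b) the combination of lower semicontinuity of $f$ with the variational inequality obtained by testing the $\bm{x}$-update against the competitor $\bm{x}^*$, using Corollary~\ref{Cor:SeqConv} and the boundedness of $\{\beta_k\}$ to kill the proximal terms. The only difference is cosmetic: the paper phrases the optimality comparison at the iterate $\bm{x}^{k_t+1}$ (so it in effect handles the shifted subsequence), whereas you index it at $\bm{x}^{k_t}$, which addresses the stated subsequence directly.
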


\begin{proof} (a) We show that $\{\bm{Z}^k\}_{k\in\mathbb{N}}$ is bounded by contradiction.

Assume for the sake of contradiction that there exists a subsequence $\{\bm{Z}^{k_l}\}_{l\in\mathbb{N}}$ such that $\|\bm{Z}^{k_l}\|\to \infty$ as $l\to \infty$. According to Assumption (A3), $\Phi$ is coercive, and then $\Phi(\bm{Z}^{k_l})\to \infty$ as $l\to \infty$. However, since $\{\Phi(\bm{Z}^k)\}_{k\in\mathbb{N}}$ is strictly decreasing and lower bounded by $\Phi_{\inf}>-\infty$, then $\{\Phi(\bm{Z}^k)\}_{k\in\mathbb{N}}$ converges and $\{\Phi(\bm{Z}^{k_l})\}_{l\in\mathbb{N}}$ also converges, which yields a contradiction. Thus, $\{\bm{Z}^{k}\}_{k\in\mathbb{N}}$ is bounded and there exists a convergent subsequence of $\{\bm{Z}^k\}_{k\in\mathbb{N}}$.

(b) Let $\{\bm{Z}^{k_t}\}_{t\in\mathbb{N}}$ be  a subsequence such that $\bm{Z}^{k_t}\to \bm{Z}^*$ as $t\to\infty$.

Since $f$ is lower semicontinuous, then we have
\begin{displaymath}
\liminf_{t\to\infty} \;\tau f(\bm{x}^{k_t+1}) \geq \tau  f(\bm{x}^*).
\end{displaymath}
From the update of $\bm{x}^{k_t+1}$ referring to  \eqref{Min_x^{k+1}}, we obtain the following inequality
\begin{align*}
&\tau f(\bm{x}^{k_t+1})+ \langle \bm{x}^{k_t+1}-\bm{x}^{k_t},  \bm{x}^{k_t}-\sum_{j=1}^J\mu_j \bm{W}^{-1}R_j^{\top}(\bm{Y}_j^{k_t+1})\rangle_{\bm{W}} +\frac{\beta_{k_t}+1}{2}\|\bm{x}^{k_t+1}-\bm{x}^{k_t}\|_{\bm{W}}^2\\
\leq &\tau f(\bm{x}^*)+ \langle \bm{x}^{*}-\bm{x}^{k_t},  \bm{x}^{k_t}-\sum_{j=1}^J\mu_j \bm{W}^{-1}R_j^{\top}(\bm{Y}_j^{k_t+1})\rangle_{\bm{W}} +\frac{\beta_{k_t}+1}{2}\|\bm{x}^{*}-\bm{x}^{k_t}\|_{\bm{W}}^2.
\end{align*}
 Letting $t\to\infty$ on both sides of the above inequality, we get
\begin{align*}
&\limsup_{t \to\infty} \;\tau f(\bm{x}^{k_t+1})\\
\leq &\tau f(\bm{x}^*)+\limsup_{t\to \infty}\;\langle \bm{x}^{*}-\bm{x}^{k_t}, \bm{x}^{k_t}-\sum_{j=1}^J\mu_j \bm{W}^{-1}R_j^{\top}(\bm{Y}_j^{k_t+1})\rangle_{\bm{W}} +\frac{\beta_{k_t}+1}{2}\|\bm{x}^*-\bm{x}^{k_t}\|_{\bm{W}}^2\\
=&\tau f(\bm{x}^*),
\end{align*}
where we use the boundedness of the sequences $\{\bm{x}^{k_t+1}\}_{t\in\mathbb{N}}$, $\{\bm{Y}^{k_t+1}_j\}_{t\in\mathbb{N}}$ and $\{\beta_{k_t}\}_{t\in\mathbb{N}}$ and the result that $\lim_{t\to\infty}\|\bm{x}^{k_t+1}-\bm{x}^{k_t}\|_{\bm{W}}\to 0$  followed from Corollary~\ref{Cor:SeqConv}.
Hence, \\$\lim_{t \to\infty} \tau f(\bm{x}^{k_t+1})=f(\bm{x}^*)$.

Due to the continuity of $\frac{\mu_j}{2}\|\bm{Y}_j-R_j(\bm{x})\|_{F}^2$ with respect to $\bm{Y}_j$ and  $\bm{x}$ and the continuity of $g(t)$ with respect to $t$, we have
\begin{align*}
\lim_{t\to\infty} \Phi(\bm{Z}^{k_t+1})= \tau f(\bm{x}^{*})+\sum_{j=1}^{J}\left (\frac{\mu_j}{2}\|\bm{Y}^{*}_j-R_j(\bm{x}^{*})\|_{F}^2+\lambda_j\sum_{i=1}^{m_j}g(\sigma_i(\bm{Y}_j^*))\right )=\Phi(\bm{Z}^{*}).
\end{align*}
\end{proof}

\subsection{Convergence results}

In this subsection, we show the convergence of  the sequence $\{\bm{Z}^k\}_{k\in\mathbb{N}}$ generated by the PARM algorithm.

Let us first review a definition and a theorem on the Kurdyka-\L ojasiewicz (KL) property of a function in \cite{attouch2010proximal,attouch2013convergence}.
\begin{definition}[Kurdyka-\L ojasiewicz] Let $f:\mathbb{R}^d\to (-\infty,+\infty]$ be proper and lower semicontinuous.
\begin{enumerate}
\item[(a)] The function $f$ is called to have the Kurdyka-\L ojasiewicz (KL) property at $\tilde{\bm{x}}\in \dom \partial f$ if there exist $\eta\in (0,+\infty]$, a neighborhood $U$ of $\tilde{\bm{x}}$ and a continuous function $\varphi:[0,\eta)\to [0,\infty)$ such that
\begin{enumerate}
\item[(i)] $\varphi(0)=0$;
\item[(ii)] $\varphi$ is $C^1$ on $(0,\eta)$ and continuous at $0$;
\item[(iii)] for all $s\in (0,\eta)$, $\varphi'(s)>0$;
\item[(iv)] for all $\bm{x}\in U\cap \{\bm{x}\in \mathbb{R}^d: f(\tilde{\bm{x}})<f(\bm{x})<f(\tilde{\bm{x}})+\eta\}$, the following Kurdyka-\L ojasiewicz inequality holds
\begin{displaymath}
\varphi'(f(\bm{x})-f(\tilde{\bm{x}}))\text{dist}(\bm{0},\partial f(\bm{x}))\geq 1.
\end{displaymath}
\end{enumerate}

\item[(b)] The function $f$ is called a KL function if $f$ has the KL property at each point of $\dom \partial f$.
\end{enumerate}
\end{definition}

\begin{theorem}[see {\cite[Theorem 2.9]{attouch2013convergence}}]\label{Thm:KLConv} Let $f:\mathbb{R}^{d}\to (-\infty,+\infty]$ be a proper lower semicontinuous function. Consider a sequence $\{\bm{x}^k\}_{k\in\mathbb{N}}$ that satisfies Condition (H1)-(H3). If $f$ has the Kurdyka-\L ojasiewicz property at the limit point $\bm{x}^*$ specified in (H3), then the sequence  $\{\bm{x}^k\}_{k\in\mathbb{N}}$ converges to $\bm{x}^*$ as $k$ goes to $\infty$, and $\bm{x}^*$ is a critical point of $f$. Moreover, the sequence $\{\bm{x}^k\}_{k\in\mathbb{N}}$ has a finite length, i.e.,
\begin{displaymath}
\sum_{k=0}^{\infty}\|\bm{x}^{k+1}-\bm{x}^{k}\|<\infty.
\end{displaymath}

\end{theorem}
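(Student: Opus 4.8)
The plan is to follow the standard abstract convergence argument for descent methods on Kurdyka--\L ojasiewicz functions. First I would establish that the objective values converge to $f(\bm{x}^*)$: Condition (H1) makes $\{f(\bm{x}^k)\}_{k\in\mathbb{N}}$ non-increasing, and since (H3) supplies a subsequence with $f(\bm{x}^{k_t})\to f(\bm{x}^*)$, monotonicity forces the entire sequence to converge to the same limit with $f(\bm{x}^k)\geq f(\bm{x}^*)$ for all $k$. If $f(\bm{x}^{k_0})=f(\bm{x}^*)$ for some finite $k_0$, then (H1) gives $\bm{x}^k=\bm{x}^{k_0}$ for all $k\geq k_0$ and every conclusion is immediate; so I would assume $f(\bm{x}^k)>f(\bm{x}^*)$ for every $k$ and work in the regime where the KL inequality is meaningful.

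The core of the argument is to bound the step lengths using the KL inequality. Writing $\Delta_k:=\varphi(f(\bm{x}^k)-f(\bm{x}^*))$, concavity of $\varphi$ yields
\[
\Delta_k-\Delta_{k+1}\geq \varphi'(f(\bm{x}^k)-f(\bm{x}^*))\,\bigl(f(\bm{x}^k)-f(\bm{x}^{k+1})\bigr).
\]
The KL inequality at $\bm{x}^k$ bounds $\varphi'(f(\bm{x}^k)-f(\bm{x}^*))$ below by $1/\text{dist}(\bm{0},\partial f(\bm{x}^k))$, while (H2) bounds this distance above by $c_2\|\bm{x}^k-\bm{x}^{k-1}\|$; combining these with the descent estimate $f(\bm{x}^k)-f(\bm{x}^{k+1})\geq c_1\|\bm{x}^{k+1}-\bm{x}^k\|^2$ from (H1) gives
\[
\|\bm{x}^{k+1}-\bm{x}^k\|^2\leq \frac{c_2}{c_1}\,\|\bm{x}^k-\bm{x}^{k-1}\|\,(\Delta_k-\Delta_{k+1}).
\]
Applying $2\sqrt{ab}\leq a+b$ converts this into the telescoping-friendly inequality $2\|\bm{x}^{k+1}-\bm{x}^k\|\leq \|\bm{x}^k-\bm{x}^{k-1}\|+(c_2/c_1)(\Delta_k-\Delta_{k+1})$. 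Summing over $k$ and using $\Delta_k\geq 0$ produces a uniform bound on the partial sums, establishing the finite-length property $\sum_k\|\bm{x}^{k+1}-\bm{x}^k\|<\infty$.

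The finite-length property makes $\{\bm{x}^k\}_{k\in\mathbb{N}}$ a Cauchy sequence, hence convergent; since a subsequence already converges to $\bm{x}^*$, the full sequence converges to $\bm{x}^*$. To identify $\bm{x}^*$ as a critical point, I would invoke (H2) once more: the subgradients $\bm{A}^{k+1}\in\partial f(\bm{x}^{k+1})$ satisfy $\|\bm{A}^{k+1}\|\leq c_2\|\bm{x}^{k+1}-\bm{x}^k\|\to 0$, so combining $\bm{x}^k\to\bm{x}^*$ with $f(\bm{x}^k)\to f(\bm{x}^*)$ and the closedness of the limiting subdifferential yields $\bm{0}\in\partial f(\bm{x}^*)$.

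The main obstacle I anticipate is making the KL inequality applicable \emph{at every iterate}: the inequality holds only on a neighborhood $U$ of $\bm{x}^*$ intersected with a level strip, so I must guarantee that $\bm{x}^k$ remains in $U$ for all large $k$ before the telescoping estimate above is legitimate. The resolution is an induction on a ball $B(\bm{x}^*,\rho)\subset U$: choosing $k_0$ large enough that $\bm{x}^{k_0}$ is close to $\bm{x}^*$ and $\Delta_{k_0}$ is small, the very same length estimate that proves summability simultaneously controls how far the iterates can drift from $\bm{x}^*$, so the partial-sum bound keeps the trajectory inside $B(\bm{x}^*,\rho)$. Closing this induction---coupling the claim that the iterates stay near $\bm{x}^*$ with the claim that the steps are summable---is the delicate point, and it is precisely where the interplay of (H1), (H2), and the KL inequality must be orchestrated carefully.
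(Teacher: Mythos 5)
This statement is imported verbatim from the literature --- the paper gives no proof of it, citing it as Theorem 2.9 of Attouch, Bolte, and Svaiter \cite{attouch2013convergence} and using it as a black box in the final convergence theorem. Your sketch is a faithful reconstruction of the standard argument in that reference: monotone convergence of the values from (H1) combined with (H3), the chain
\begin{displaymath}
\Delta_k-\Delta_{k+1}\;\geq\;\varphi'\bigl(f(\bm{x}^k)-f(\bm{x}^*)\bigr)\bigl(f(\bm{x}^k)-f(\bm{x}^{k+1})\bigr)\;\geq\;\frac{c_1\|\bm{x}^{k+1}-\bm{x}^k\|^2}{c_2\|\bm{x}^k-\bm{x}^{k-1}\|},
\end{displaymath}
the $2\sqrt{ab}\leq a+b$ step, the telescoping bound on partial sums, Cauchyness, and closedness of the limiting subdifferential for criticality --- all of these are exactly the ingredients of the cited proof, and your bookkeeping (the partial sums collapse to $s_0+(c_2/c_1)\Delta_1$) is correct. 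You have also correctly identified the one genuinely delicate point, namely that the KL inequality is only available on $U\cap\{f(\bm{x}^*)<f<f(\bm{x}^*)+\eta\}$, so the length estimate and the confinement of the iterates to a ball $B(\bm{x}^*,\rho)\subset U$ must be established by a single coupled induction; you describe the resolution accurately but do not execute it, which is acceptable for a proof plan though it is where most of the technical work in the original proof lives. In short: there is nothing in the paper to compare against, and your proposal matches the source's argument.
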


The KL theory is a powerful tool for nonconvex nonsmooth optimization problems and KL functions are ubiquitous. For example, for multiplicative noise removal, the objective function $\Phi$ in model \eqref{FullModel} with $f$ defined as  \eqref{eq:f} and $g$  as \eqref{eq:g} is a KL function. For more examples of KL functions see \cite{attouch2010proximal,attouch2013convergence}.

Next, equipped with Condition (H1)-(H3) discussed in the previous subsections, we can show  that any limit point of $\{\bm{Z}^k\}_{k\in\mathbb{N}}$ is a critical point of $\Phi$ in the following theorem.
\begin{theorem}
\label{Thm:Crit}  Suppose that the objective function $\Phi$ in model \eqref{FullModel}  satisfies Assumption (A1)-(A3). Let $\{\bm{Z}^k\}_{k\in \mathbb{N}}$ be the sequence generated by the PARM algorithm provided that the parameters satisfy Assumption (A4). Let $\mathcal{S}$ denote the set of all limit points of the sequence $\{\bm{Z}^k\}_{k\in\mathbb{N}}$ and let $\operatorname{crit}(\Phi)$ denote the set of all critical points of the function $\Phi$. Then $\emptyset\neq\mathcal{S}\subseteq \operatorname{crit}(\Phi)$, that is,  any limit point of $\{\bm{Z}^k\}_{k\in\mathbb{N}}$ is a critical point of $\Phi$.
\end{theorem}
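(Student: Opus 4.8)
The plan is to separate the claim into its two assertions and dispatch the nonemptiness immediately, then concentrate on the inclusion $\mathcal{S}\subseteq\operatorname{crit}(\Phi)$, where the real work lies. Nonemptiness of $\mathcal{S}$ is exactly Proposition~\ref{Thm:Continuity}(a), so $\mathcal{S}\neq\emptyset$ needs no new argument. For the inclusion, I would fix an arbitrary limit point $\bm{Z}^*\in\mathcal{S}$ together with a subsequence $\{\bm{Z}^{k_t}\}_{t\in\mathbb{N}}$ satisfying $\bm{Z}^{k_t}\to\bm{Z}^*$, and aim to show $\bm{0}\in\partial\Phi(\bm{Z}^*)$, which by definition means that $\bm{Z}^*$ is a critical point of $\Phi$. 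Note that the Kurdyka--\L ojasiewicz property and Theorem~\ref{Thm:KLConv} are \emph{not} needed for this statement, since here I only assert that every limit point is critical, not that the whole sequence converges.

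The engine of the argument is the closedness of the graph of the limiting subdifferential, which can be read off from Definition~\ref{Def:Subdiff}(2): if a sequence of iterates converges, the corresponding function values converge, and an associated sequence of subgradients converges, then the limiting subgradient belongs to the subdifferential at the limit point. I would assemble exactly these three ingredients along the \emph{shifted} subsequence $\{\bm{Z}^{k_t+1}\}_{t\in\mathbb{N}}$. First, Corollary~\ref{Cor:SeqConv} gives $\|\bm{Z}^{k+1}-\bm{Z}^k\|\to 0$, so $\bm{Z}^{k_t+1}\to\bm{Z}^*$ as well. Second, Proposition~\ref{Thm:Continuity}(b) yields $\Phi(\bm{Z}^{k_t+1})\to\Phi(\bm{Z}^*)$. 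Third, Proposition~\ref{Thm:UpperBounded}(a) furnishes $\bm{A}^{k_t+1}\in\partial\Phi(\bm{Z}^{k_t+1})$, while Corollary~\ref{Cor:SubdifZero} gives $\|\bm{A}^{k_t+1}\|\to 0$, that is, $\bm{A}^{k_t+1}\to\bm{0}$. Combining these three facts through the closedness property produces $\bm{0}\in\partial\Phi(\bm{Z}^*)$, hence $\bm{Z}^*\in\operatorname{crit}(\Phi)$; since $\bm{Z}^*$ was arbitrary, $\mathcal{S}\subseteq\operatorname{crit}(\Phi)$.

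The main obstacle I anticipate is the rigorous justification of the closedness step. Definition~\ref{Def:Subdiff}(2) builds the limiting subdifferential out of Fr\'echet subgradients at nearby points, whereas the subgradients $\bm{A}^{k_t+1}$ supplied by Proposition~\ref{Thm:UpperBounded} already lie in the limiting subdifferential $\partial\Phi(\bm{Z}^{k_t+1})$. What is really required is that the graph of $\partial\Phi$ be closed relative to sequences along which the function values also converge, a standard robustness property of the limiting subdifferential that I would invoke as a known fact. The function-value convergence requirement is precisely why Proposition~\ref{Thm:Continuity}(b) was established, and it cannot be dropped: without it the closure process in Definition~\ref{Def:Subdiff}(2) need not capture the limit. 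A minor technical point to handle with care is that $\bm{Z}$ carries the product metric mixing the $\bm{W}$-weighted norm on $\bm{x}$ with the Frobenius norms on the $\bm{Y}_j$, so $\partial\Phi$ must be understood as the subdifferential defined componentwise in those norms, and the closedness argument should be carried out in that product structure.
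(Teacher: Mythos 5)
Your proposal is correct and follows essentially the same route as the paper: nonemptiness from Proposition~\ref{Thm:Continuity}(a), then for a limit point $\bm{Z}^*$ combine the continuity condition, the subgradients $\bm{A}^{k+1}\in\partial\Phi(\bm{Z}^{k+1})$ from Proposition~\ref{Thm:UpperBounded}, their convergence to $\bm{0}$ from Corollary~\ref{Cor:SubdifZero}, and the closedness of the limiting subdifferential to conclude $\bm{0}\in\partial\Phi(\bm{Z}^*)$. Your use of the shifted subsequence $\{\bm{Z}^{k_t+1}\}$ and your explicit remark that the closure property requires convergence of the function values are in fact slightly more careful than the paper's own wording, but they do not constitute a different argument.
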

\begin{proof}\ \  Let $\bm{Z}^*$ be in $\mathcal{S}\neq \emptyset$ and let  $\{\bm{Z}^{k_t}\}_{t\in\mathbb{N}}$ be a subsequence of $\{\bm{Z}^k\}_{k\in\mathbb{N}}$  such that $ \lim_{t\to\infty}\bm{Z}^{k_t}= \bm{Z}^*$. Then by Proposition~\ref{Thm:Continuity}, $\lim_{t\to\infty}\Phi(\bm{Z}^{k_t})= \Phi(\bm{Z}^*)$. Also, it follows from  Proposition~\ref{Thm:UpperBounded} and Corollary~\ref{Cor:SubdifZero} that $\bm{A}^{k_{t}}\in \partial \Phi(\bm{Z}^{k_{t}})$ and $\bm{A}^{k_{t}}\to \bm{0}$ as $t\to \infty$. Thus, by the definition of subdifferential in  Definition~\ref{Def:Subdiff}, we have $\bm{0}\in \partial\Phi (\bm{Z}^*)$.
\end{proof}

In addition to Condition (H1)-(H3), if $\Phi$ is a Kurdyka-\L ojasiewicz (KL) function, then a stronger convergence result can be achieved for the sequence $\{\bm{Z}^k\}_{k\in\mathbb{N}}$. That is, we can prove that the sequence $\{\bm{Z}^k\}_{k\in \mathbb{N}}$ itself converges  a critical point of $\Phi$ using the KL theory.

\begin{theorem}   Suppose that the objective function $\Phi$ in model \eqref{FullModel} satisfies Assumption (A1)-(A3). Let $\{\bm{Z}^k\}_{k\in \mathbb{N}}$ be the sequence generated by the PARM algorithm provided that the parameters satisfy Assumption (A4). If  $\Phi$ is a KL function, then the following assertions hold.

\begin{enumerate}

\item[(a)] The sequence $\{\bm{Z}^k\}_{k\in\mathbb{N}}$ has finite length, that is,
\begin{displaymath}
\sum_{k=0}^{\infty}\|\bm{Z}^{k+1}-\bm{Z}^{k}\|<\infty;
\end{displaymath}

\item[(b)] The sequence $\{\bm{Z}^k\}_{k\in\mathbb{N}}$ converges to a critical point of $\Phi$.

\end{enumerate}
\end{theorem}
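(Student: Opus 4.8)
The plan is to recognize that this final theorem is essentially a direct application of the abstract Kurdyka--\L ojasiewicz convergence result, Theorem~\ref{Thm:KLConv}, since all of its structural hypotheses have already been verified for the PARM sequence in the preceding subsections. Concretely, I would first assemble Conditions (H1)--(H3) from the earlier results. The sufficient descent condition (H1) is exactly Proposition~\ref{Thm:PhiDecreasing}, with the explicit constant $c_1=\tfrac{1}{2}\min\{\beta_-,\alpha_-\}$. The relative error condition (H2) is Proposition~\ref{Thm:UpperBounded}, which constructs a specific subgradient $\bm{A}^{k+1}\in\partial\Phi(\bm{Z}^{k+1})$ obeying $\|\bm{A}^{k+1}\|\leq c_2\|\bm{Z}^{k+1}-\bm{Z}^k\|$. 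The continuity condition (H3) is supplied by Proposition~\ref{Thm:Continuity}: part~(a) uses coercivity (Assumption~(A3)) together with the monotone boundedness of $\{\Phi(\bm{Z}^k)\}$ to guarantee a nonempty set $\mathcal{S}$ of limit points, and part~(b) shows that along any convergent subsequence $\bm{Z}^{k_t}\to\bm{Z}^*$ the values satisfy $\Phi(\bm{Z}^{k_t})\to\Phi(\bm{Z}^*)$.

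Second, I would invoke the KL hypothesis. By assumption $\Phi$ is a KL function, so it possesses the KL property at every point of $\dom\partial\Phi$, and in particular at the limit point $\bm{Z}^*$ furnished by (H3). At this stage all hypotheses of Theorem~\ref{Thm:KLConv} are met with the identifications $f=\Phi$ and $\bm{x}^k=\bm{Z}^k$, where the ambient norm is the mixed $\bm{W}$-weighted/Frobenius norm $\|\bm{Z}\|=\sqrt{\|\bm{x}\|_{\bm{W}}^2+\sum_{j}\|\bm{Y}_j\|_F^2}$ that was used consistently when establishing (H1)--(H3).

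Third, I would apply Theorem~\ref{Thm:KLConv} directly. Its conclusion yields assertion~(a), the finite-length bound $\sum_{k}\|\bm{Z}^{k+1}-\bm{Z}^k\|<\infty$, and the convergence of the entire sequence $\{\bm{Z}^k\}$ to $\bm{Z}^*$. Finite length makes $\{\bm{Z}^k\}$ Cauchy, hence convergent, and the limit is a critical point of $\Phi$; this last fact can be read off from Theorem~\ref{Thm:KLConv} itself, or alternatively from Theorem~\ref{Thm:Crit}, which has already shown $\emptyset\neq\mathcal{S}\subseteq\operatorname{crit}(\Phi)$. Together these give assertion~(b). The step that most warrants care is not computational but bookkeeping: one must check that the limit point at which the KL property is invoked is \emph{the same} point $\bm{Z}^*$ appearing in (H3), and that every condition and the KL inequality are stated in the identical norm, so that the abstract theorem transfers verbatim to the PARM setting. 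Once this alignment is confirmed, no further estimation is needed and the proof is immediate.
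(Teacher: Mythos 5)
Your proposal is correct and follows exactly the paper's argument: verify Conditions (H1)--(H3) via Propositions~\ref{Thm:PhiDecreasing}, \ref{Thm:UpperBounded} and \ref{Thm:Continuity}, then apply Theorem~\ref{Thm:KLConv} using the KL property of $\Phi$ at the limit point. The additional bookkeeping you mention about norm consistency is a sensible precaution but does not change the route.
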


\begin{proof}
It follows from Proposition~\ref{Thm:PhiDecreasing},  Proposition~\ref{Thm:UpperBounded} and Proposition~\ref{Thm:Continuity} that the sequence  $\{\bm{Z}^k\}_{k\in \mathbb{N}}$ satisfies Condition (H1)-(H3), respectively. Then the results (a) and (b) immediately follow from  Theorem~\ref{Thm:KLConv}.
\end{proof}

\section{Numerical Results}\label{Section:Experiment}

In this section, we first describe a practical version of Algorithm~\ref{Alg:Theoretical} and then test the proposed algorithms to solve the proposed nonlocal low-rank model for multiplicative noise removal. We compare our proposed method with six existing methods: the DZ method\cite{dong2013convex}, the HNW method \cite{huang2009new},  the I-DIV method\cite{steidl2010removing}, the TwL-mV method\cite{kang2013two}, the learned dictionary (Dict) method \cite{huang2012multiplicative} and the SAR-BM3D method\cite{parrilli2011nonlocal}. Numerical results show superior performance of the proposed method over the existing ones.

The experiments were implemented in Matlab 2016b running a 64 bit Ubuntu 18.04 system and executed on an eight-core Intel Xeon E5-2640v3 128GB CPU at 2.6 GHz, with four NVIDIA Tesla P100 16GB GPUs. The proposed algorithms were accelerated using graphics processing units (GPUs), as the estimation of each patch matrix can be computed in parallel.

\subsection{Practical version of PARM algorithm}

The PARM algorithm presented in Algorithm~\ref{Alg:Theoretical} converges theoretically as shown in section~\ref{Section:Convergence}, if the patch extraction operator $R_j$ is assumed to be fixed. The extraction operator $R_j$ plays an important role in improving the denoising performance because a better initialization of $R_j$ can yield to a better denoised image. In the case in which the optimal $R_j$ is not available, it is empirically challenging to find an appropriate choice of $R_j$ only with a noisy image given.
\begin{algorithm}
\caption{Practical version of the PARM algorithm}
\label{Alg:Practical}
\begin{algorithmic}[1]
\State{Set parameters $\tau$, $\mu_j$, $\lambda_j$, and $\beta_{k}$}
\State{Initialize  $\bm{x}^0$ and $\bm{w}_j^{0}$}
\For{$k$ from $0$ to $K-1$}
\State{Set extraction $\hat{R_j}$ by block matching}
\State{Compute matrix $\bm{\hat{W}}$}
	\For{$j$ from $1$ to $J$}
		\State{$[\bm{U}_j^{k+1}, \widetilde{\bm{\Sigma}}^{k}_j, \bm{V}_j^{k+1}]=SVD\left (\hat{R_j} (\bm{x}^{k})\right )$}\Comment{SVD}
		\State{$\bm{\Sigma}_j^{k+1}=S_{\lambda_j/\mu_j ,\bm{w}_j^{k}}(\widetilde{\bm{\Sigma}}^{k}_j)$}\Comment{WSVT}
		\State{$\bm{Y}_j^{k+1}=\bm{U}^{k+1}_j\bm{\Sigma}_j^{k+1}(\bm{V}_j^{k+1})^{\top}$} \Comment{Update $\bm{Y}_j^{k+1}$}
		\State{$(w_j^{k+1})_i=g'((\Sigma_j^{k+1})_{ii})$}\Comment{Update $\bm{w}_j^{k+1}$}
	\EndFor
    	\State{$\bm{x}^{k+1}\in \prox_{\frac{\tau}{\beta_k+1}f}^{\bm{\hat{W}}}\left (\bm{x}^k+ \frac{1}{\beta_k+1}\left (\sum_{j=1}^J\mu_j \bm{\hat{W}}^{-1} \hat{R_j}^{\top}(\bm{Y}_j^{k+1})-\bm{x}^k\right)\right )$}\Comment{Update $\bm{x}^{k+1}$}
    \EndFor
\end{algorithmic}
\end{algorithm}

Here, we provide a practical version of the PARM algorithm with dynamically updated patch extraction operator, denoted as $\hat{R_j}$. The operator $\hat{R_j}$ is recomputed at each step by block matching based on the update of the estimated image $e^{\bm{x}^{k}}$, and the weighted counts matrix, now denoted as $\bm{\hat{W}}$, is recomputed based on the newest $\hat{R_j}$. As a result of this dynamically updating scheme on $\hat{R_j}$, the patch matrix $\bm{Y}_j^{k+1}$ and $\bm{Y}_j^{k}$ may not refer to the same patch group.  That is because $\bm{Y}_j^{k+1}$ is associated with $\hat{R_j}(\bm{x}^{k})$, while $\bm{Y}_j^{k}$ is associated with $\hat{R_j}(\bm{x}^{k-1})$ using a different extraction operator $\hat{R_j}$. Hence, in this  practical version of the PARM algorithm, we set that $\bm{Y}_j^{k+1}$ is  updated  without using the previous update $\bm{Y}_j^k$ and its parameter $\alpha_{jk}$. The overall procedure of a practical version of the PARM algorithm for multiplicative noise removal is summarized as Algorithm~\ref{Alg:Practical}.

\subsection{Parameter settings}

First, we utilize block matching and normalization with mean zero to extract patch matrices using the following parameter settings for  block matching. In Algorithm~\ref{Alg:Theoretical}, the fixed  extraction  $R_j$ is initialized via block matching based on the estimated image from the SAR-BM3D method; and in  Algorithm~\ref{Alg:Practical}, the dynamically updated extraction $\hat{R_j}$ is computed at each step via block matching based on the update of $e^{\bm{x}^{k}}$. Besides this, both algorithms share the same parameter settings for  block matching,  including the search window, the patch size and the number of patches in each patch group  as presented in Table~\ref{Table:ParaRj}.
\begin{table}[htbp]
\centering
\small
\caption{Settings for block matching.}\label{Table:ParaRj}
\begin{tabular}{cccc}
\hline
$L$ & \textbf{Search window} & \textbf{Patch size} & \textbf{Patch number} \\
\hline
1 & 50 & $10\times 10$ &  150\\
3 & 50 & $9\times 9$ &   120 \\
5 & 50 & $8\times 8$ & 100  \\
\hline
\end{tabular}
\end{table}

Second, we set the model parameters and algorithm parameters for Algorithm~\ref{Alg:Theoretical} and Algorithm~\ref{Alg:Practical}, respectively. The model parameters $\tau$,  $\lambda_j$'s, $\mu_j$'s, $\rho$, $\gamma$ and $\varepsilon$ are adaptive to the noise level. The algorithm parameters ($\alpha_{jk}$'s and) $\beta_k$'s  influence the computational speed. The settings of the above parameters  are presented in Table~\ref{Table:ParaTheor} and Table~\ref{Table:ParaPrac}.
\begin{table}[htbp]
\centering
\small
\caption{Parameter settings for Algorithm~\ref{Alg:Theoretical}.}\label{Table:ParaTheor}
\begin{tabular}{ccccccccccccc}
\hline
\multirow{2}{*}{$L$ } & \multicolumn{2}{c}{\textbf{Standard images}} & & \multicolumn{2}{c}{\textbf{Remote images}} & & \multicolumn{6}{c}{\textbf{Common parameters}} \\
\cline{2-3}\cline{5-6}\cline{8-13}
&$\tau$ & $\lambda_j$ & & $\tau$ & $\lambda_j$ & & $\mu_j$ & $\rho$ & $\gamma$ & $\varepsilon$ & $\alpha_{jk}$ & $\beta_k$\\
\hline
1 & $\beta_k/50$ &  1.8 & & $\beta_k/100$ &  1 & & 1 & 0.01 & 4 & $10^{-10}$ & 0.001 & 1.001 \\
3 & $\beta_k/150$ & 1 & & $\beta_k/150$ & 0.45 & &  1 & 1.5 & 1.9 & $10^{-10}$ & 0.001 & 1.001\\
5 & $\beta_k/250$ & 0.6 & &  $\beta_k/200$& 0.15 & & 1 & 2 & 1.3 & $10^{-10}$ & 0.001 & 1.001 \\
\hline
\end{tabular}
\end{table}
\begin{table}[htbp]
\centering
\small
\caption{Parameter settings for Algorithm~\ref{Alg:Practical}.}\label{Table:ParaPrac}
\begin{tabular}{cccccccccccc}
\hline
\multirow{2}{*}{$L$ } & \multicolumn{2}{c}{\textbf{Standard images}} & & \multicolumn{2}{c}{\textbf{Remote images}} & & \multicolumn{5}{c}{\textbf{Common parameters}} \\
\cline{2-3}\cline{5-6}\cline{8-12}
 &$\tau$ & $\lambda_j$ & & $\tau$ & $\lambda_j$ & & $\mu_j$ & $\rho$ & $\gamma$ & $\varepsilon$ & $\beta_k$\\
\hline
1 & $\beta_k/50$ &  2.6 & & $\beta_k/50$ &  2.6 & & 1 & 0.01 & 4 & $10^{-10}$ & 1.001 \\
3 & $\beta_k/150$ & 1.3 & & $\beta_k/150$ &  1.2 & & 1 & 1.5 & 1.9 & $10^{-10}$ & 1.001\\
5 & $\beta_k/250$ & 0.8 & &  $\beta_k/250$& 0.7 & & 1 & 2 & 1.3 & $10^{-10}$ & 1.001 \\
\hline
\end{tabular}
\end{table}

Third, the initialization settings and the stopping criteria are set differently for  Algorithm~\ref{Alg:Theoretical} and Algorithm~\ref{Alg:Practical}. Algorithm~\ref{Alg:Theoretical} is initialized using the estimated image from the SAR-BM3D method and is  terminated if the relative error reaches a tolerance threshold as follows
\begin{displaymath}
\frac{\|\bm{x}^{k+1}-\bm{x}^{k}\|_{\bm{W}}}{\|\bm{x}^{k}\|_{\bm{W}}}< \max \left \{10^{-3}, \frac{\|\bm{x}^{1}-\bm{x}^{0}\|_{\bm{W}}}{\|\bm{x}^{0}\|_{\bm{W}}}\times 50\%\right \}.
\end{displaymath}
Algorithm~\ref{Alg:Practical} is initialized using the given noisy image and terminated by experience based on the number of iterations $K$. For $L=1, 3, 5$, $K$ is set to 65-70, 23-25, 18-20, respectively.

Lastly, the restored image is estimated by $\hat{\bm{u}}=e^{\hat{\bm{x}}}$, where $\hat{\bm{x}}$ is the log-transformed image obtained from the proposed algorithms.

\subsection{Numerical results tested on standard test images}

In this experiment, we use standard test images ``Monarch", ``Lena" and ``House" all of size $256\times 256$, as shown in Figure~\ref{TestImg:Standard}. To generate the observed images, we degrade the original test images by multiplicative Gamma noise at $L=1$, $L=3$ and $L=5$.

\begin{figure}[htbp]
	\centering
	\begin{subfigure}[t]{.26\textwidth}\centering
        \includegraphics[height=3.9cm]{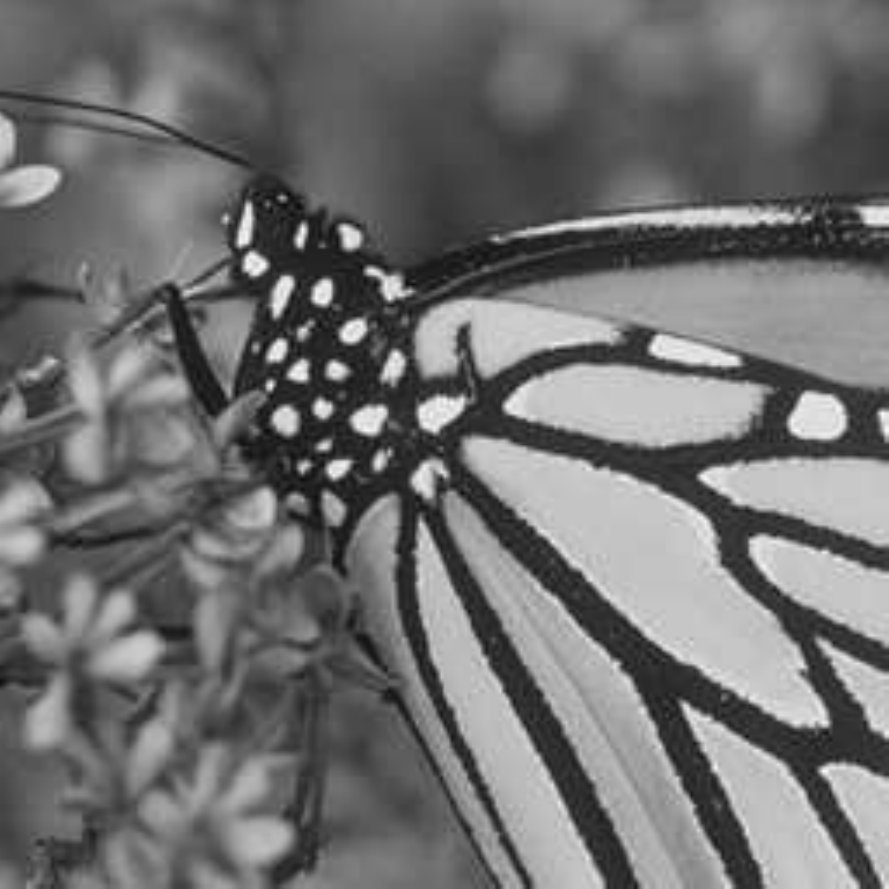}
        \caption{Monarch}
    \end{subfigure}
    	\begin{subfigure}[t]{.26\textwidth}\centering
        \includegraphics[height=3.9cm]{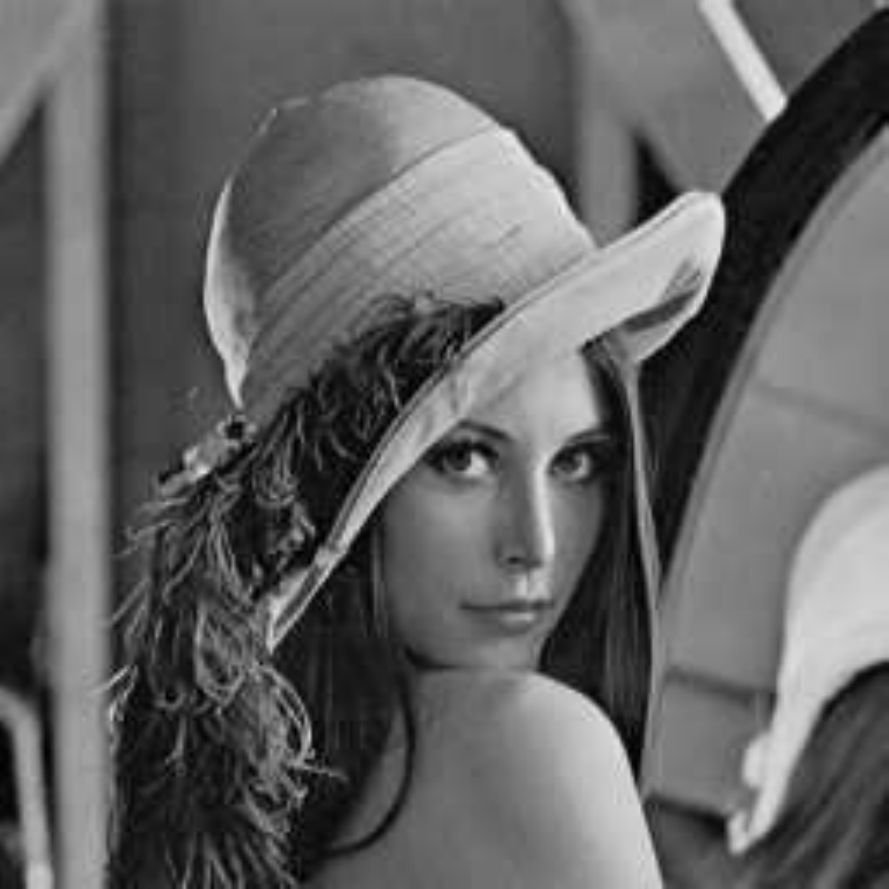}
        \caption{Lena}
    \end{subfigure}
    	\begin{subfigure}[t]{.26\textwidth}\centering
        	\includegraphics[height=3.9cm]{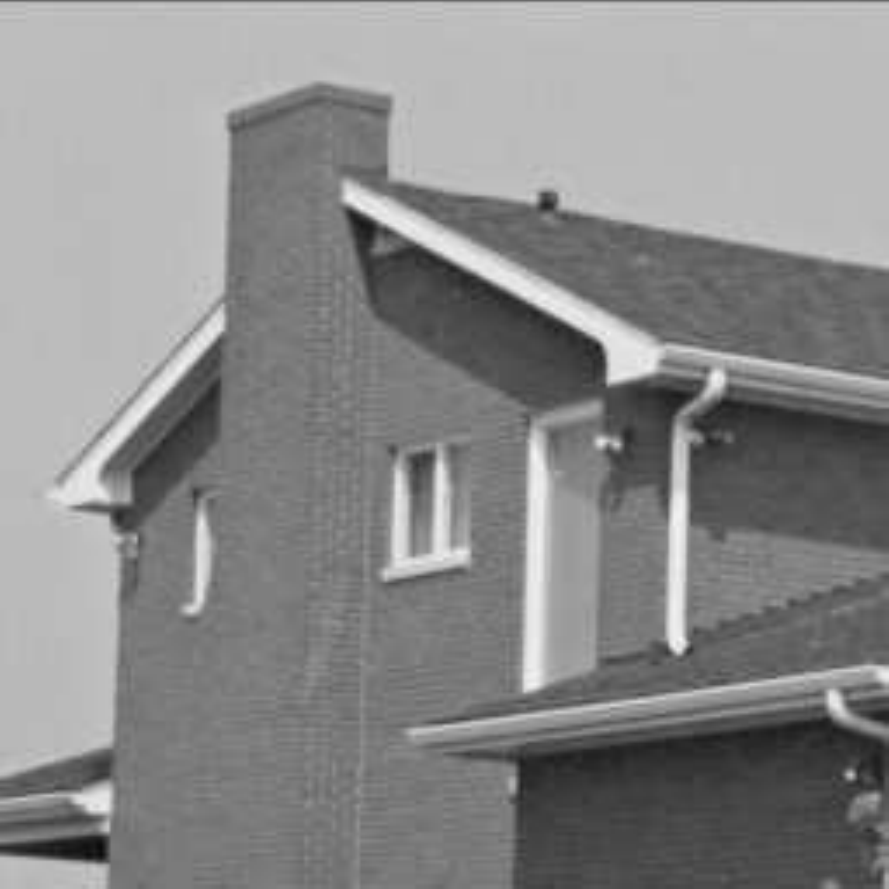}
        \caption{House}
    \end{subfigure}
	\caption{Standard test images.}\label{TestImg:Standard}
\end{figure}

The evaluation of the image quality is measured in the intensity format between the original image $\bm{u}\in \mathbb{R}^N$ and the estimated image $\hat{\bm{u}}\in \mathbb{R}^N$,  using the peak-signal-to-noise ratio (PSNR) defined as
\begin{displaymath}
\text{PSNR}=10\log_{10}\left ( \frac{255^2N }{\|\bm{u}-\hat{\bm{u}}\|_2^2}\right )
\end{displaymath}
and the structural similarity index measure (SSIM)\cite{zhou2004image}.

\begin{table}[htbp]
\centering
\caption{Numerical results tested on standard test images at different noise levels  by different methods.}\label{Table:Standard}
{\small
\begin{tabular}{lcccccccccc}
\hline
\textbf{Image} & $L$ & \textbf{Meas.} & \textbf{Alg 1} &\textbf{Alg 2} & \textbf{SAR-} &\textbf{DZ}& \textbf{HNW}&\textbf{I-DIV}  & \textbf{TwL-} & \textbf{Dict} \\
 &  &  &  & &  \textbf{BM3D} && & & \textbf{4V} &  \\
\hline
Monarch &1 & PSNR &  \textbf{21.94} & \underline{21.55} & 21.36 & 19.38 &19.73 & 19.91 & 19.26 & 19.50 \\
&  & SSIM & \underline{0.6926} & \textbf{0.6966} & 0.6404  & 0.5758 &0.5523 & 0.5883 & 0.5848 & 0.5726\\
 &3 & PSNR & \textbf{24.90} & \underline{24.69} & 24.48 & 22.66 &22.55 & 22.69 & 22.43 & 23.02 \\
&  & SSIM & \underline{0.8051} & \textbf{0.8102} & 0.7693  & 0.7156 &0.7049 & 0.7244 & 0.7096 & 0.7449\\
&5 & PSNR & \textbf{26.31} & \underline{26.24} & 25.78  & 24.04 &23.88 & 23.98 & 23.74 & 24.38\\
&  & SSIM & \underline{0.8524} & \textbf{0.8529} & 0.8232  & 0.7648 &0.7588 & 0.7723 & 0.7621 & 0.7740\\
&&&&&&&&&\\
Lena & 1 & PSNR & \textbf{23.74} & \underline{23.43} & 23.20  & 21.33 & 21.66 & 21.95 & 21.47 & 21.96\\
& & SSIM & \underline{0.6975} & \textbf{0.7082} & 0.6480  & 0.6027 &0.5551 & 0.5947 & 0.6123 & 0.6106\\
 &3 & PSNR &  \textbf{26.44} & \underline{26.29} & 26.00 & 24.06 &24.36 & 24.48 & 24.29 & 24.81 \\
&  & SSIM & \underline{0.7892} & \textbf{0.7944} & 0.7596  & 0.6907 &0.6911 & 0.7073 & 0.7128 & 0.7379\\
 &5 & PSNR & \textbf{27.85} & \underline{27.63} & 27.39  & 25.42 &25.66 & 25.79 & 25.64 & 25.77\\
&  & SSIM & \textbf{0.8308} & \underline{0.8306} & 0.8094  & 0.7469 &0.7455 & 0.7596 & 0.7604 & 0.7621\\
&&&&&&&&&\\
House & 1 & PSNR & \underline{23.42} & \textbf{23.90}  & 22.83  & 21.52 &21.57 & 21.99 & 21.72 & 21.70\\
& & SSIM & \underline{0.6726} & \textbf{0.7179}& 0.5916 & 0.6119 &0.4925 & 0.5860 & 0.6017 & 0.5801 \\
 &3 & PSNR & \underline{27.20} & \textbf{27.32} & 26.54  & 24.16 &24.26 & 24.51 & 24.25 & 23.84\\
&  & SSIM &  \textbf{0.7823} & \underline{0.7798} & 0.7139  & 0.6806 &0.6365 & 0.6938 & 0.6597 & 0.6602\\
&5 & PSNR & \underline{29.04} &\textbf{29.12} & 28.36  & 25.70 &25.73 & 25.84 & 25.79 & 24.56\\
&  & SSIM & \underline{0.8115} & \textbf{0.8163} & 0.7641  & 0.7339 &0.6995 & 0.7291 & 0.7197 & 0.6474\\
 \hline
\end{tabular}}
\end{table}
\begin{figure}[htbp]
	\centering
	\begin{subfigure}[t]{.1942\textwidth}
    \includegraphics[width=3.245cm]{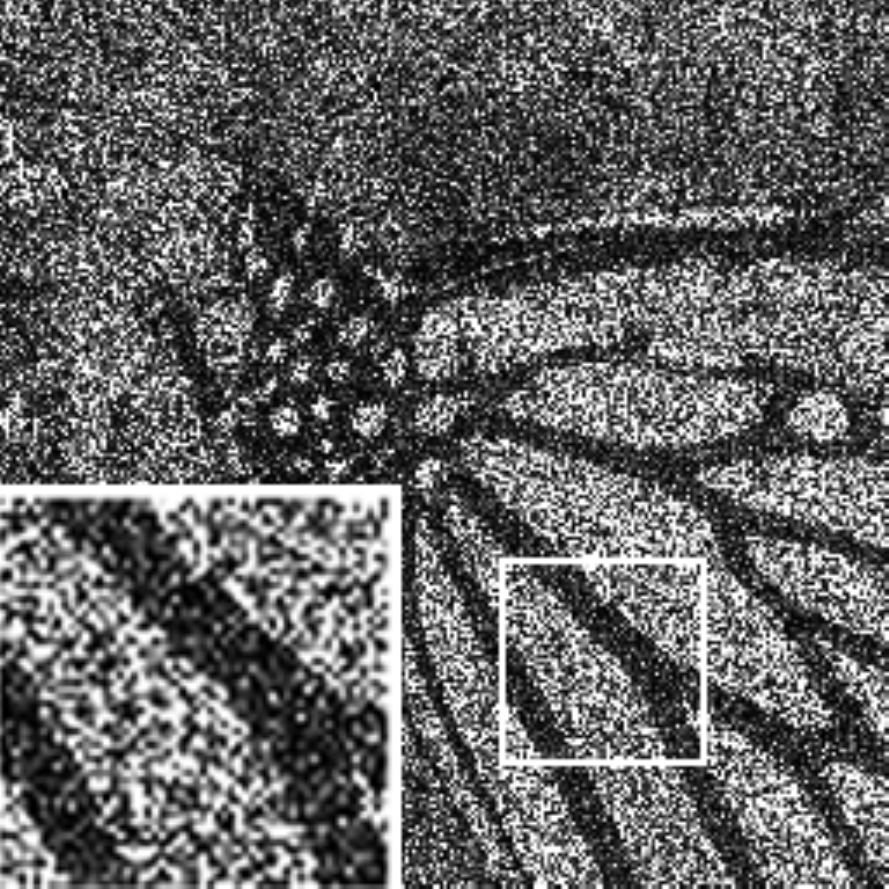}
    \caption{Noisy image (L=1)}
    \end{subfigure}	
    \begin{subfigure}[t]{.1942\textwidth}
    \includegraphics[width=3.245cm]{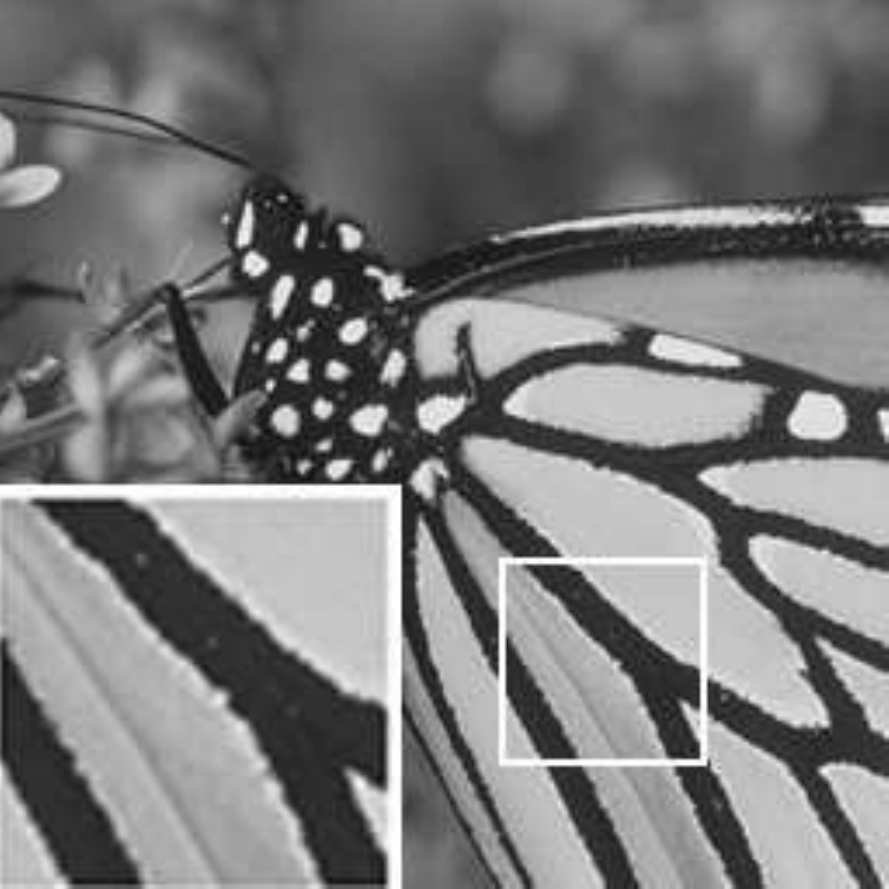}
    \caption{Ground truth}
    \end{subfigure}
    \begin{subfigure}[t]{.1942\textwidth}
    \includegraphics[width=3.245cm]{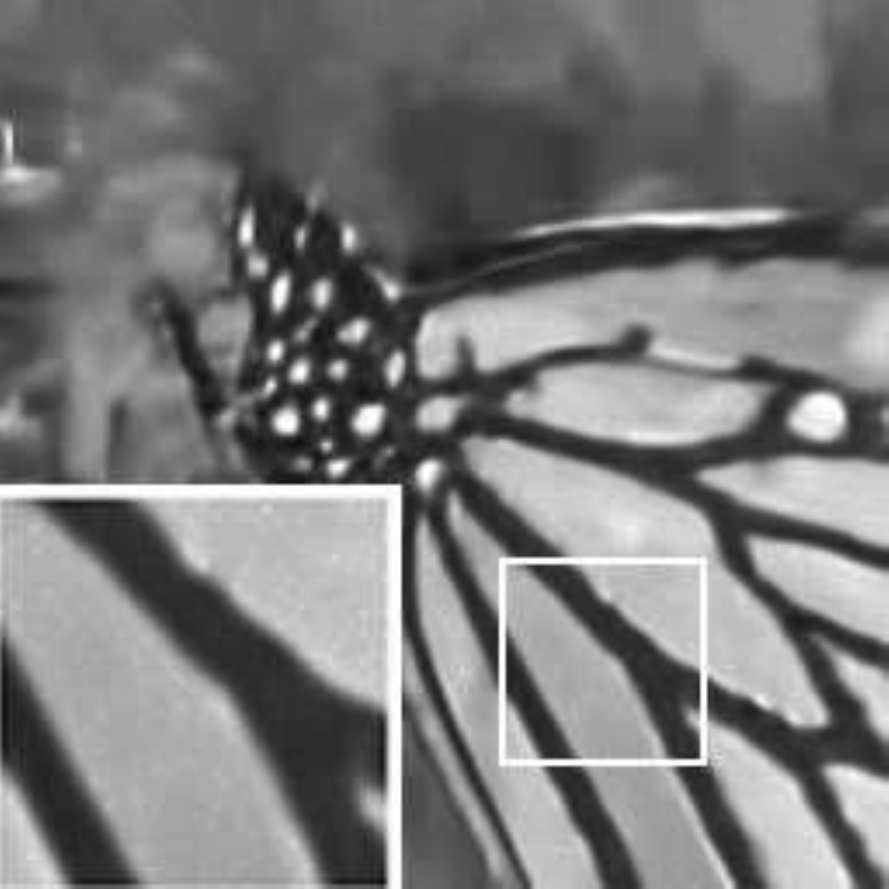}
    \caption{Alg 1}
    \end{subfigure}
    \begin{subfigure}[t]{.1942\textwidth}
    \includegraphics[width=3.245cm]{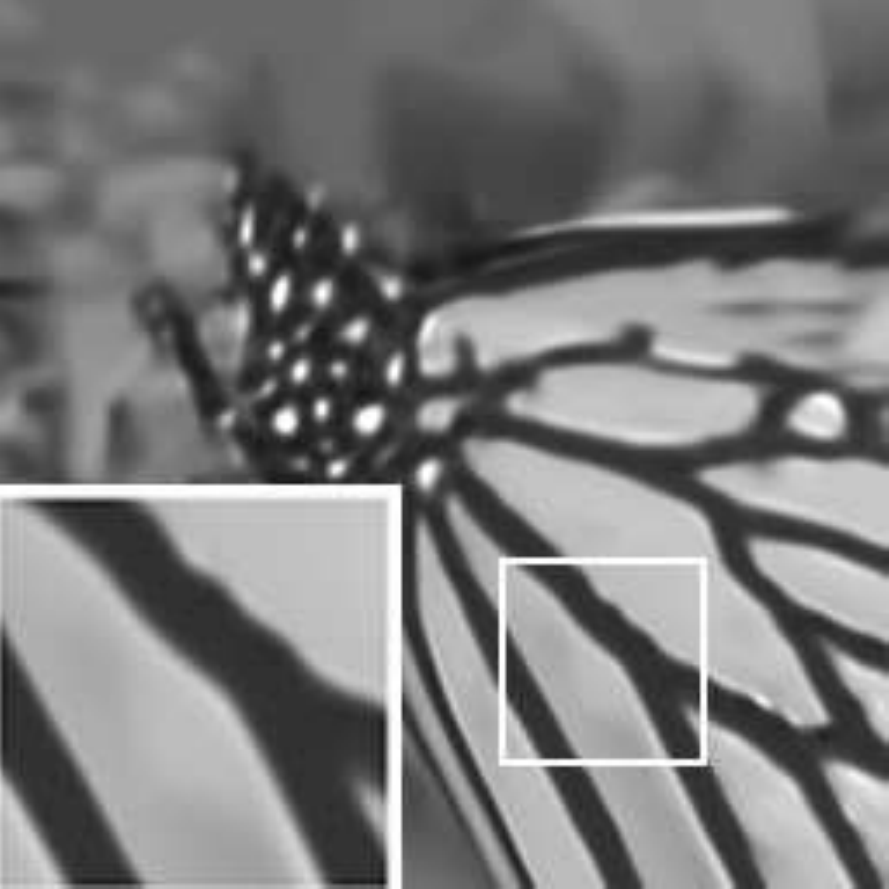}
    \caption{Alg 2}
    \end{subfigure}
    \begin{subfigure}[t]{.1942\textwidth}
    \includegraphics[width=3.245cm]{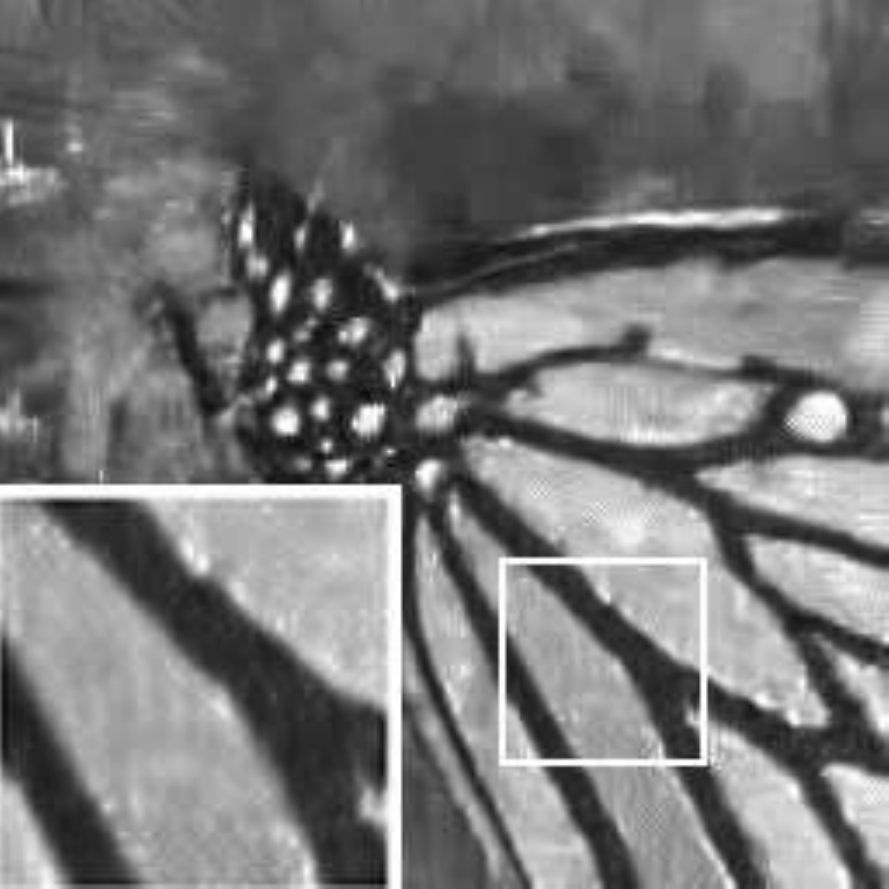}
	\caption{SAR-BM3D}
    \end{subfigure}\\
    \begin{subfigure}[t]{.1942\textwidth}
    \includegraphics[width=3.051cm]{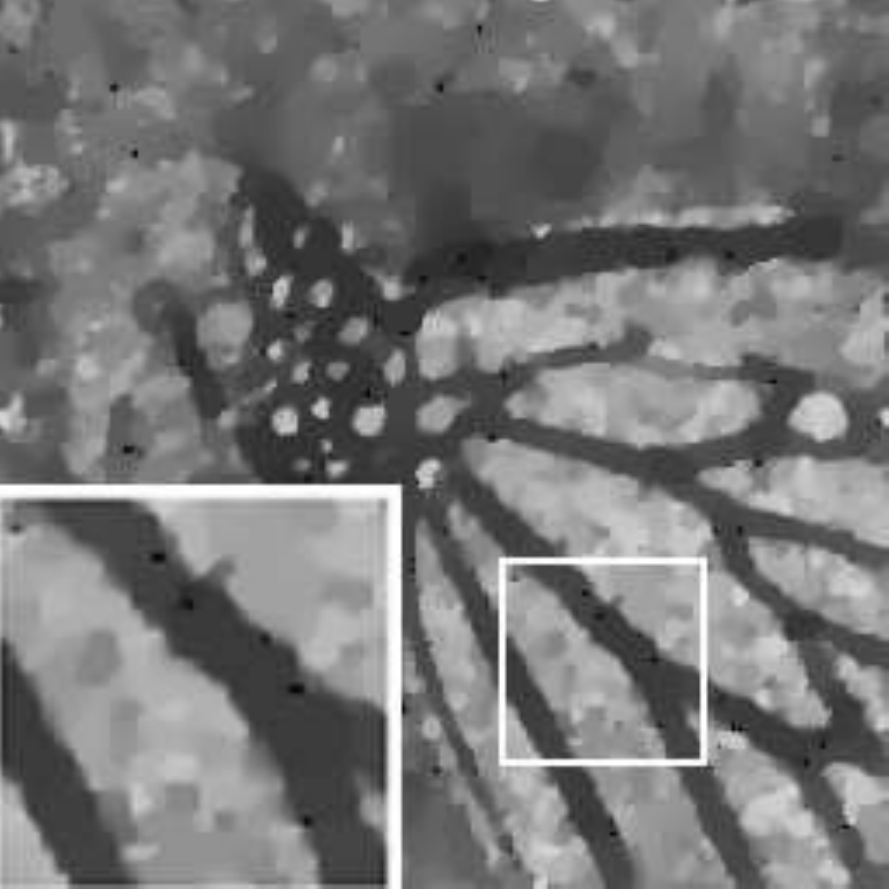}
	\caption{DZ}
    \end{subfigure}
    \begin{subfigure}[t]{.1942\textwidth}
    \includegraphics[width=3.245cm]{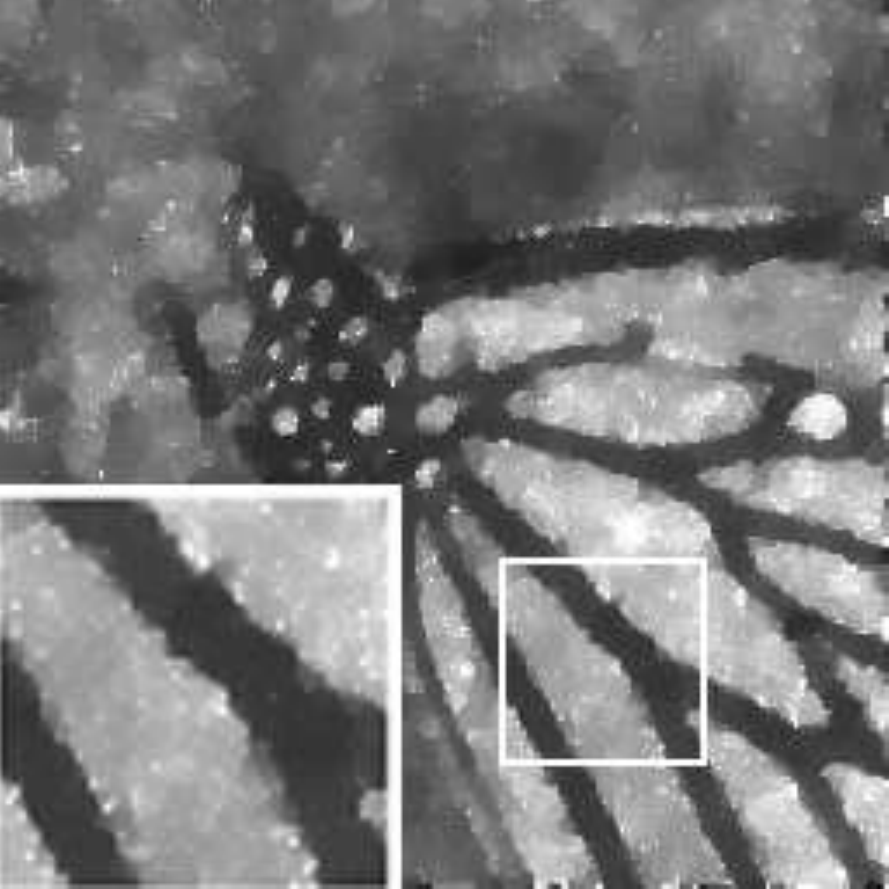}
	\caption{HNW}
    \end{subfigure}
    \begin{subfigure}[t]{.1942\textwidth}
    \includegraphics[width=3.245cm]{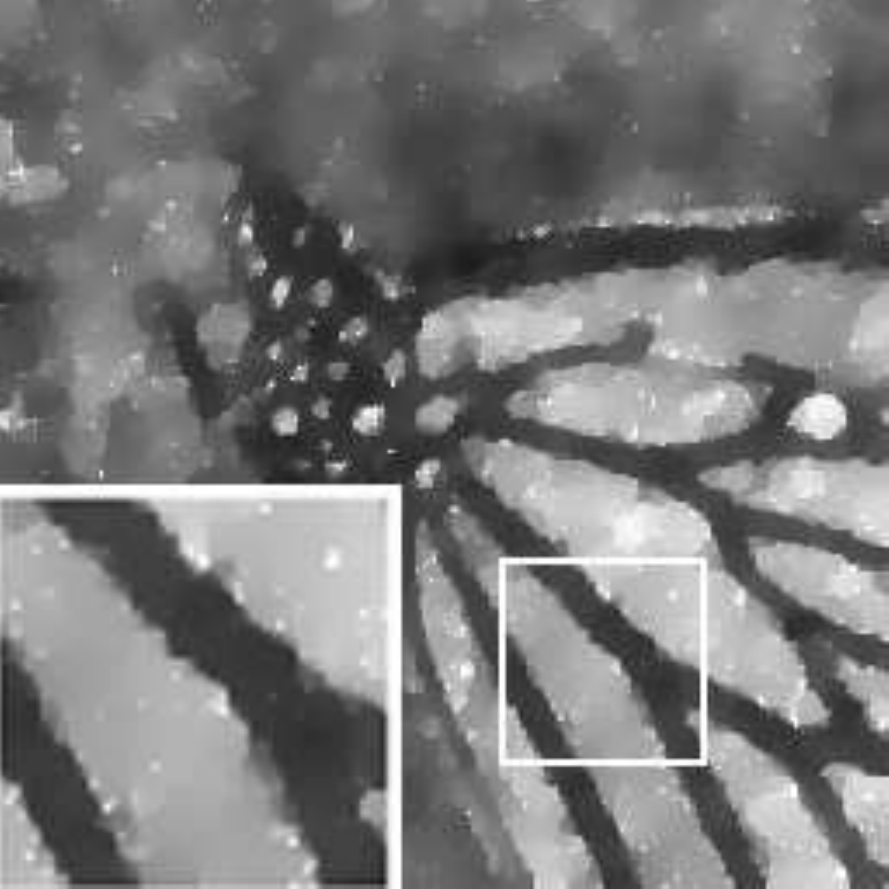}
    \caption{I-DIV}
    \end{subfigure}
    \begin{subfigure}[t]{.1942\textwidth}
    \includegraphics[width=3.245cm]{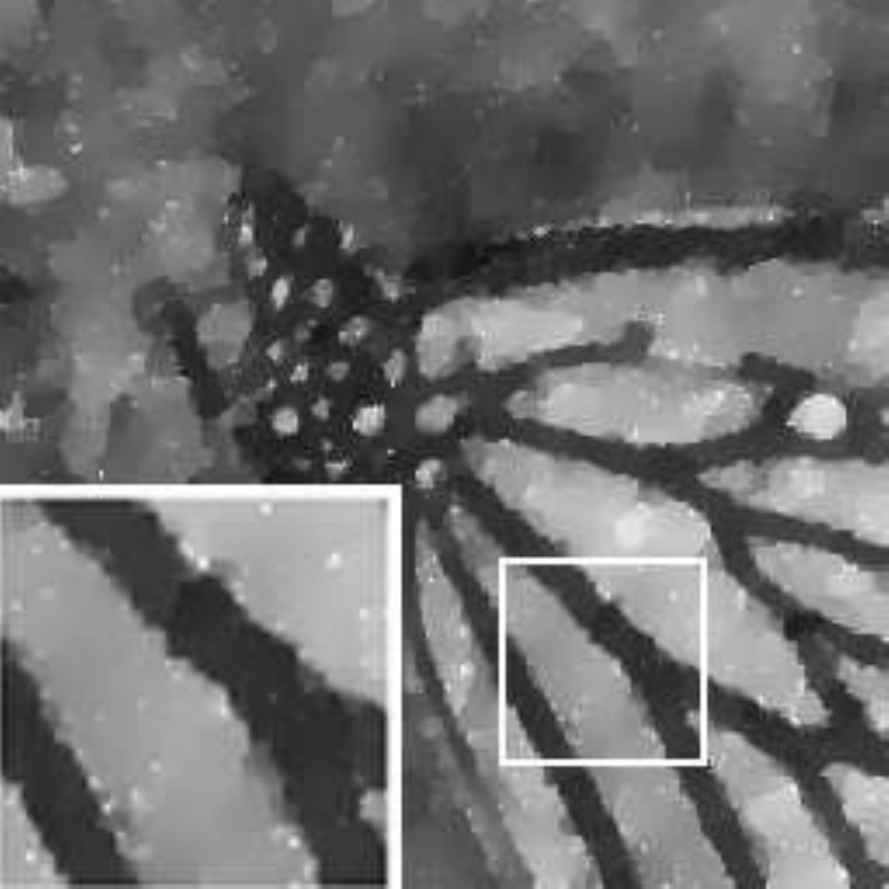}
    \caption{TwL-4V}
    \end{subfigure}
    \begin{subfigure}[t]{.1942\textwidth}
    \includegraphics[width=3.245cm]{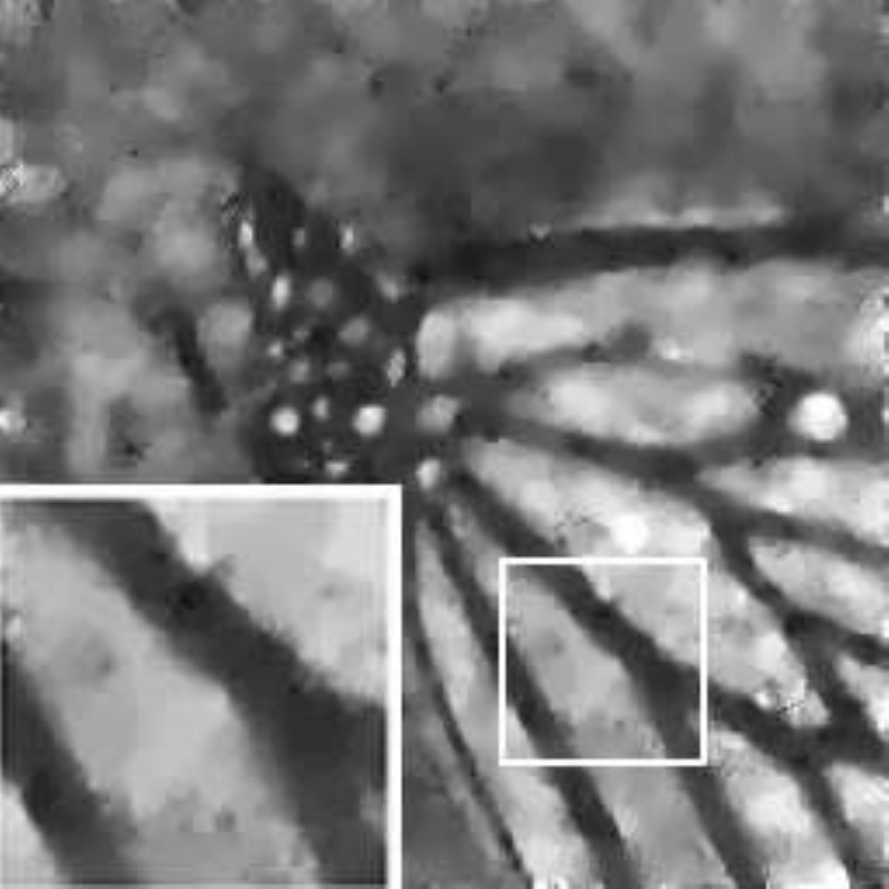}
	\caption{Dictionary}
    \end{subfigure}   	
	\caption{Comparison of denoised images restored from ``Monarch" at noise level $L=1$ by different methods. The (PSNR, SSIM) values for each denoised image: (c) Alg 1 (21.94dB, 0.6926); (d) Alg 2 (21.55dB, 0.6966); (e) SAR-BM3D (21.36dB, 0.6404); (f) DZ (19.38dB, 0.5758); (g) HNW (19.73dB, 0.5523); (h) I-DIV (19.91dB, 0.5883); (i) TwL-4V (19.26dB, 0.5848); (j) Dictionary (19.50dB, 0.5726).}\label{fig:Monarch}
\end{figure}
\begin{figure}[htbp]
	\centering
    \begin{subfigure}[t]{.1942\textwidth}
    \includegraphics[width=3.245cm]{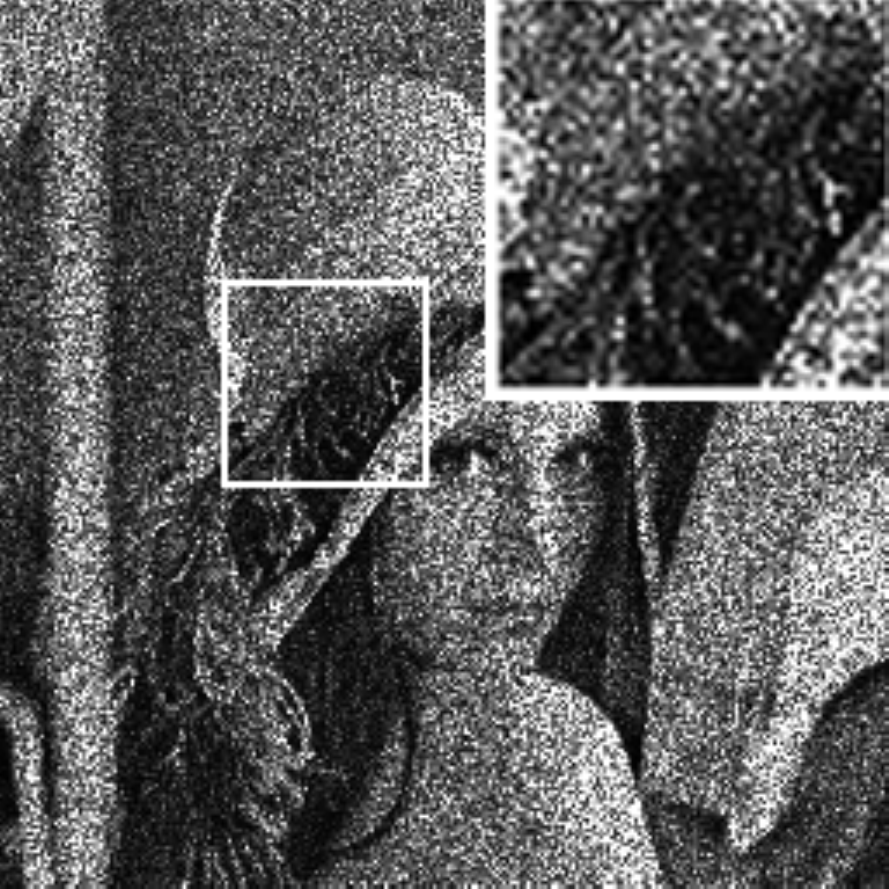}
    \caption{Noisy image (L=3)}
    \end{subfigure}
  	\begin{subfigure}[t]{.1942\textwidth}
    \includegraphics[width=3.245cm]{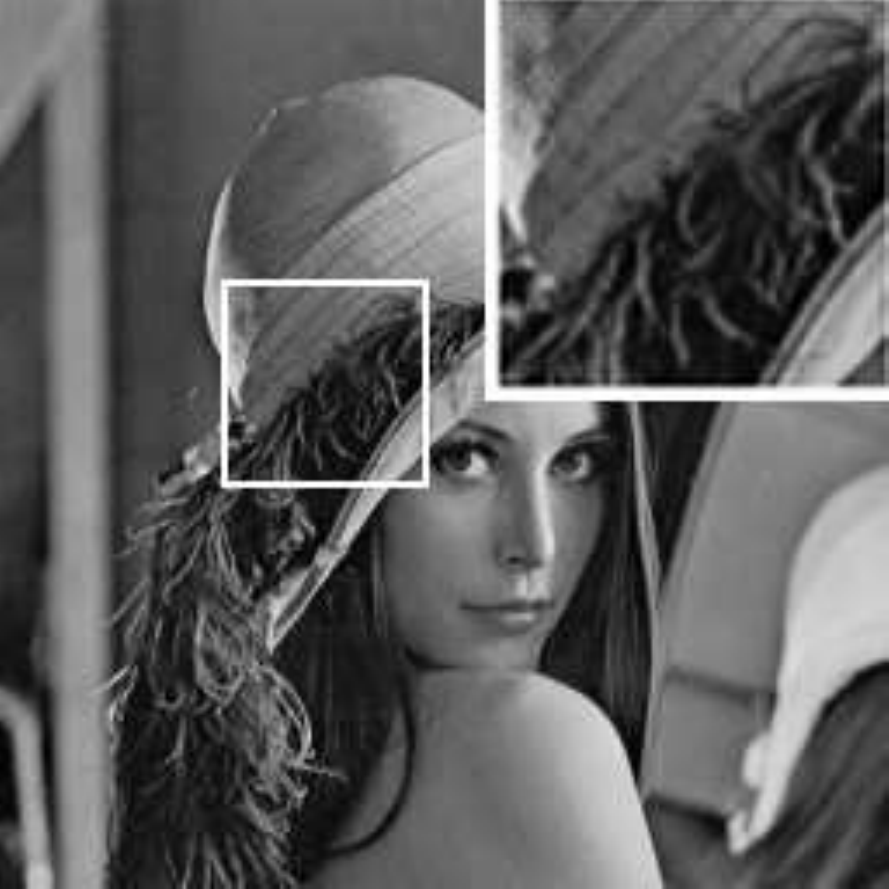}
    \caption{Ground truth}
    \end{subfigure}	
    \begin{subfigure}[t]{.1942\textwidth}
    \includegraphics[width=3.245cm]{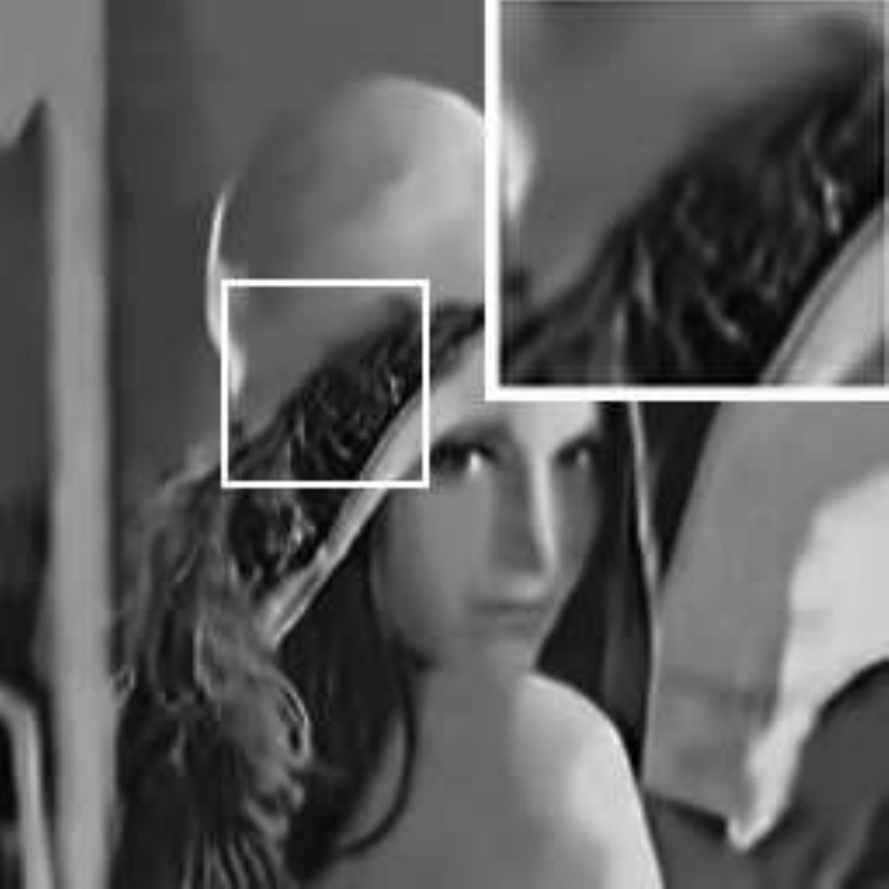}
    \caption{Alg 1}
    \end{subfigure}
    \begin{subfigure}[t]{.1942\textwidth}
    \includegraphics[width=3.245cm]{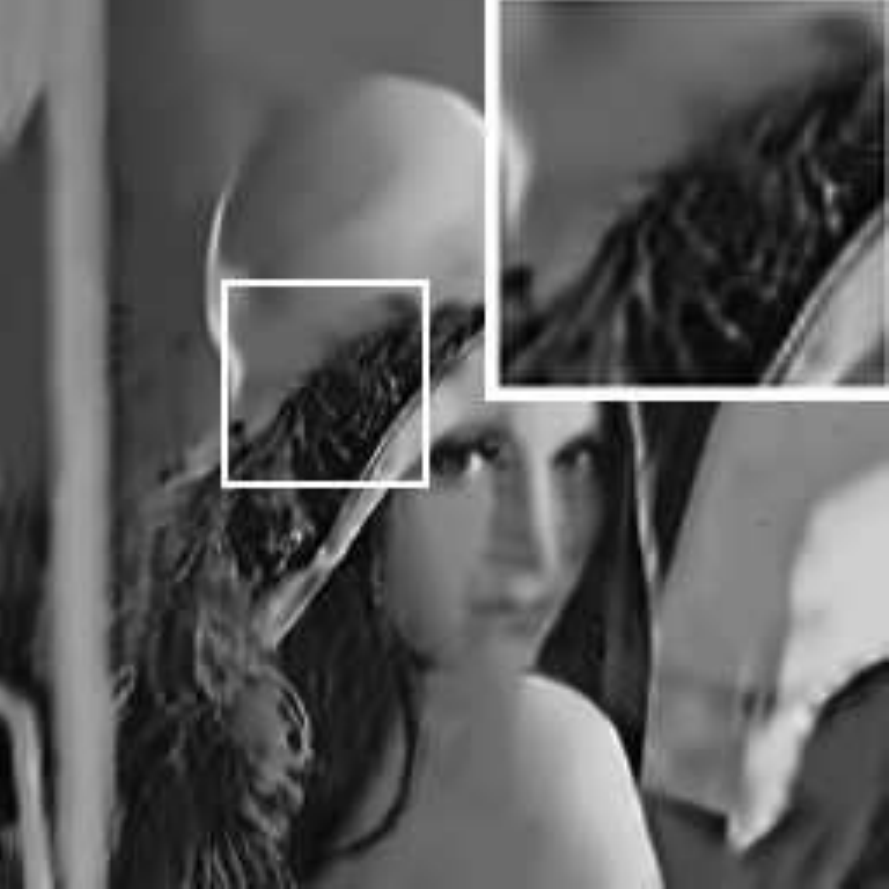}
    \caption{Alg 2}
    \end{subfigure}
    \begin{subfigure}[t]{.1942\textwidth}
    \includegraphics[width=3.245cm]{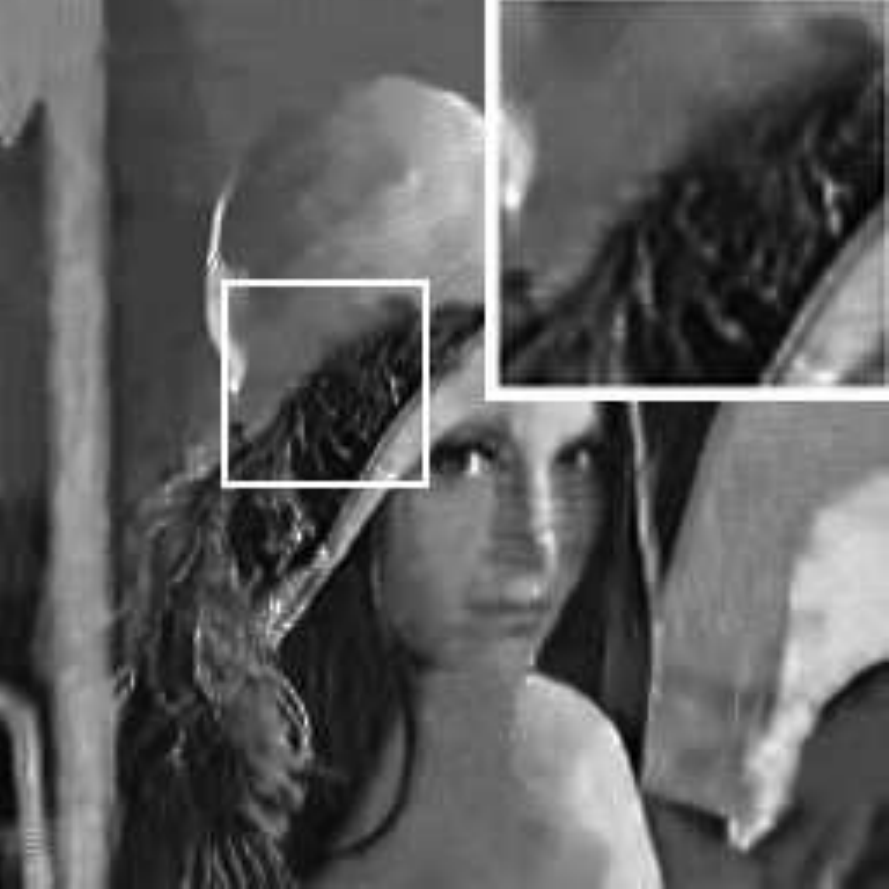}
	\caption{SAR-BM3D}
    \end{subfigure}   \\	
    \begin{subfigure}[t]{.1942\textwidth}
    \includegraphics[width=3.245cm]{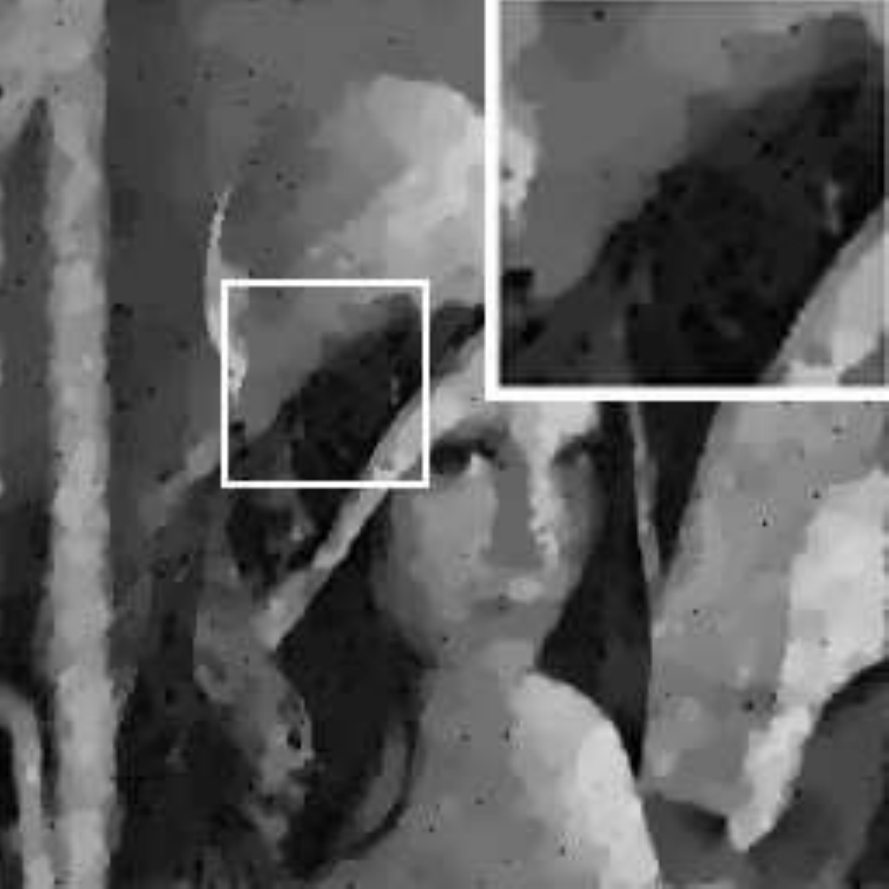}
	\caption{DZ}
    \end{subfigure}
    \begin{subfigure}[t]{.1942\textwidth}
    \includegraphics[width=3.245cm]{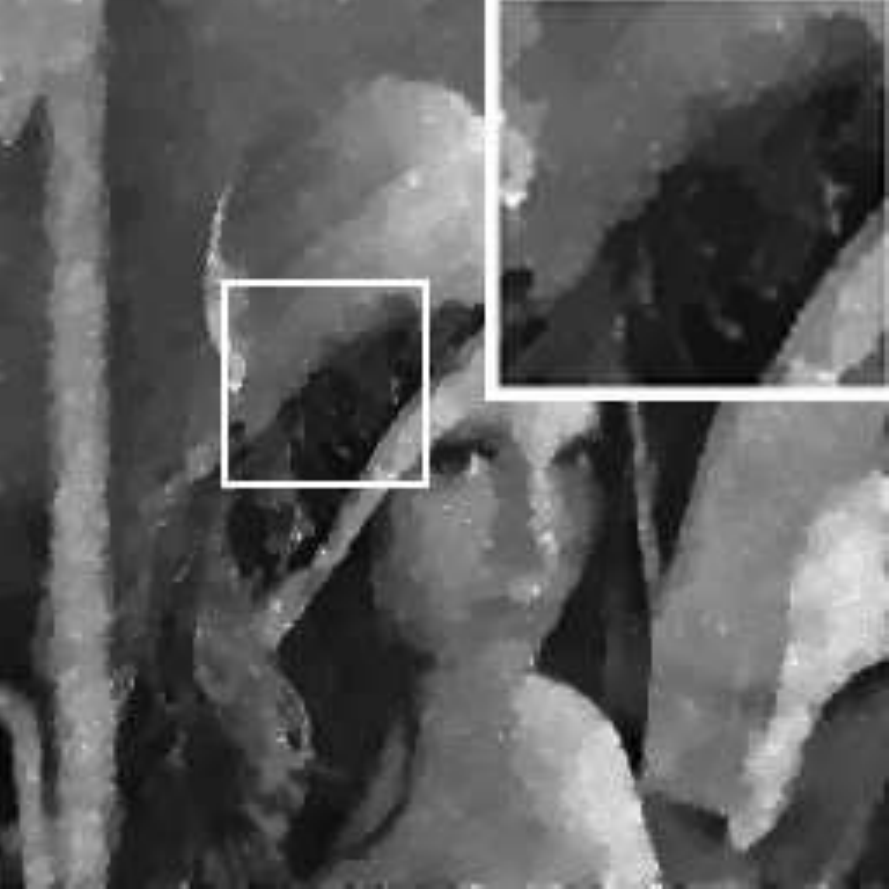}
	\caption{HNW}
    \end{subfigure}
    \begin{subfigure}[t]{.1942\textwidth}
    \includegraphics[width=3.245cm]{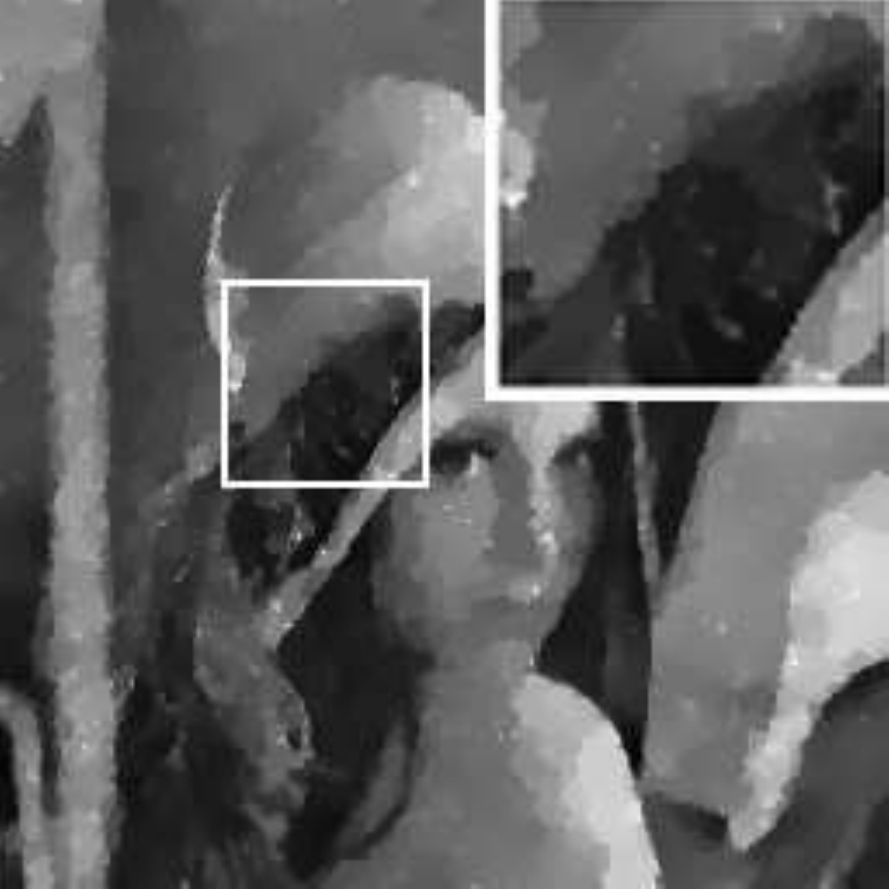}
    \caption{I-DIV}
    \end{subfigure}
    \begin{subfigure}[t]{.1942\textwidth}
    \includegraphics[width=3.245cm]{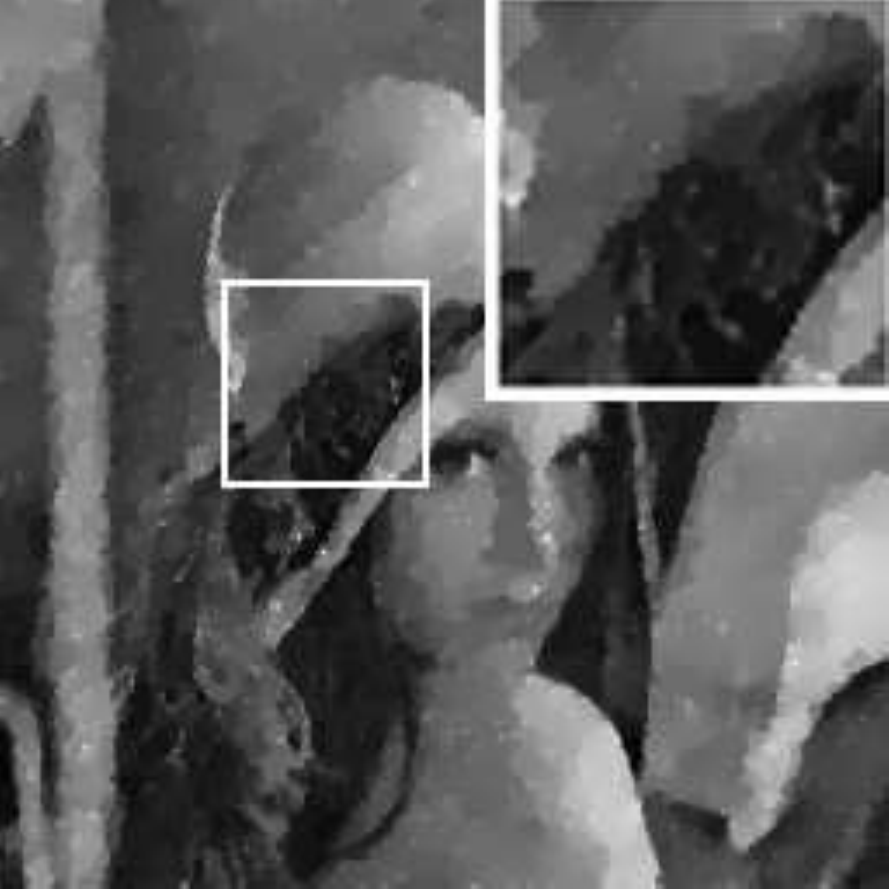}
    \caption{TwL-4V}
    \end{subfigure}
    \begin{subfigure}[t]{.1942\textwidth}
    \includegraphics[width=3.245cm]{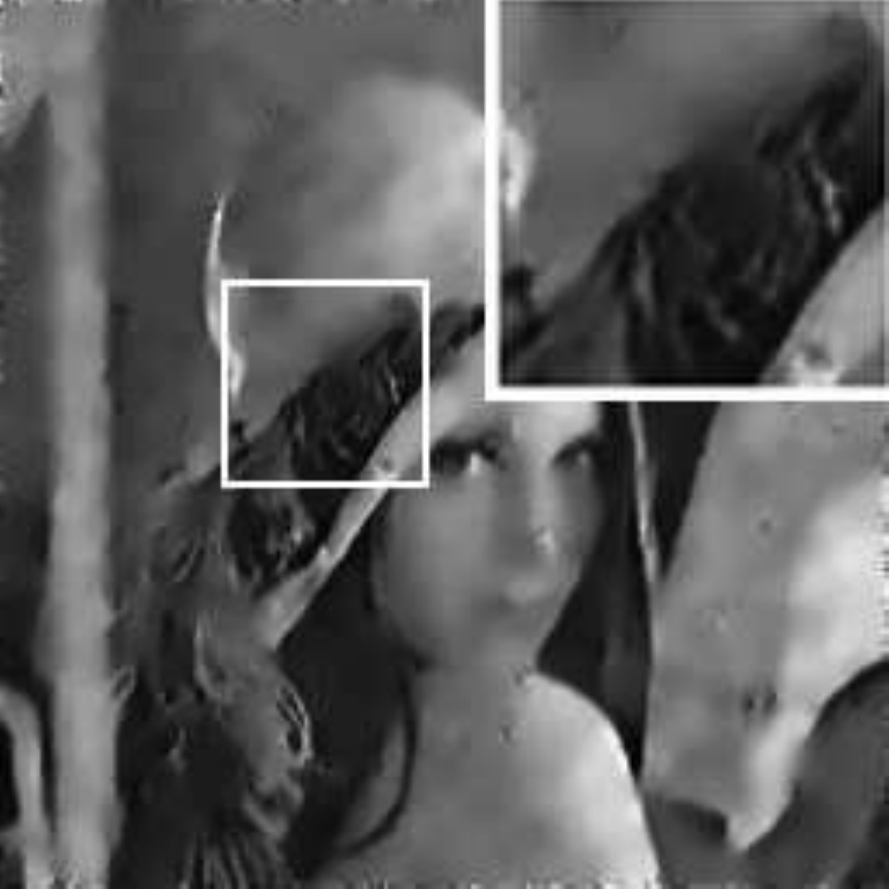}
	\caption{Dictionary}
    \end{subfigure}   	
	\caption{Comparison of denoised images restored from ``Lena" at noise level $L=3$ by different methods.  The (PSNR, SSIM) values for each denoised image: (c) Alg 1 (26.44dB, 0.7892); (d) Alg 2 (26.29dB, 0.7944); (e) SAR-BM3D (26.00dB, 0.7596); (f) DZ (24.06dB, 0.6907); (g) HNW (24.36dB, 0.6911); (h) I-DIV (24.48dB, 0.7073); (i) TwL-4V (24.29dB, 0.7128); (j) Dictionary (24.81dB, 0.7379).}\label{fig:Lena}
\end{figure}
\begin{figure}[htbp]
	\centering
    \begin{subfigure}[t]{.1942\textwidth}
    \includegraphics[width=3.245cm]{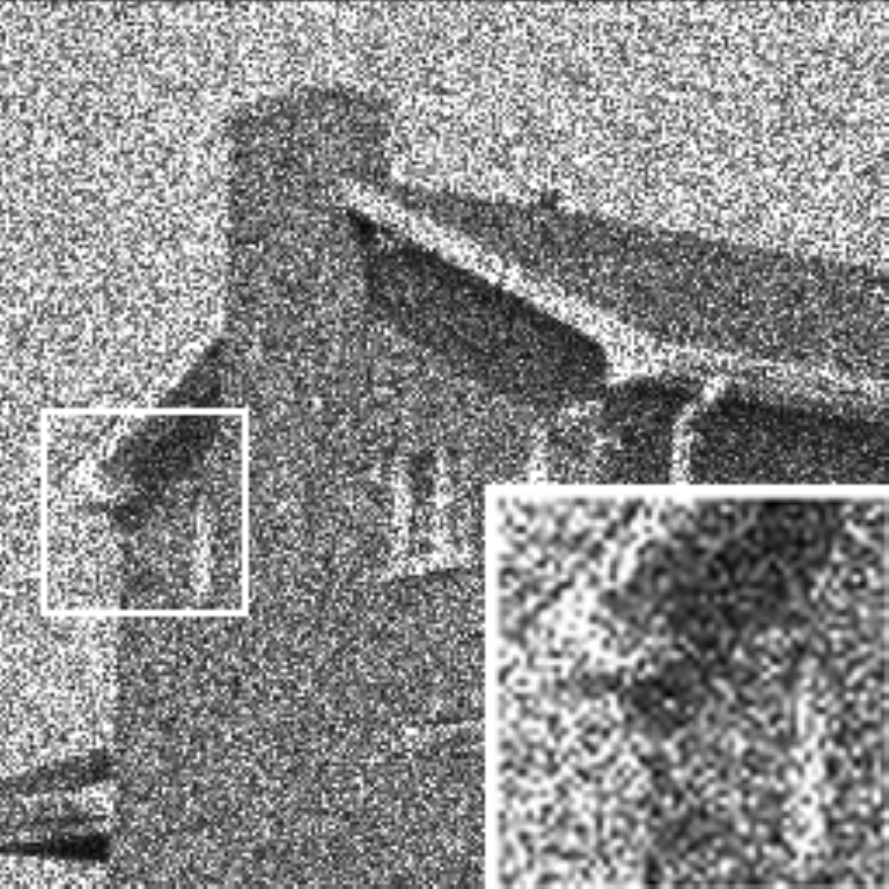}
    \caption{Noisy image (L=5)}
    \end{subfigure}
	\begin{subfigure}[t]{.1942\textwidth}
    \includegraphics[width=3.245cm]{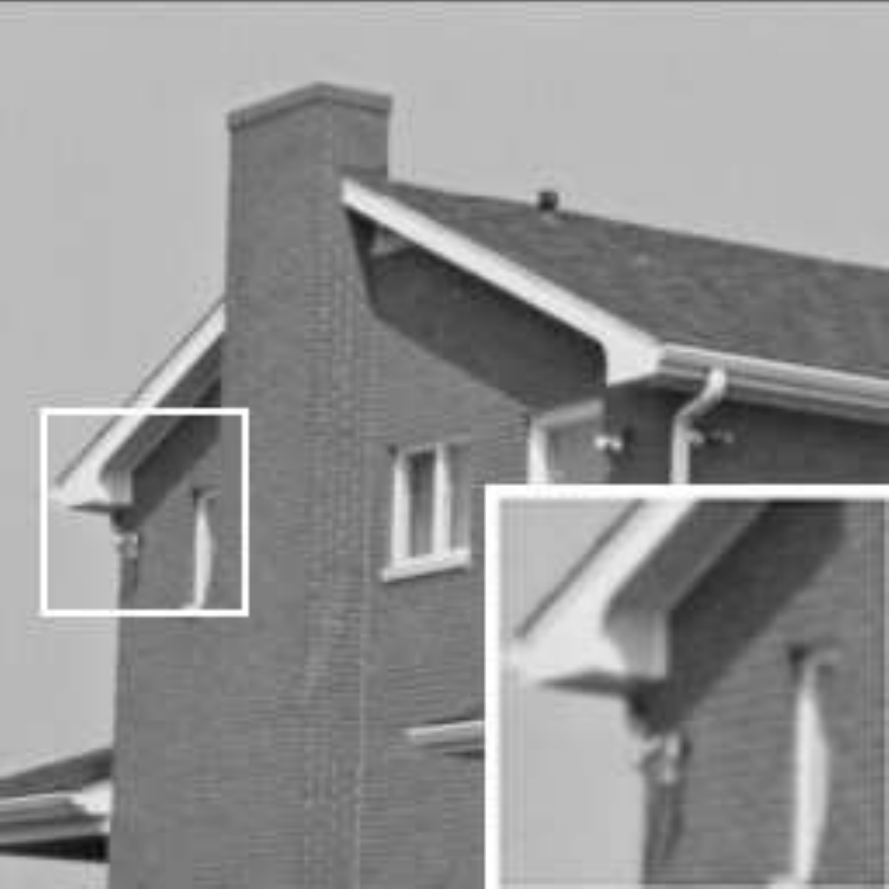}
    \caption{Ground truth}
    \end{subfigure}
    \begin{subfigure}[t]{.1942\textwidth}
    \includegraphics[width=3.245cm]{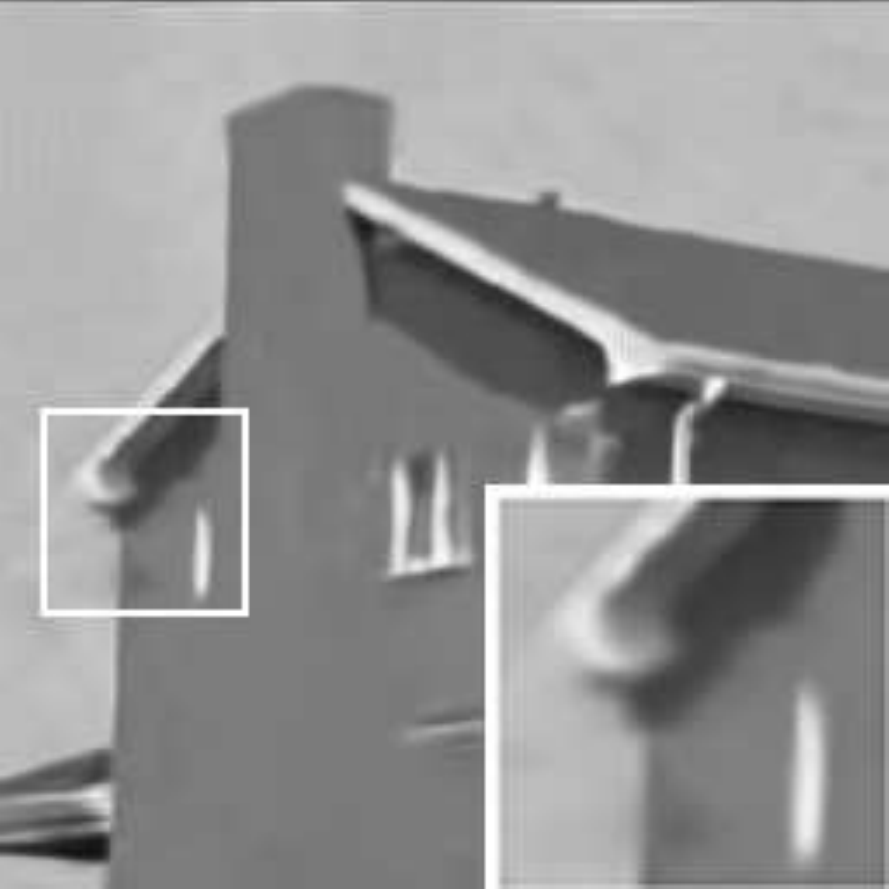}
    \caption{Alg 1}
    \end{subfigure}
    \begin{subfigure}[t]{.1942\textwidth}
    \includegraphics[width=3.245cm]{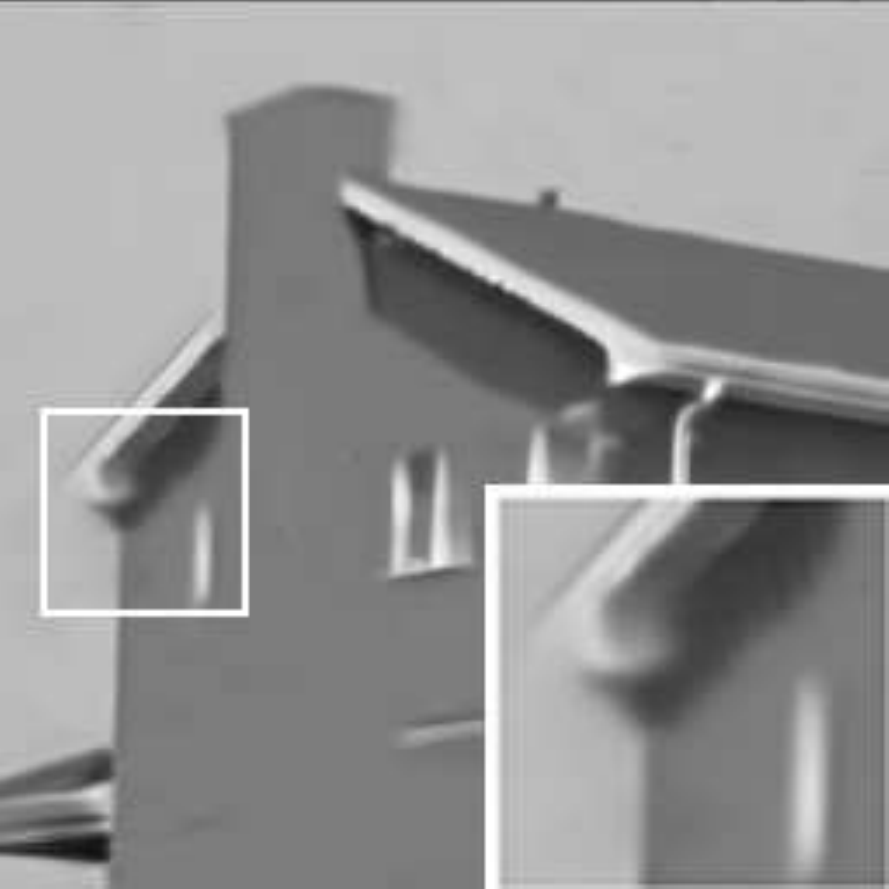}
    \caption{Alg 2 }
    \end{subfigure}
    \begin{subfigure}[t]{.1942\textwidth}
    \includegraphics[width=3.245cm]{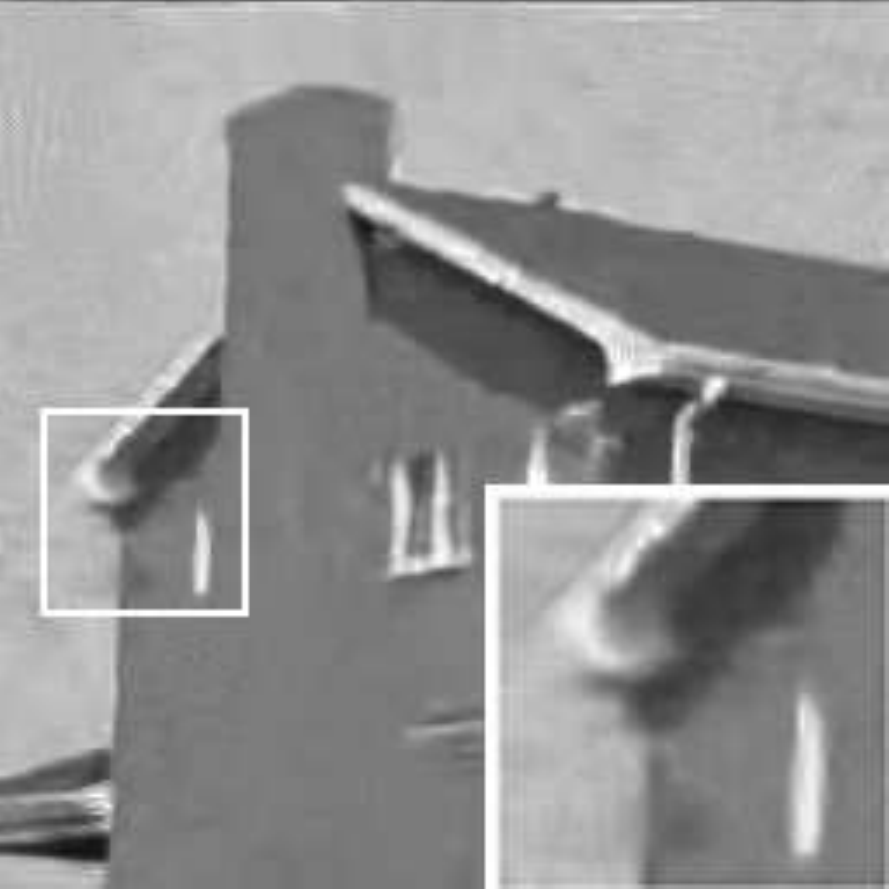}
	\caption{SAR-BM3D}
    \end{subfigure}\\
    \begin{subfigure}[t]{.1942\textwidth}
    \includegraphics[width=3.245cm]{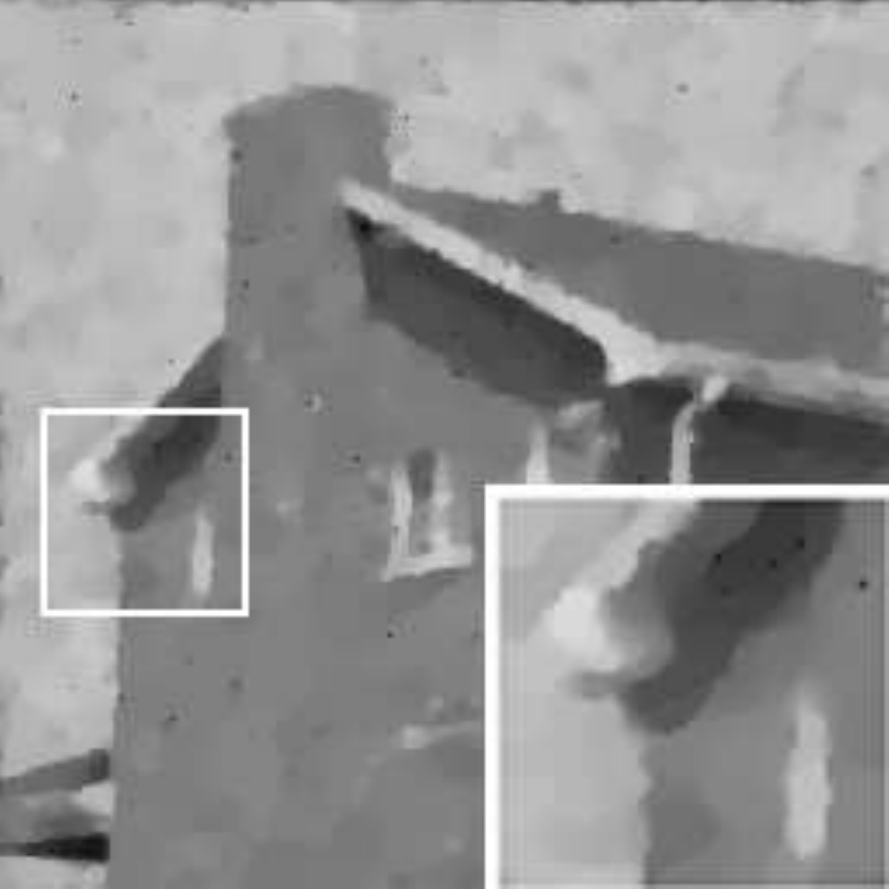}
	\caption{DZ}
	\end{subfigure}   	
    \begin{subfigure}[t]{.1942\textwidth}
    \includegraphics[width=3.245cm]{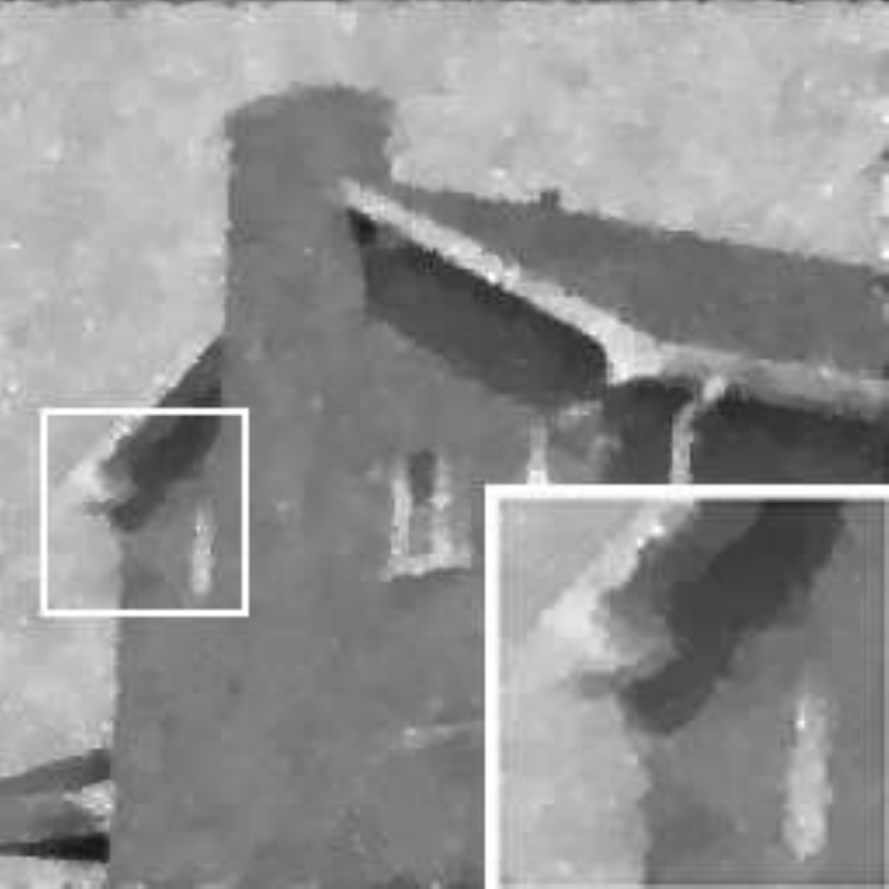}
	\caption{HNW}
    \end{subfigure}
    \begin{subfigure}[t]{.1942\textwidth}
    \includegraphics[width=3.245cm]{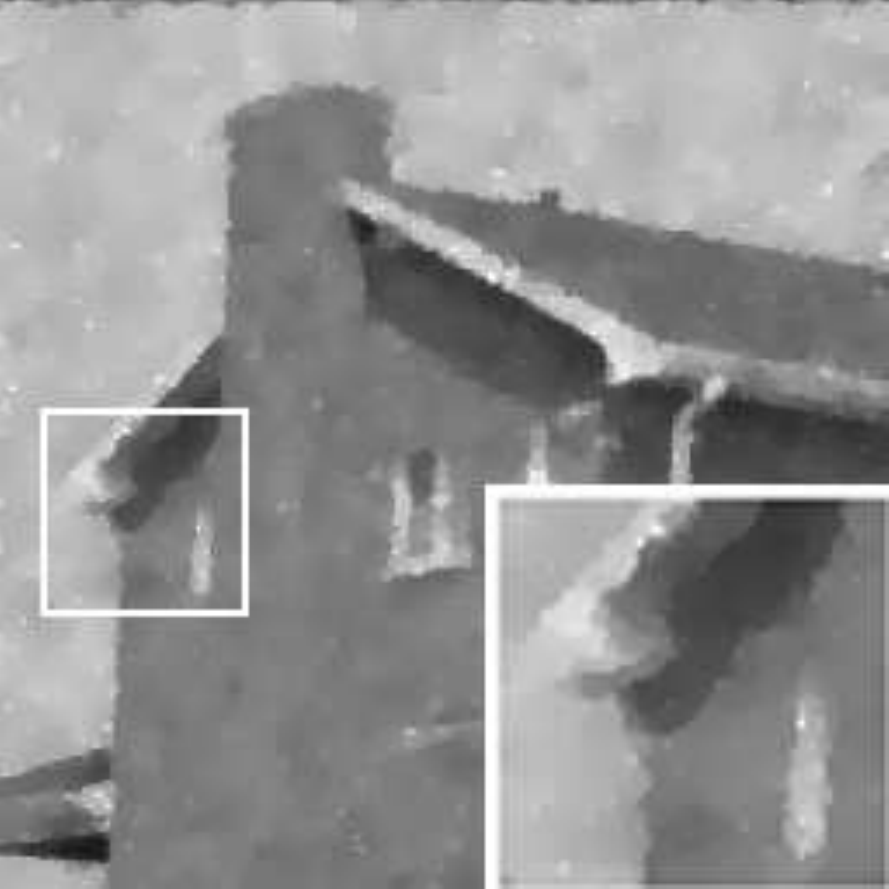}
    \caption{I-DIV}
    \end{subfigure}
    \begin{subfigure}[t]{.1942\textwidth}
    \includegraphics[width=3.245cm]{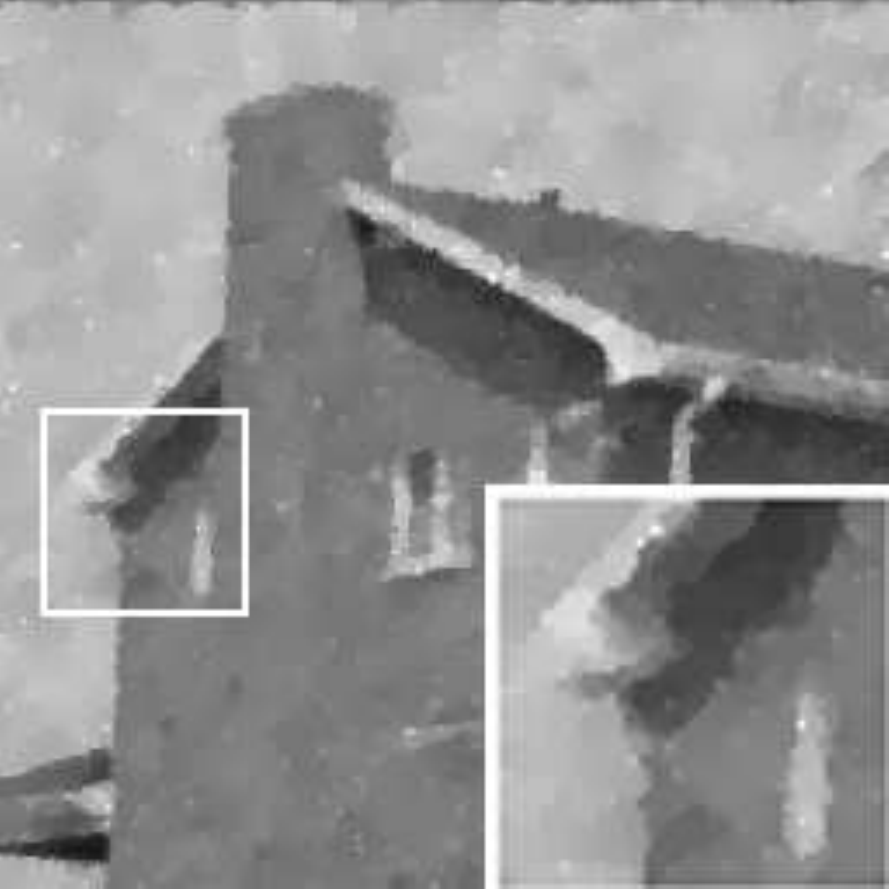}
    \caption{TwL-4V}
    \end{subfigure}
    \begin{subfigure}[t]{.1942\textwidth}
    \includegraphics[width=3.245cm]{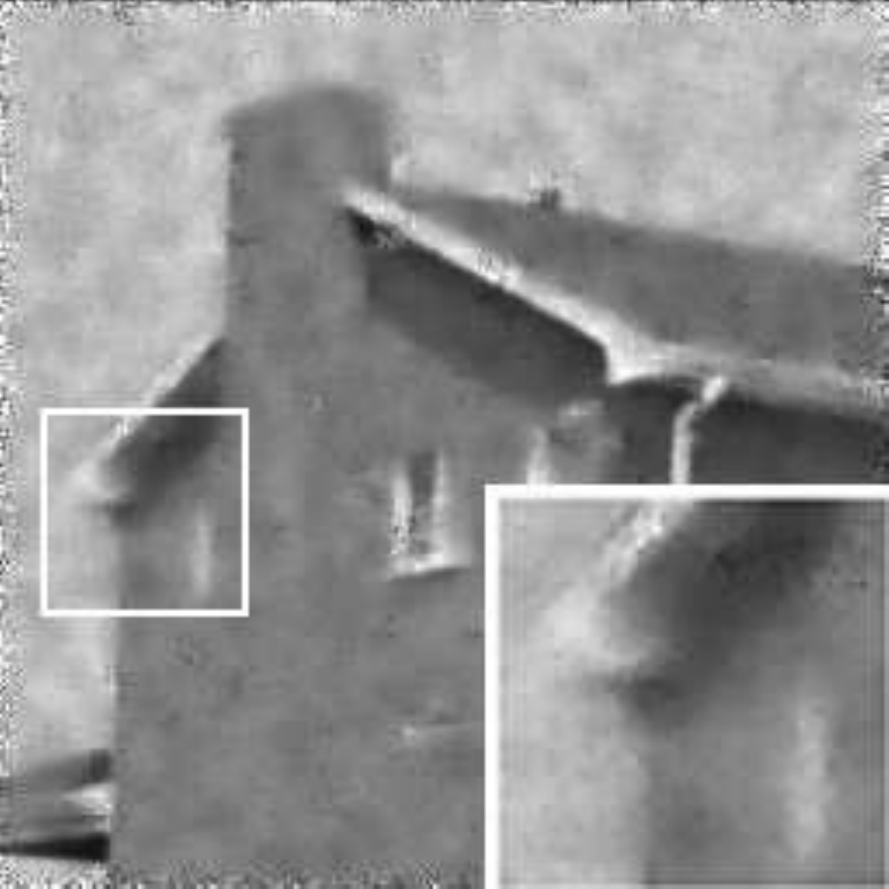}
	\caption{Dictionary}
    \end{subfigure}   	
	\caption{Comparison of denoised images restored from ``House" at noise level $L=5$ by different methods.  The (PSNR, SSIM) values for each denoised image: (c) Alg 1 (29.04dB, 0.8115); (d) Alg 2 (29.12dB, 0.8163); (e) SAR-BM3D (28.36dB, 0.7641); (f) DZ (25.70dB, 0.7339); (g) HNW (25.73dB, 0.6995); (h) I-DIV (25.84dB, 0.7291); (i) TwL-4V (25.79dB, 0.7197); (j) Ditionary (24.56dB, 0.6474).}\label{fig:House}
\end{figure}

Table~\ref{Table:Standard} reports the PSNR and SSIM values of the  denoised images tested on three standard test images. The best results for each case are marked in bold and the second-best results are underlined. Both Algorithm~\ref{Alg:Theoretical} and Algorithm~\ref{Alg:Practical} outperform all the other methods in terms of  PSNR and SSIM values. Compared with the benchmark SAR-BM3D method, Algorithm~\ref{Alg:Theoretical} achieves 0.54-0.59dB,  0.42-0.66dB  and 0.46-0.68dB improvements in PSNR when $L=1$, $L=3$ and $L=5$, respectively. Algorithm~\ref{Alg:Practical} with updated patch extraction also surpasses the SAR-BM3D method and it even surpasses Algorithm~\ref{Alg:Theoretical} in some of the cases, especially in terms of SSIM values.

Figure~\ref{fig:Monarch}-\ref{fig:House} present the denoised images tested on ``Monarch" at noise level $L=1$, ``Lena" at $L=3$ and ``House" at $L=5$. In terms of the visual quality,  Algorithm~\ref{Alg:Theoretical} and Algorithm~\ref{Alg:Practical}  perform better than other methods, because they reconstruct more details and more smooth textures, but less noise and fewer artifacts. For example, compared to the DZ method, the HNW method,  the I-DIV method, the TwL-mV method and the learned dictionary method, the proposed methods preserve more details of the hair of ``Lena" and generate more smooth textures on the wings of ``Monarch" and the sky of ``House". Compared to the benchmark SAR-BM3D method, the proposed methods generate fewer artifacts, resulting in better images in terms of  PSNR and SSIM values.

\subsection{Numerical results tested on remote sensing images}

In this experiment, we use remote sensing images ``Remote 1" and ``Remote 2" both of size $512\times 512$, and ``Remote 3" of size $540\times 632$ as shown in Figure~\ref{TestImg:Remote}. To generate the observed images, we degrade the original test images by multiplicative Gamma noise at $L=1$, $L=3$ and $L=5$. The image quality is evaluated using PSNR and SSIM values.

\begin{figure}[htbp]
	\centering
	\begin{subfigure}[t]{.26\textwidth}\centering
        \includegraphics[height=3.9cm]{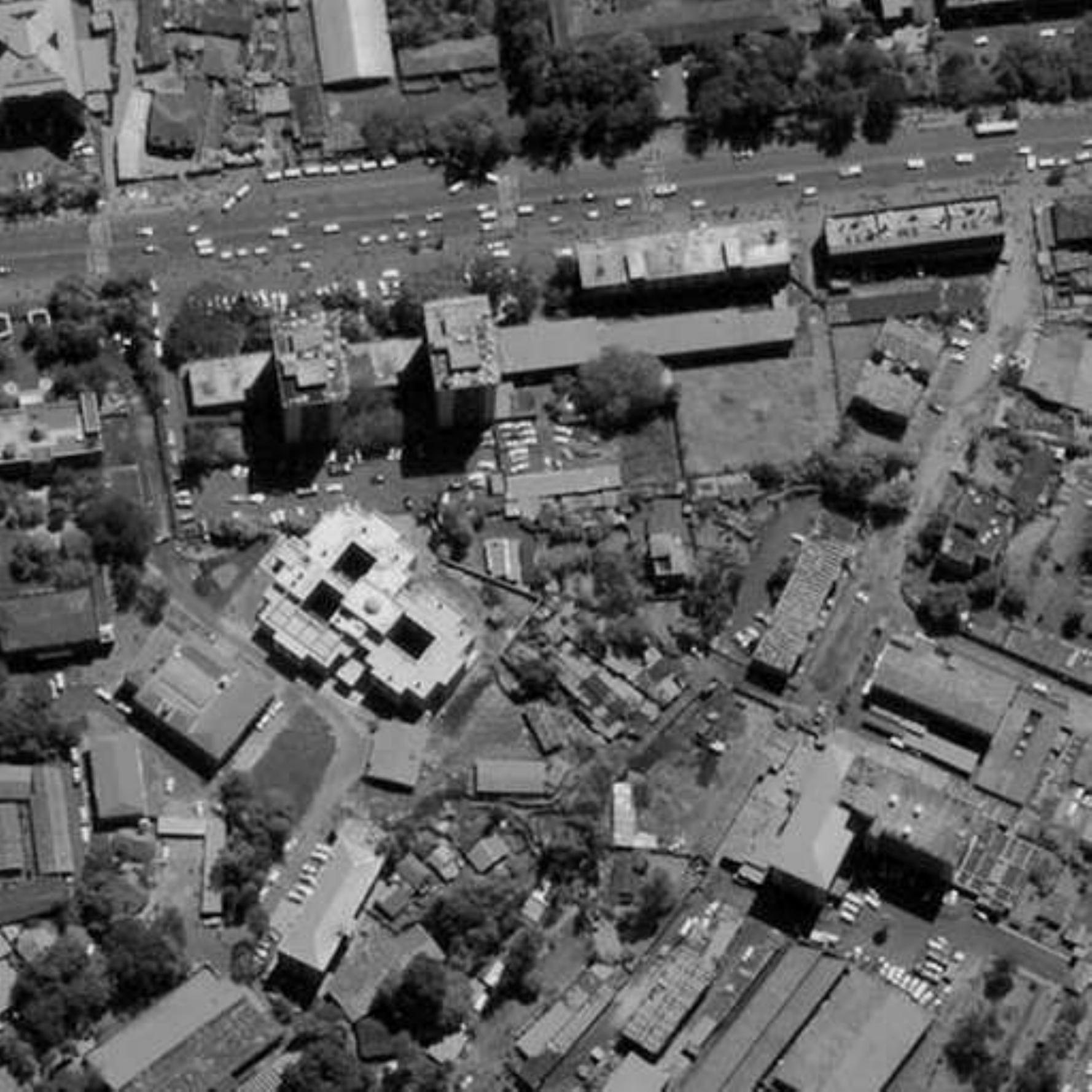}
        \caption{Remote 1}
    \end{subfigure}
    	\begin{subfigure}[t]{.26\textwidth}\centering
        \includegraphics[height=3.9cm]{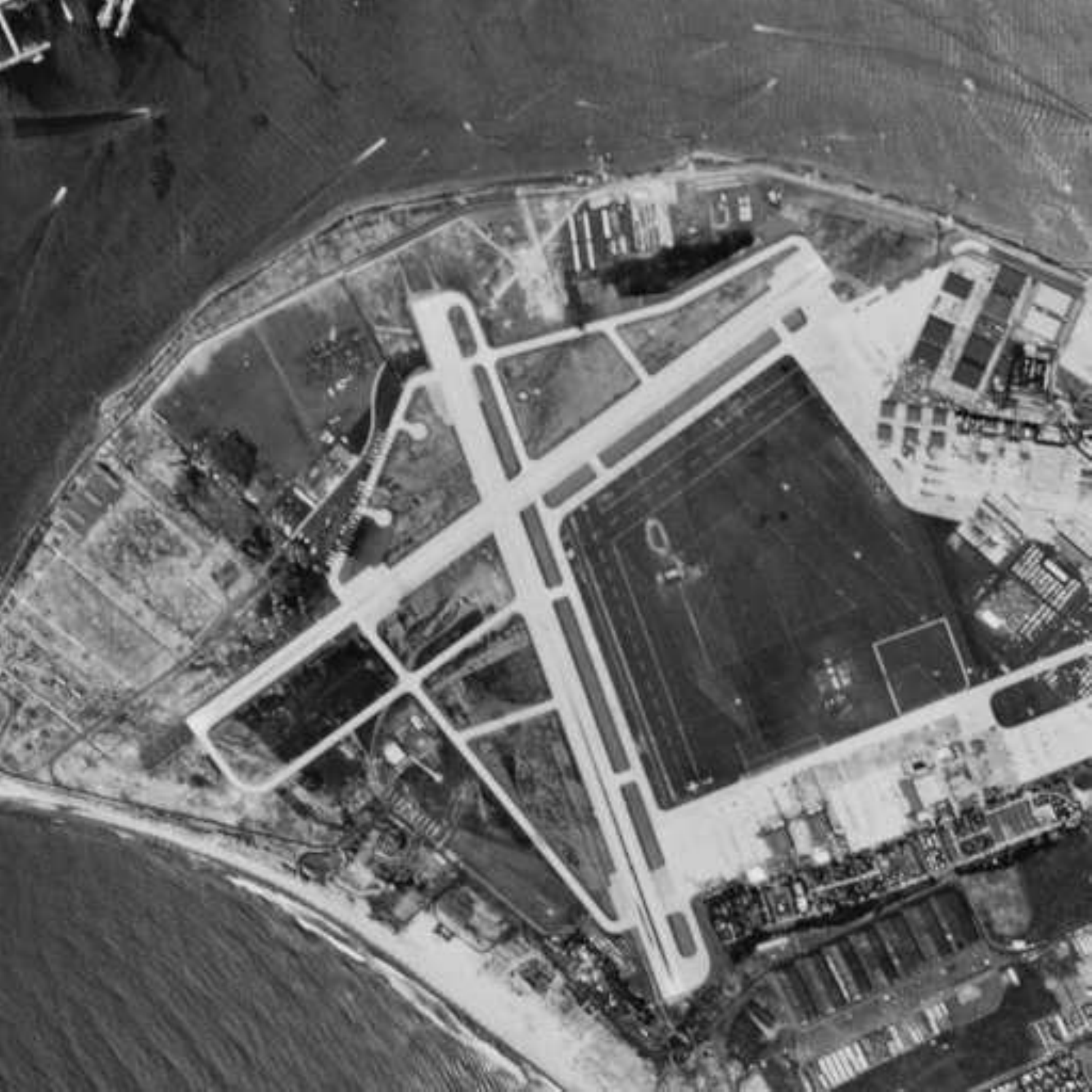}
        \caption{Remote 2}
    \end{subfigure}
    	\begin{subfigure}[t]{.3\textwidth}\centering
        	\includegraphics[height=3.9cm]{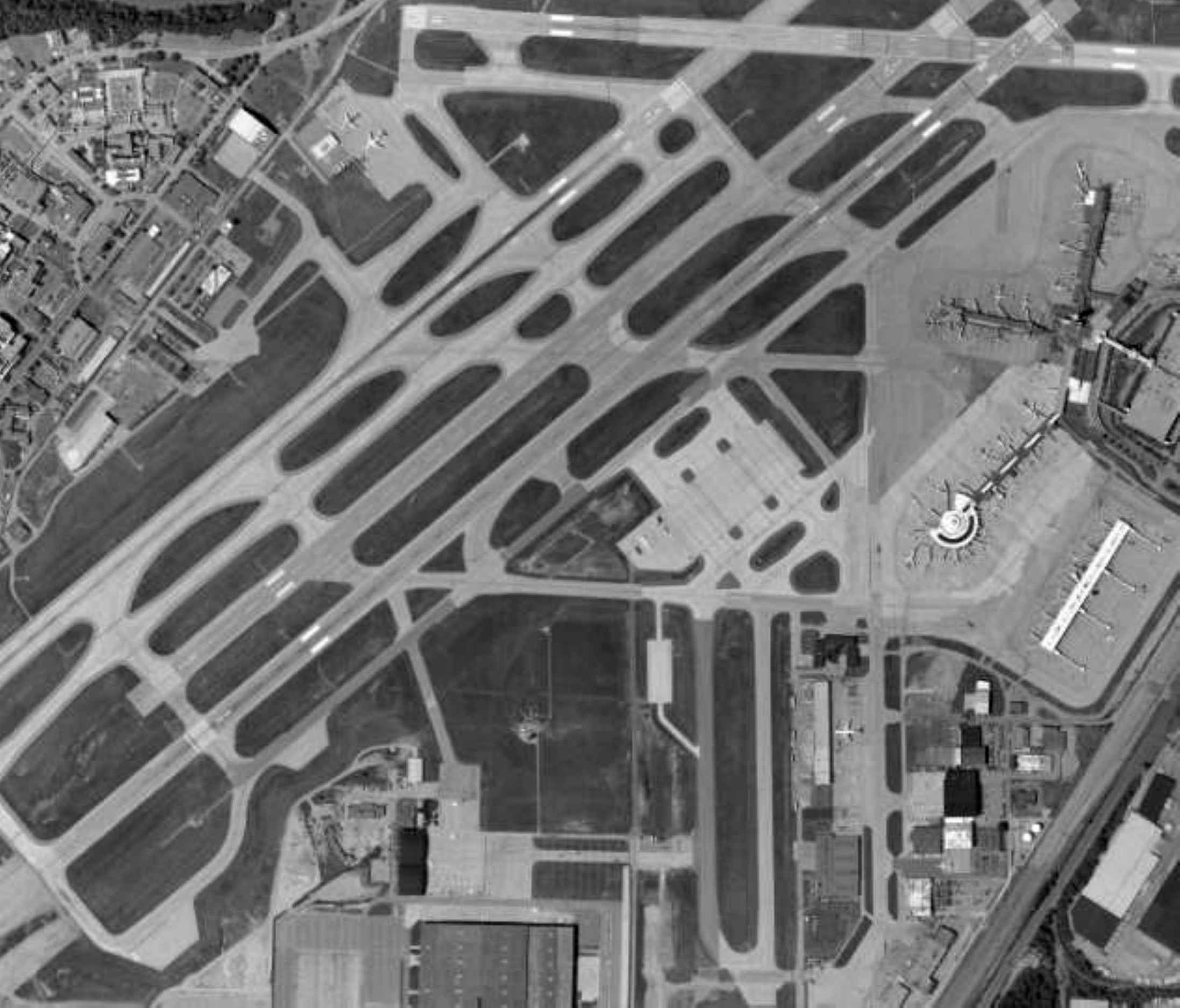}
        \caption{Remote 3}
    \end{subfigure}
	\caption{Remote sensing images.}\label{TestImg:Remote}
\end{figure}

\begin{table}[htbp]
\centering
\caption{Numerical results tested on remote sensing images at different noise levels  by different methods.}\label{Table:Remote}
{\small
\begin{tabular}{lcccccccccc}
\hline
\textbf{Image} & $L$ & \textbf{Meas.} & \textbf{Alg 1} &\textbf{Alg 2} & \textbf{SAR-} &\textbf{DZ}& \textbf{HNW}&\textbf{I-DIV}  & \textbf{TwL-} & \textbf{Dict} \\
 &  &  &  & &  \textbf{BM3D} && & & \textbf{4V} &  \\
 \hline
Remote 1 &1 & PSNR &\textbf{21.23} & 21.11 & \underline{21.12} & 20.47 &20.24 & 20.03 & 20.07 & 20.44\\
& &SSIM& \underline{0.5459} & \textbf{0.5510} & 0.5393 & 0.4950 & 0.4551 & 0.4709 & 0.4934 & 0.4867\\
 &3 & PSNR & \textbf{23.45} & \underline{23.43} & 23.39  & 22.51 &21.96 & 22.05 & 22.38 & 20.52\\
&  & SSIM & \textbf{0.6730} & \underline{0.6719} & 0.6716  & 0.6199 &0.5686 & 0.5935 & 0.6268 & 0.4953\\
&5 & PSNR & \underline{24.55} &\textbf{24.61} & 24.49   & 23.69 &22.90 & 23.17 & 23.55 & 20.93\\
&  & SSIM & \underline{0.7283} &\textbf{0.7326} & 0.7261  & 0.6800 &0.6274 & 0.6595 & 0.6824 & 0.5350\\
&&&&&&&&&\\
Remote 2 &1 & PSNR & \textbf{21.91} &\underline{21.88} & 21.68  & 20.37 &20.89 & 20.58 & 20.51 & 20.40\\
&  & SSIM & \textbf{0.5461} & \underline{0.5361} & 0.5334  & 0.4827 &0.4783 & 0.4791 & 0.4789 & 0.4665\\
&3 & PSNR & \textbf{24.13} & \underline{24.07} & 24.03  & 22.76 &22.71 & 22.49 & 22.66 & 22.34\\
&  & SSIM & \underline{0.6471} & \textbf{0.6474} & 0.6449  & 0.5758 &0.5805 & 0.5744 & 0.5845 & 0.5592\\
&5 & PSNR & \underline{25.32} & \textbf{25.37} & 25.21  & 23.98 &23.79 & 23.59 & 23.81 & 23.67\\
&  & SSIM & \textbf{0.6964} &\textbf{0.6964}& \underline{0.6939}  & 0.6302 &0.6294 & 0.6265 & 0.6364 & 0.6179 \\
&&&&&&&&&\\
Remote 3 &1 & PSNR & \textbf{22.16} &\textbf{22.16} & \underline{21.88}  & 20.93 &20.89 & 20.81 & 20.79 & 20.59\\
&  & SSIM & \underline{0.5895} & \textbf{0.6038} & 0.5565  & 0.5292 &0.4916 & 0.5131 & 0.5182 & 0.4955\\
&3 & PSNR & \textbf{24.66} & \underline{24.56} & 24.47 & 23.34 &22.81 & 23.02 & 23.18 & 22.14 \\
&  & SSIM & \textbf{0.7002} & \textbf{0.7002}  & \underline{0.6811} & 0.6236 &0.6077 & 0.6250 & 0.6218 & 0.5545\\
&5 & PSNR & \underline{25.80} & \textbf{25.83} & 25.65  & 24.45 &23.79 & 24.10 & 24.30 & 22.70\\
&  & SSIM & \underline{0.7427}& \textbf{0.7460} & 0.7316  & 0.6745 &0.6562 & 0.6713 & 0.6737 & 0.5736\\
 \hline
\end{tabular}}
\end{table}

\begin{figure}[htbp]
	\centering
	\begin{subfigure}[t]{.1942\textwidth}
    \includegraphics[width=3.245cm]{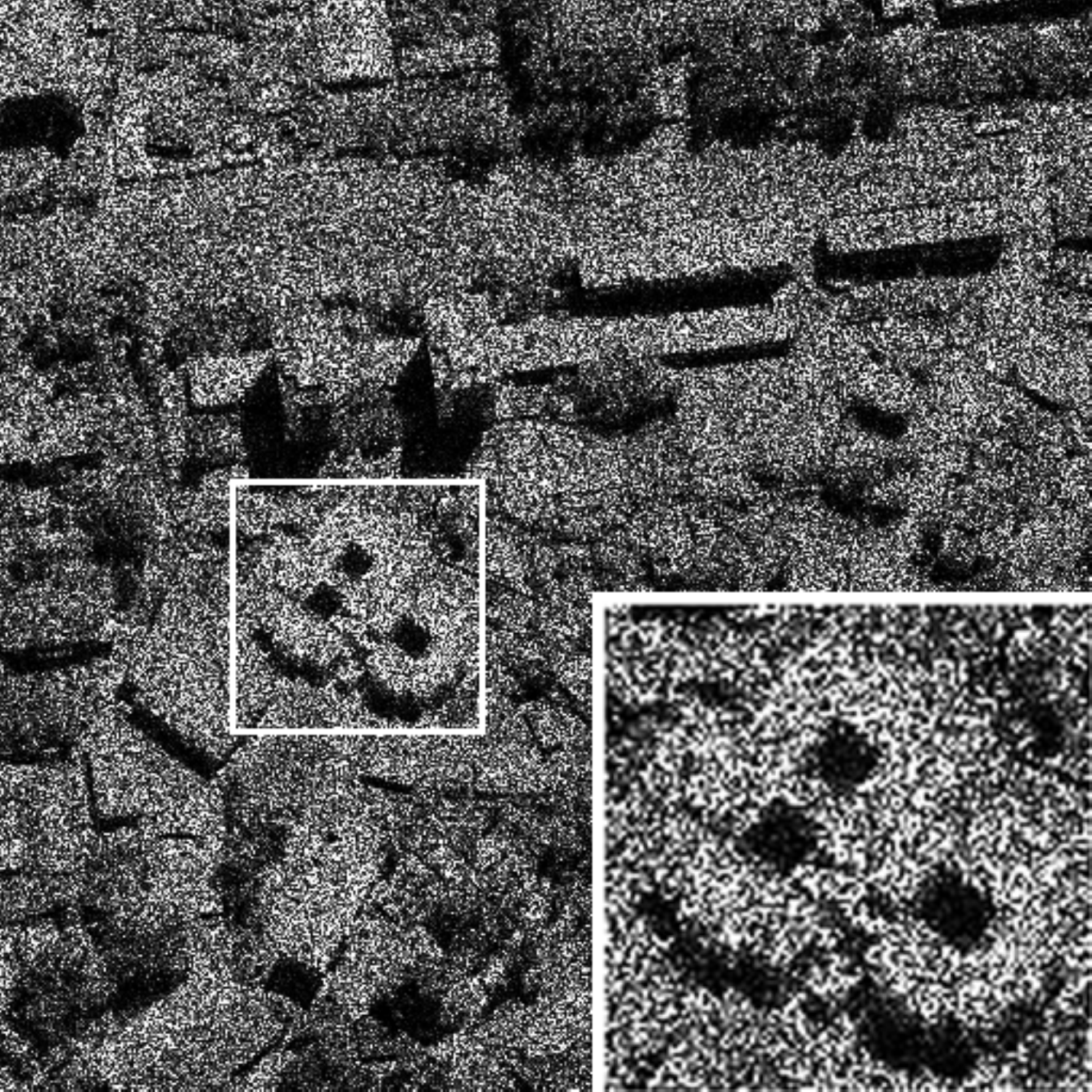}
    \caption{Noisy image (L=1)}
    \end{subfigure}	
    \begin{subfigure}[t]{.1942\textwidth}
    \includegraphics[width=3.245cm]{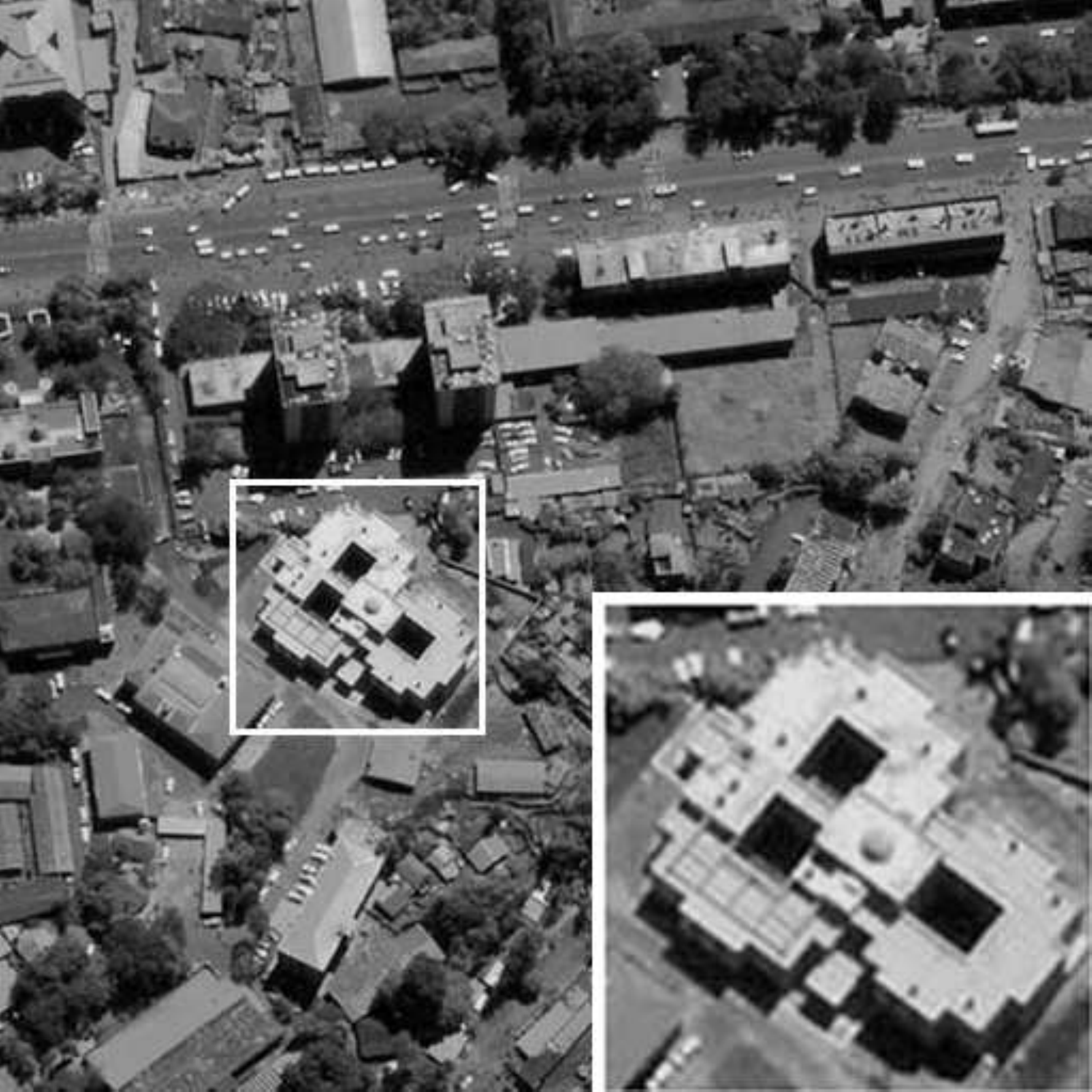}
    \caption{Ground truth}
    \end{subfigure}
    \begin{subfigure}[t]{.1942\textwidth}
    \includegraphics[width=3.245cm]{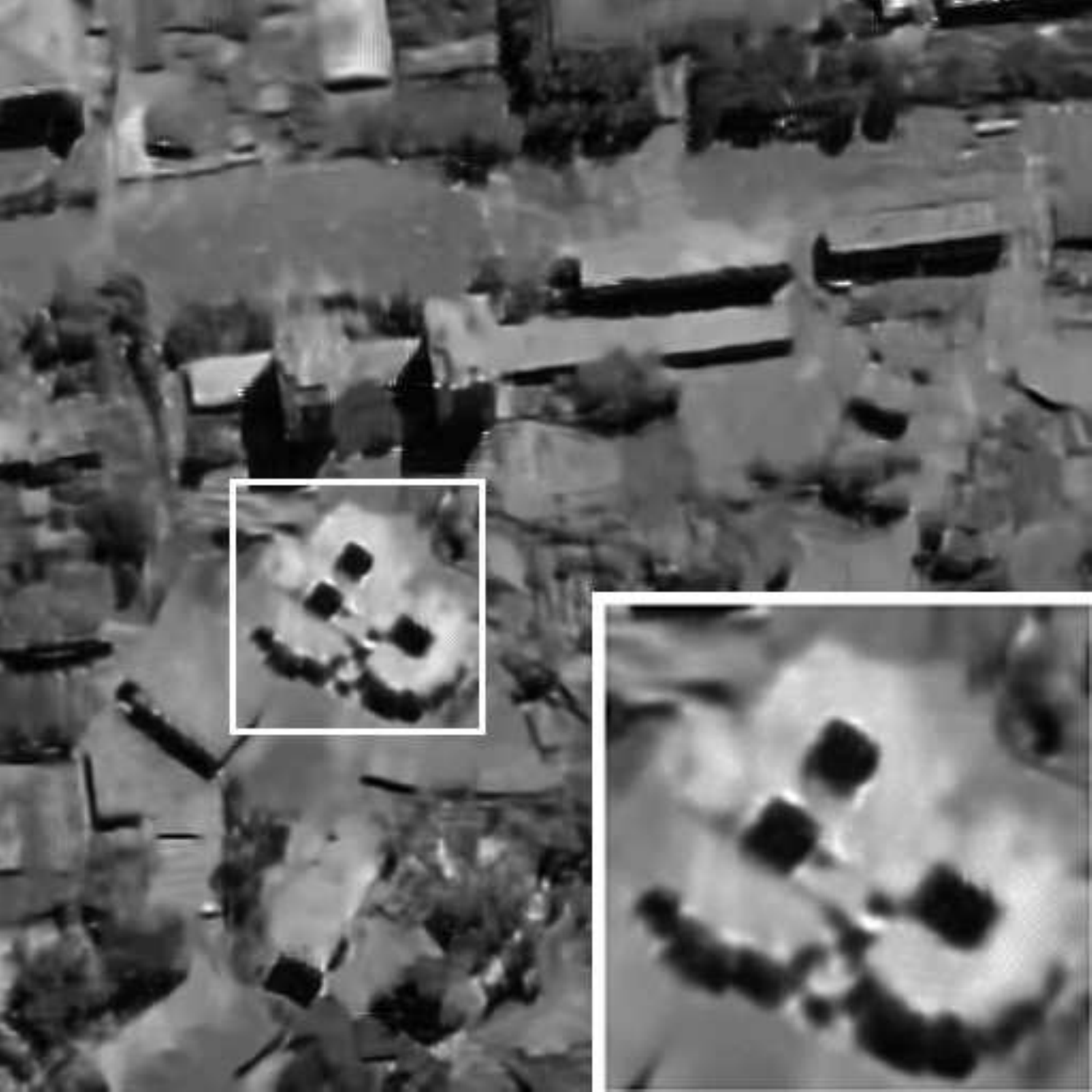}
    \caption{Alg 1}
    \end{subfigure}
    \begin{subfigure}[t]{.1942\textwidth}
    \includegraphics[width=3.245cm]{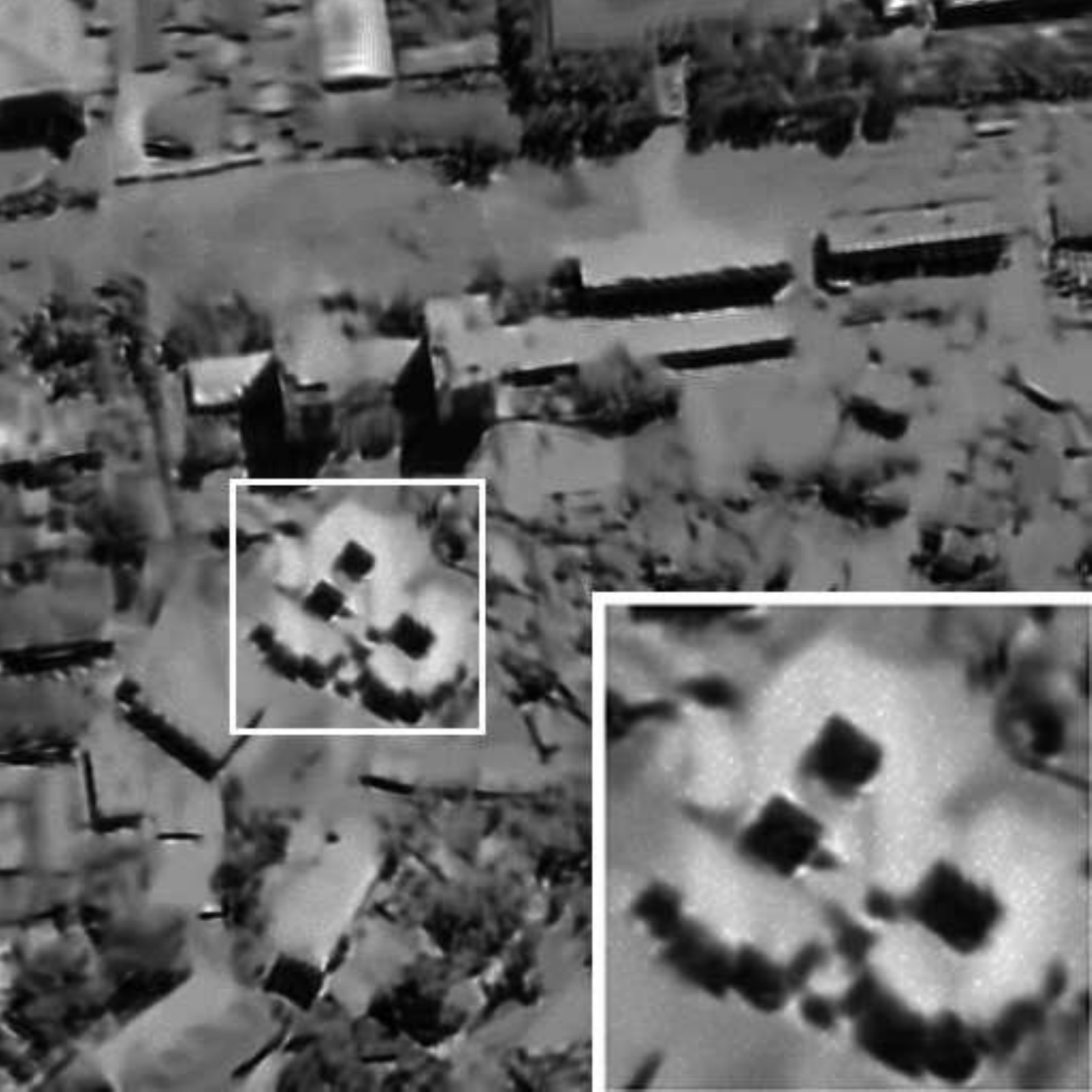}
    \caption{Alg 2}
    \end{subfigure}
    \begin{subfigure}[t]{.1942\textwidth}
    \includegraphics[width=3.245cm]{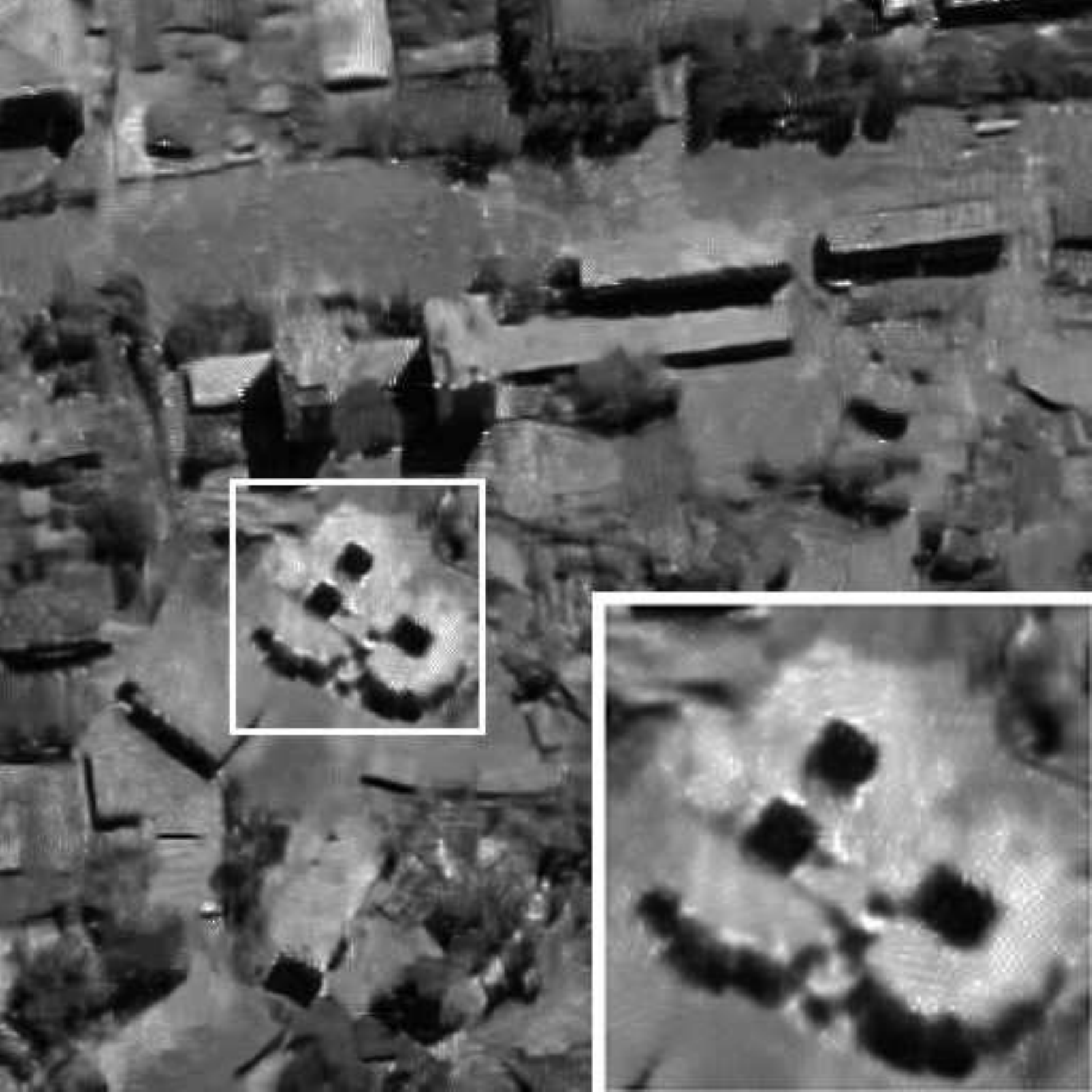}
	\caption{SAR-BM3D}
    \end{subfigure}\\
    \begin{subfigure}[t]{.1942\textwidth}
    \includegraphics[width=3.245cm]{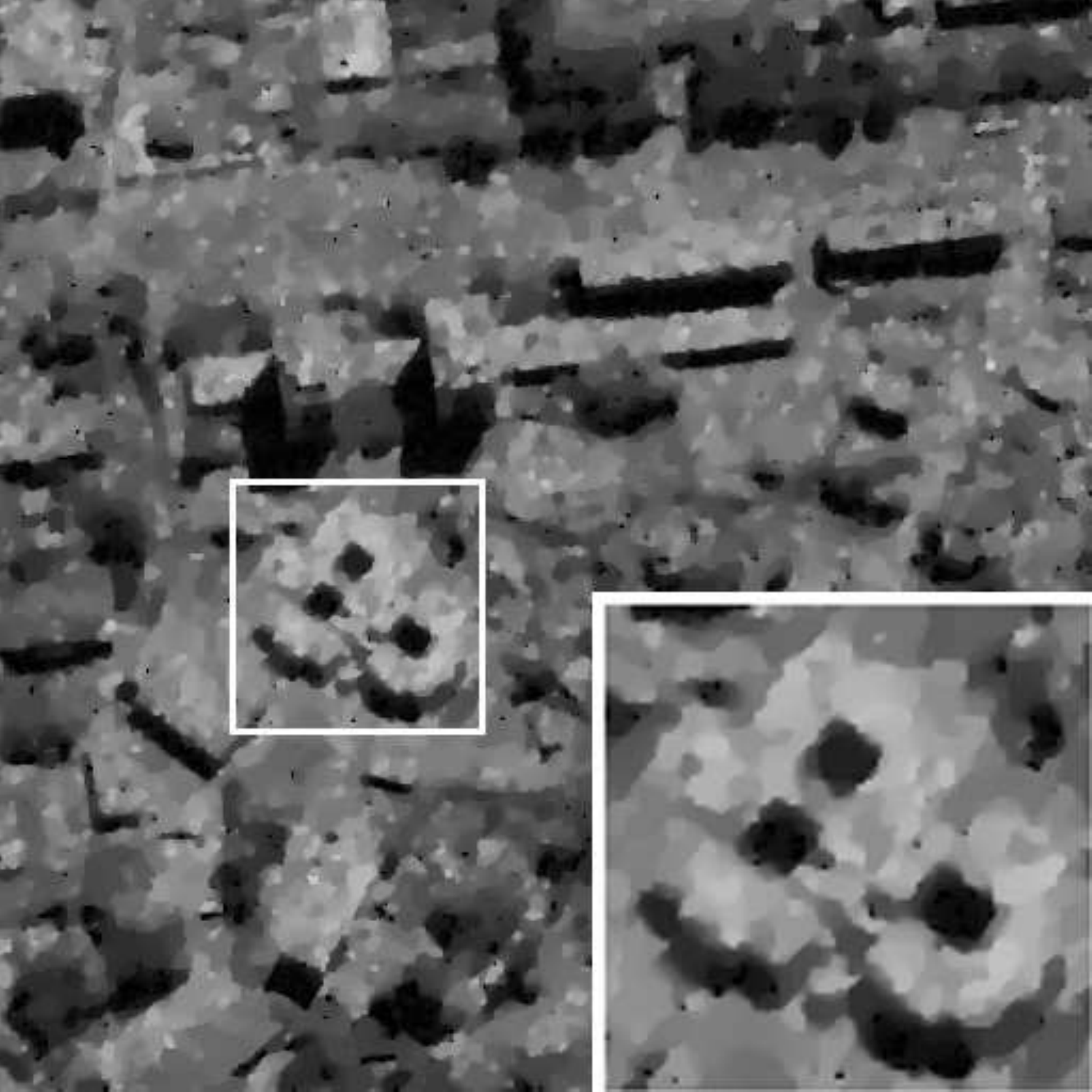}
	\caption{DZ}
    \end{subfigure}
    \begin{subfigure}[t]{.1942\textwidth}
    \includegraphics[width=3.245cm]{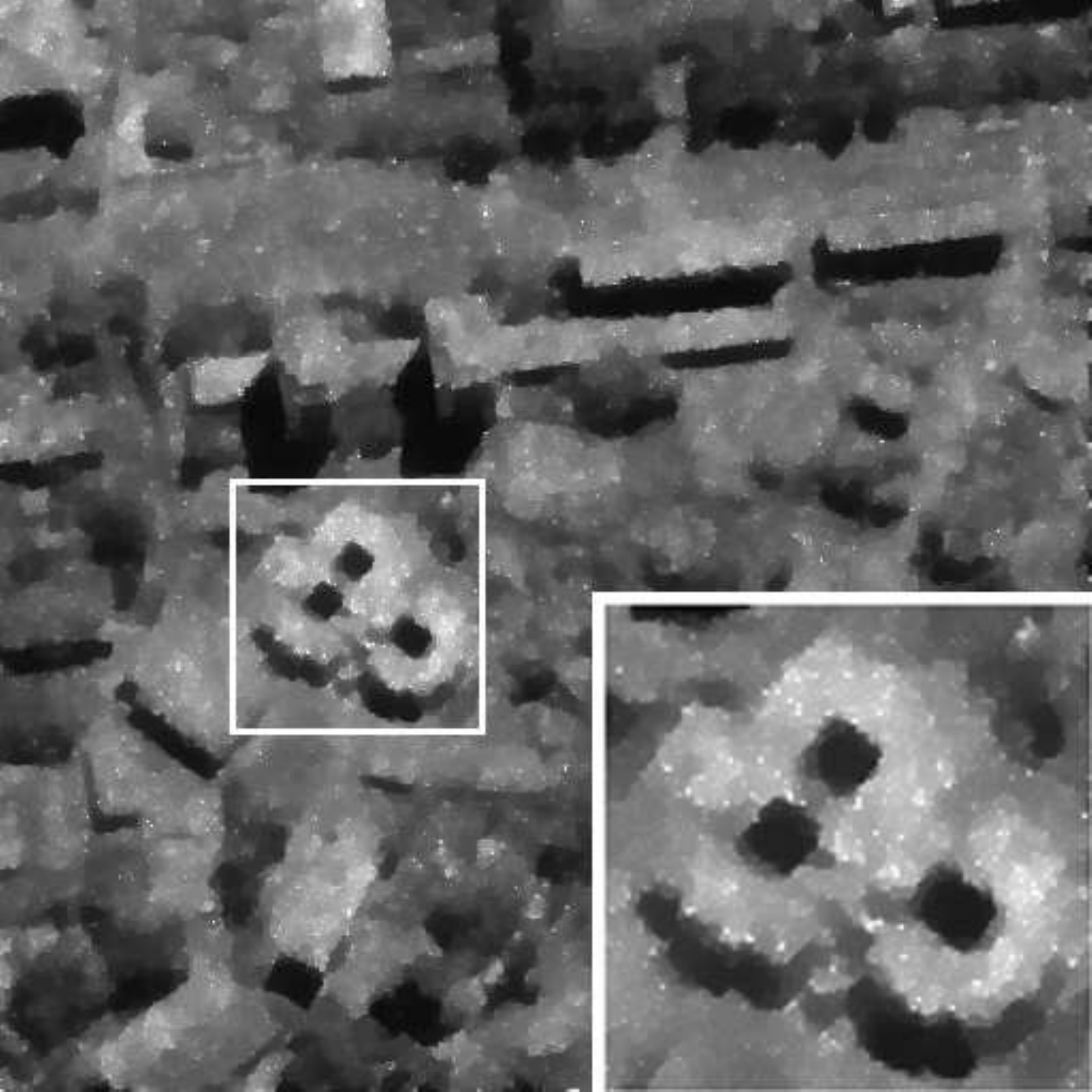}
	\caption{HNW}
    \end{subfigure}
    \begin{subfigure}[t]{.1942\textwidth}
    \includegraphics[width=3.245cm]{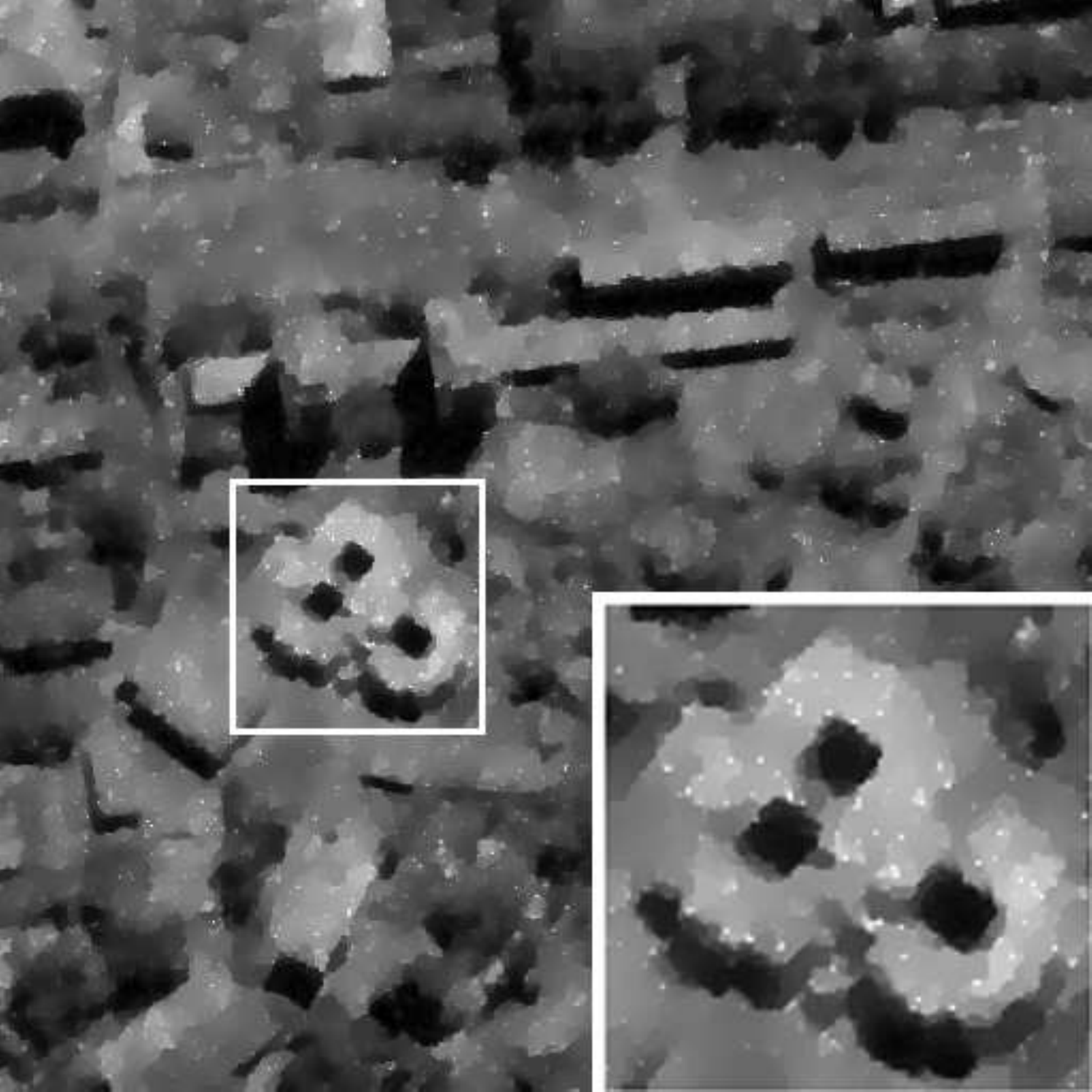}
    \caption{I-DIV}
    \end{subfigure}
    \begin{subfigure}[t]{.1942\textwidth}
    \includegraphics[width=3.245cm]{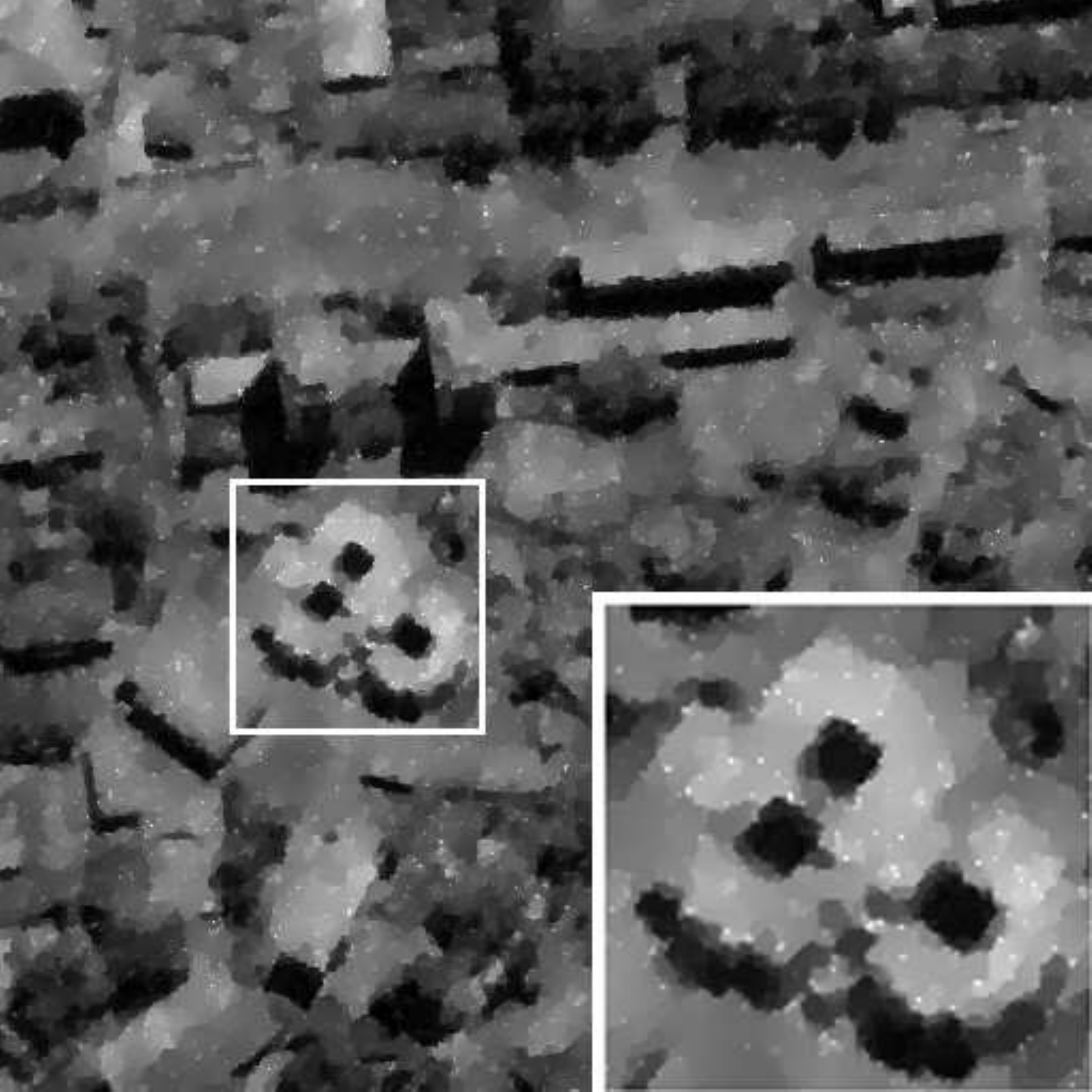}
    \caption{TwL-4V}
    \end{subfigure}
    \begin{subfigure}[t]{.1942\textwidth}
    \includegraphics[width=3.245cm]{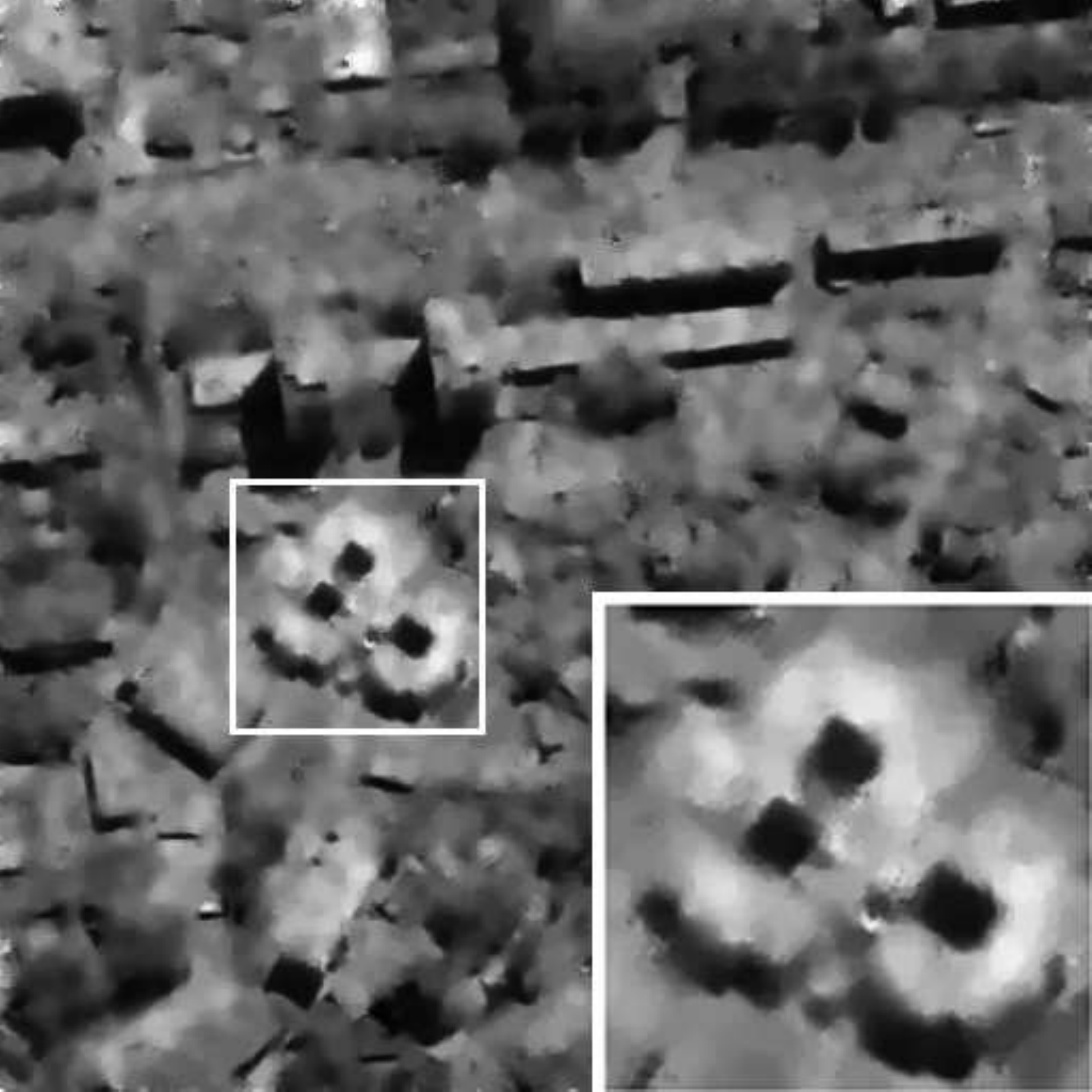}
	\caption{Dictionary}
    \end{subfigure}   	
	\caption{Comparison of denoised images restored from ``Remote 1" at noise level $L=1$ by different methods. The (PSNR, SSIM) values for each denoised image: (c) Alg 1 (21.23dB, 0.5459); (d) Alg 2 (21.11dB, 0.5510); (e) SAR-BM3D (21.12dB, 0.5393); (f) DZ (20.47dB, 0.4950); (g) HNW (20.24dB, 0.4551); (h) I-DIV (20.03dB, 0.4709); (i) TwL-4V (20.07dB, 0.4934); (j) Dictionary (20.44dB, 0.4867).}\label{fig:Remote1}
\end{figure}
\begin{figure}[htbp]
	\centering
	\begin{subfigure}[t]{.1942\textwidth}
    \includegraphics[width=3.245cm]{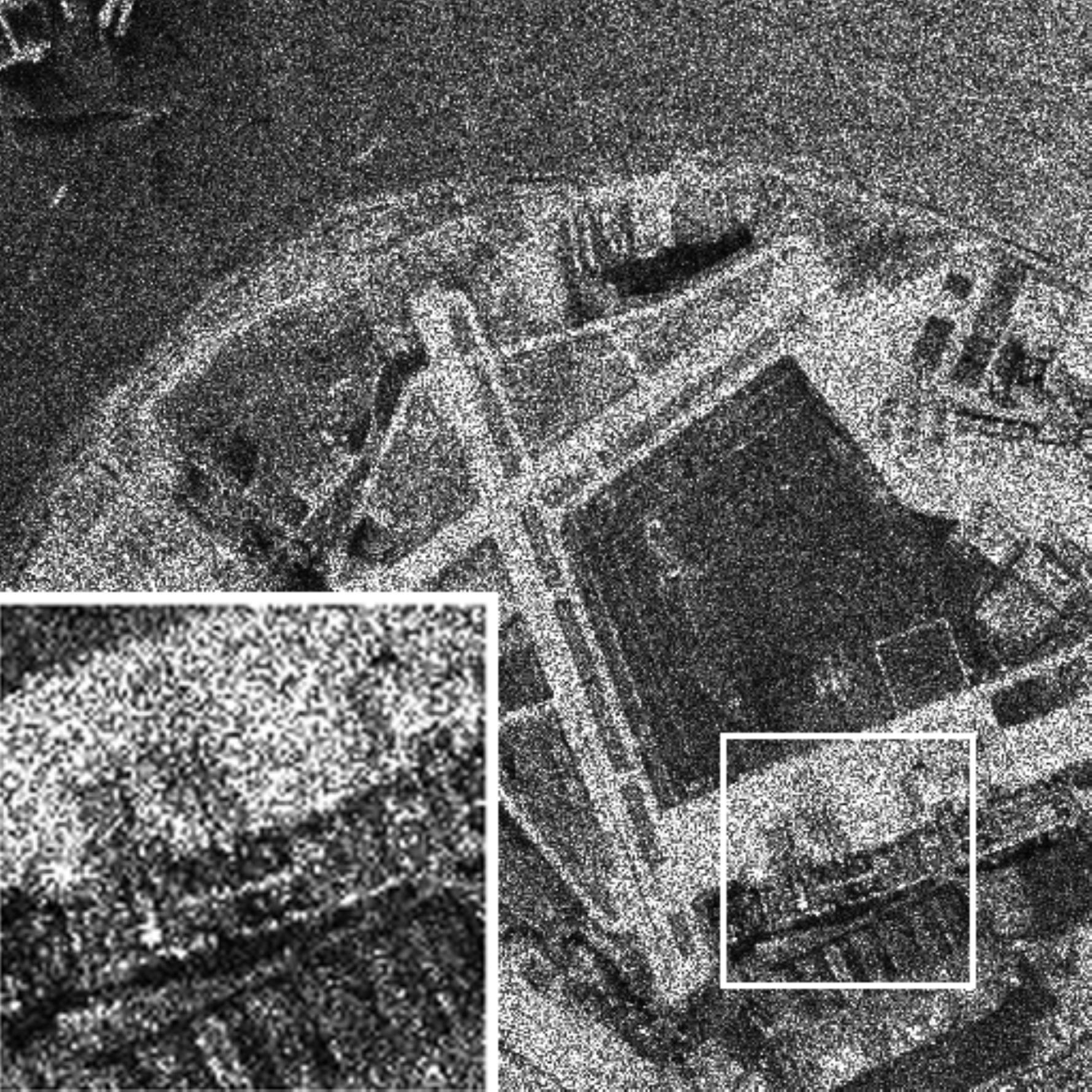}
    \caption{Noisy image (L=3)}
    \end{subfigure}
	\begin{subfigure}[t]{.1942\textwidth}
    \includegraphics[width=3.245cm]{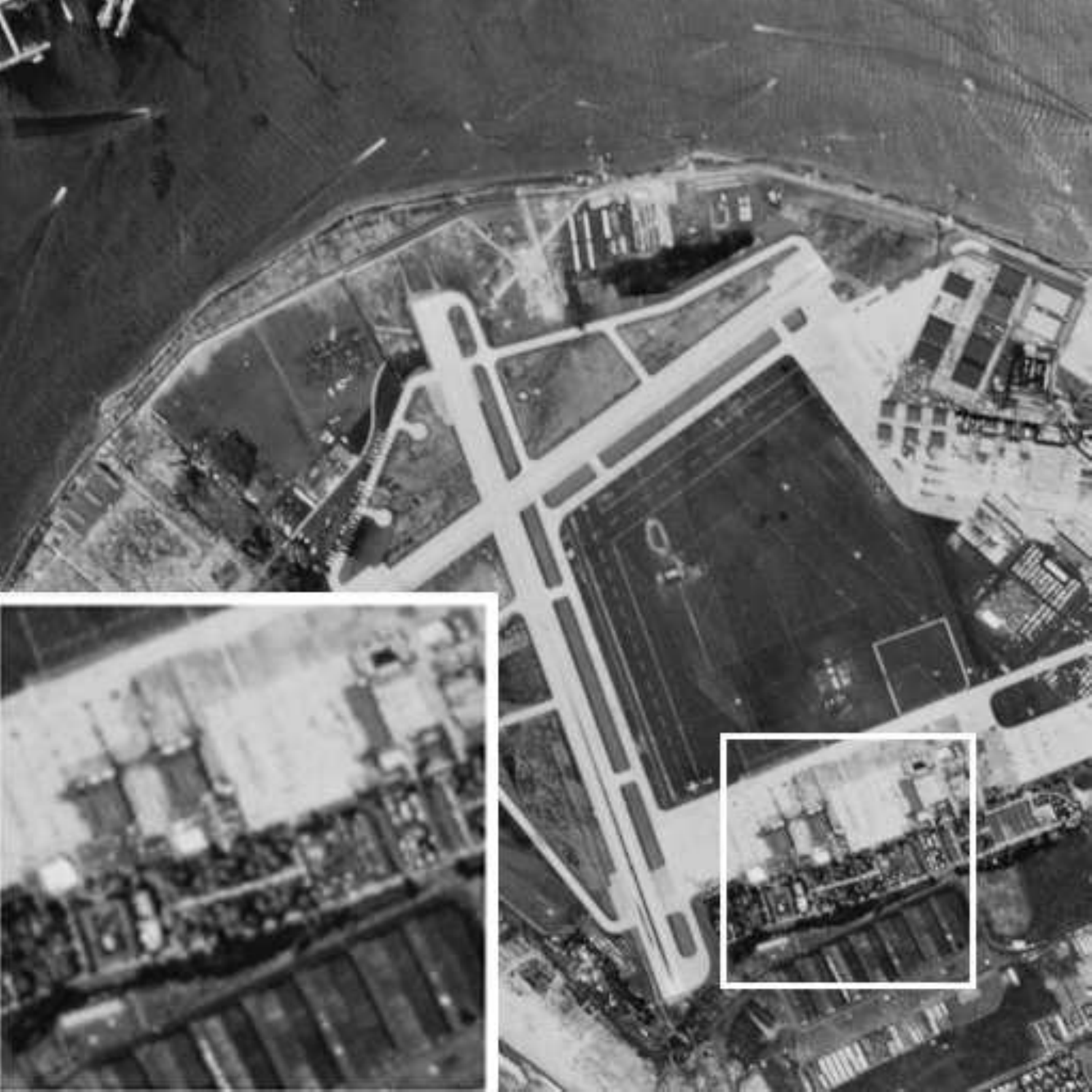}
    \caption{Ground truth}
    \end{subfigure}
    \begin{subfigure}[t]{.1942\textwidth}
    \includegraphics[width=3.245cm]{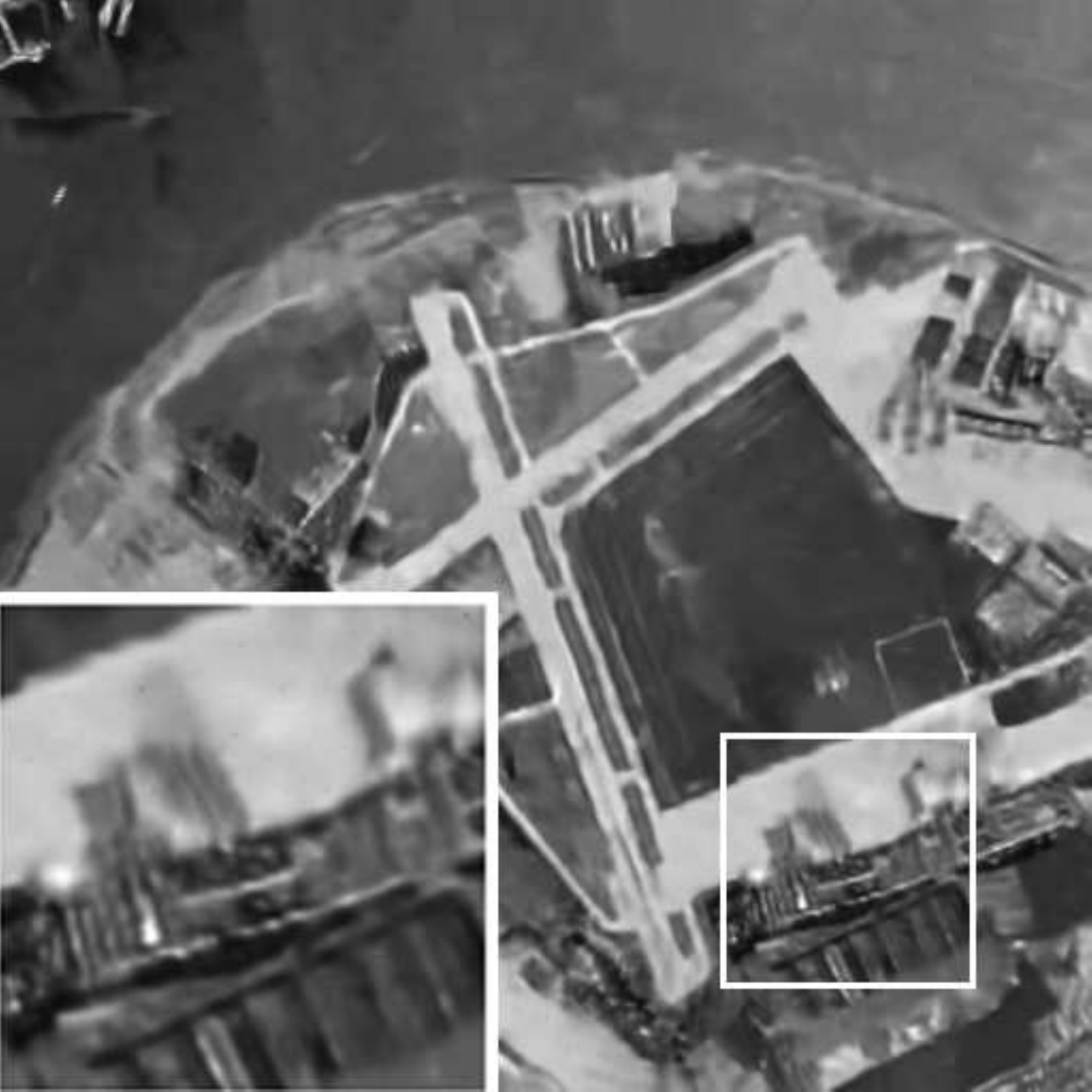}
    \caption{Alg 1}
    \end{subfigure}
    \begin{subfigure}[t]{.1942\textwidth}
    \includegraphics[width=3.245cm]{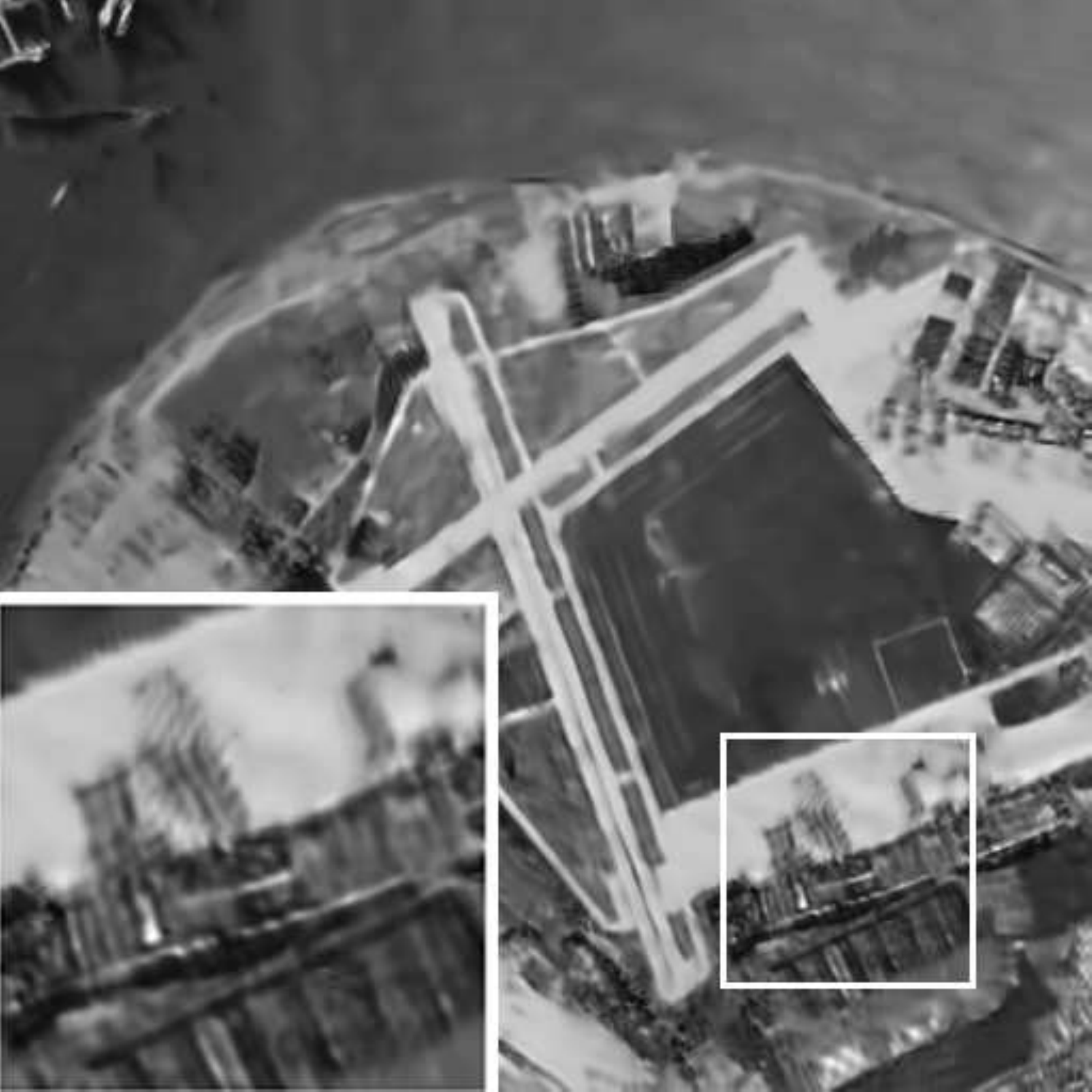}
    \caption{Alg 2}
    \end{subfigure}
    \begin{subfigure}[t]{.1942\textwidth}
    \includegraphics[width=3.245cm]{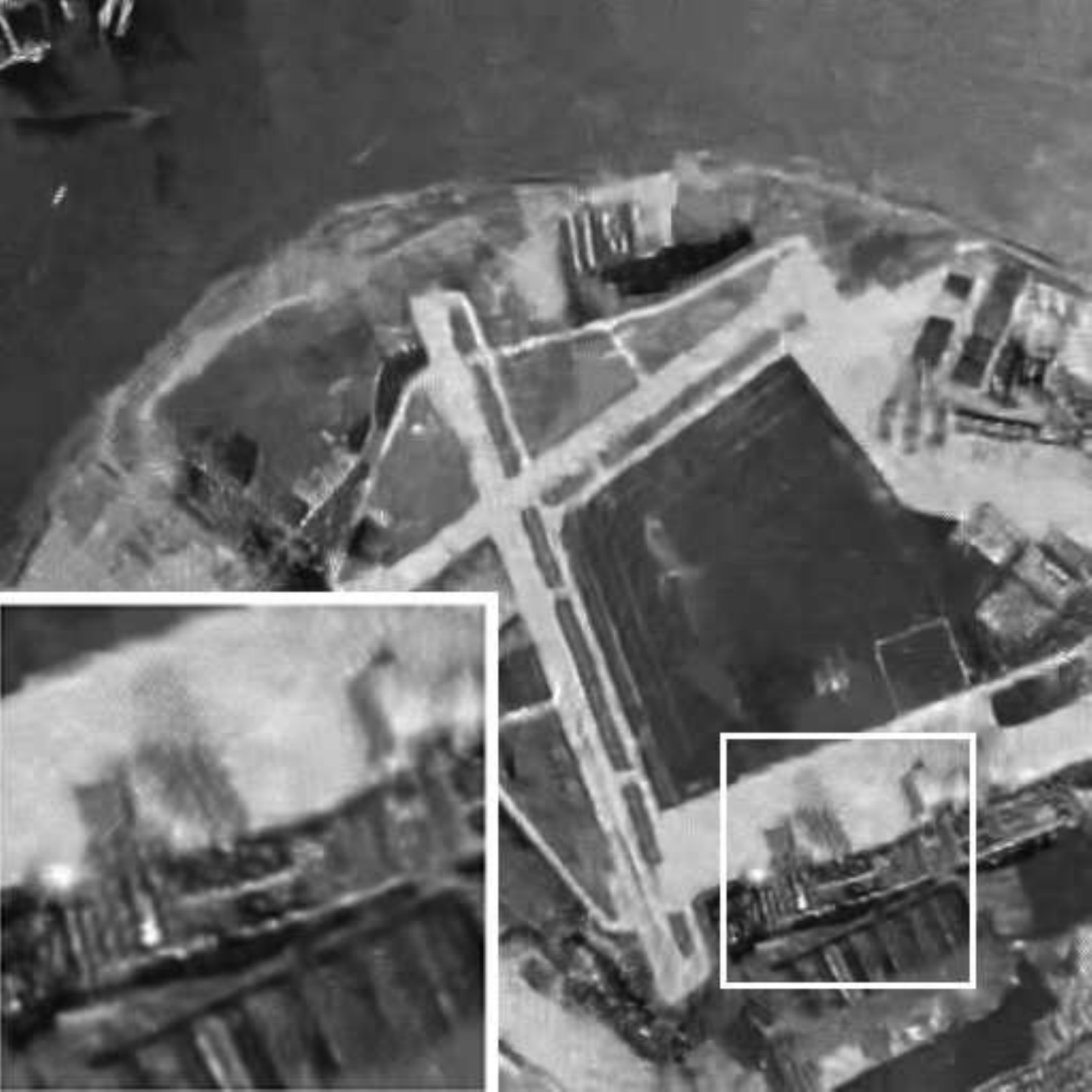}
	\caption{SAR-BM3D}
    \end{subfigure} \\
    \begin{subfigure}[t]{.1942\textwidth}
    \includegraphics[width=3.245cm]{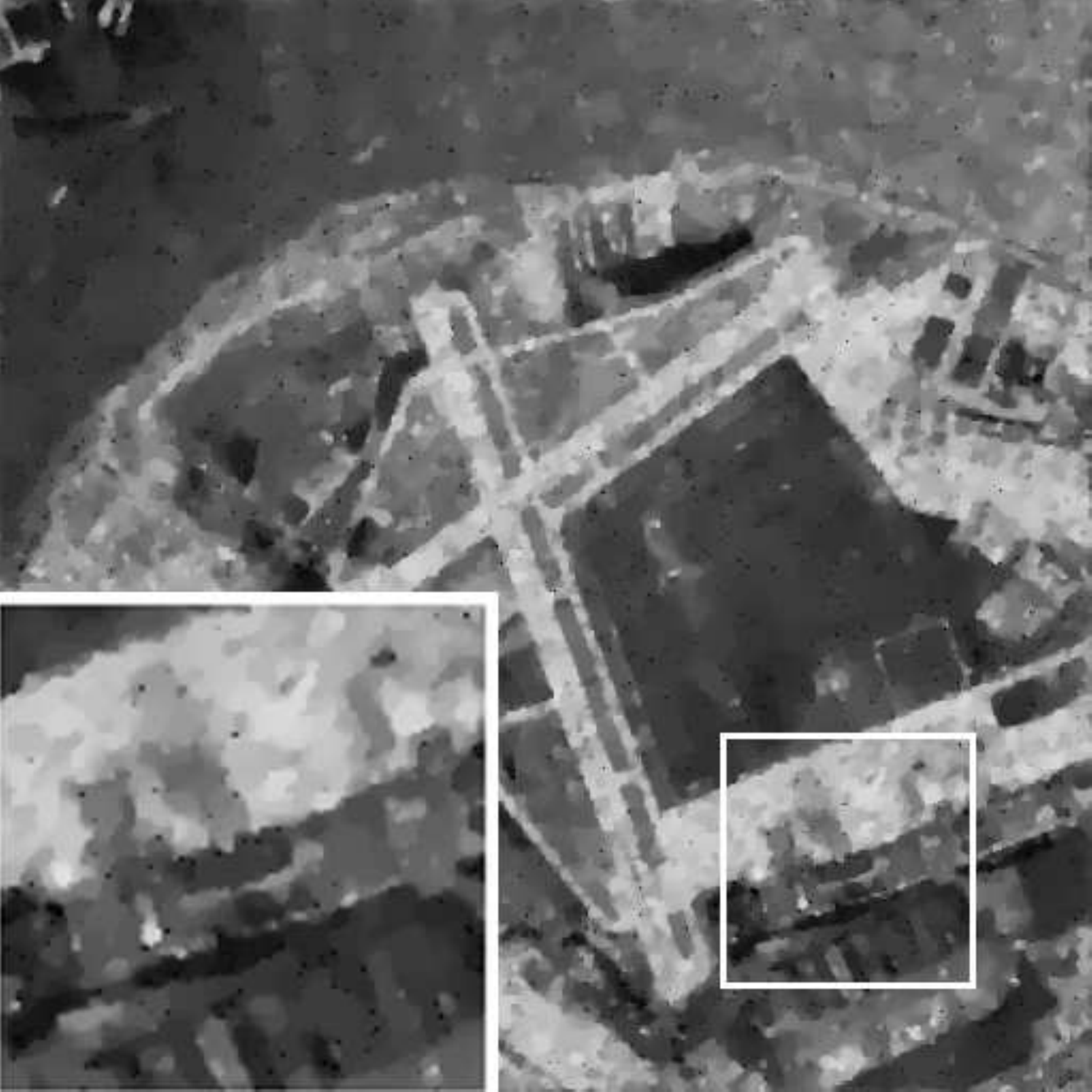}
	\caption{DZ}
    \end{subfigure}   	
    \begin{subfigure}[t]{.1942\textwidth}
    \includegraphics[width=3.245cm]{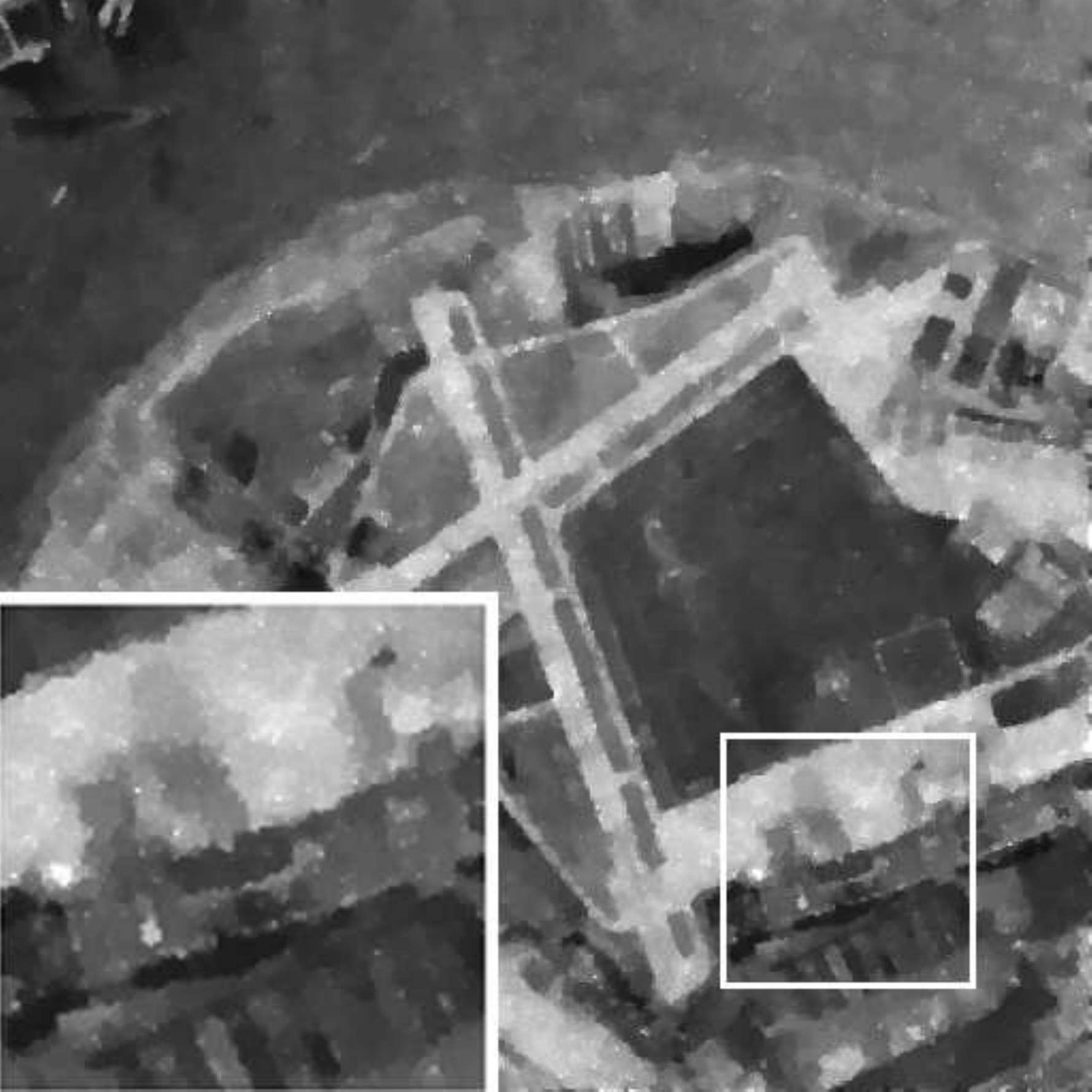}
	\caption{HNW}
    \end{subfigure}
    \begin{subfigure}[t]{.1942\textwidth}
    \includegraphics[width=3.245cm]{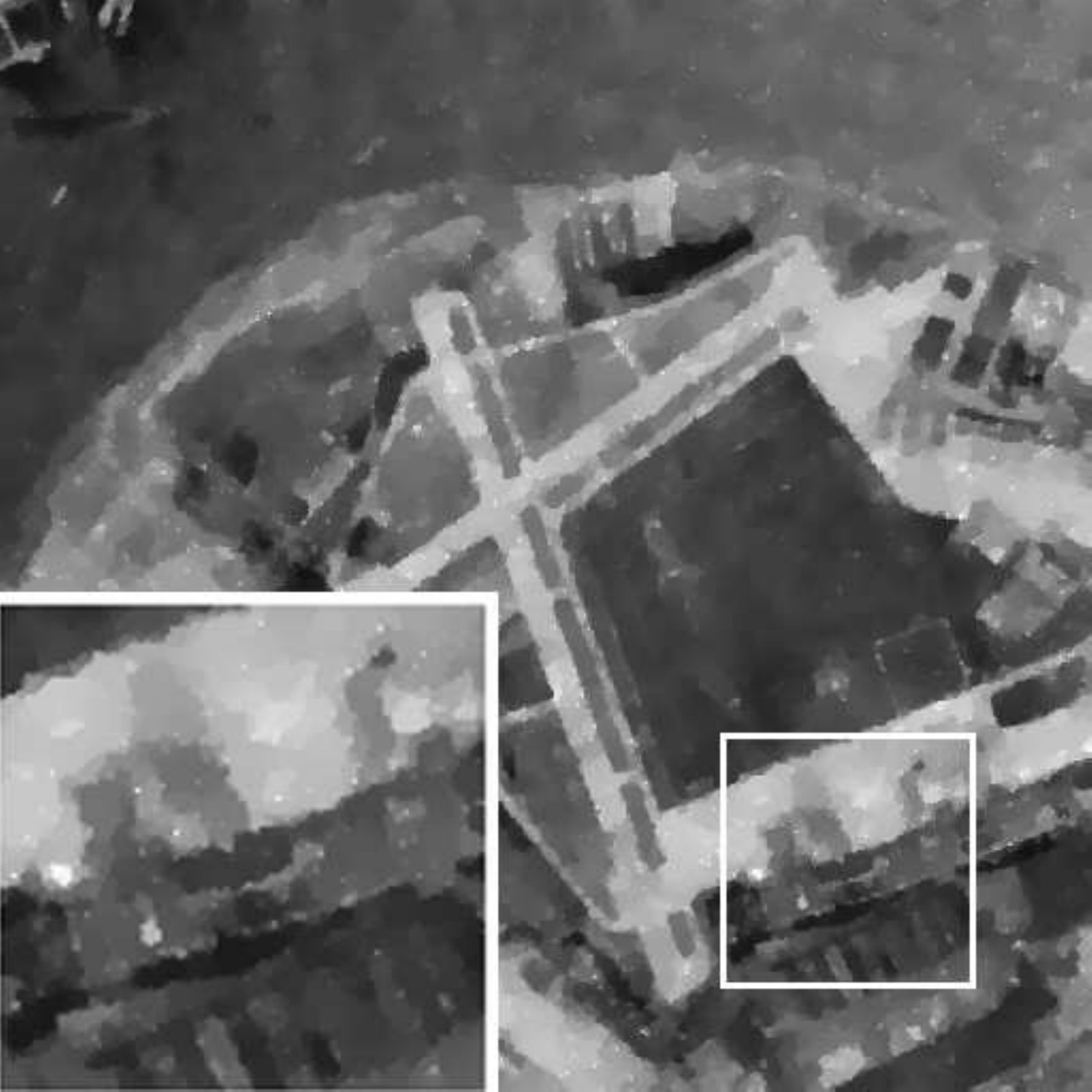}
    \caption{I-DIV}
    \end{subfigure}
    \begin{subfigure}[t]{.1942\textwidth}
    \includegraphics[width=3.245cm]{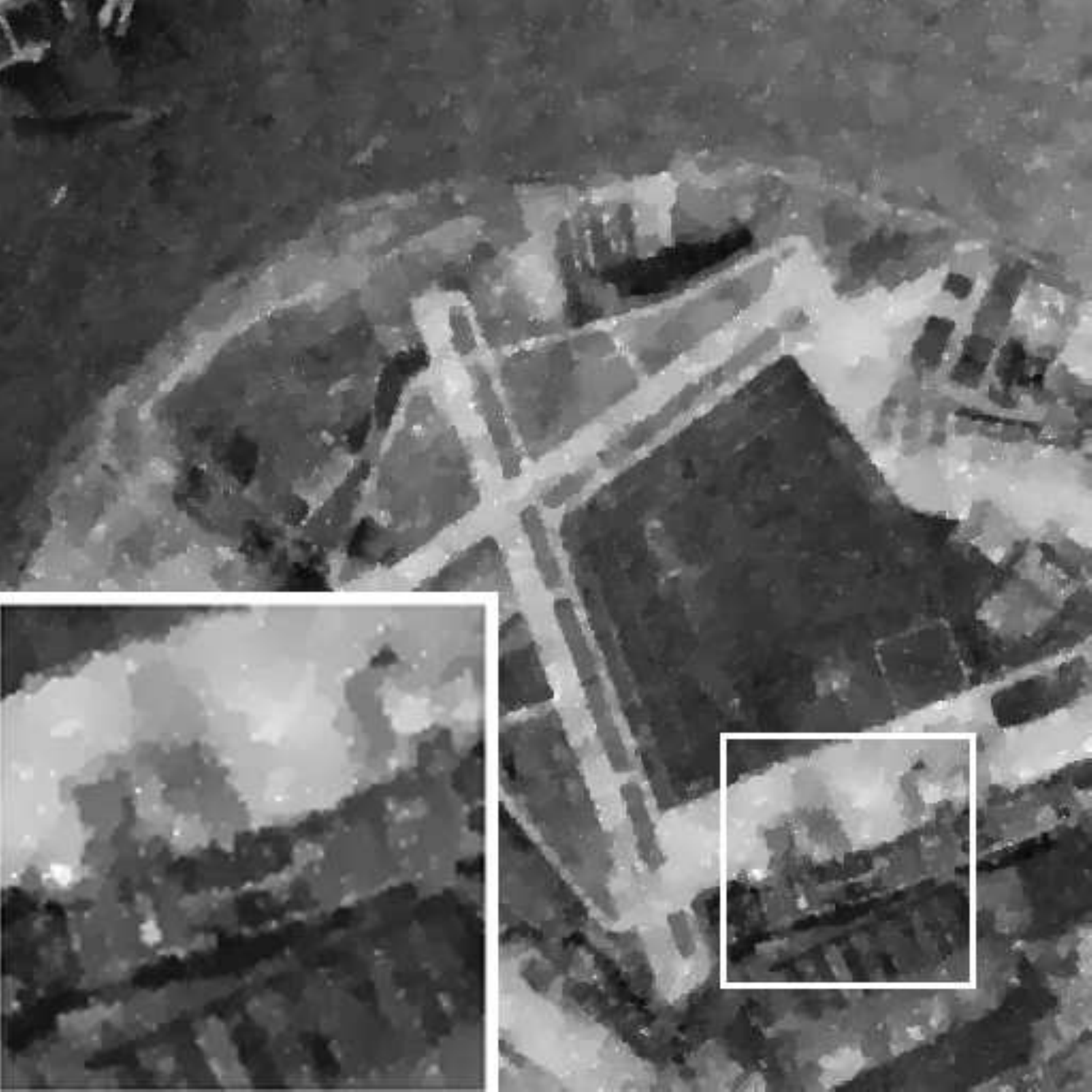}
    \caption{TwL-4V}
    \end{subfigure}
    \begin{subfigure}[t]{.1942\textwidth}
    \includegraphics[width=3.245cm]{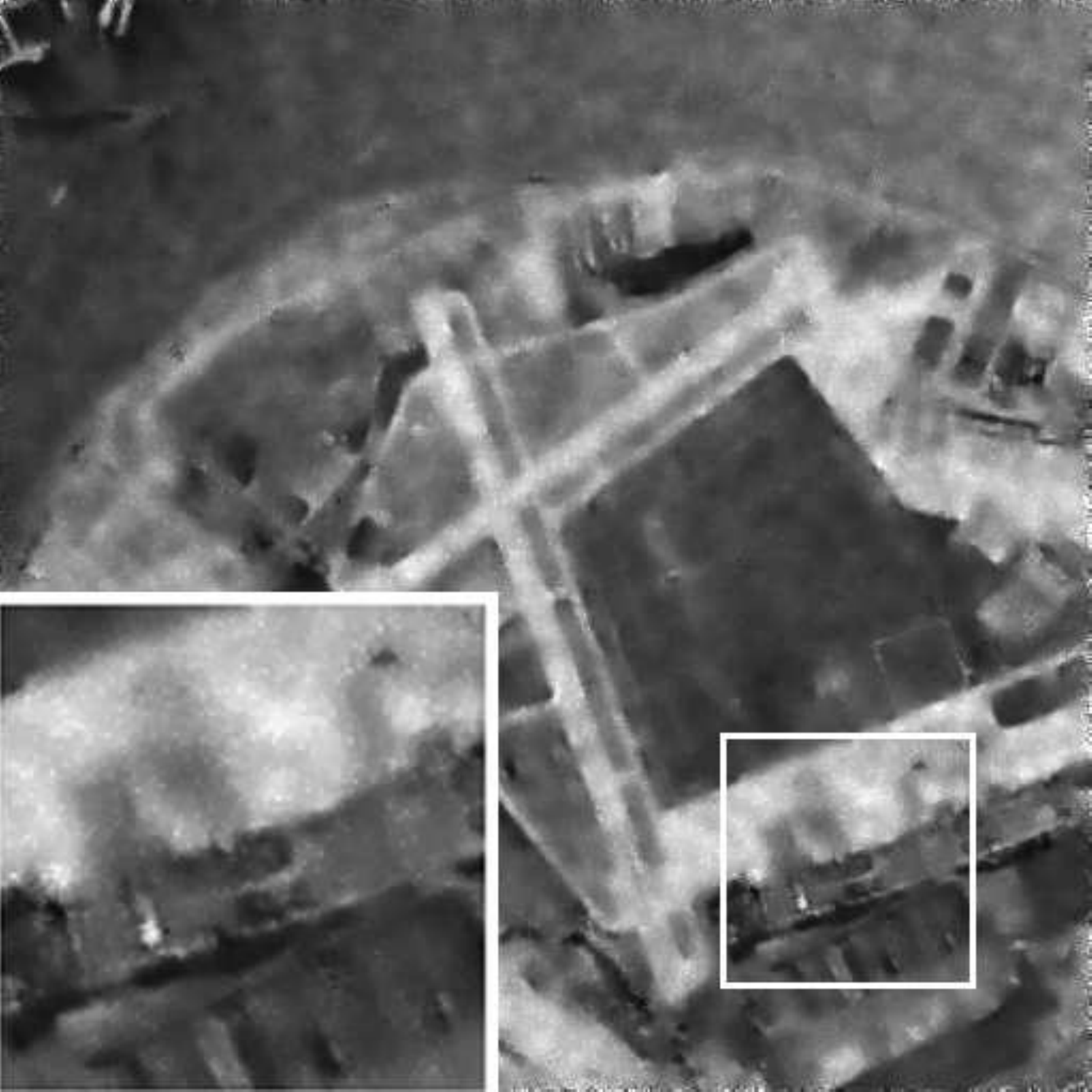}
	\caption{Dictionary}
    \end{subfigure}   	
	\caption{Comparison of denoised images restored from ``Remote 2" at noise level $L=3$ by different methods. The (PSNR, SSIM) values for each denoised image: (c) Alg 1 (24.13dB, 0.6471); (d) Alg 2 (24.07dB, 0.6474); (e) SAR-BM3D (24.03dB, 0.6449); (f) DZ (22.76dB, 0.5758); (g) HNW (22.71dB, 0.5805); (h) I-DIV (22.49dB, 0.5744); (i) TwL-4V (22.66dB, 0.5845); (j) Dictionary (22.34dB, 0.5592).}\label{fig:Remote2}
\end{figure}
\begin{figure}[htbp]
	\centering
	\begin{subfigure}[t]{.1942\textwidth}
    \includegraphics[width=3.245cm]{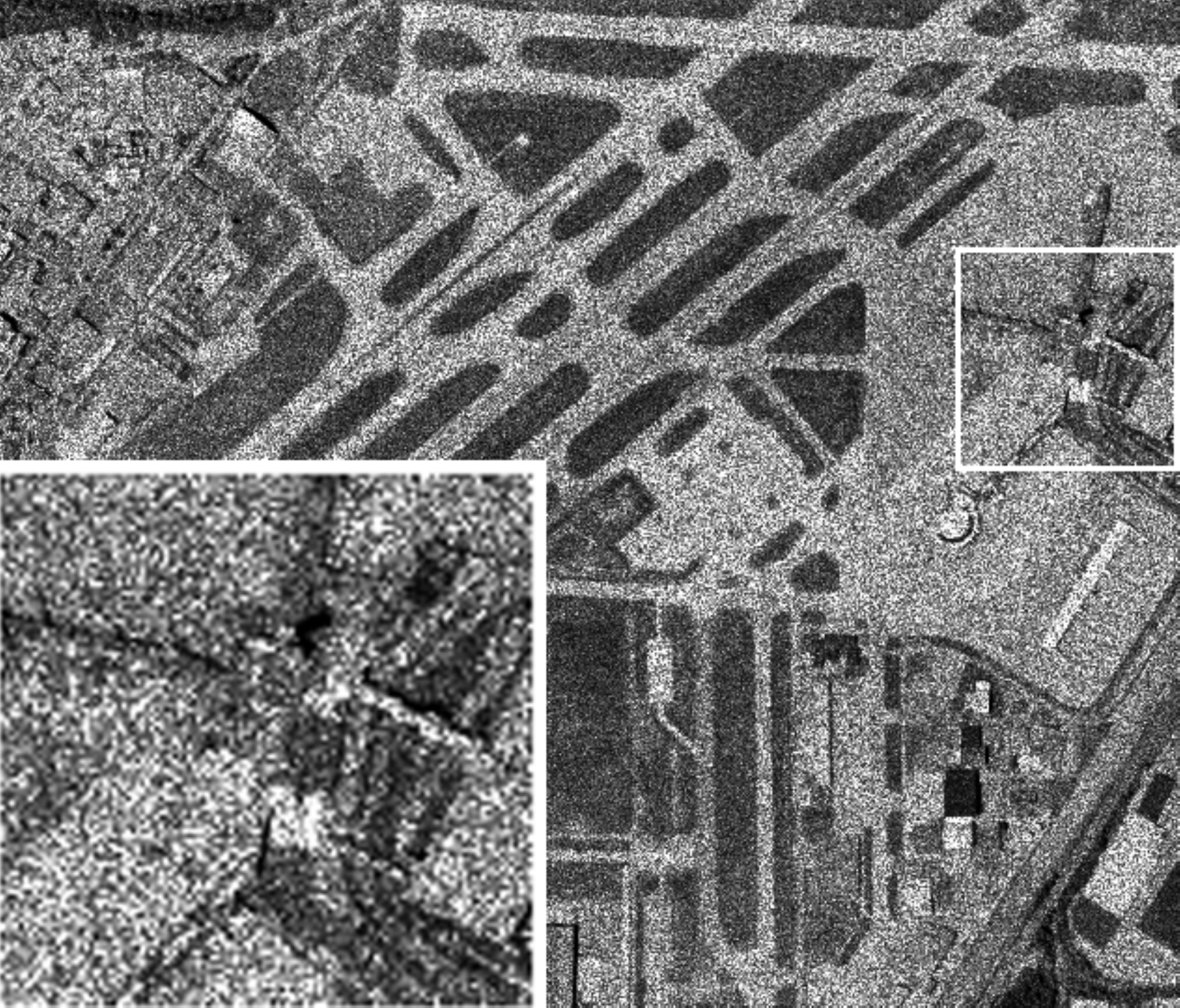}
    \caption{Noisy image (L=5)}
    \end{subfigure}
	\begin{subfigure}[t]{.1942\textwidth}
    \includegraphics[width=3.245cm]{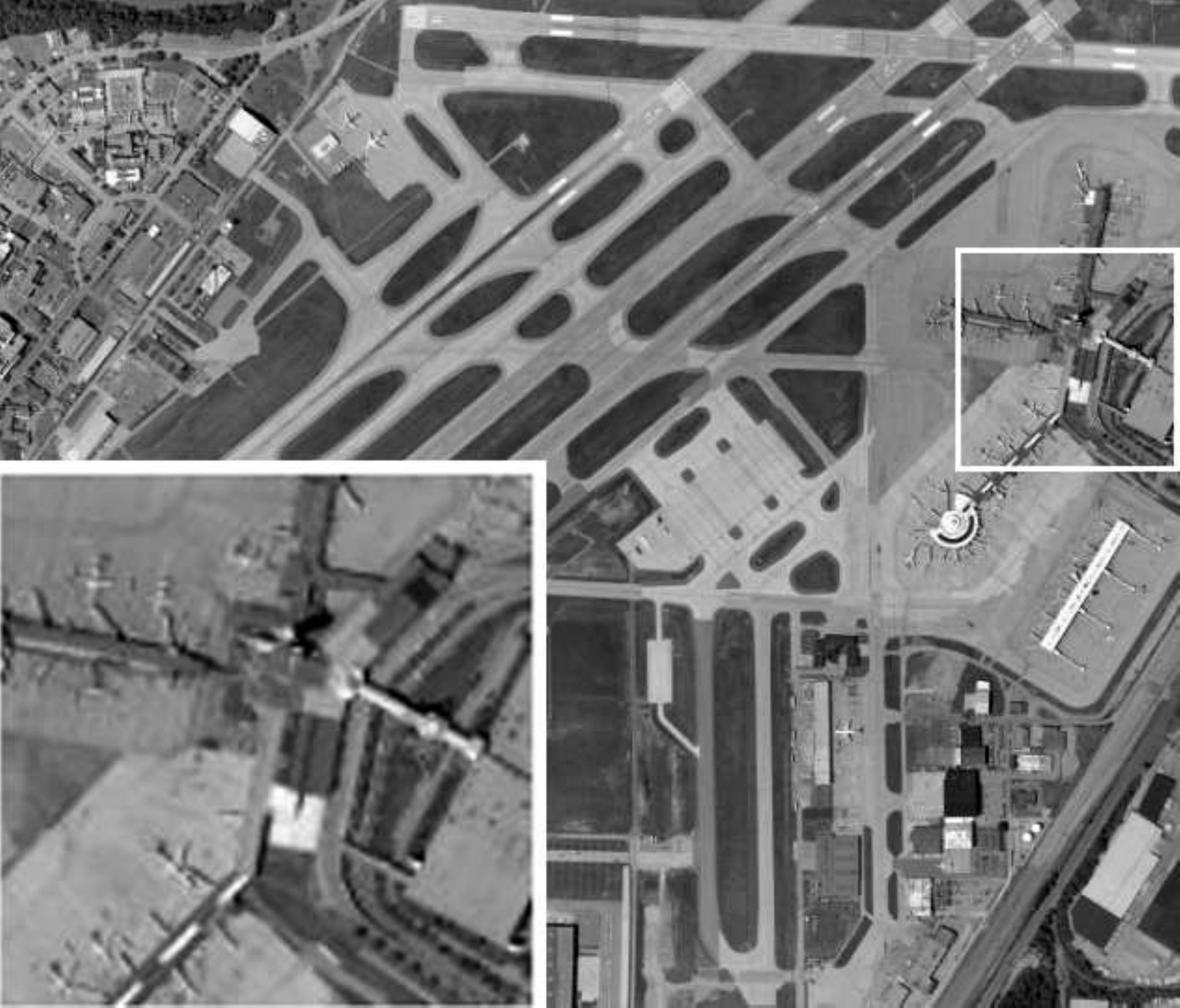}
    \caption{Ground truth}
    \end{subfigure}
    \begin{subfigure}[t]{.1942\textwidth}
    \includegraphics[width=3.245cm]{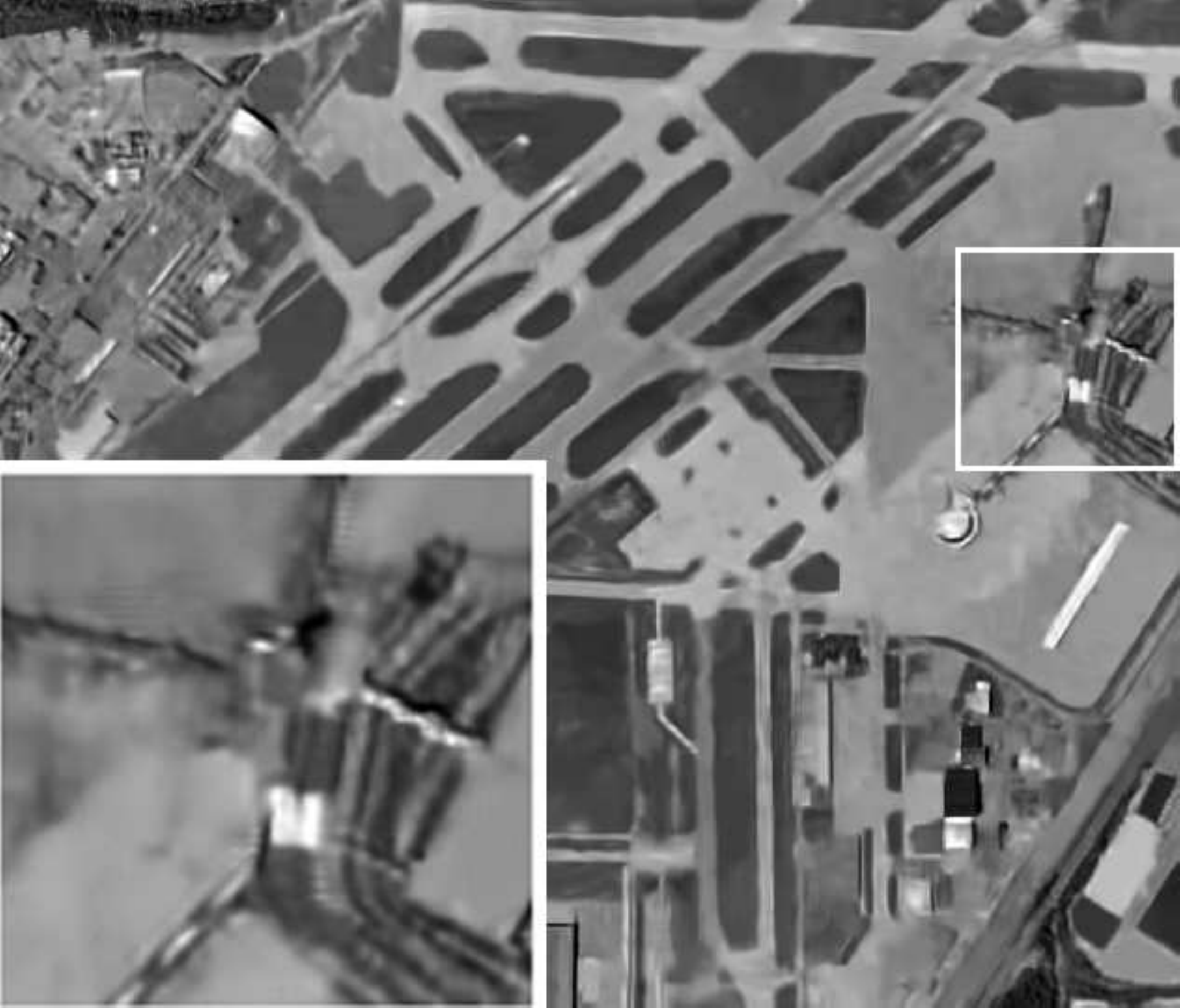}
    \caption{Alg 1}
    \end{subfigure}
    \begin{subfigure}[t]{.1942\textwidth}
    \includegraphics[width=3.245cm]{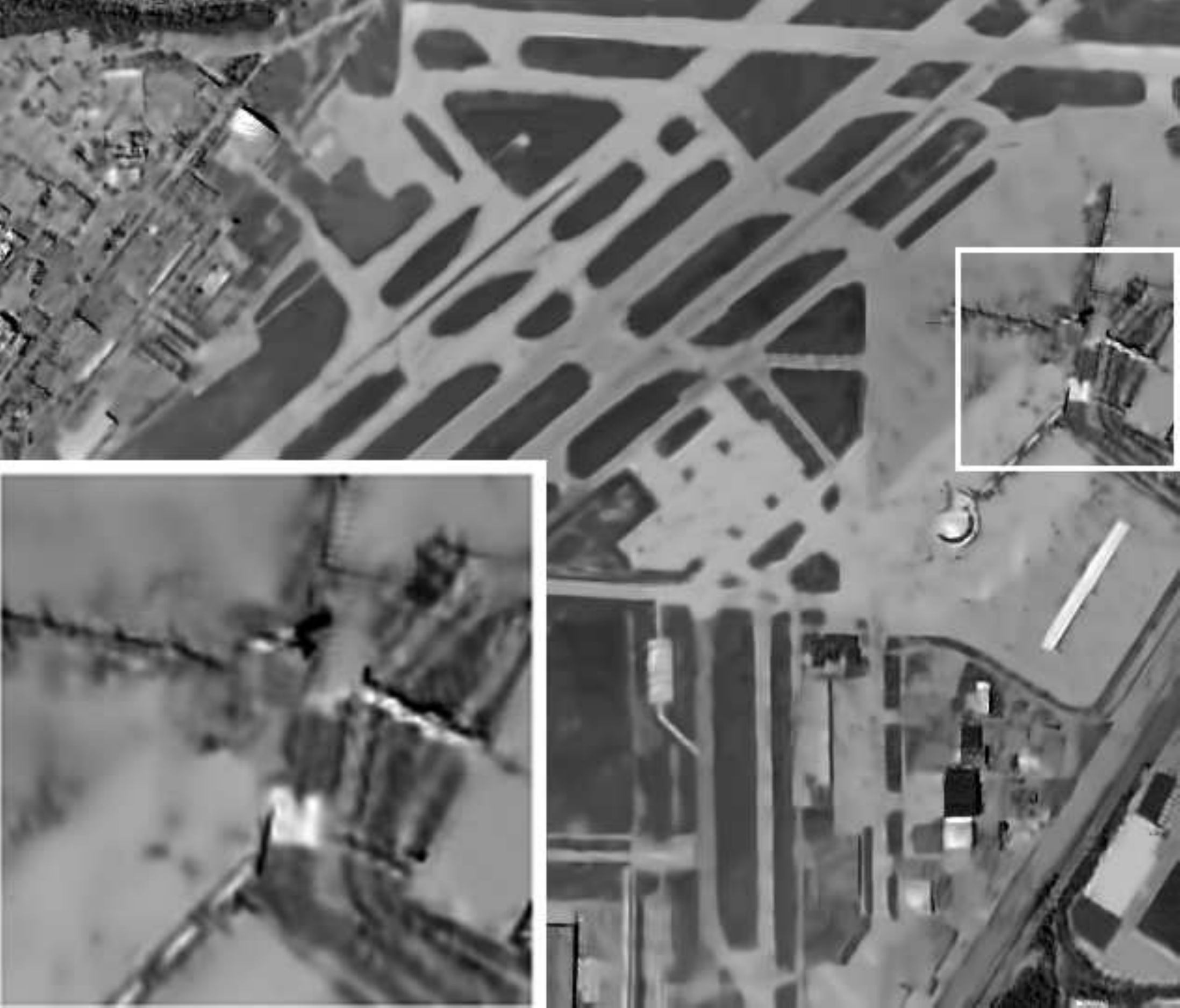}
    \caption{Alg 2}
    \end{subfigure}
    \begin{subfigure}[t]{.1942\textwidth}
    \includegraphics[width=3.245cm]{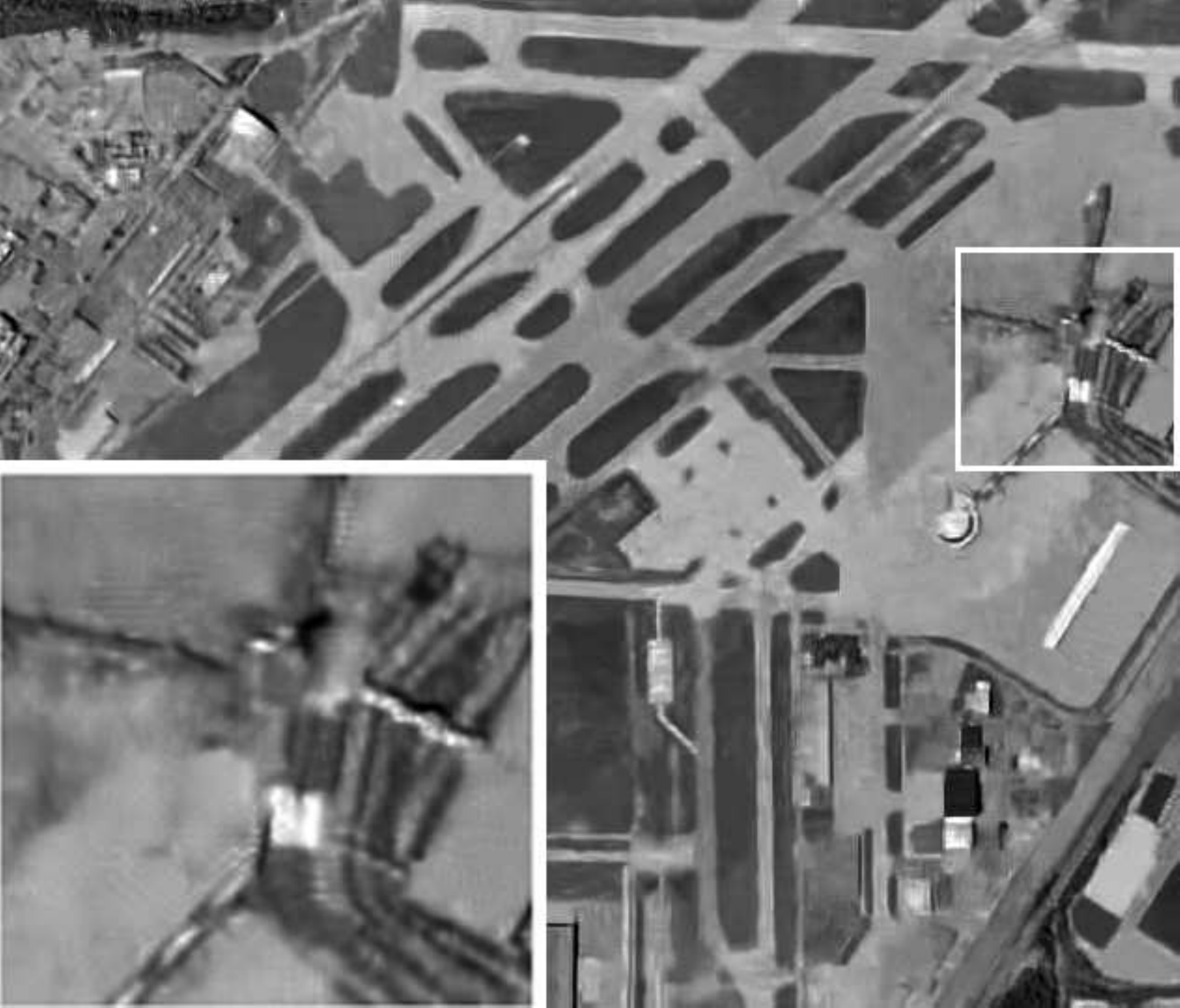}
	\caption{SAR-BM3D}
    \end{subfigure}     \\
    \begin{subfigure}[t]{.1942\textwidth}
    \includegraphics[width=3.245cm]{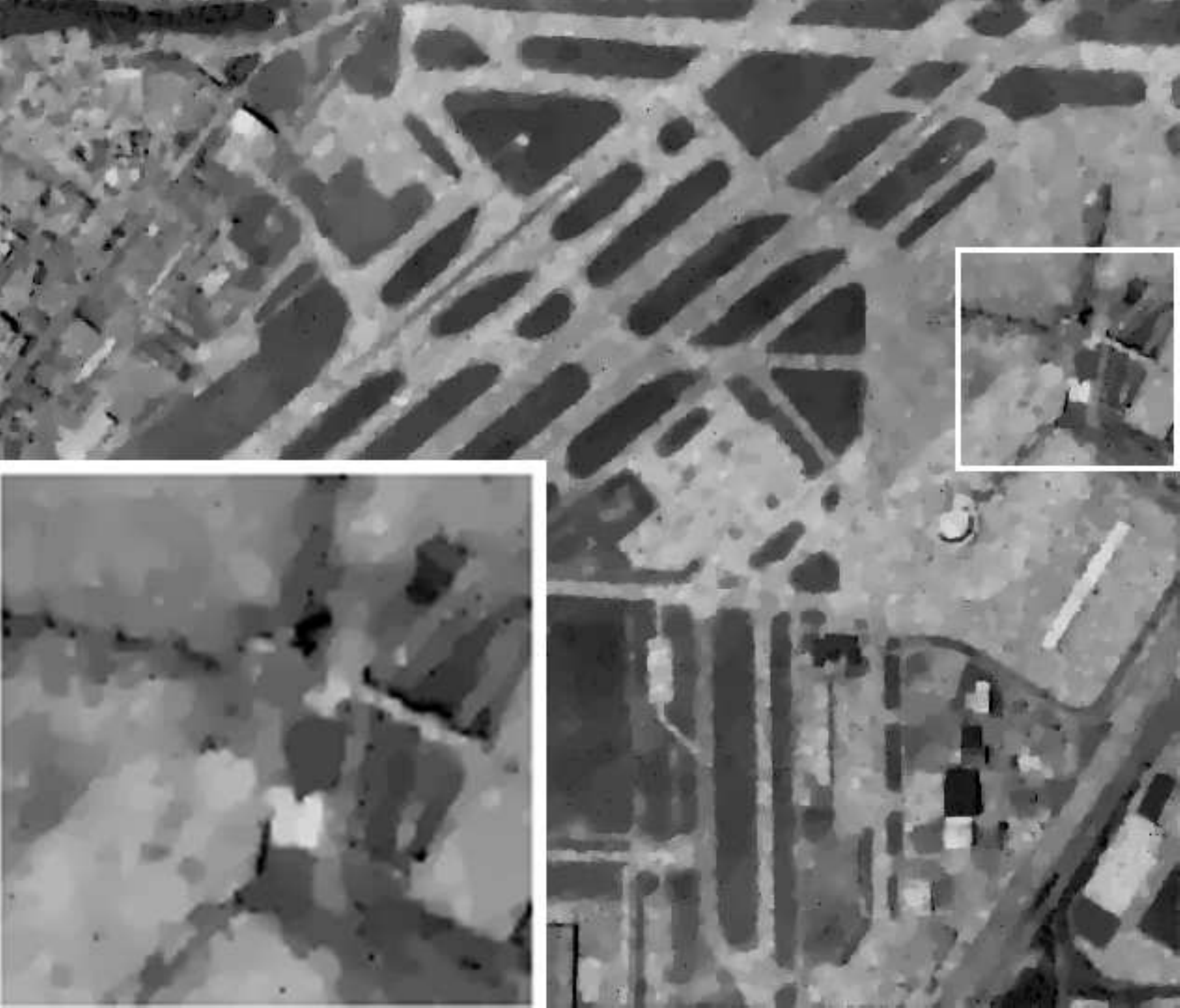}
	\caption{DZ}
    \end{subfigure}
    \begin{subfigure}[t]{.1942\textwidth}
    \includegraphics[width=3.245cm]{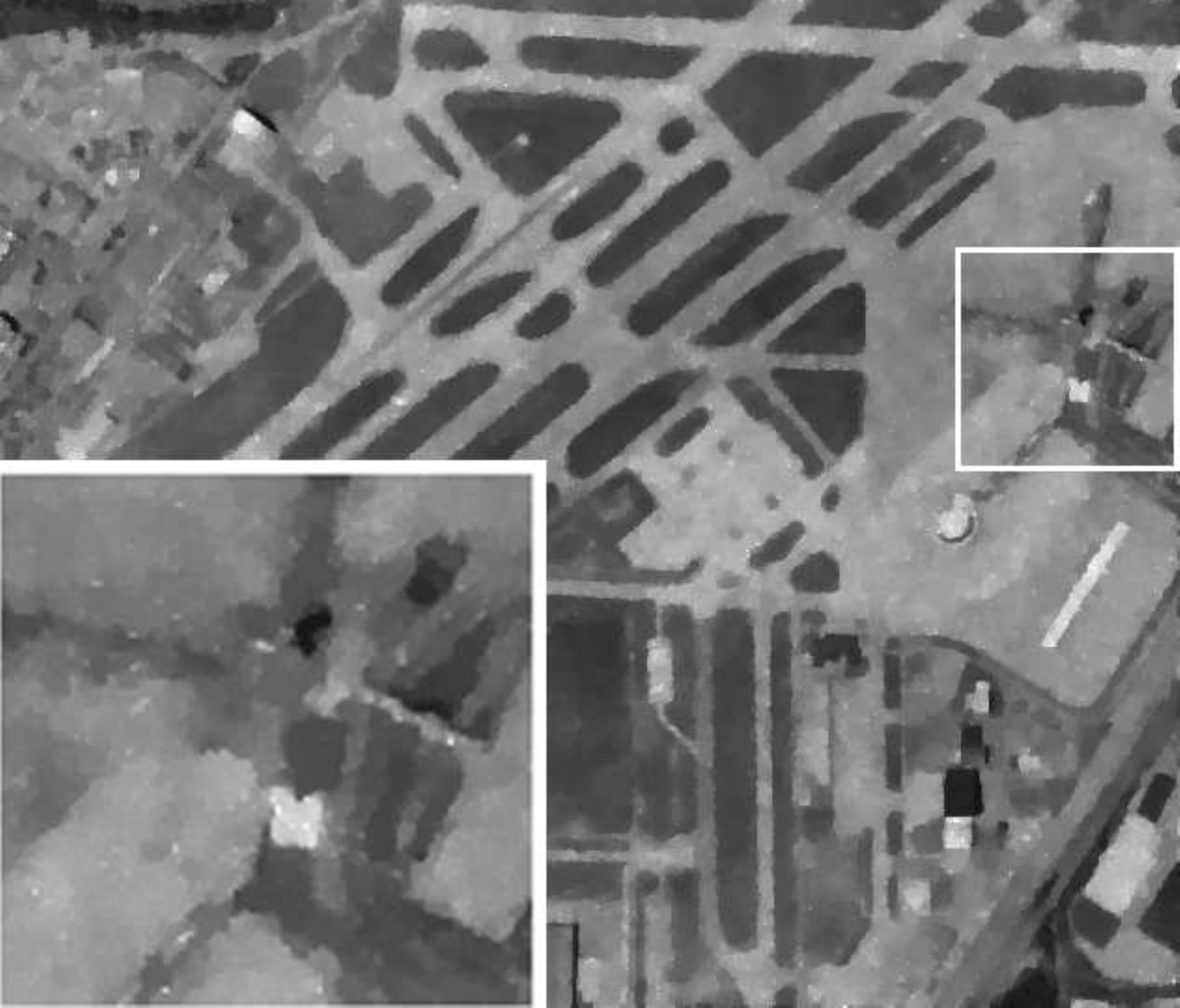}
	\caption{HNW}
    \end{subfigure}
    \begin{subfigure}[t]{.1942\textwidth}
    \includegraphics[width=3.245cm]{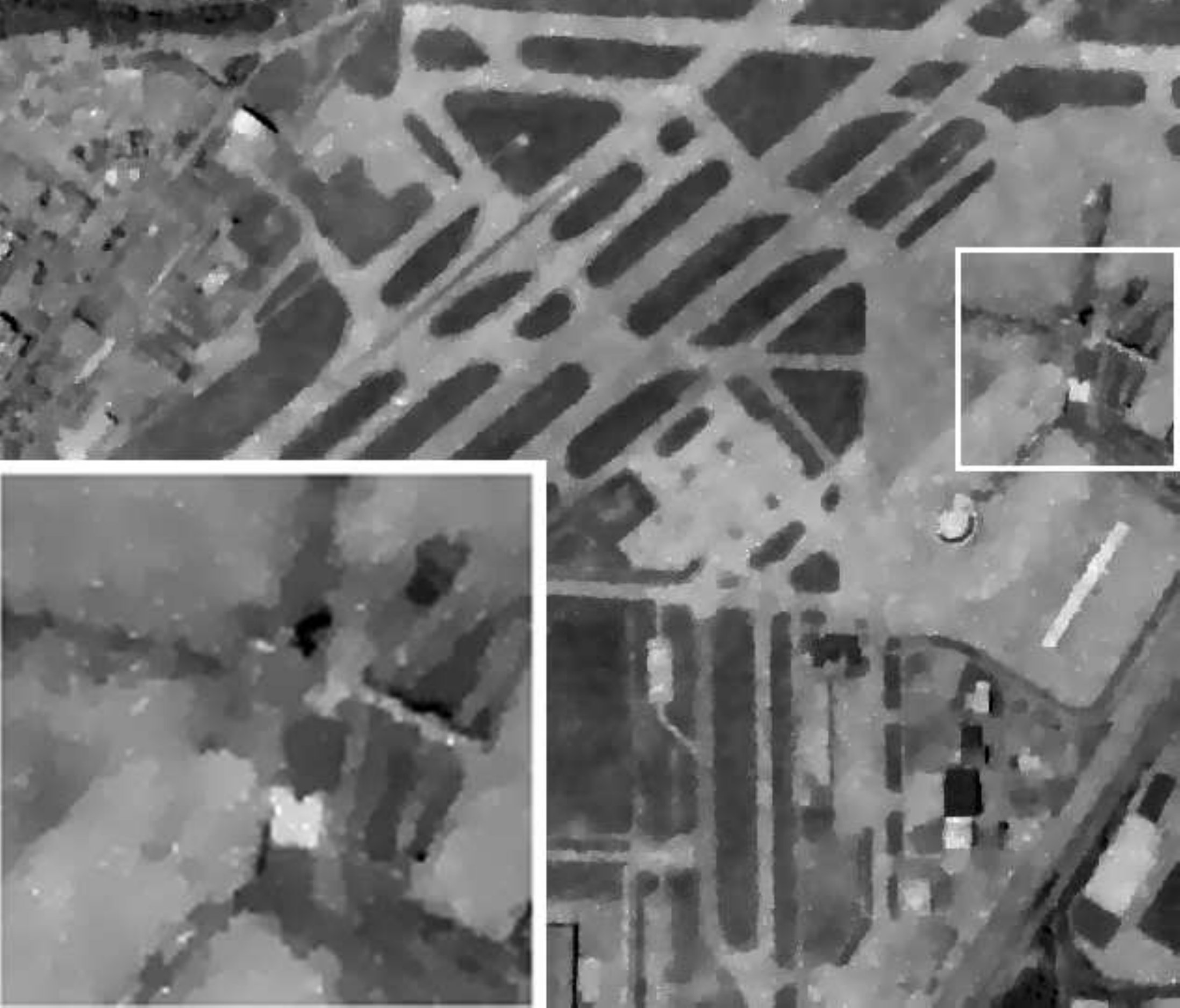}
    \caption{I-DIV}
    \end{subfigure}
    \begin{subfigure}[t]{.1942\textwidth}
    \includegraphics[width=3.245cm]{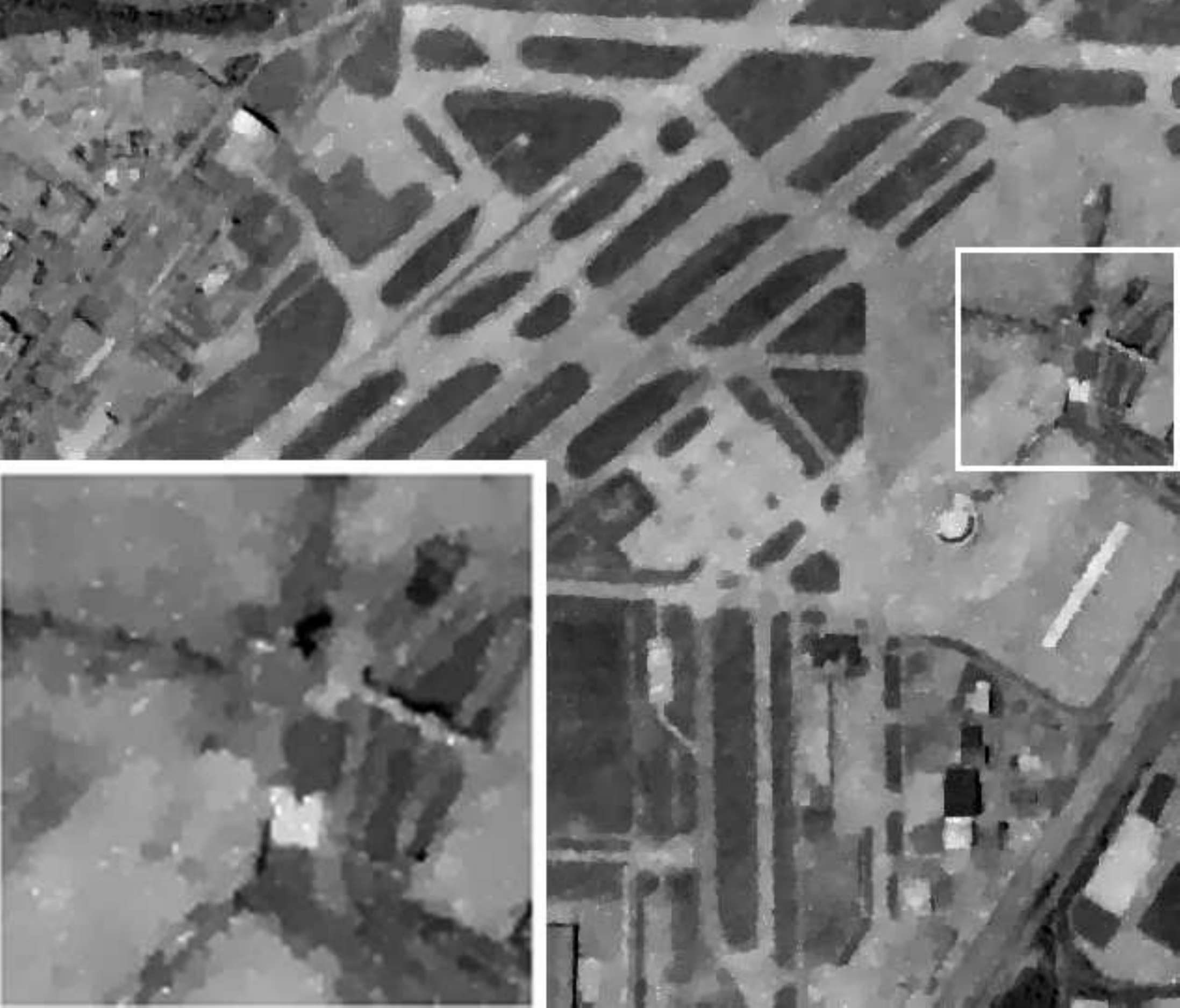}
    \caption{TwL-4V}
    \end{subfigure}
    \begin{subfigure}[t]{.1942\textwidth}
    \includegraphics[width=3.245cm]{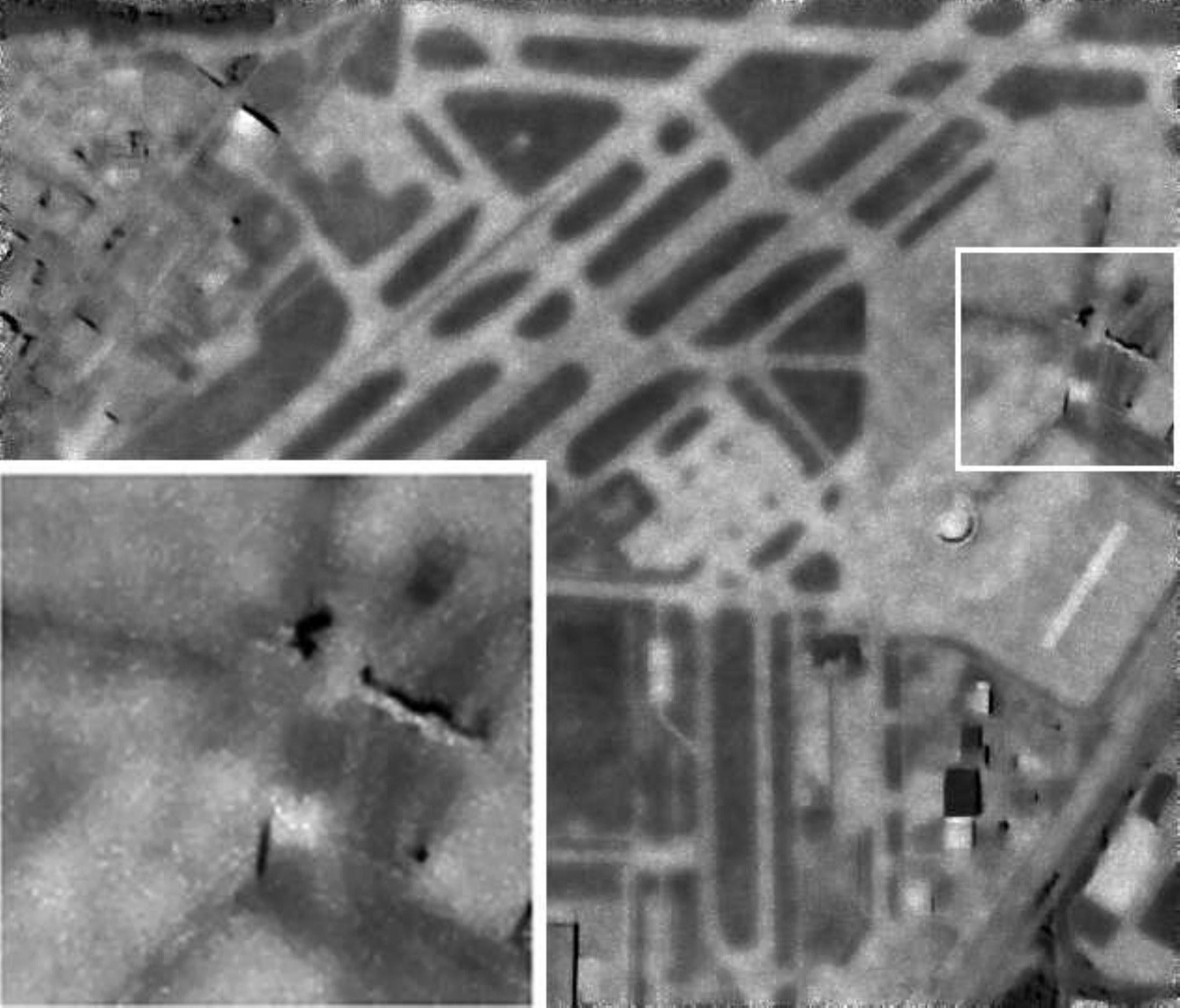}
	\caption{Dictionary}
    \end{subfigure}   	
	\caption{Comparison of denoised images restored from ``Remote 3" at noise level $L=5$ by different methods. The (PSNR, SSIM) values for each denoised image: (c) Alg 1 (25.80dB, 0.7427); (d) Alg 2 (25.83dB, 0.7460); (e) SAR-BM3D (25.65dB, 0.7316); (f) DZ (24.45dB, 0.6745); (g) HNW (23.79dB, 0.6562); (h) I-DIV (24.10dB, 0.6713); (i) TwL-4V (24.30dB, 0.6737); (j) Dictionary (22.70dB, 0.5736).}\label{fig:Remote3}
\end{figure}

Table~\ref{Table:Remote} reports the PSNR and SSIM values of the  denoised images tested on three remote sensing images. Algorithm~\ref{Alg:Theoretical} and Algorithm~\ref{Alg:Practical}  achieve great performance in PSNR and SSIM values over other methods. For example, Algorithm~\ref{Alg:Theoretical} outperforms the benchmark SAR-BM3D method by 0.11-0.28dB,  0.06-0.19dB  and 0.06-0.15dB in PSNR when $L=1$, $L=3$ and $L=5$, respectively; and it outperforms the other traditional methods by 0.76-1.57dB,  0.94-2.93dB  and 0.86-3.62dB in PSNR when $L=1$, $L=3$ and $L=5$, respectively. Algorithm~\ref{Alg:Practical} is also comparable to Algorithm~\ref{Alg:Theoretical} and the SAR-BM3D method.

Figure~\ref{fig:Remote1}-\ref{fig:Remote3} present the denoised images by different methods tested on ``Remote~1" at noise level $L=1$, ``Remote 2" at $L=3$ and ``Remote 3" at $L=5$. Algorithm~\ref{Alg:Theoretical}, Algorithm~\ref{Alg:Practical}  and the benchmark SAR-BM3D method achieve significantly better visual quality over other methods. For example, they reconstruct buildings, roads and patterns with fine edges and textures.

\subsection{Numerical results tested on real SAR images}

In this experiment, we use real SAR images images ``SAR 1" of size  $370\times 370$ and ``SAR 2" of size $350\times 350$ as shown in Figure~\ref{fig:SAR1}(a) and  Figure~\ref{fig:SAR2}(a), respectively.
\begin{figure}[htbp]
	\centering
	\begin{subfigure}[t]{.1942\textwidth}
	\includegraphics[width=3.245cm]{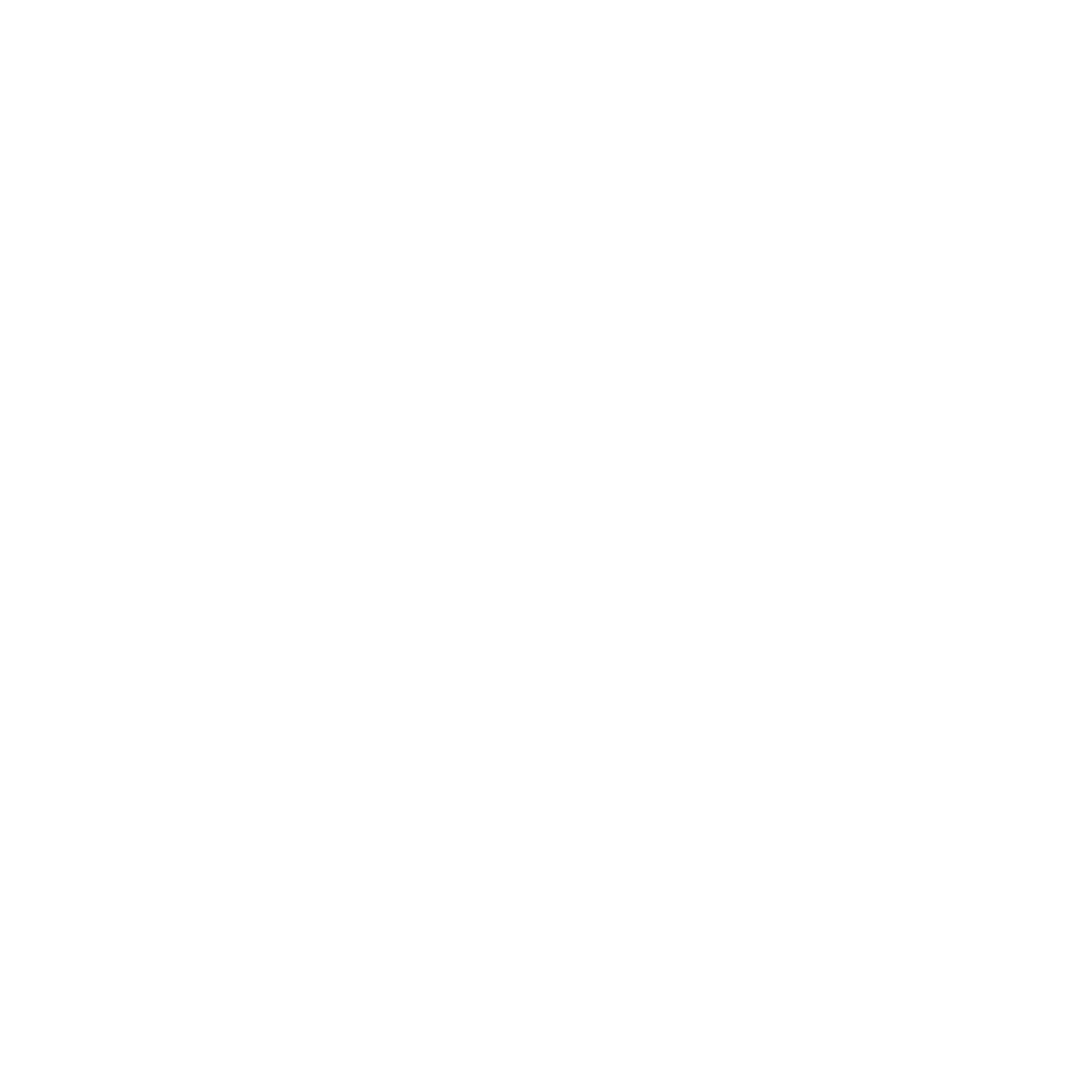}
	\end{subfigure}
   	\begin{subfigure}[t]{.1942\textwidth}
    \includegraphics[width=3.245cm]{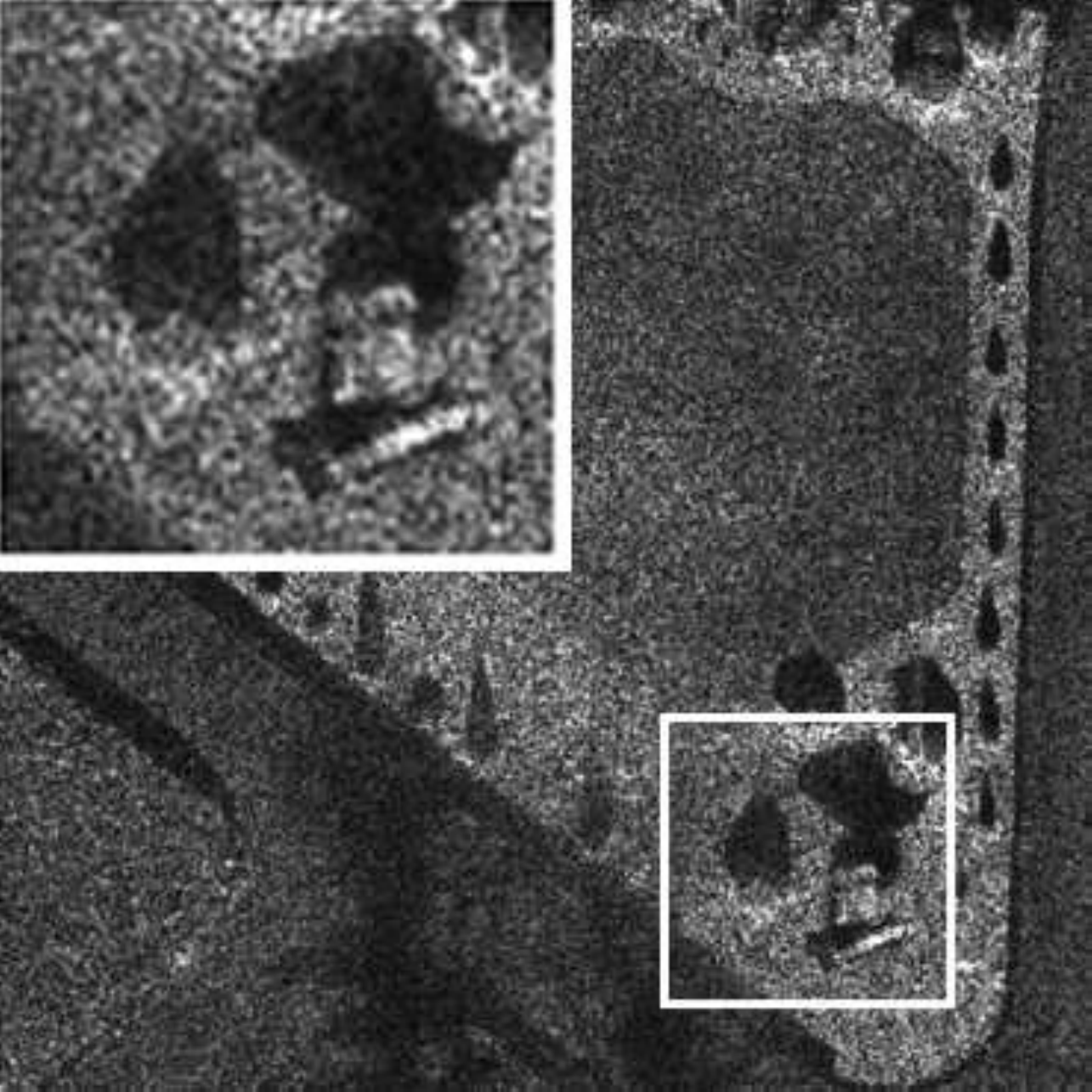}
    \caption{SAR 1}
    \end{subfigure}
    \begin{subfigure}[t]{.1942\textwidth}
    \includegraphics[width=3.245cm]{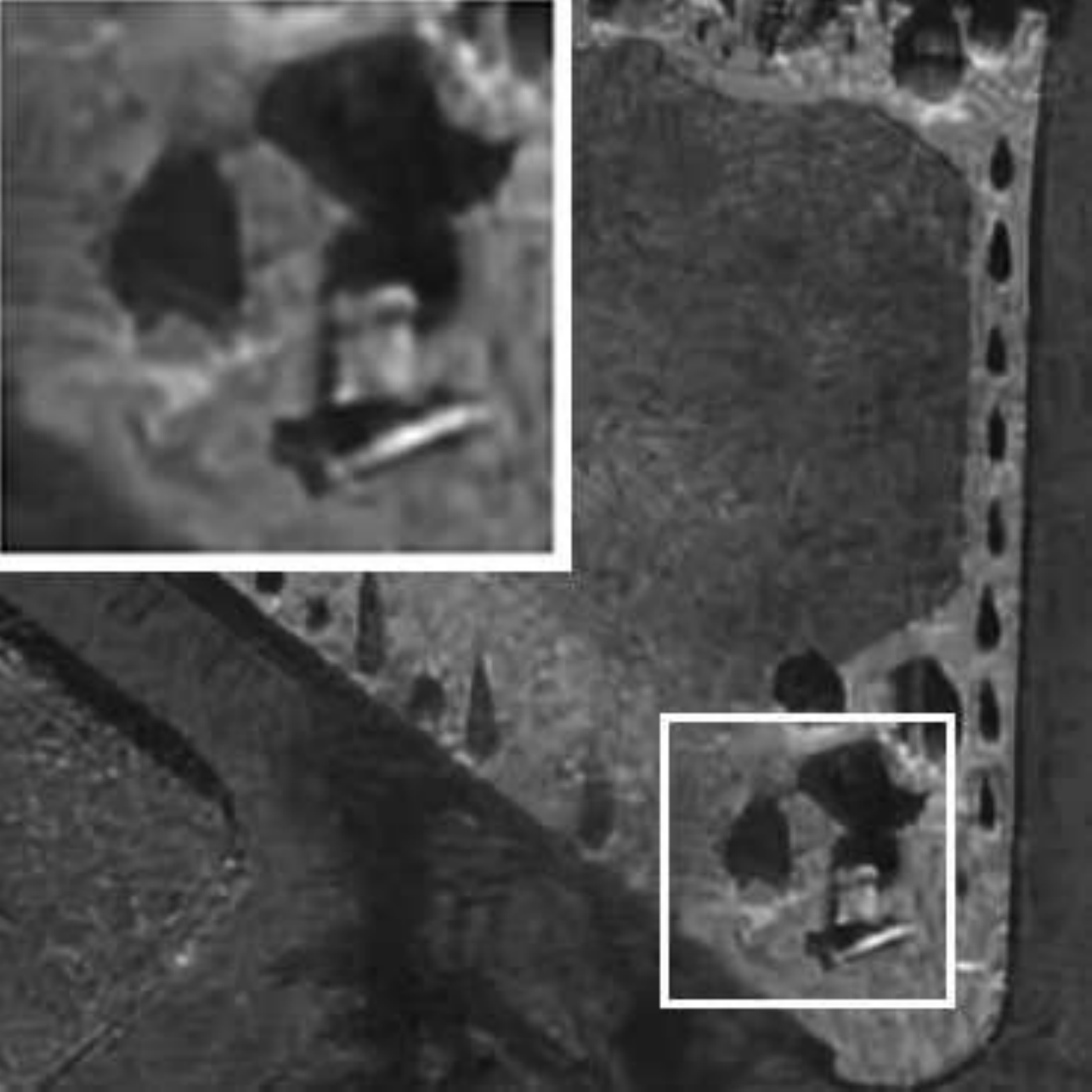}
    \caption{Alg 1}
    \end{subfigure}
    \begin{subfigure}[t]{.1942\textwidth}
    \includegraphics[width=3.245cm]{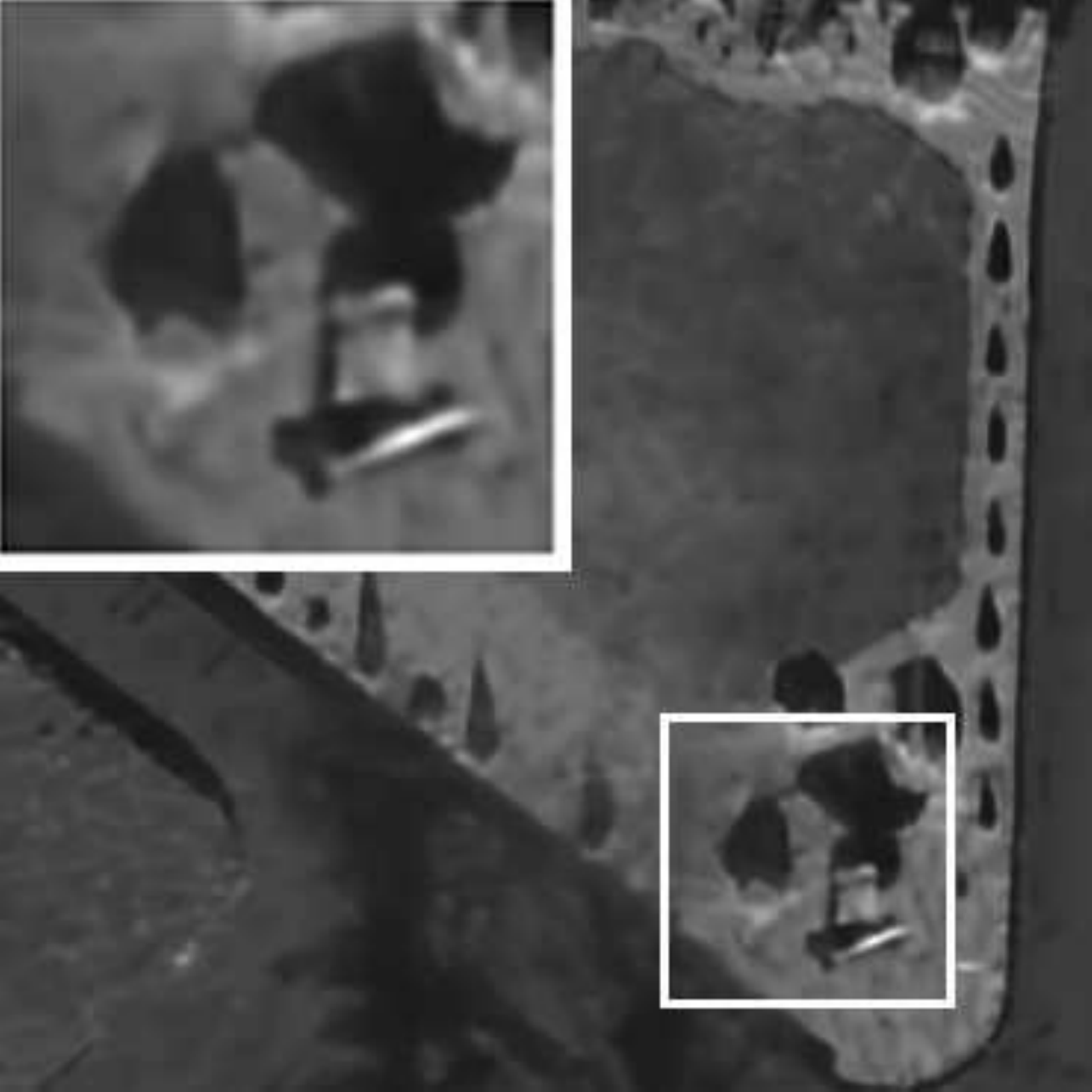}
    \caption{Alg 2}
    \end{subfigure}
    \begin{subfigure}[t]{.1942\textwidth}
    \includegraphics[width=3.245cm]{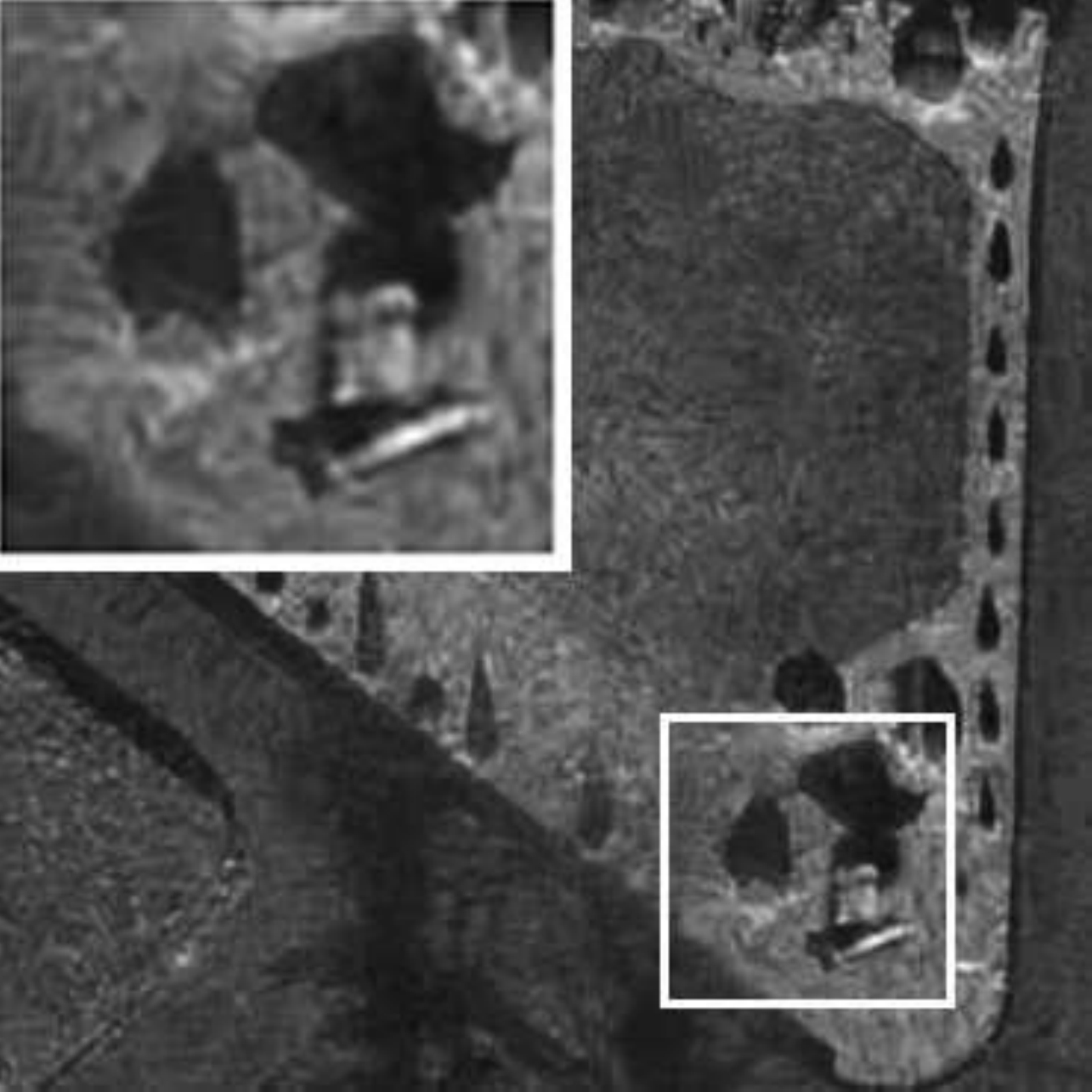}
	\caption{SAR-BM3D}
    \end{subfigure}    \\
    \begin{subfigure}[t]{.1942\textwidth}
    \includegraphics[width=3.245cm]{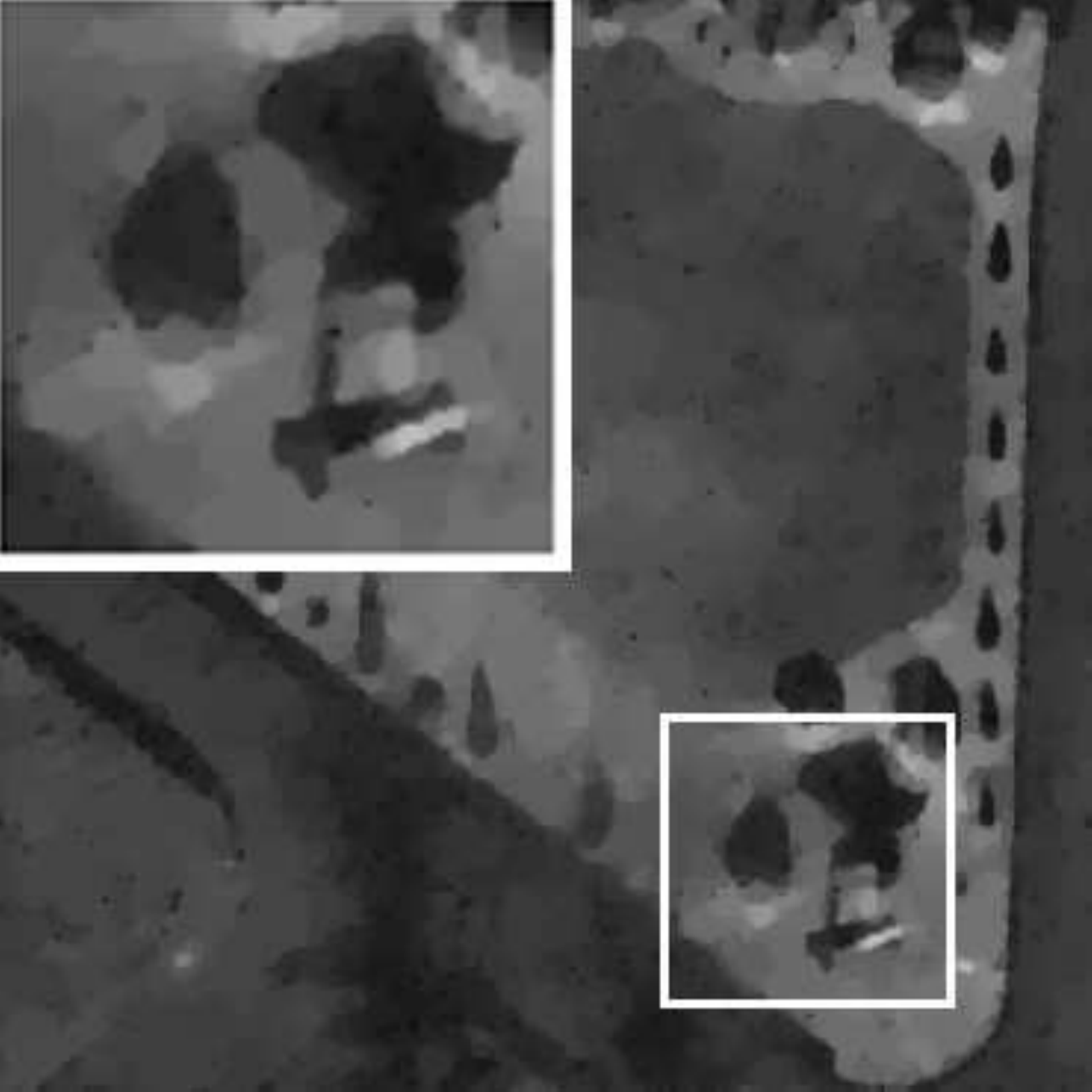}
	\caption{DZ}
    \end{subfigure}	
    \begin{subfigure}[t]{.1942\textwidth}
    \includegraphics[width=3.245cm]{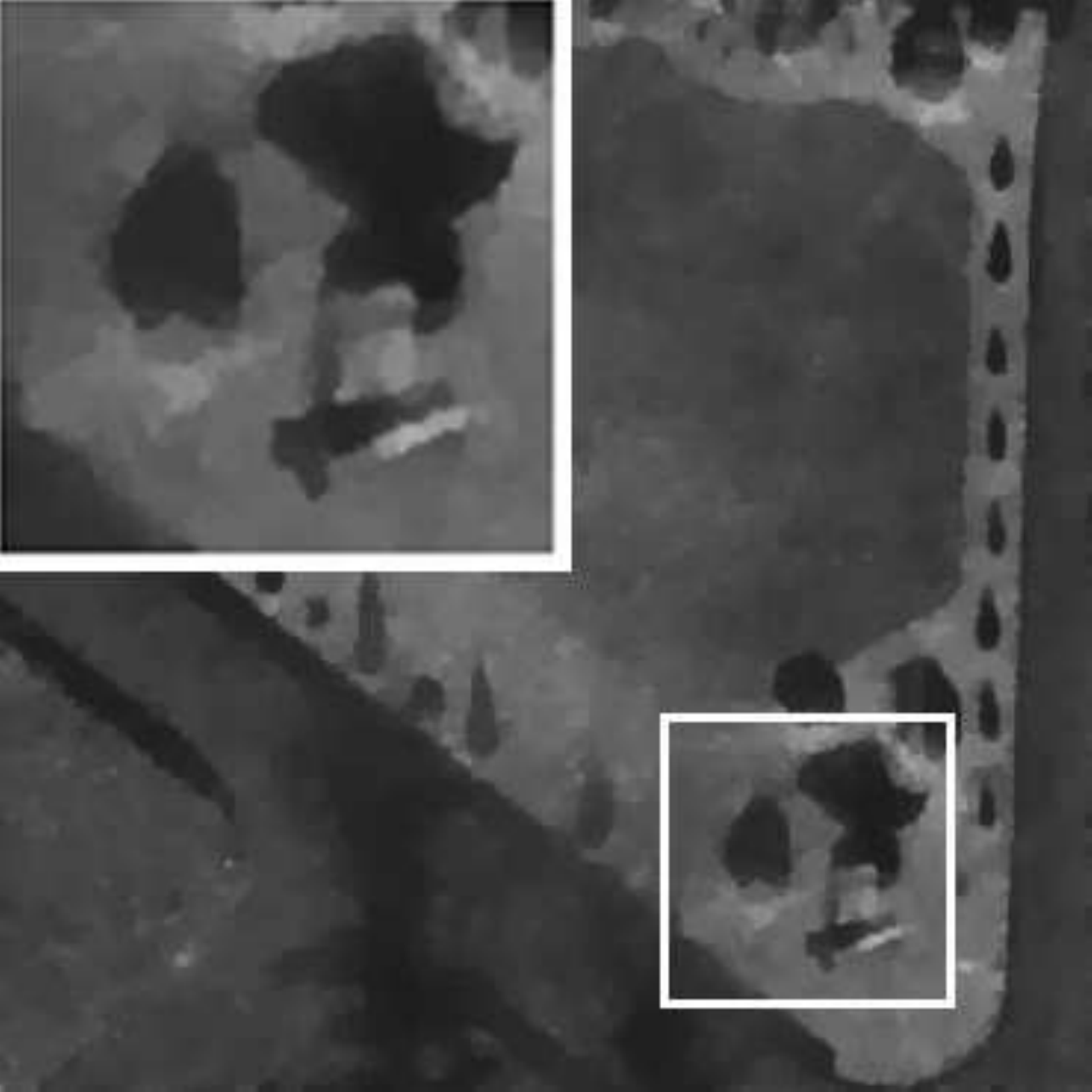}
	\caption{HNW}
    \end{subfigure}
    \begin{subfigure}[t]{.1942\textwidth}
    \includegraphics[width=3.245cm]{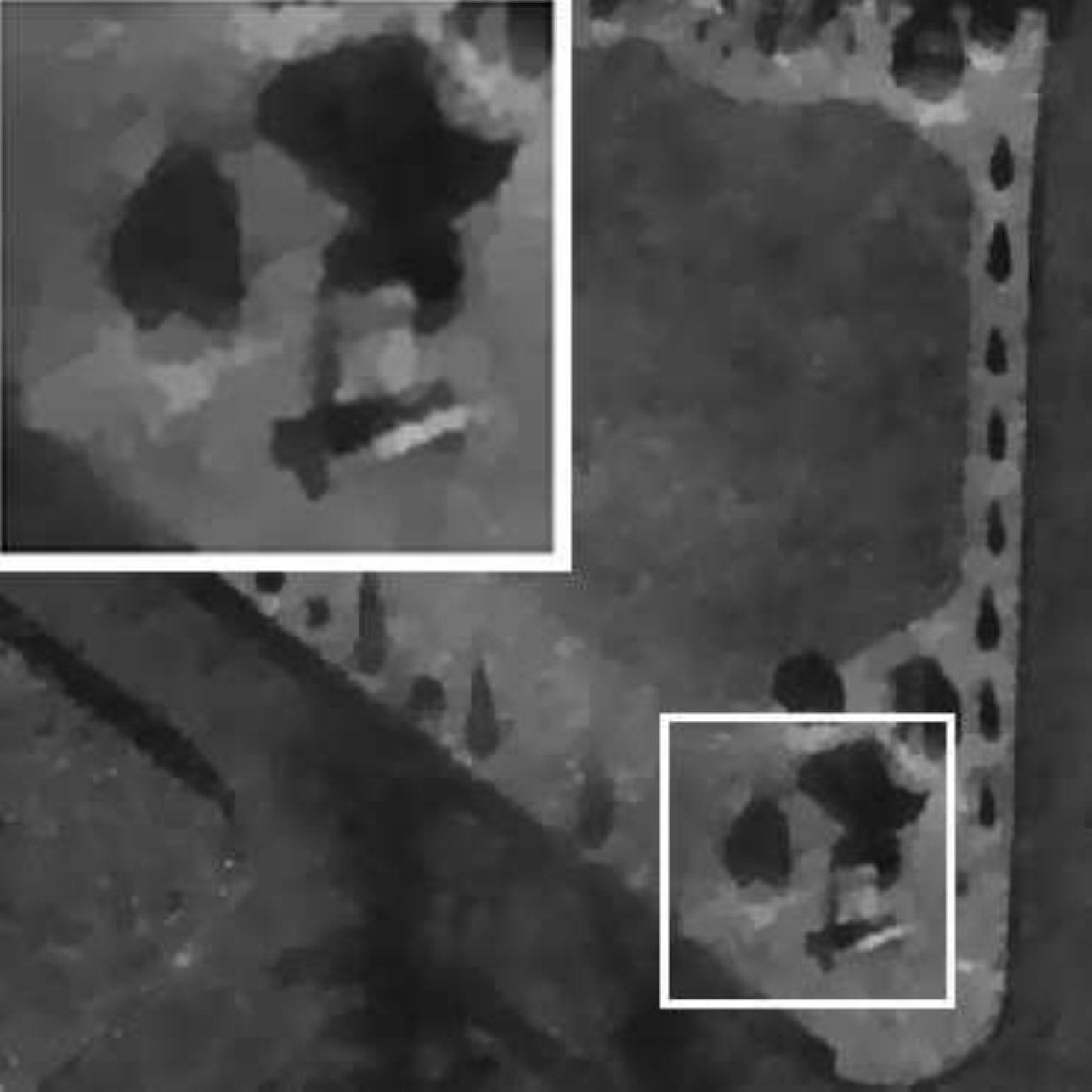}
    \caption{I-DIV}
    \end{subfigure}
    \begin{subfigure}[t]{.1942\textwidth}
    \includegraphics[width=3.245cm]{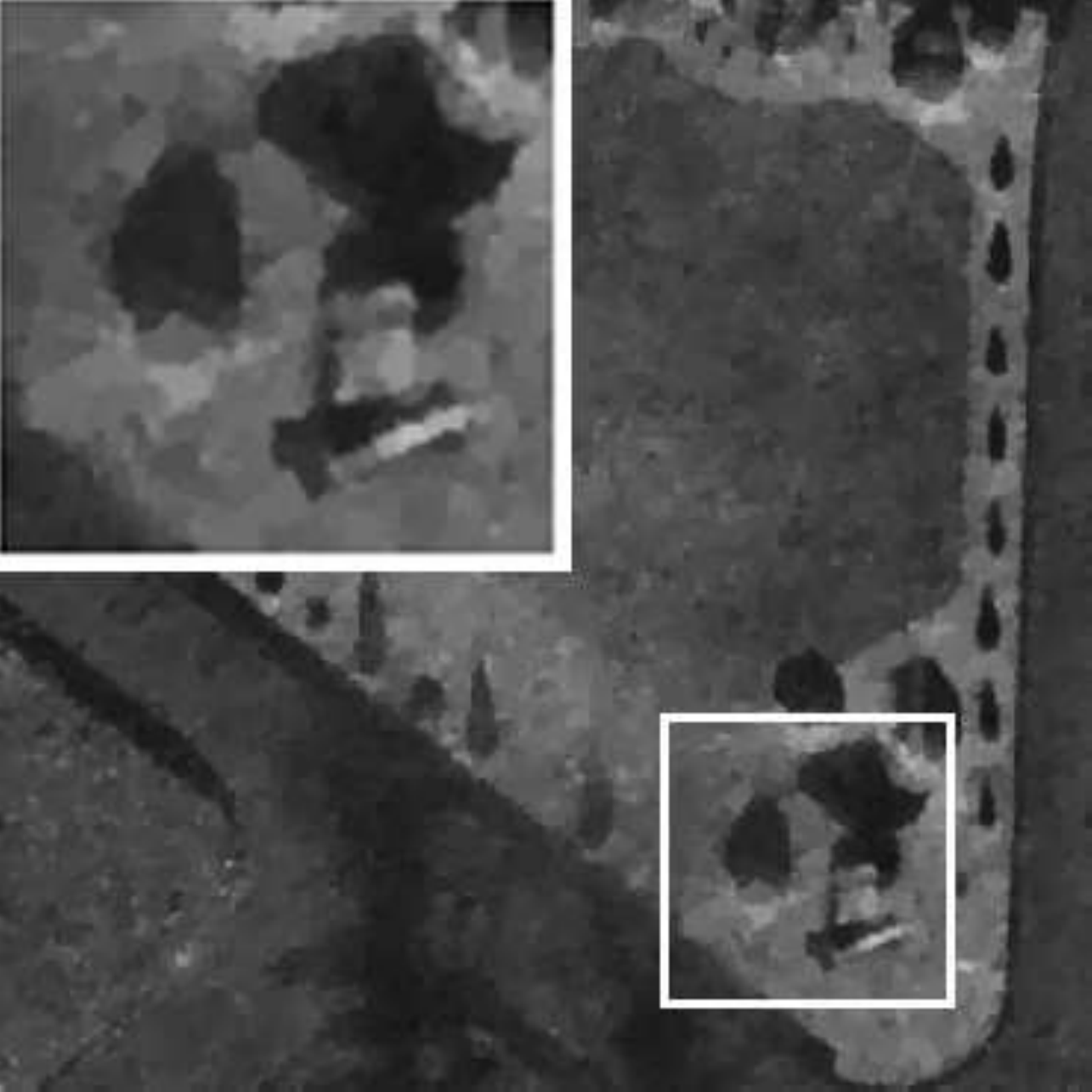}
    \caption{TwL-4V}
    \end{subfigure}
    \begin{subfigure}[t]{.1942\textwidth}
    \includegraphics[width=3.245cm]{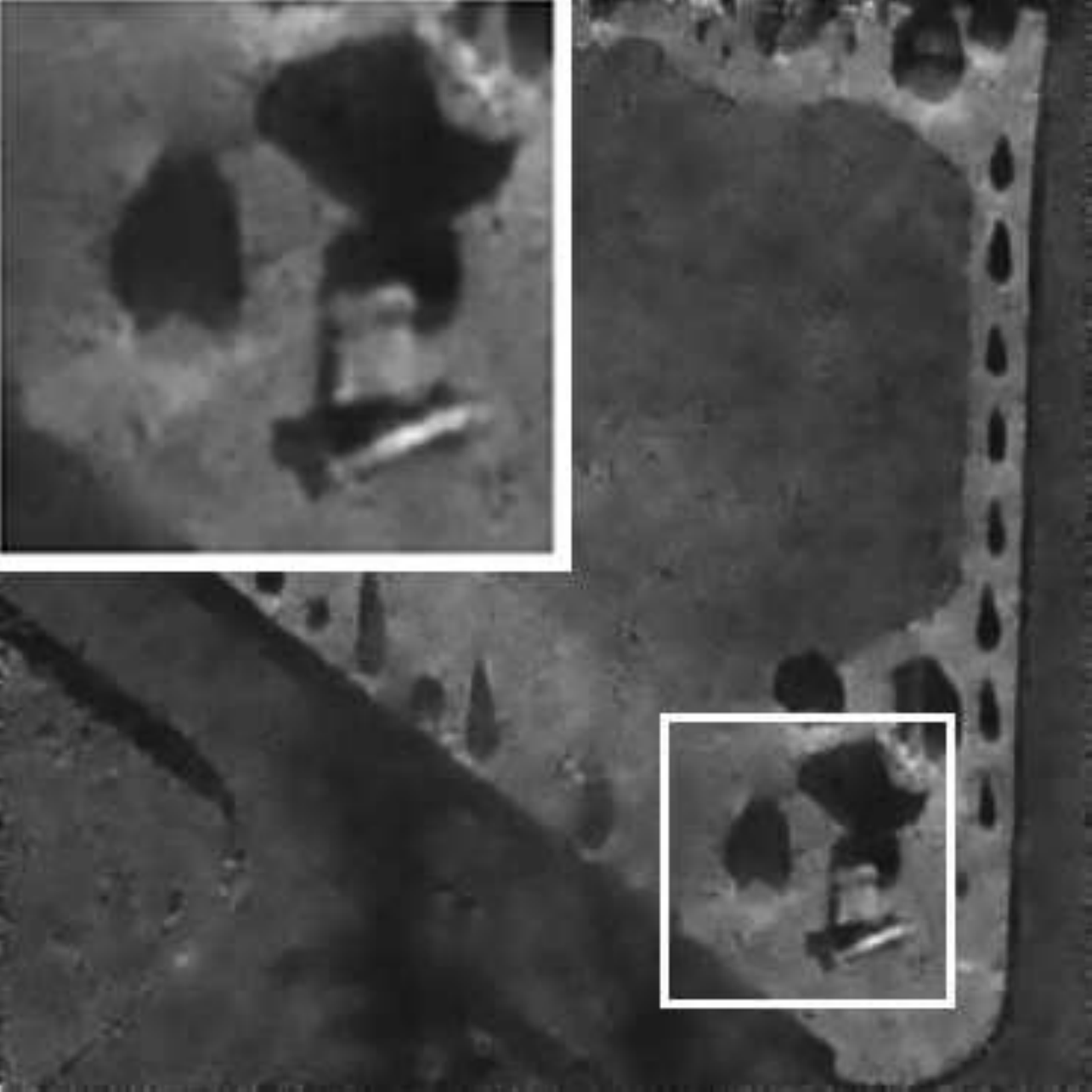}
	\caption{Dictionary}
    \end{subfigure}   	
	\caption{Comparison of denoised images restored from ``SAR 1" by different methods.}\label{fig:SAR1}
\end{figure}

\begin{figure}[htbp]
	\centering
	\begin{subfigure}[t]{.1942\textwidth}
    \includegraphics[width=3.245cm]{empty.pdf}
    \end{subfigure}
	\begin{subfigure}[t]{.1942\textwidth}
    \includegraphics[width=3.245cm]{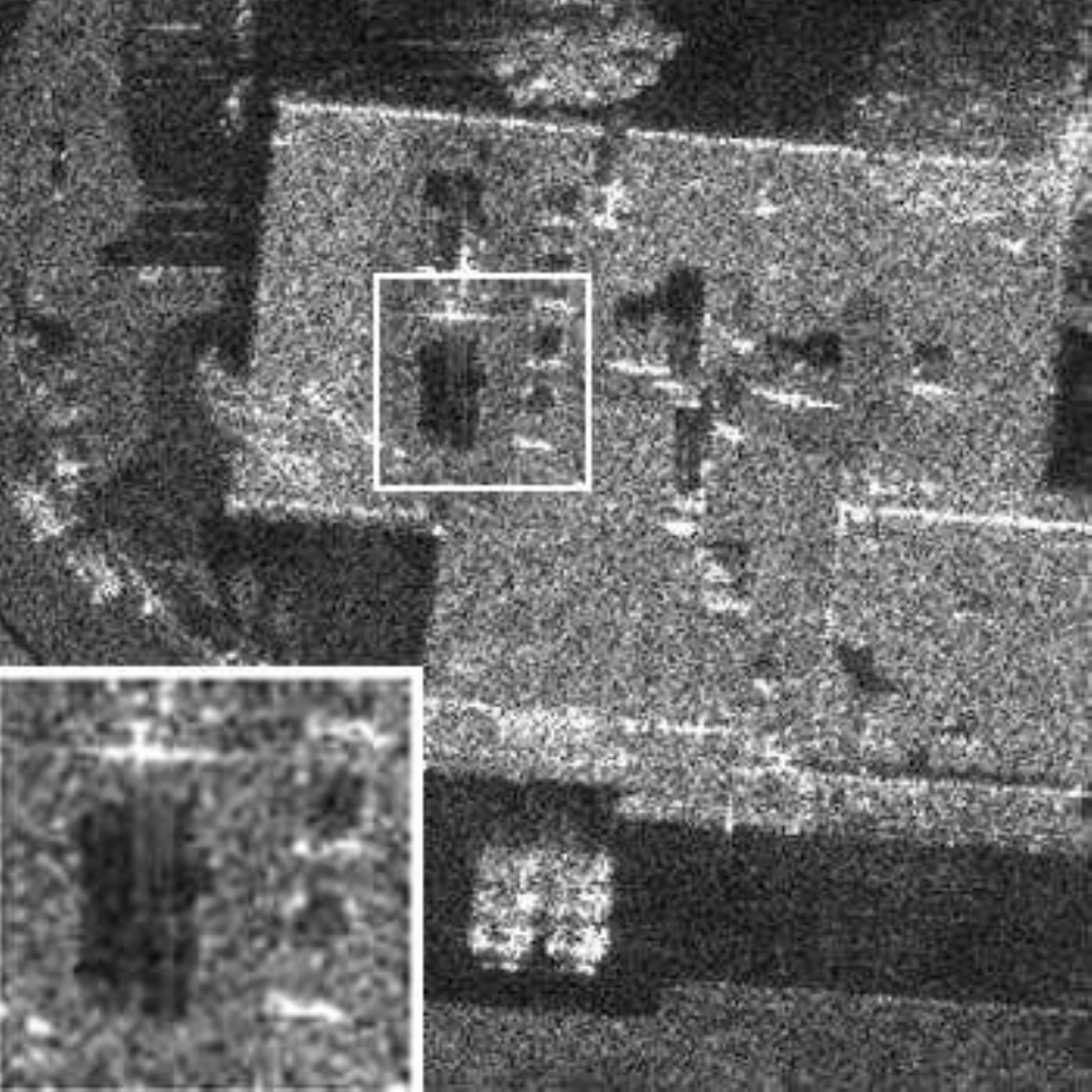}
    \caption{SAR 2}
    \end{subfigure}
    \begin{subfigure}[t]{.1942\textwidth}
    \includegraphics[width=3.245cm]{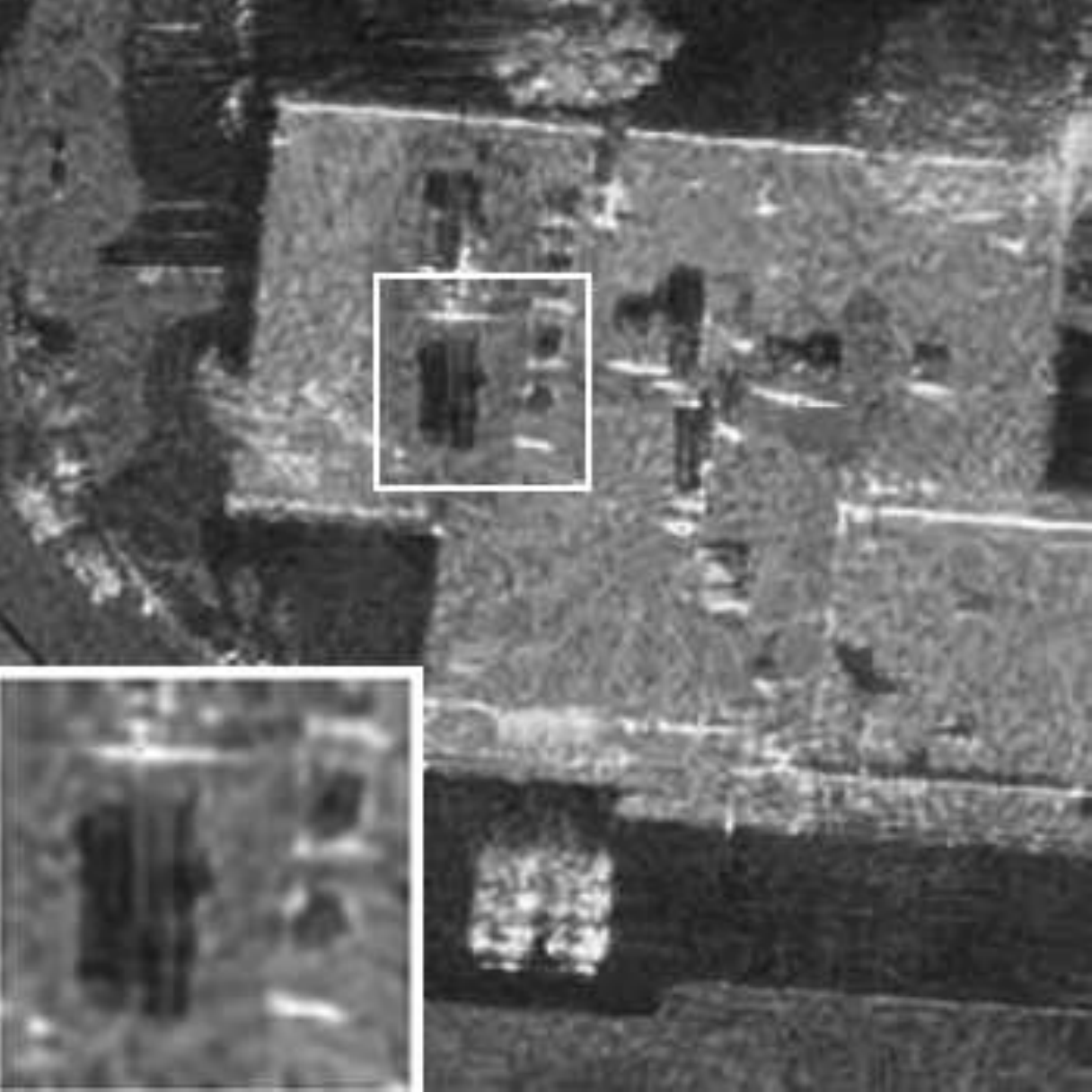}
    \caption{Alg 1}
    \end{subfigure}
    \begin{subfigure}[t]{.1942\textwidth}
    \includegraphics[width=3.245cm]{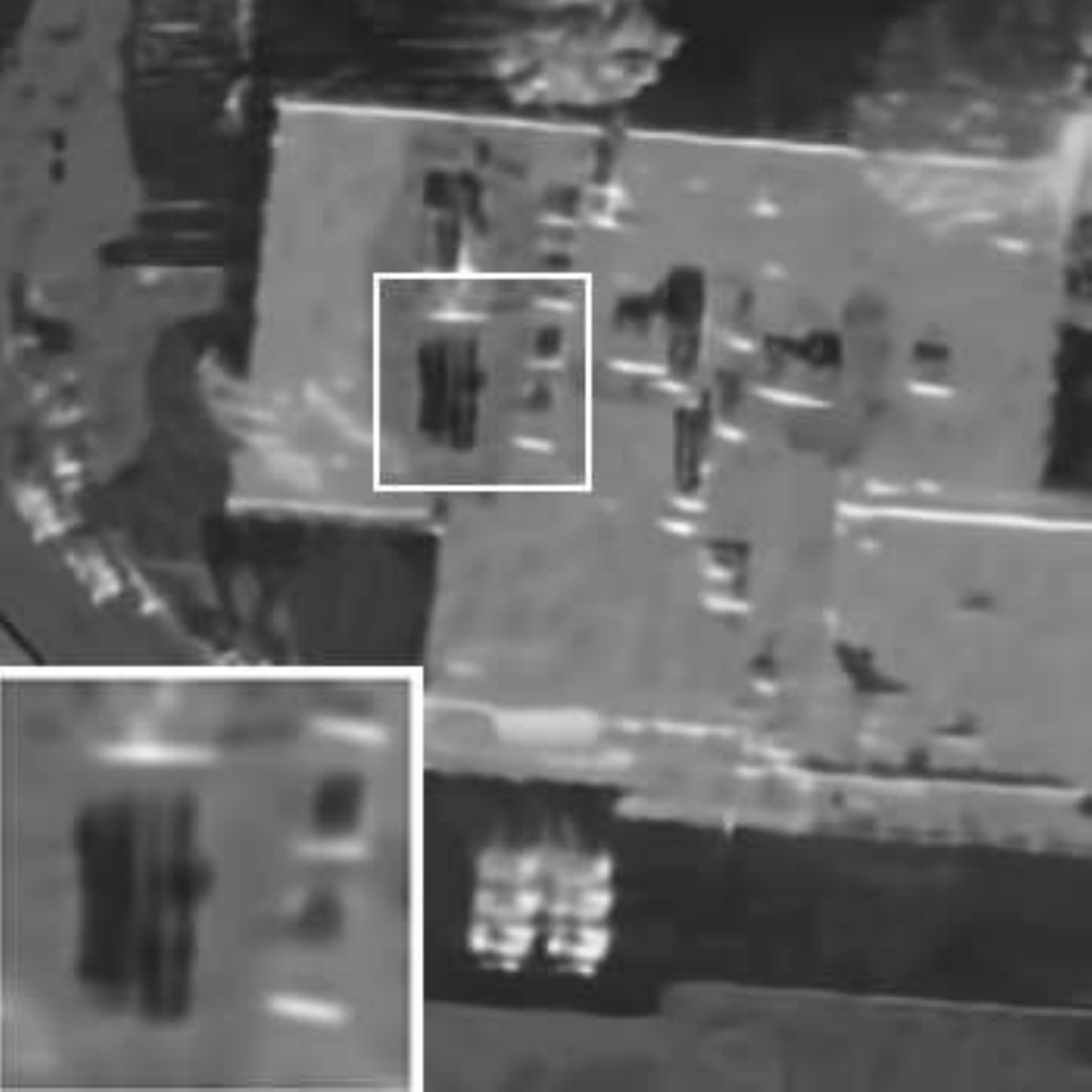}
    \caption{Alg 2}
    \end{subfigure}
    \begin{subfigure}[t]{.1942\textwidth}
    \includegraphics[width=3.245cm]{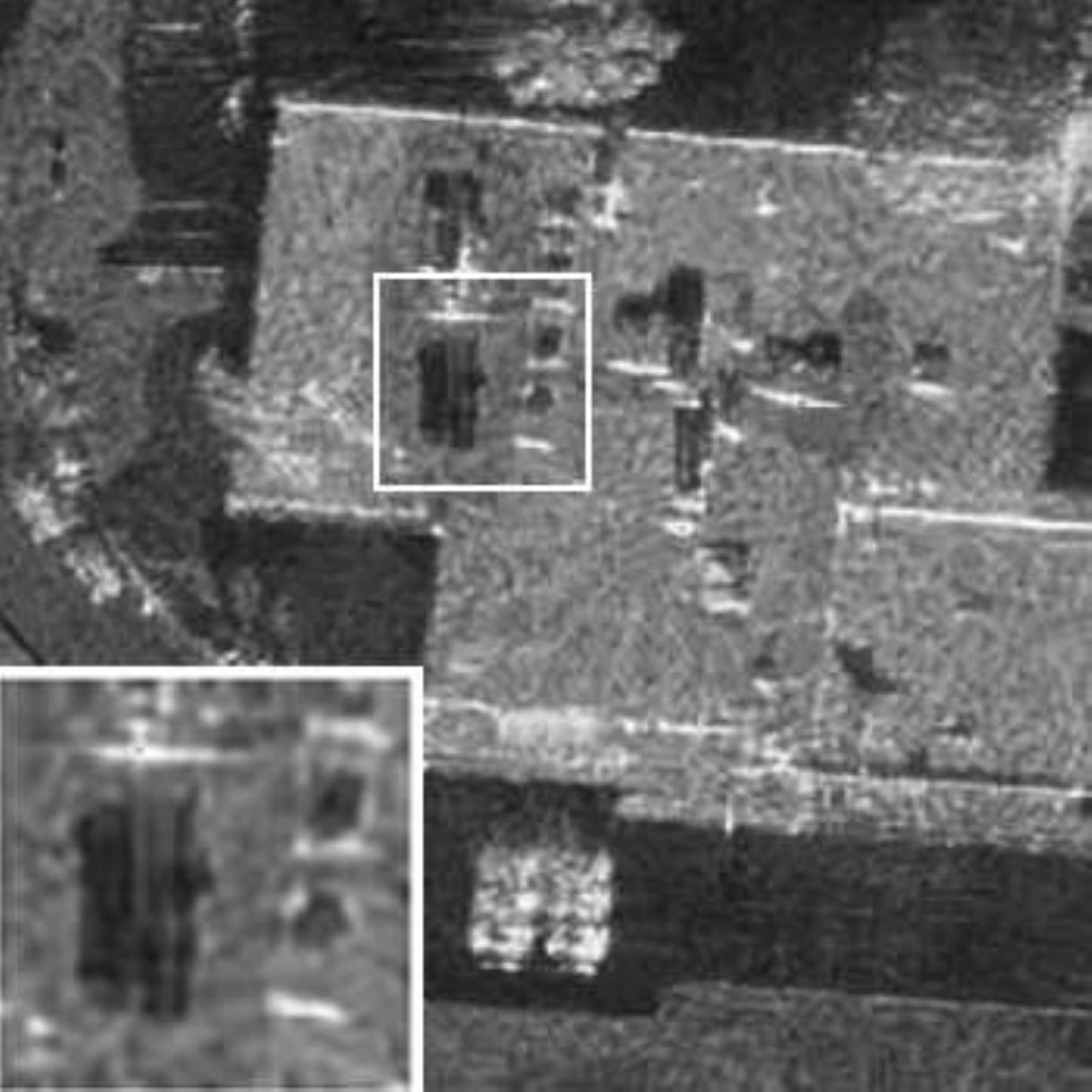}
	\caption{SAR-BM3D}
    \end{subfigure}    \\
    \begin{subfigure}[t]{.1942\textwidth}
    \includegraphics[width=3.245cm]{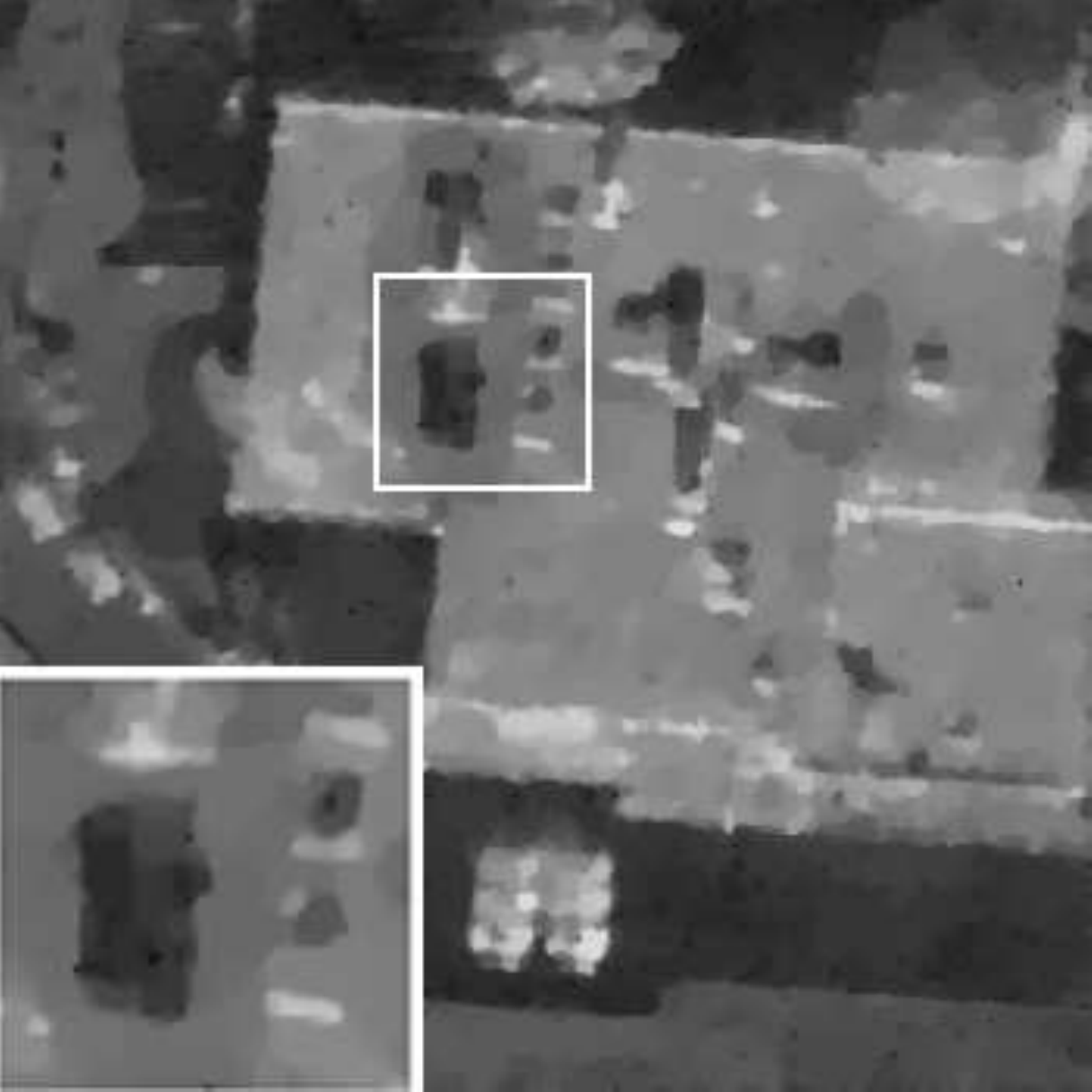}
	\caption{DZ}
    \end{subfigure}
    \begin{subfigure}[t]{.1942\textwidth}
    \includegraphics[width=3.245cm]{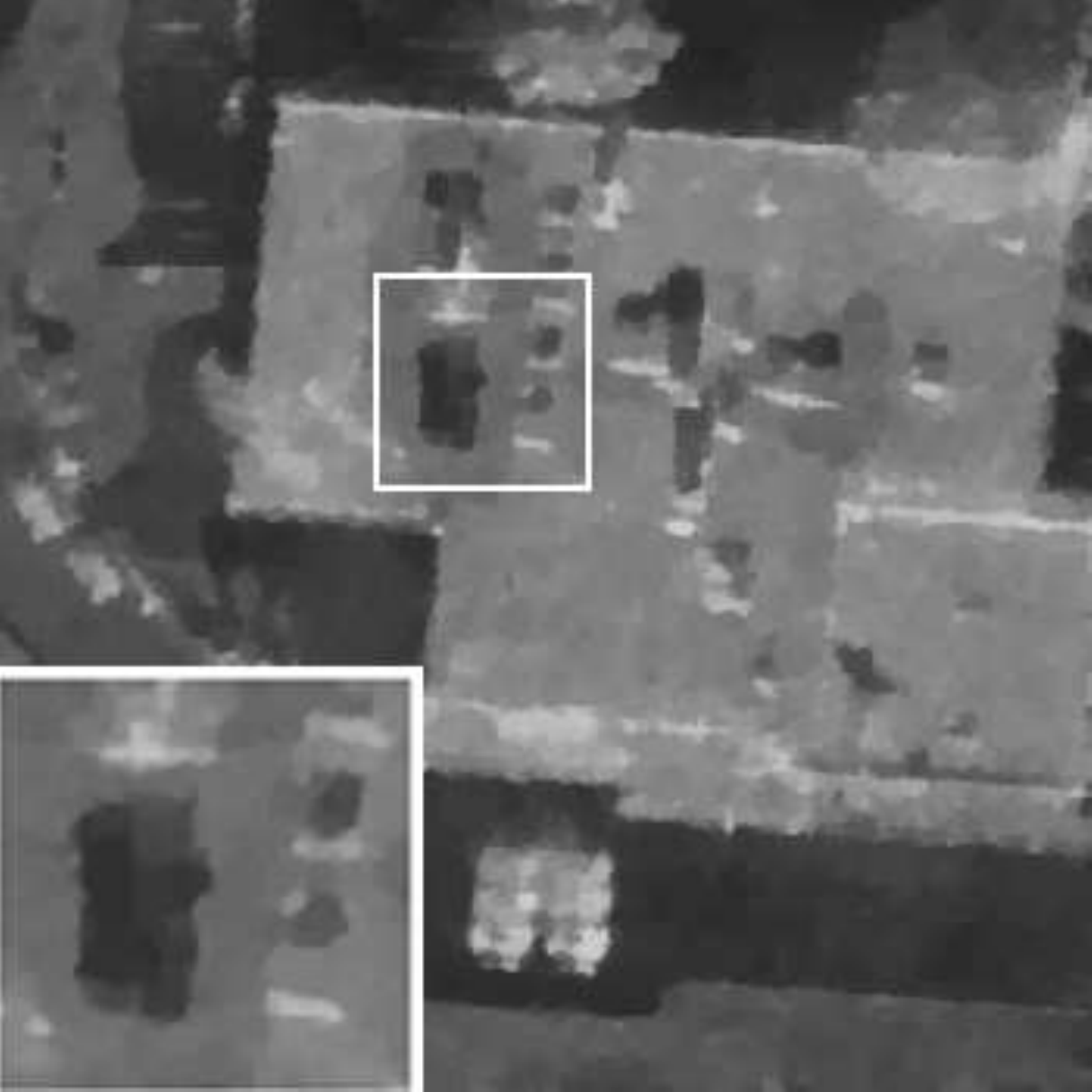}
	\caption{HNW}
    \end{subfigure}
    \begin{subfigure}[t]{.1942\textwidth}
    \includegraphics[width=3.245cm]{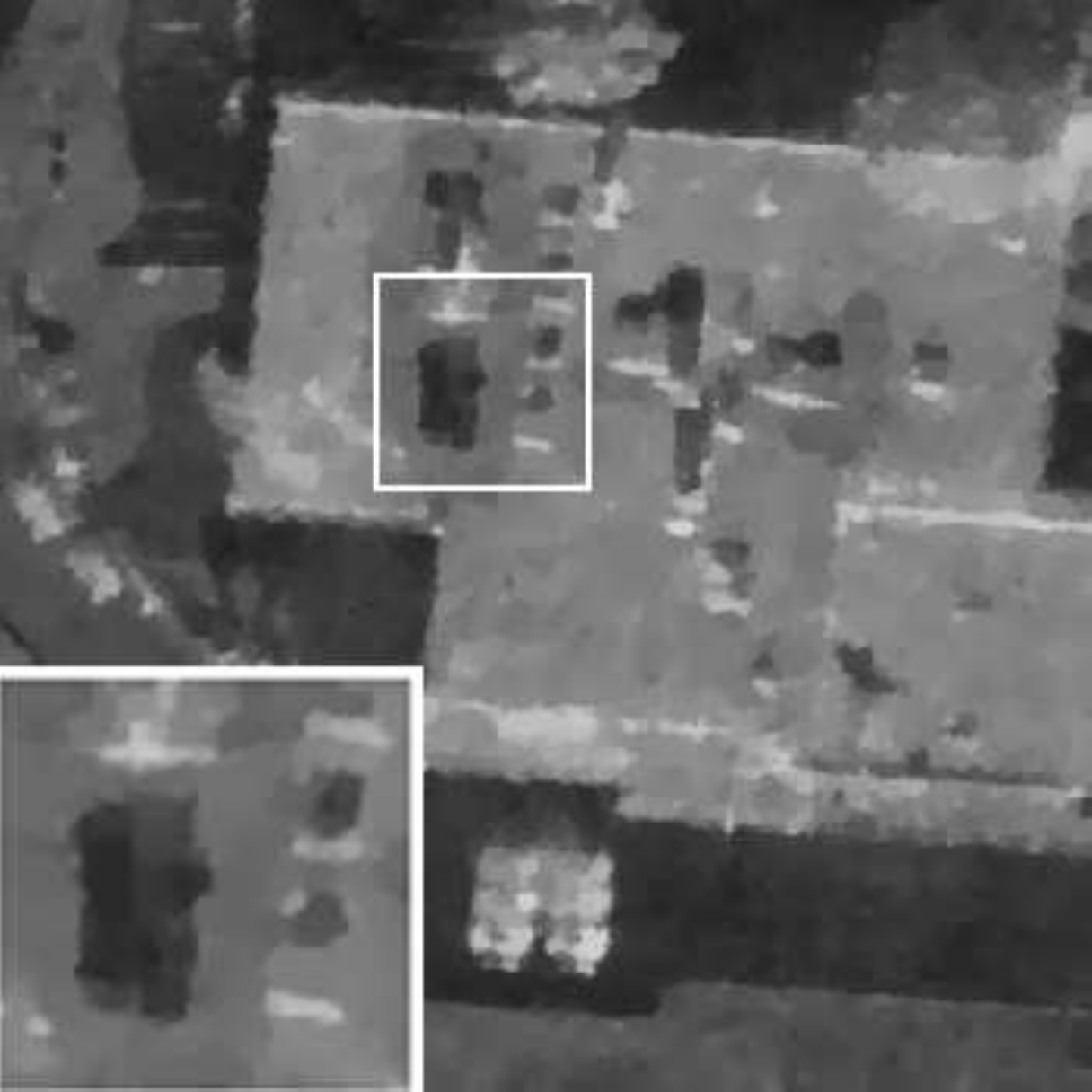}
    \caption{I-DIV}
    \end{subfigure}
    \begin{subfigure}[t]{.1942\textwidth}
    \includegraphics[width=3.245cm]{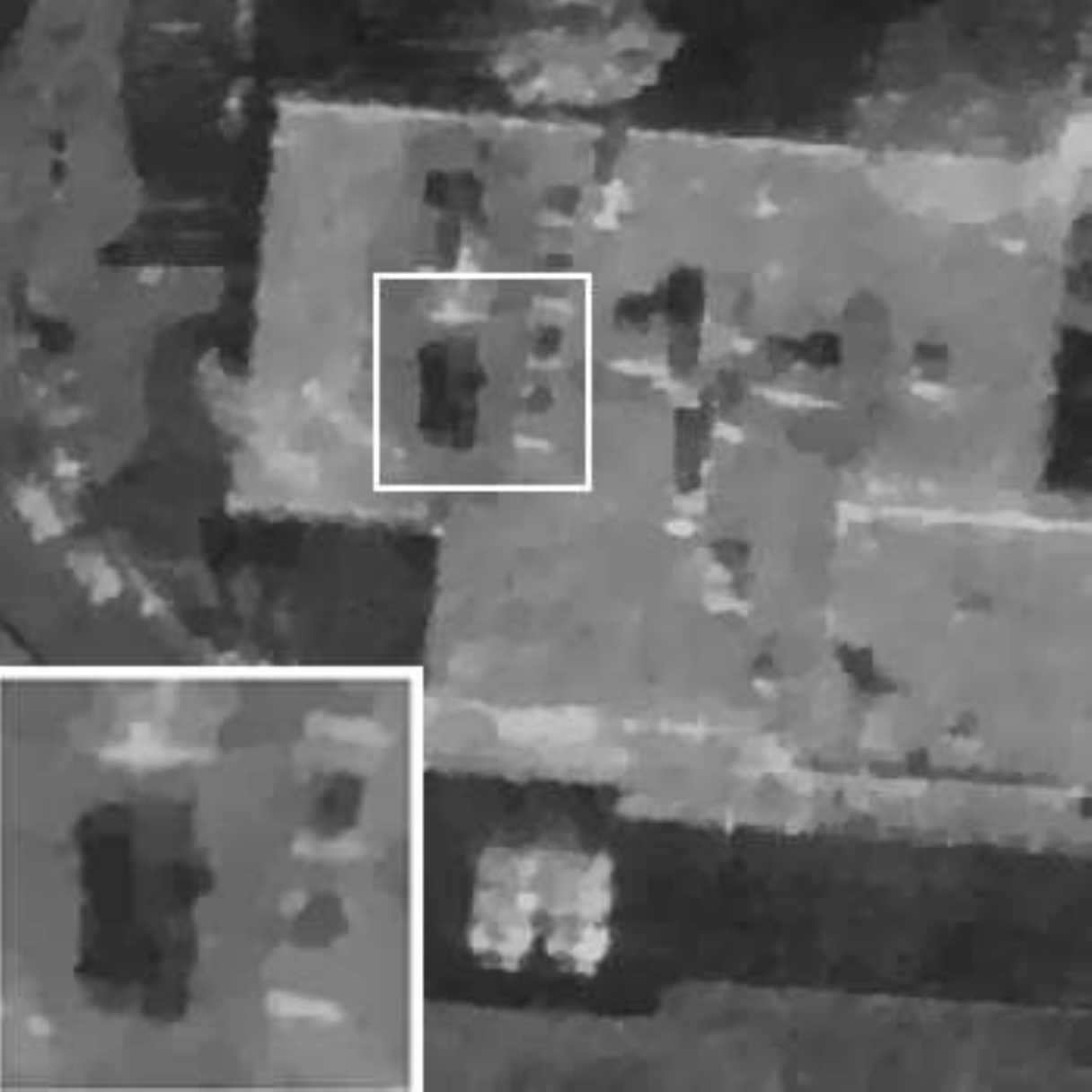}
    \caption{TwL-4V}
    \end{subfigure}
    \begin{subfigure}[t]{.1942\textwidth}
    \includegraphics[width=3.245cm]{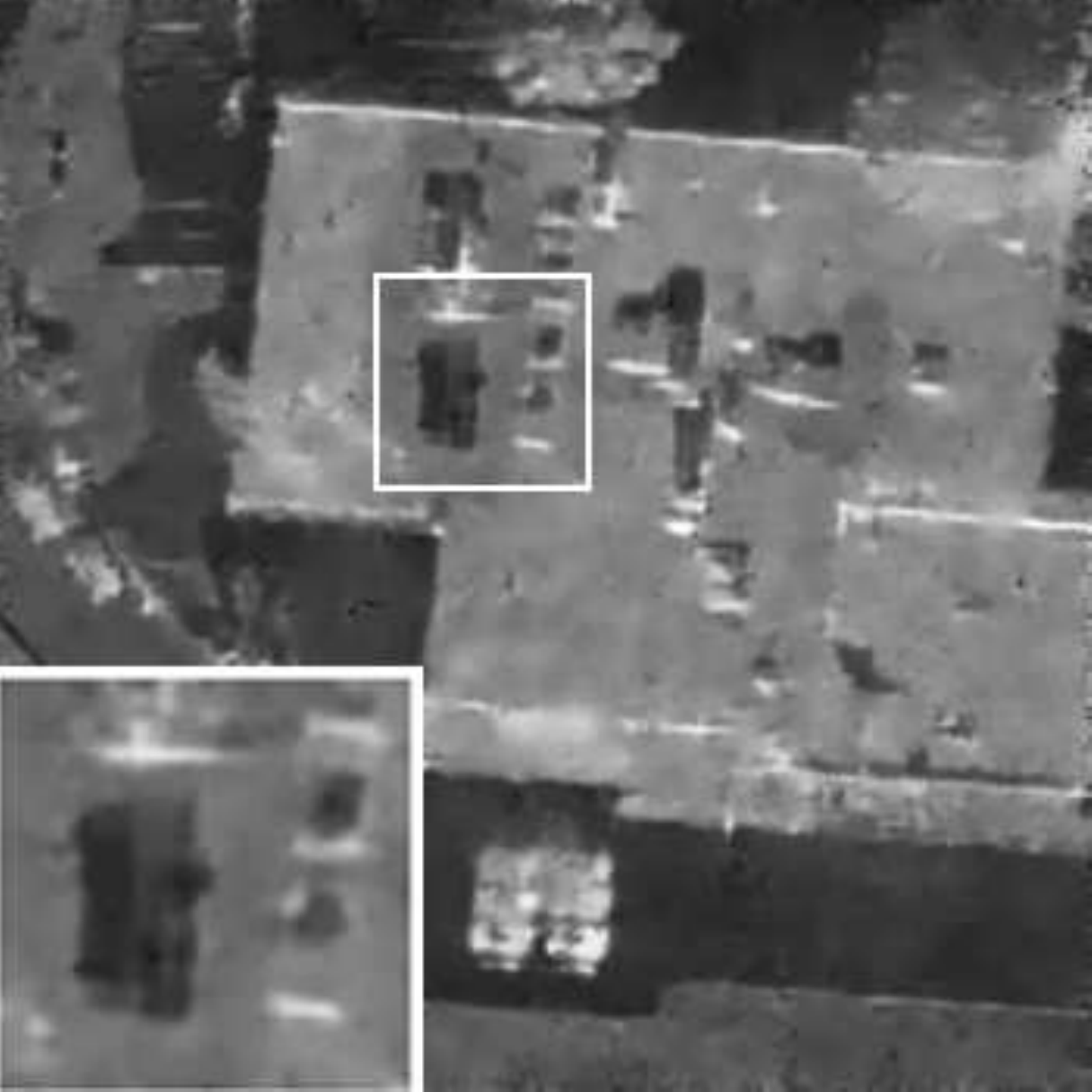}
	\caption{Dictionary}
    \end{subfigure}   	

	\caption{Comparison of denoised images restored from ``SAR 2" by different methods.}\label{fig:SAR2}
\end{figure}

Figure~\ref{fig:SAR1} and Figure~\ref{fig:SAR2} demonstrate that Algorithm~\ref{Alg:Theoretical} and Algorithm~\ref{Alg:Practical}  achieve better denoising performance than other methods. For example, they reconstruct  more local structures and smooth textures than the DZ method, the HNW method, the I-DIV method, the TwL-4V method and  the learned dictionary method; and they remove more noise and generate fewer artifacts than the benchmark SAR-BM3D method.

In addition to the visual quality comparison on the denoised images, we can also receive guidance by computing the equivalent number of looks (ENL) and analyzing the ratio images for different methods.

The ENL of an estimated image $\hat{\bm{u}}\in \mathbb{R}^N$  measures the multiplicative noise reduction in homogeneous regions and is defined as
\begin{displaymath}
\text{ENL}=\frac{\mu_{\hat{\bm{u}}}^2}{\sigma_{\hat{\bm{u}}}^2},
\end{displaymath}
where $\mu_{\hat{\bm{u}}}$ is the average intensity of the selected area and $\sigma_{\hat{\bm{u}}}^2$ is its variance.

For computing the ENL values, two homogeneous regions are respectively selected from ``SAR 1" and ``SAR 2", as indicated by the white boxes in Figure~\ref{fig:SAR1_Ratio}(a) and Figure~\ref{fig:SAR2_Ratio}(a). Table~\ref{Table:SAR} presents the ENL values for different methods. The SAR-BM3D method has the lowest ENL values compared to other methods, which indicates that the multiplicative noise is not effectively reduced or there exist some artifacts in the estimated image. The other methods have relatively large ENL values, which indicates that the multiplicative noise is well removed or the estimated image is over-smooth.

\begin{table}[htbp]
\centering
\caption{ENL values of desnoised images restored from real SAR images by different methods.}\label{Table:SAR}
{\small
\begin{tabular}{lcccccccccc}
\hline
\textbf{Image} & \textbf{Region} & \textbf{Noisy} &\textbf{Alg 1}  &\textbf{Alg 2} & \textbf{SAR-}&\textbf{DZ}& \textbf{HNW}&\textbf{I-DIV}  & \textbf{TwL-} & \textbf{Dict} \\
 &  & & && \textbf{BM3D}&&&  & \textbf{4V} & \\
\hline
SAR 1 & Left & 9.46 & 63.09 & 289.17& 42.84  & 745.14 & 521.24 & 306.90 & 117.48 & 183.57 \\
& Right & 10.65 & 82.59 & 203.24 & 57.36  & 333.18 & 360.12 & 276.04 & 144.54 & 175.49\\
&&&&&&&&&\\
SAR 2 & Left & 22.64 & 97.02 & 894.03 & 91.92  & 1008.50 &816.97 & 501.22 & 579.26 & 336.58\\
& Right & 21.91 & 96.58 & 734.47& 91.03 & 985.64 &740.27 & 566.30 & 724.76 & 444.01  \\
\hline
\end{tabular}
}\end{table}

The pointwise ratio between the real SAR image  $\bm{u}\in \mathbb{R}^N$ and the estimated image $\hat{\bm{u}}\in \mathbb{R}^N$ simulates the multiplicative noise that has been removed by the given method and is defined as
\begin{displaymath}
\text{Ratio}=\frac{\bm{u}}{\hat{\bm{u}}}.
\end{displaymath}
The ratio images for different methods are presented in Figure~\ref{fig:SAR1_Ratio} and Figure~\ref{fig:SAR2_Ratio}. The ratio images for Algorithm~\ref{Alg:Theoretical}, Algorithm~\ref{Alg:Practical}  and the SAR-BM3D method present almost random speckle, which is matched with the expected statistics. On the contrary, the ratio images for the other methods still contain some geometric structures such as edges and details correlated to the real SAR images, which indicates that those methods have removed some valuable information besides of noise.

\begin{figure}[htbp]
	\centering
	\begin{subfigure}[t]{.1942\textwidth}
    \includegraphics[width=3.245cm]{empty.pdf}
    \end{subfigure}
  	\begin{subfigure}[t]{.1942\textwidth}
  	\includegraphics[width=3.245cm]{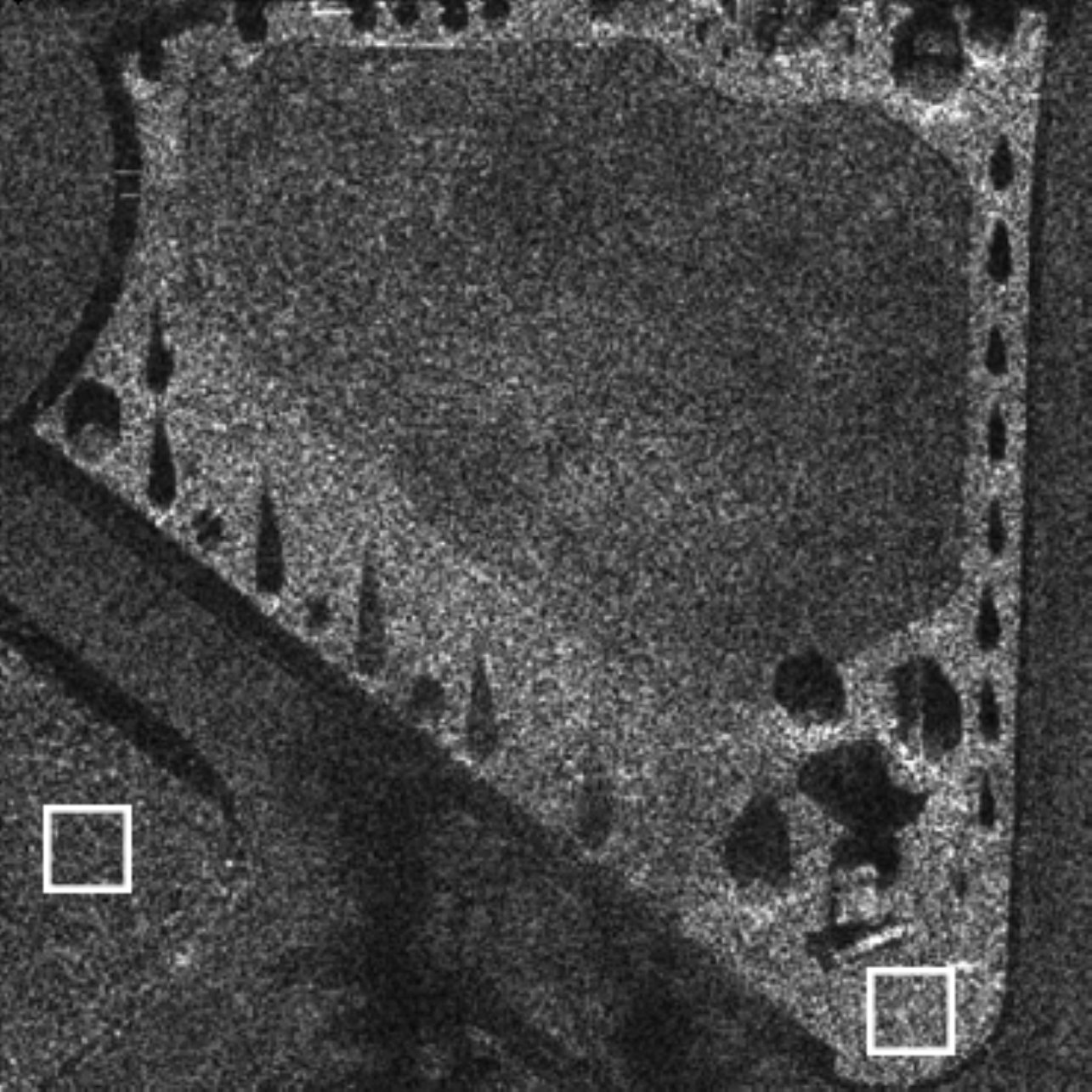}
    \caption{SAR 1}
    \end{subfigure}
    \begin{subfigure}[t]{.1942\textwidth}
    \includegraphics[width=3.245cm]{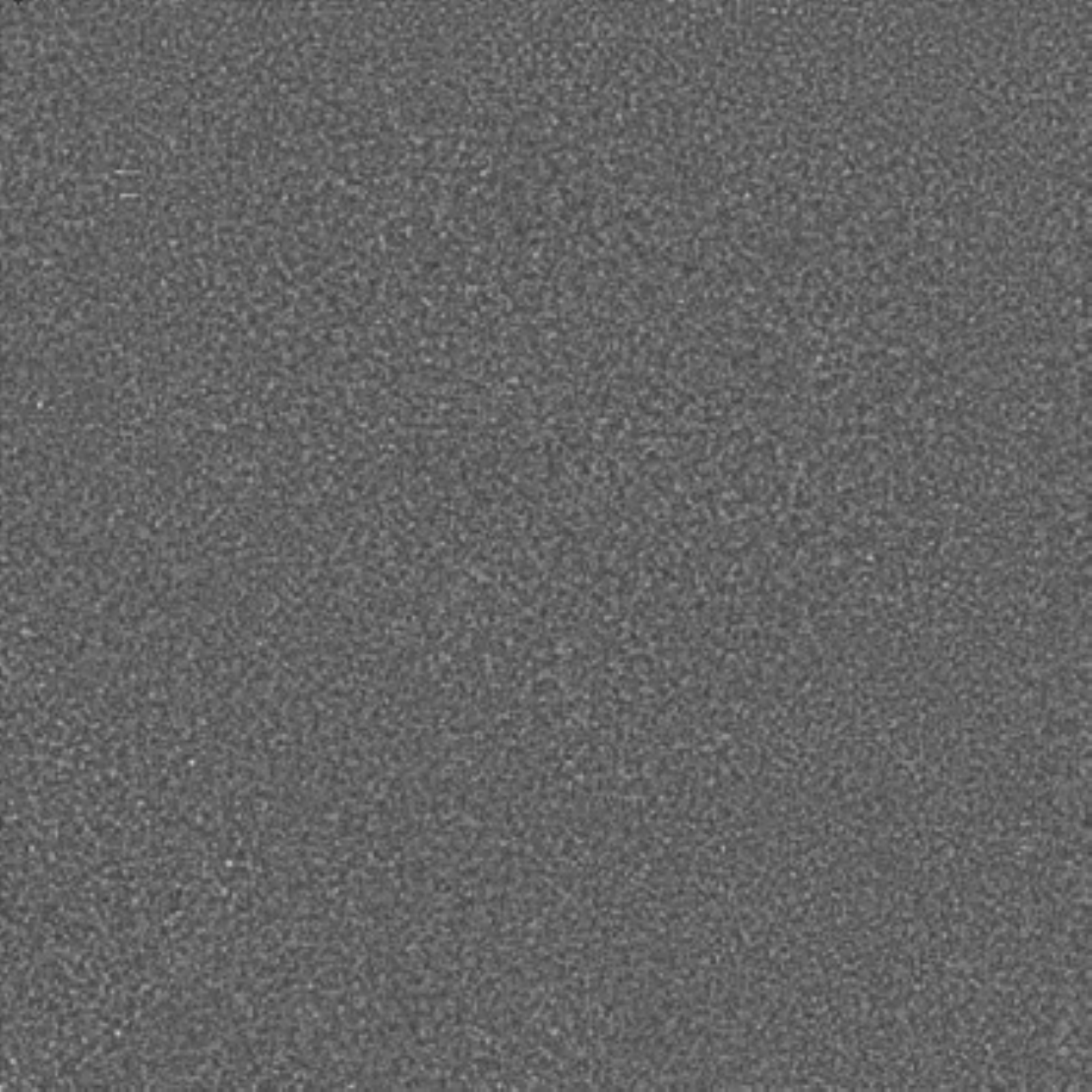}
    \caption{Alg 1}
    \end{subfigure}
    \begin{subfigure}[t]{.1942\textwidth}
    \includegraphics[width=3.245cm]{SAR1_Ratio_Estimated.pdf}
    \caption{Alg 2}
    \end{subfigure}
    \begin{subfigure}[t]{.1942\textwidth}
    \includegraphics[width=3.245cm]{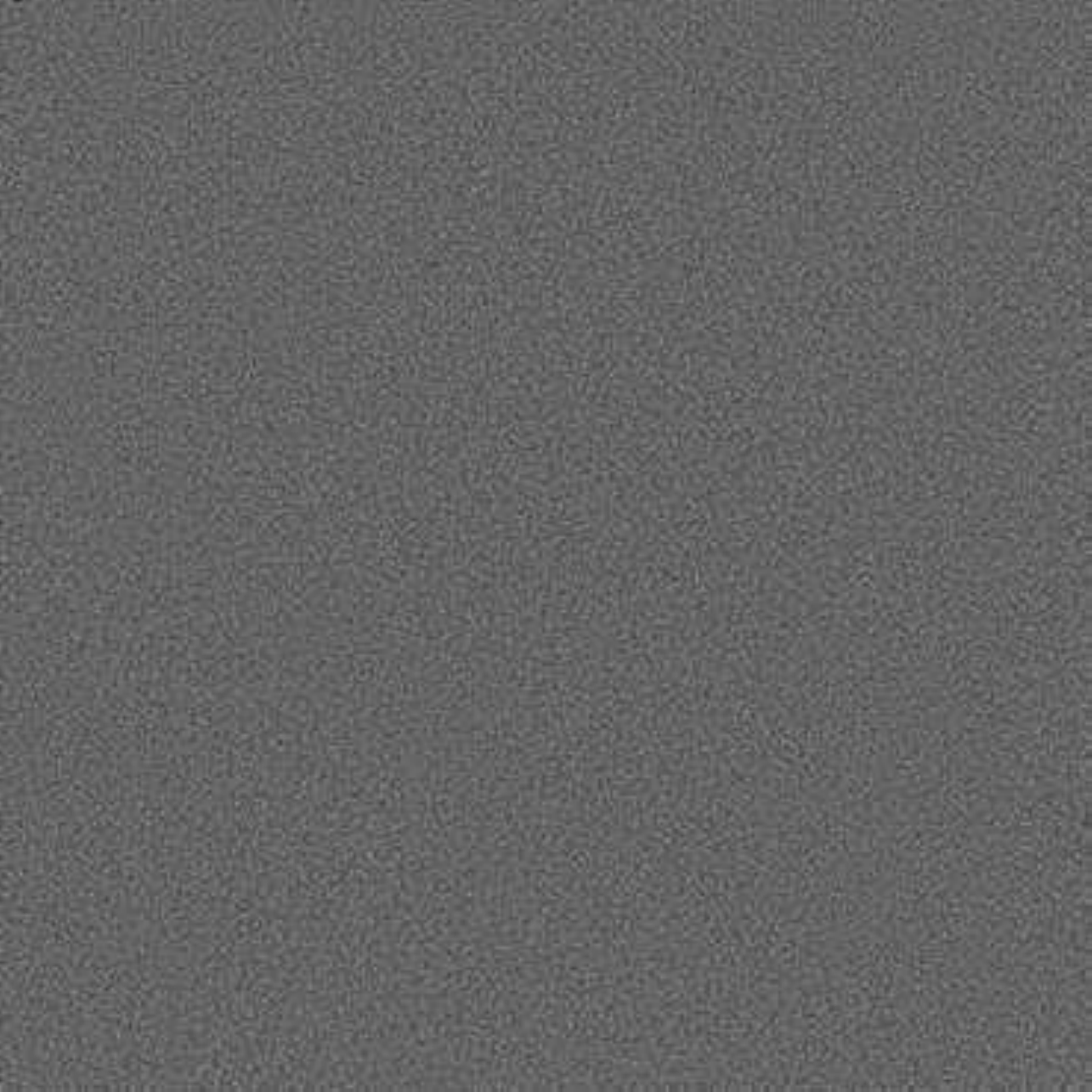}
	\caption{SAR-BM3D}
    \end{subfigure}    \\
    \begin{subfigure}[t]{.1942\textwidth}
    \includegraphics[width=3.245cm]{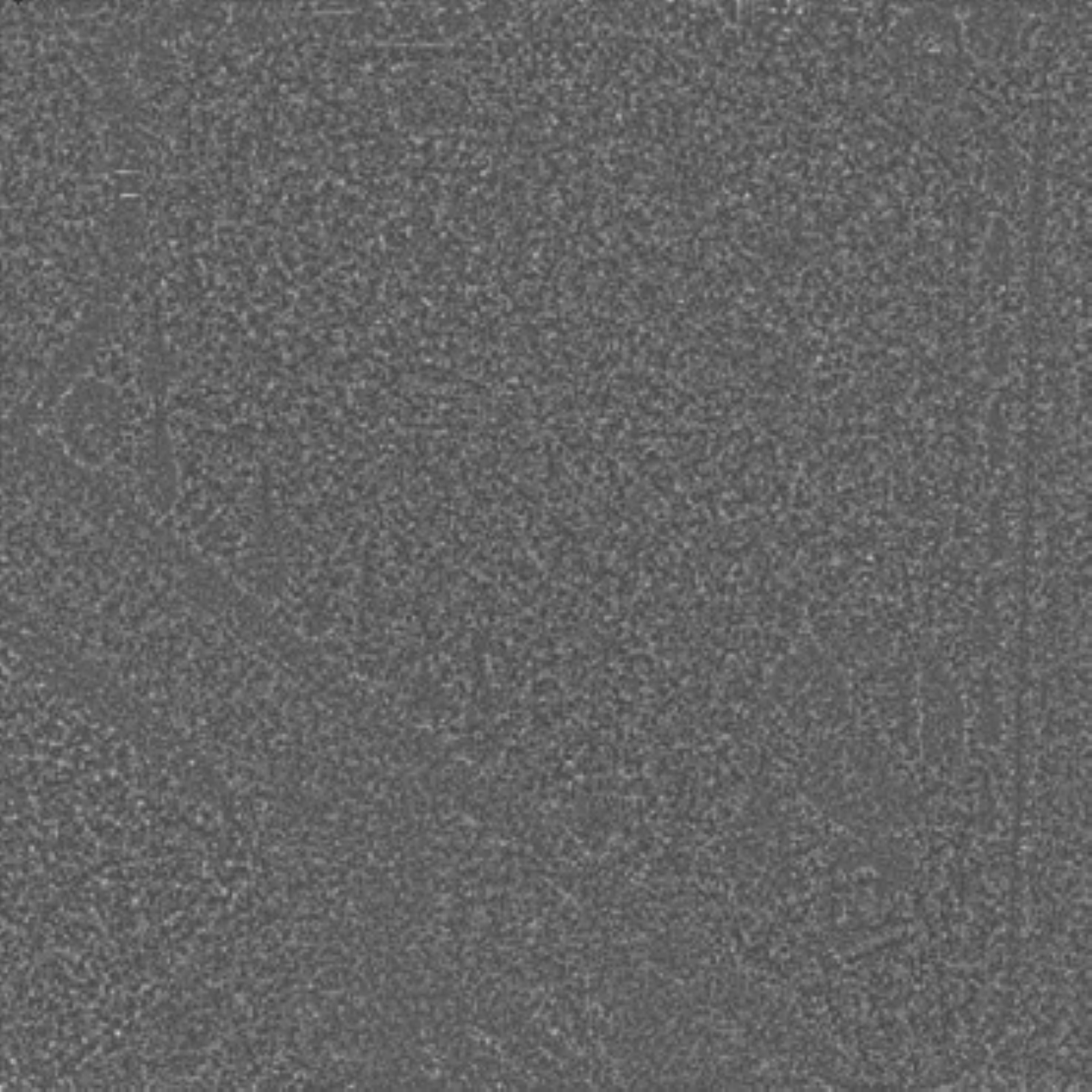}
	\caption{DZ}
    \end{subfigure}
    \begin{subfigure}[t]{.1942\textwidth}
    \includegraphics[width=3.245cm]{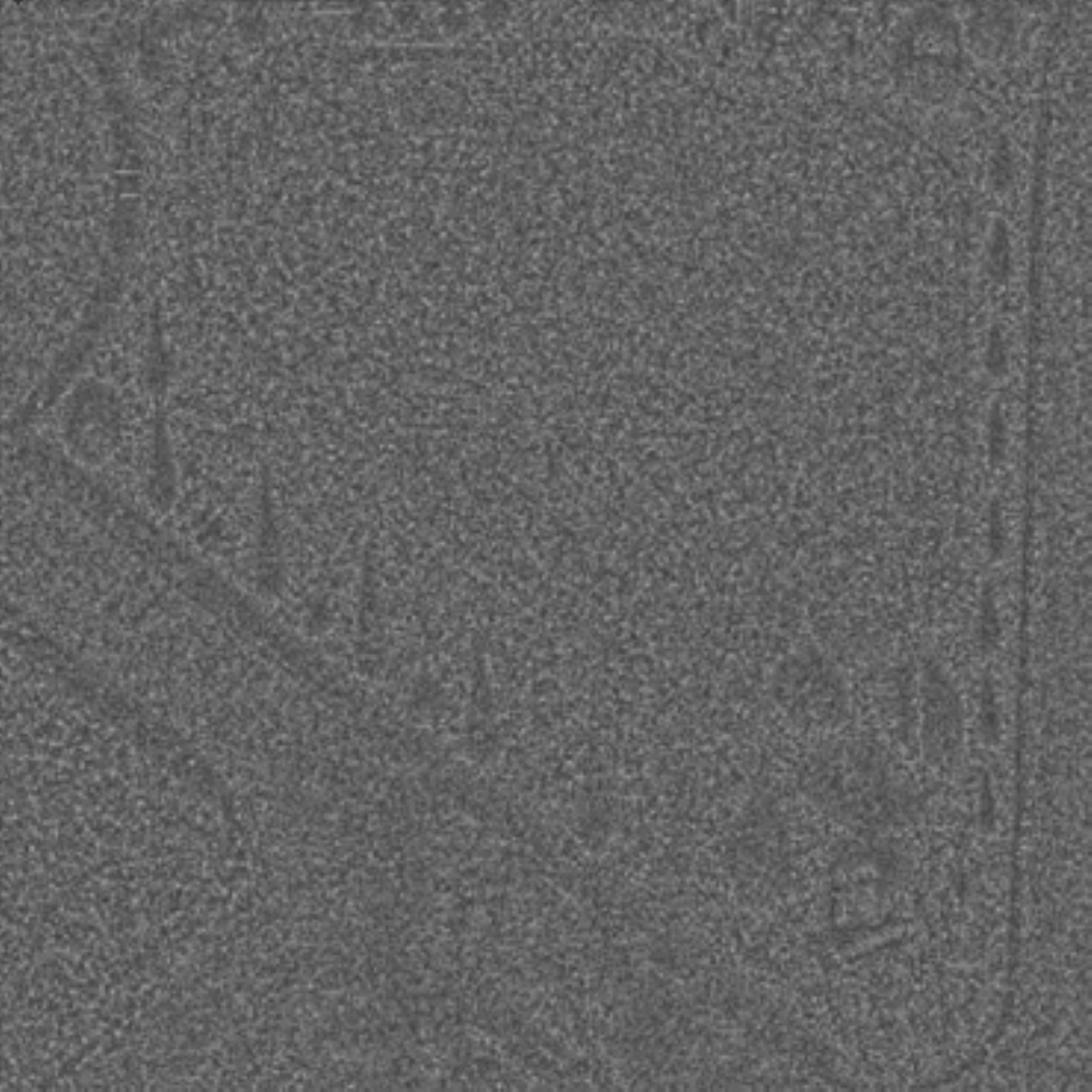}
	\caption{HNW}
    \end{subfigure}
    \begin{subfigure}[t]{.1942\textwidth}
    \includegraphics[width=3.245cm]{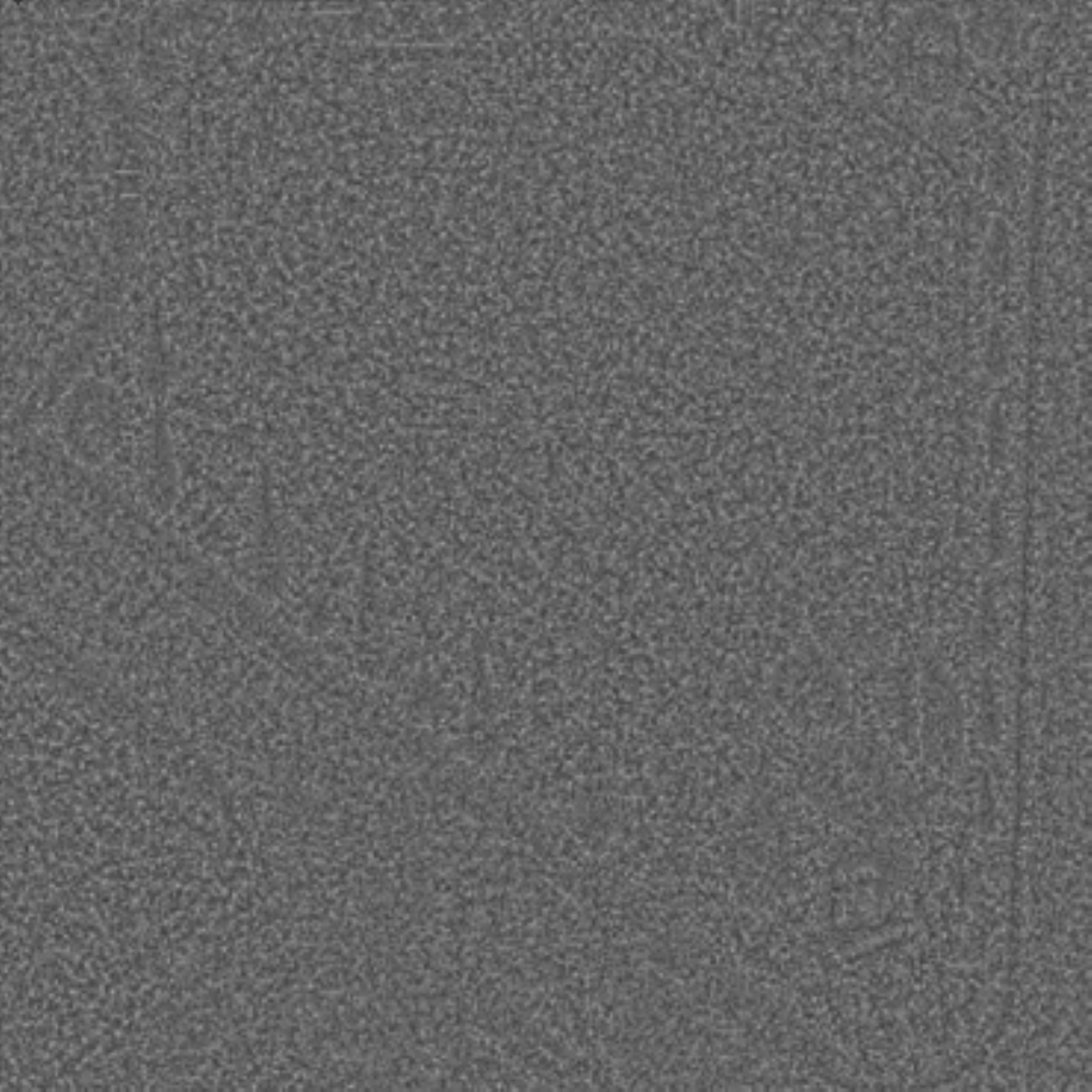}
    \caption{I-DIV}
    \end{subfigure}
    \begin{subfigure}[t]{.1942\textwidth}
    \includegraphics[width=3.245cm]{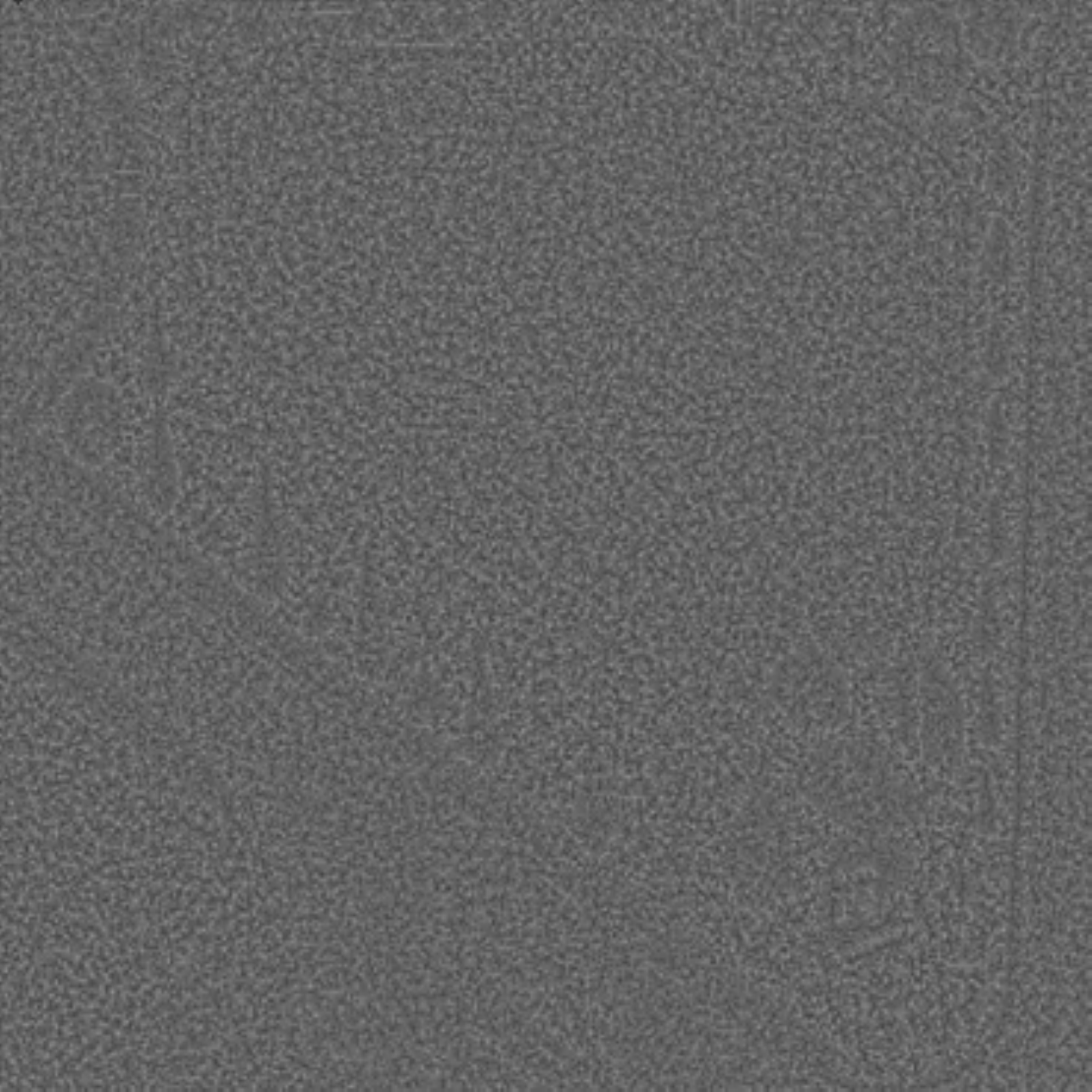}
    \caption{TwL-4V}
    \end{subfigure}
    \begin{subfigure}[t]{.1942\textwidth}
    \includegraphics[width=3.245cm]{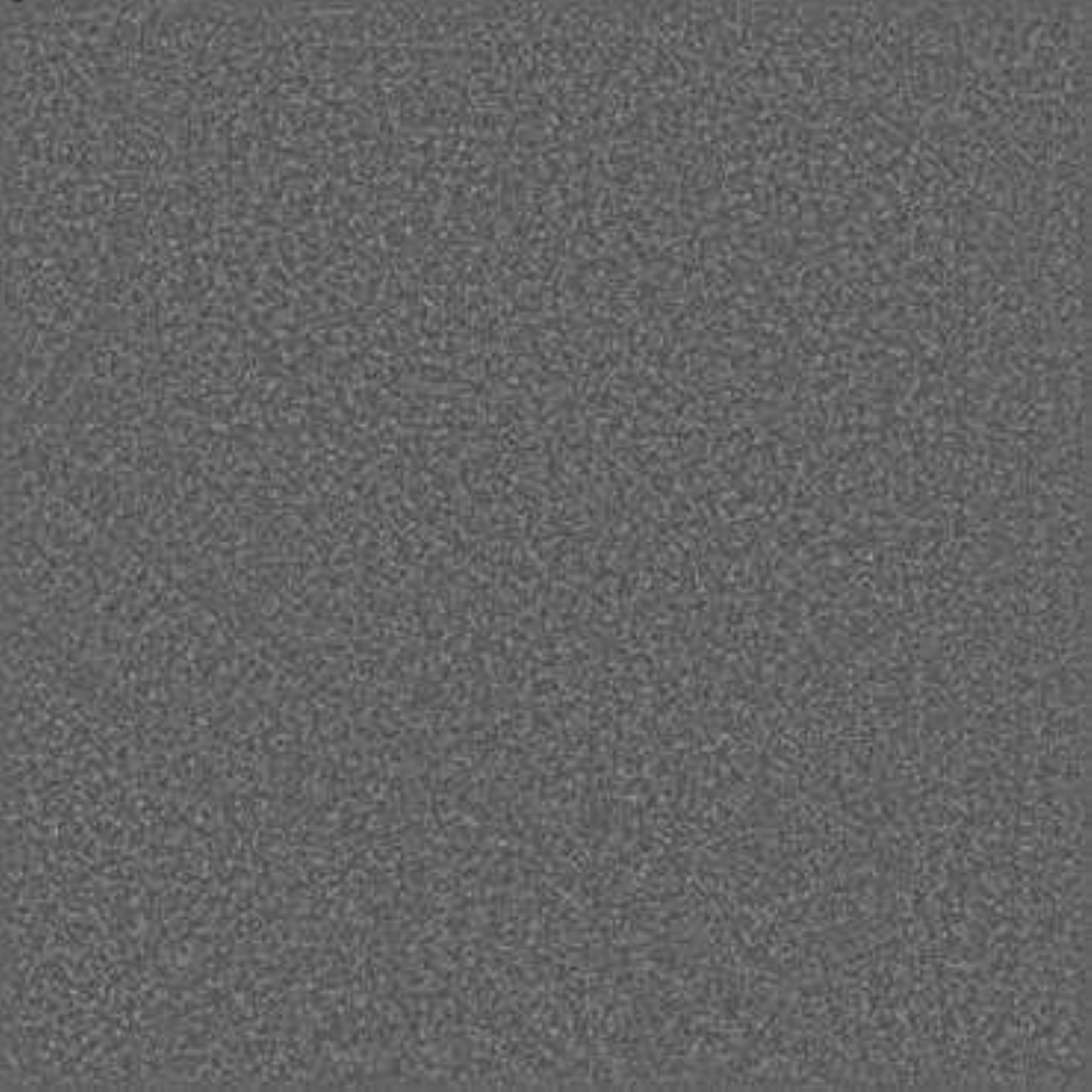}
	\caption{Dictionary}
    \end{subfigure}   	
	\caption{Comparison of the ratio images between ``SAR 1" and the estimated images by different methods.}\label{fig:SAR1_Ratio}
\end{figure}

\begin{figure}[htbp]
	\centering
	\begin{subfigure}[t]{.1942\textwidth}
    \includegraphics[width=3.245cm]{empty.pdf}
    \end{subfigure}
    \begin{subfigure}[t]{.1942\textwidth}
    \includegraphics[width=3.245cm]{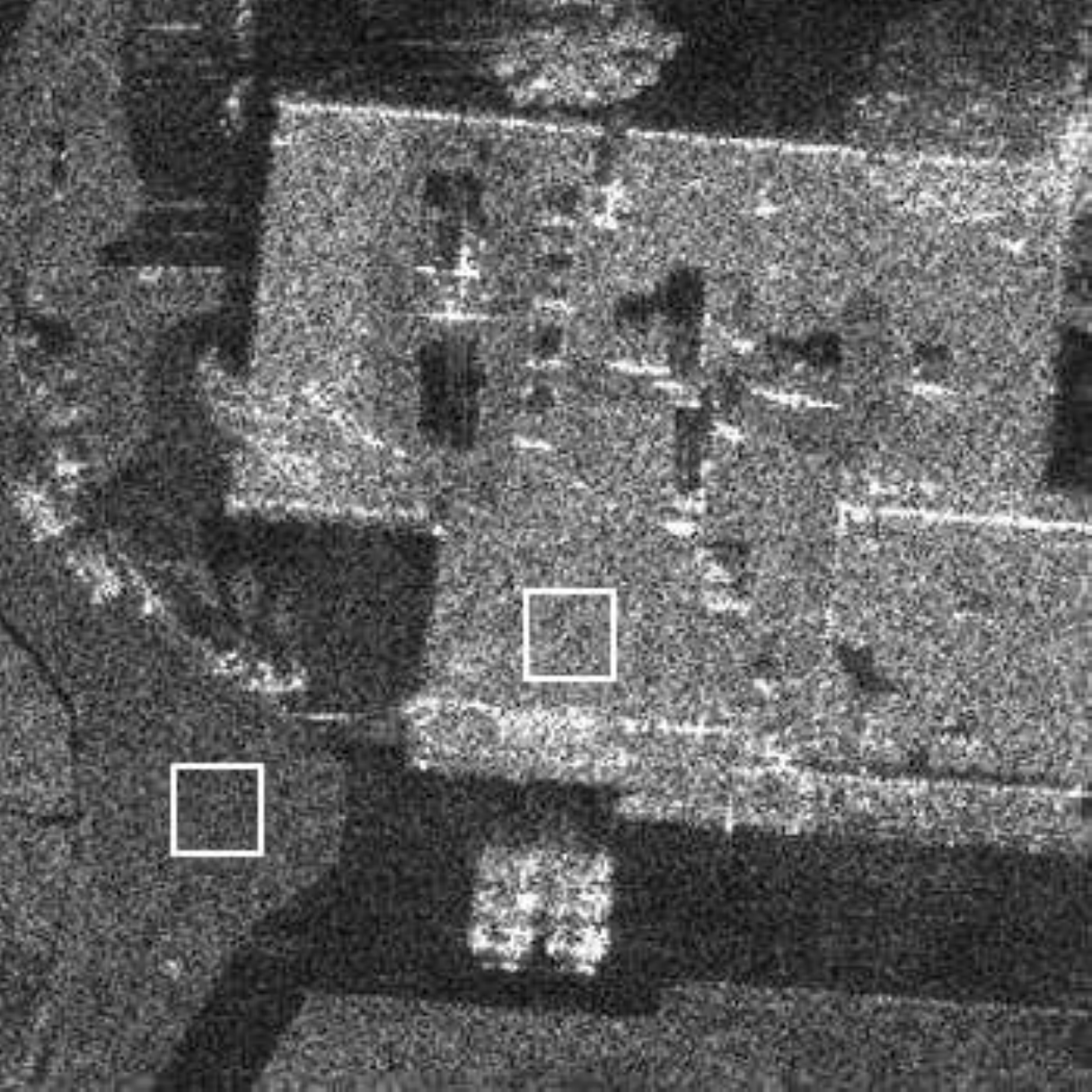}
    \caption{SAR 2}
    \end{subfigure}
    \begin{subfigure}[t]{.1942\textwidth}
    \includegraphics[width=3.245cm]{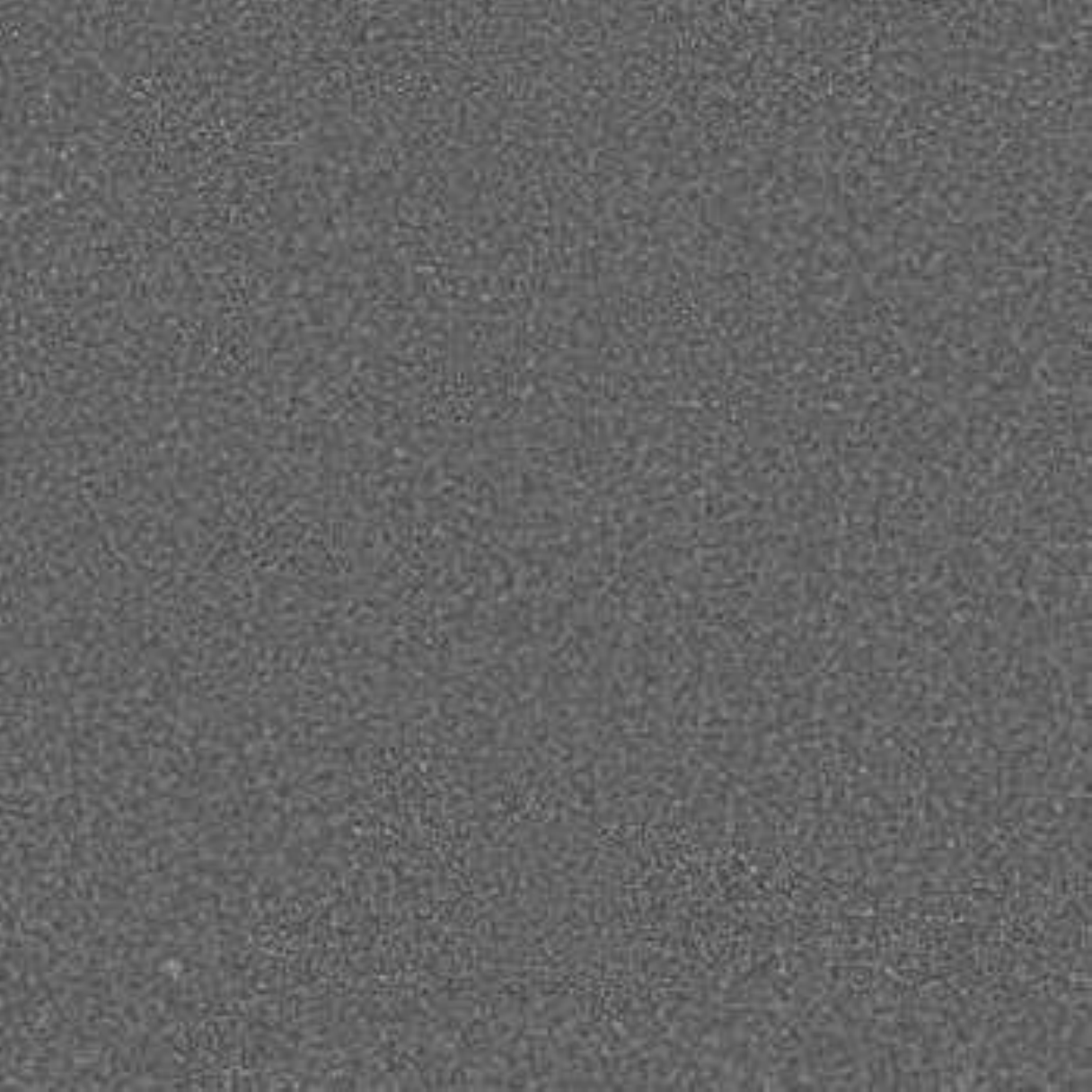}
    \caption{Alg 1}
    \end{subfigure}
    \begin{subfigure}[t]{.1942\textwidth}
    \includegraphics[width=3.245cm]{SAR6_Ratio_Estimated.pdf}
    \caption{Alg 2}
    \end{subfigure}
    \begin{subfigure}[t]{.1942\textwidth}
    \includegraphics[width=3.245cm]{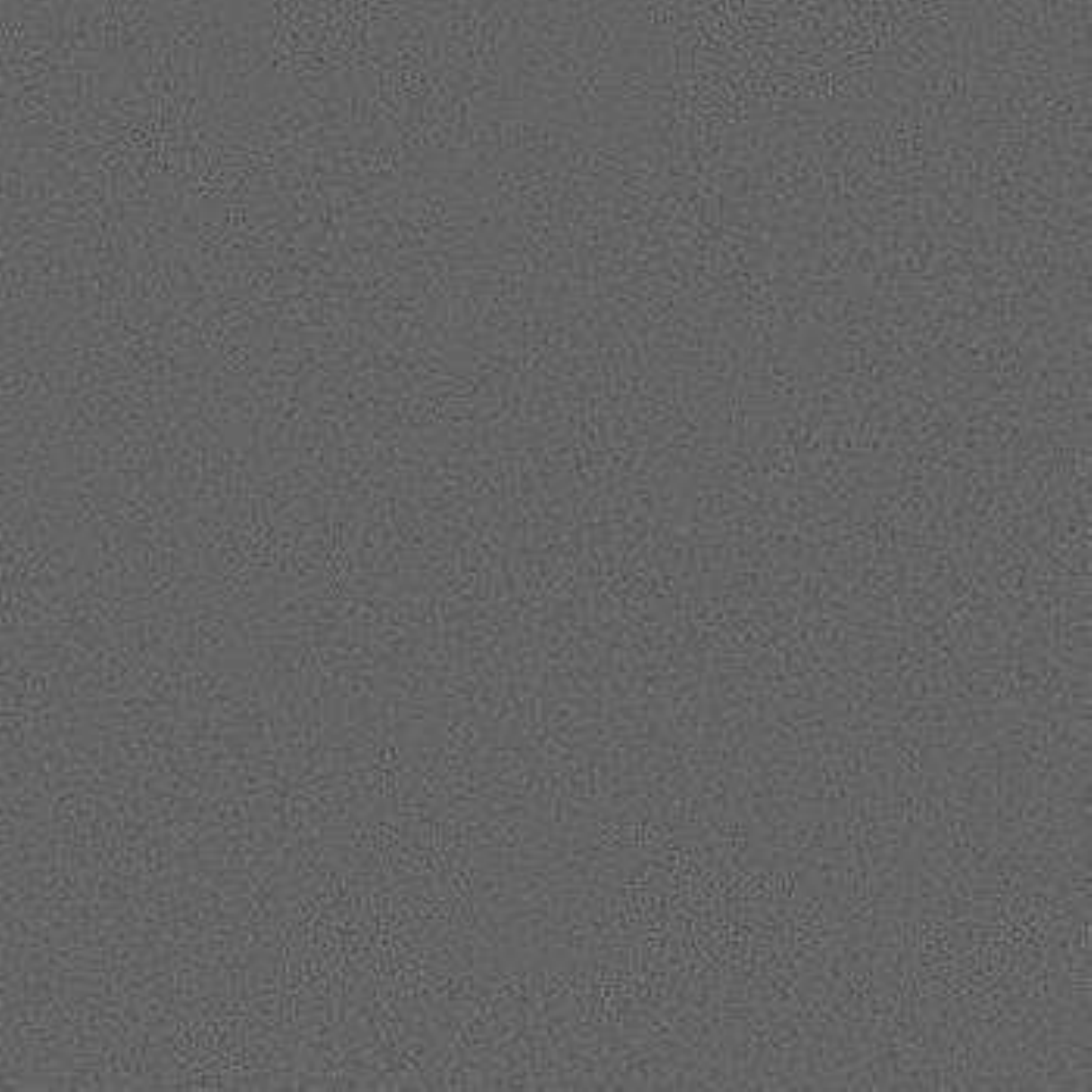}
	\caption{SAR-BM3D}
    \end{subfigure}    \\
    \begin{subfigure}[t]{.1942\textwidth}
    \includegraphics[width=3.245cm]{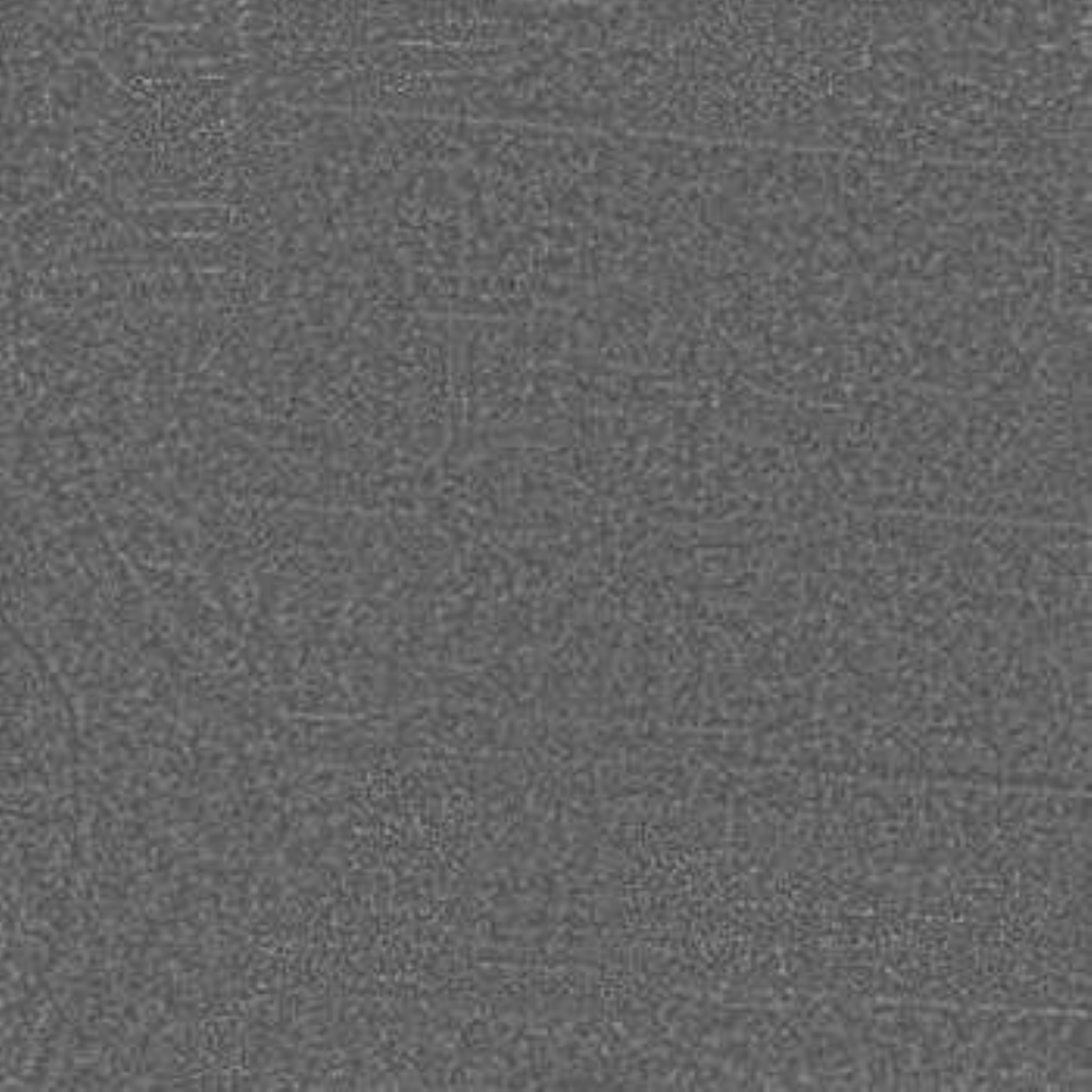}
	\caption{DZ}
    \end{subfigure}
    \begin{subfigure}[t]{.1942\textwidth}
    \includegraphics[width=3.245cm]{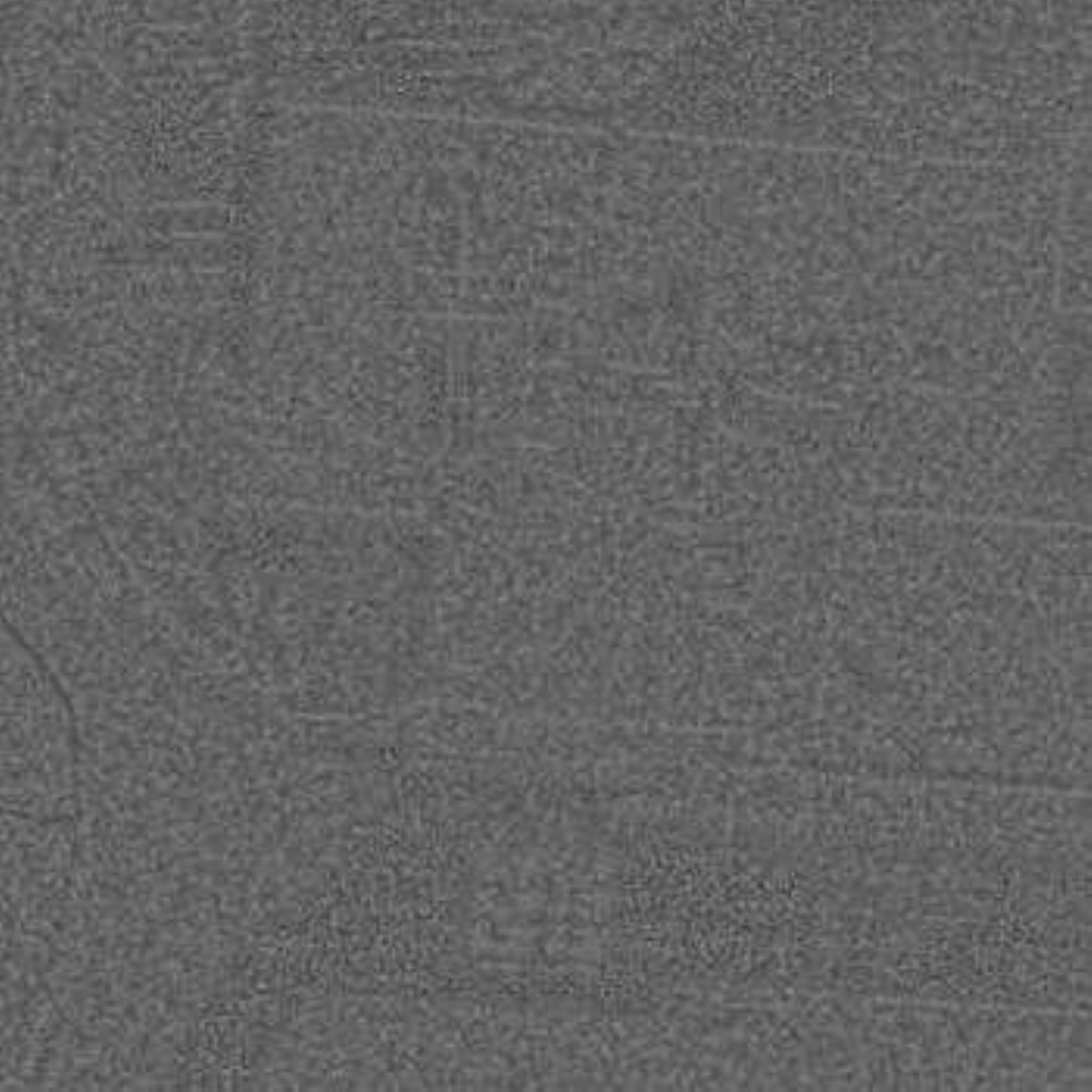}
	\caption{HNW}
    \end{subfigure}
    \begin{subfigure}[t]{.1942\textwidth}
    \includegraphics[width=3.245cm]{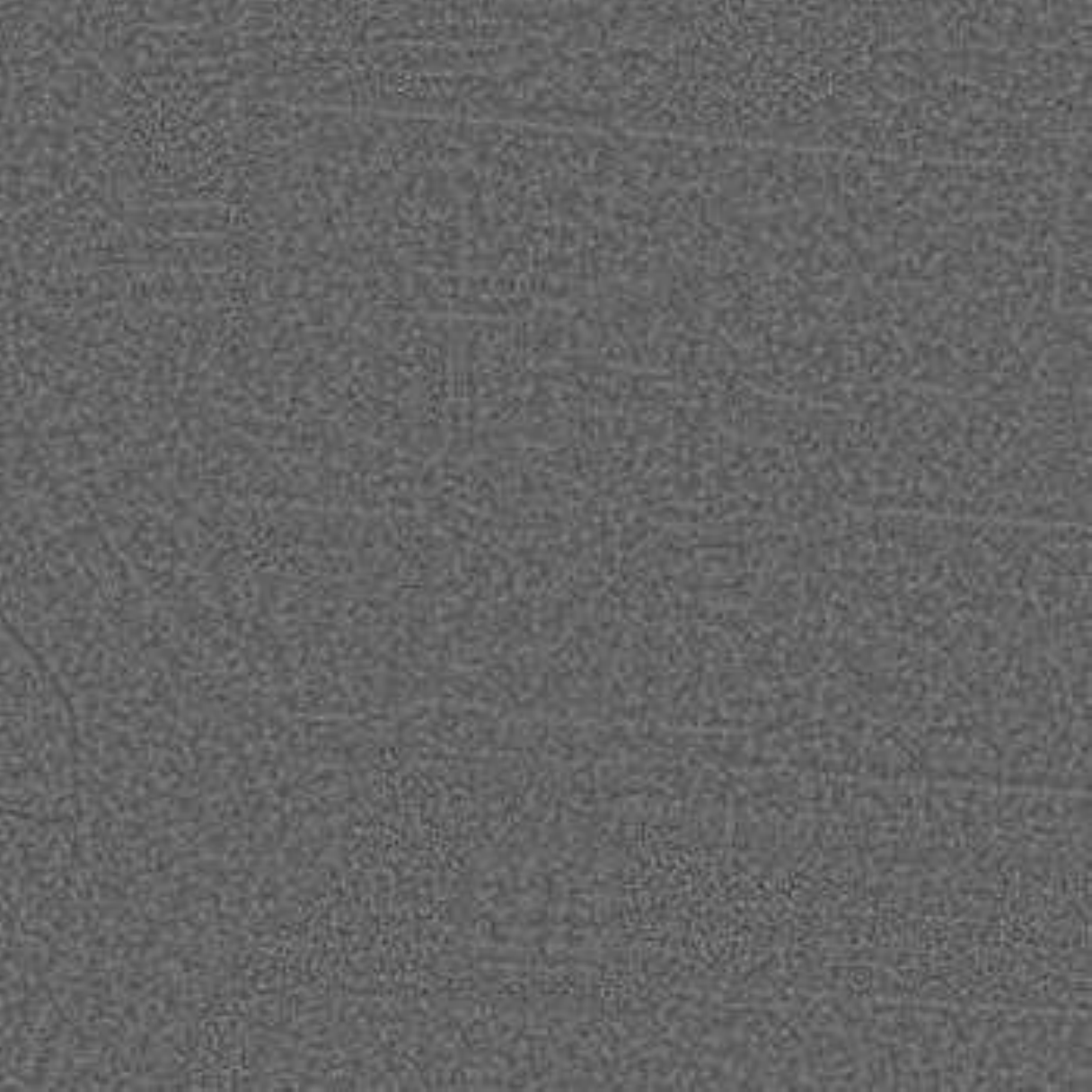}
    \caption{I-DIV}
    \end{subfigure}
    \begin{subfigure}[t]{.1942\textwidth}
    \includegraphics[width=3.245cm]{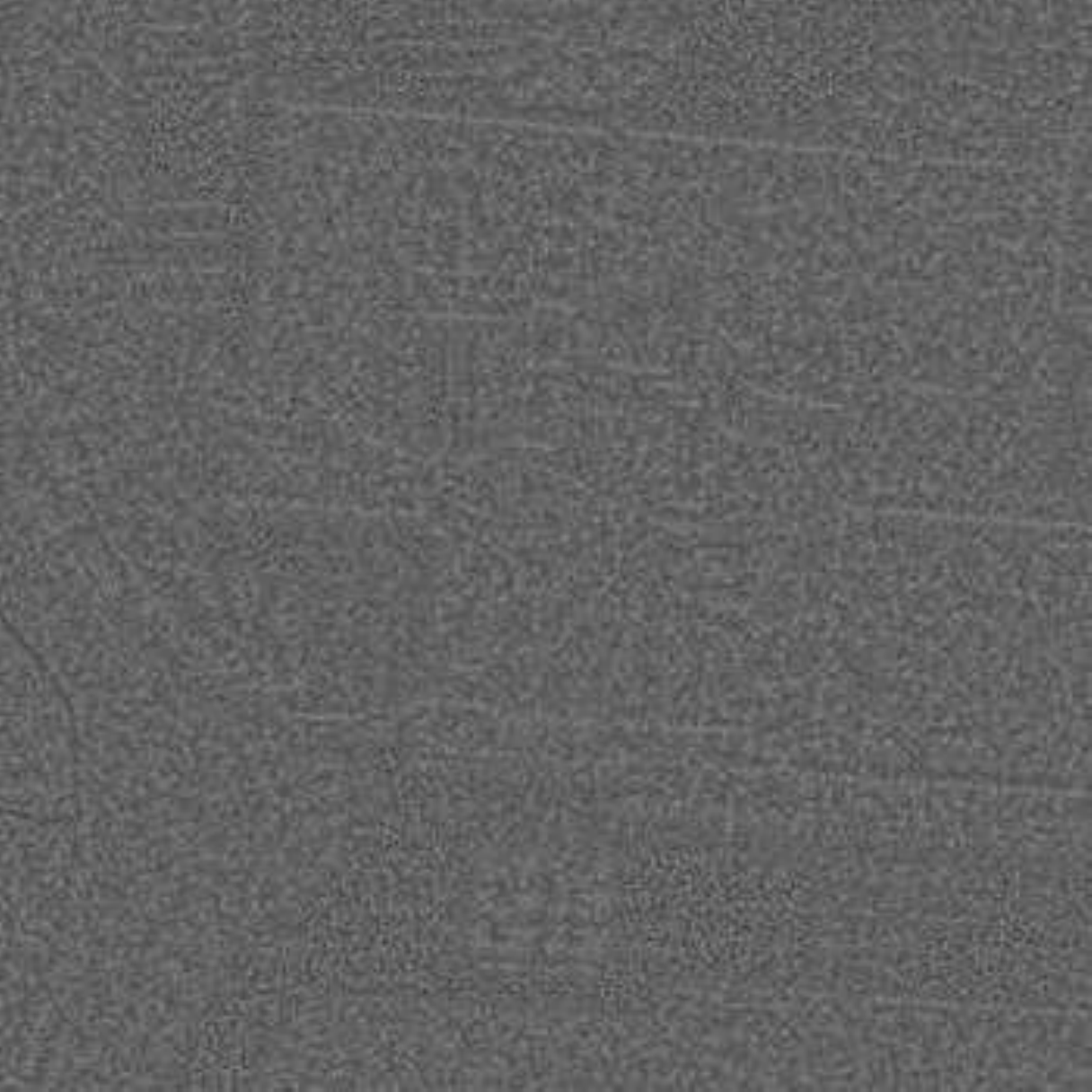}
    \caption{TwL-4V}
    \end{subfigure}
    \begin{subfigure}[t]{.1942\textwidth}
    \includegraphics[width=3.245cm]{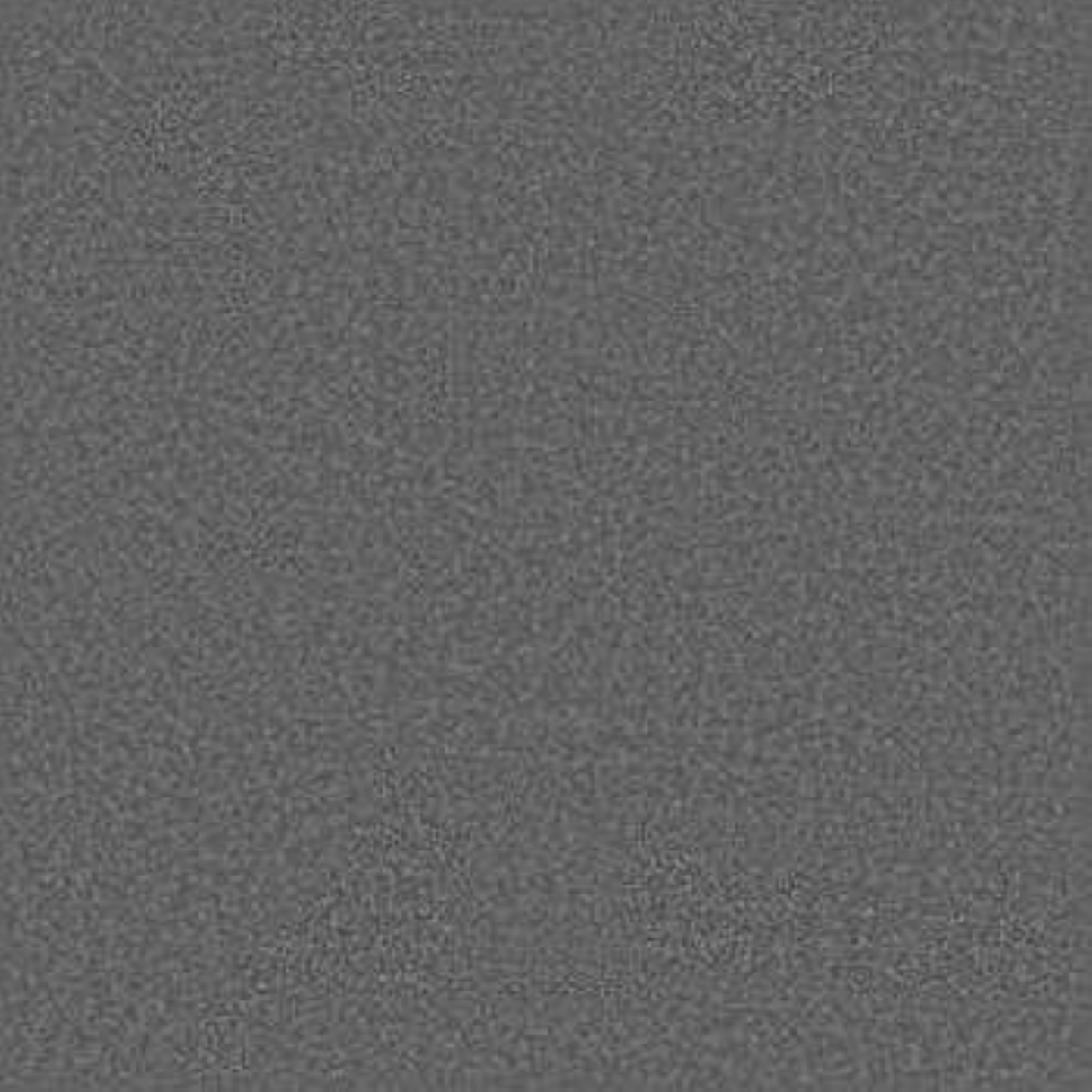}
	\caption{Dictionary}
    \end{subfigure}   	
	\caption{Comparison of the ratio images between ``SAR 2" and the estimated images by different methods.}\label{fig:SAR2_Ratio}
\end{figure}

\section{Conclusions}
\label{Section:Conclusions}

We have proposed an effective method for multiplicative noise removal. The proposed method consists of a nonlocal low-rank model, which exploits the low-rank prior of nonlocal similar patch matrices, and the PARM iterative algorithm, which solves the nonconvex nonsmooth optimization problem resulting from the proposed model. We have established the global convergence of the sequence generated by the PARM algorithm to a critical point of the nonconvex nonsmooth objective function of the resulting optimization problem. Numerical results have demonstrated that the proposed method with a theoretical convergence guarantee outperforms several existing methods including the state-of-the-art SAR-BM3D method.

\nocite{lou2010image,chan2014two}

\bibliographystyle{siam}
\bibliography{references}

\end{document}